\newtheorem{theorem}{Theorem}[section]
\newtheorem{remark}{Remark}[section]
\newtheorem{lemma}[theorem]{Lemma}
\newtheorem{proposition}[theorem]{Proposition}
\newtheorem{define}{Definition}[section]
\begin{document}
\title[Non-uniqueness in law for 3D MHD system]{Non-uniqueness in law of three-dimensional magnetohydrodynamics system forced by random noise}
 
\author{Kazuo Yamazaki}  
\address{Texas Tech University, Department of Mathematics and Statistics, Lubbock, TX, 79409-1042, U.S.A.; Phone: 806-834-6112; Fax: 806-742-1112; E-mail: (kyamazak@ttu.edu)}
\date{}
\maketitle

\begin{abstract}
We prove non-uniqueness in law of the three-dimensional magnetohydrodynamics system that is forced by random noise of an additive and a linear multiplicative type and has viscous and magnetic diffusion, both of which are weaker than a full Laplacian. We apply convex integration to both equations of velocity and magnetic fields in order to obtain the non-uniqueness in law in the class of probabilistically strong solutions.  
\vspace{5mm}

\textbf{Keywords: convex integration; fractional Laplacian; magnetohydrodynamics system; non-uniqueness; random noise.}
\end{abstract}
\footnote{2010MSC : 35A02; 35R60; 76W05}

\section{Introduction}\label{Introduction}

\subsection{Motivation from physics and mathematics}\label{Motivation from physics and mathematics}
Initiated by Alfv$\acute{\mathrm{e}}$n \cite{A42a} in 1942, the study of magnetohydrodynamics (MHD) concerns the properties of electrically conducting fluids. For example, while fluid turbulence is often investigated through Navier-Stokes (NS) equations, MHD turbulence occurs in laboratory settings such as fusion confinement devices (e.g., reversed field pinch), as well as astrophysical systems (e.g., solar corona) and the conventional system of equations for such a study is that of the MHD. Such hydrodynamic models forced by random noise has a history of more than 60 years (e.g., \cite{BT73, LL56, N65}). Moreover, various forms of dissipation have been suggested in the physics literature; e.g., frictional dissipation by atmospheric scientists \cite{PBH00}. Fractional Laplacian defined via Fourier transform $\mathcal{F}$ as $(-\Delta)^{m} f (x) \triangleq \sum_{k \in \mathbb{Z}^{n}} \lvert  k \rvert^{2m} \mathcal{F} (f) (k) e^{ik\cdot x}$ for $x \in \mathbb{T}^{n} \triangleq [-\pi, \pi]^{n}$ also appears naturally in the models from geophysics such as surface quasi-geostrophic equations (e.g., \cite{C02}). The purpose of this manuscript is to prove non-uniqueness in law of the three-dimensional MHD system forced by two prototypical forms of random noise: additive and linear multiplicative. The forms of viscous and magnetic diffusion will be fractional Laplacians with both powers allowed to be arbitrarily close to, but strictly smaller than, one. 

\subsection{Previous works}\label{Previous works}
Let us define $nD$ as ``$n$-dimensional,'' $\partial_{t} \triangleq \frac{\partial}{\partial t}$ and write components of any vector with super-indices. We denote by $u: \mathbb{R}_{+} \times \mathbb{T}^{3} \mapsto \mathbb{R}^{3}$ and $b: \mathbb{R}_{+} \times \mathbb{T}^{3} \mapsto \mathbb{R}^{3}$ the velocity and magnetic vector fields, $\pi: \mathbb{R}_{+} \times \mathbb{T}^{3} \mapsto \mathbb{R}$ the pressure scalar field, and viscous and magnetic diffusivity constants as $\nu_{1}, \nu_{2} \geq 0$, respectively. While we mostly focus on the case the spatial domain is $\mathbb{T}^{n}$, some of our discussions can be readily extended to the case of $\mathbb{R}^{n}$. Under such notations, the generalized MHD system reads 
\begin{subequations}\label{deterministic GMHD} 
\begin{align}
& \partial_{t} u + (u\cdot \nabla)u + \nabla \pi + \nu_{1} (-\Delta)^{m_{1}} u = (b\cdot\nabla) b, \hspace{2mm} \nabla\cdot u = 0, \hspace{3mm} t > 0, \\
& \partial_{t} b + (u\cdot\nabla)b + \nu_{2} (-\Delta)^{m_{2}} b = (b\cdot \nabla)u, \hspace{10mm} \nabla \cdot b = 0, \hspace{3mm} t > 0, 
\end{align}
\end{subequations} 
with initial condition denoted by $(u^{\text{in}}, b^{\text{in}})(x) \triangleq (u,b)(0,x)$. We refer to the case $\nu_{1}, \nu_{2} > 0$ and $m_{1} = m_{2} = 1$ as the classical MHD system whereas the case $\nu_{1} = \nu_{2} = 0$ the ideal MHD system; furthermore, considering $b\equiv 0$ reduces \eqref{deterministic GMHD} to the NS equations, and additionally taking $\nu_{1} = 0$ recovers the Euler equations. 
\begin{define}
(E.g., \cite[Def. 3.5-3.6]{BV19b}) In the diffusive case $\nu_{1}, \nu_{2}  >0$, $(u,b)$ such that 
\begin{equation*}
u \in C_{\text{weak}}^{0} ([0,T]; L^{2} (\mathbb{T}^{n})) \cap L^{2} ([0,T]; \dot{H}^{m_{1}} (\mathbb{T}^{n})), b \in C_{\text{weak}}^{0} ([0,T]; L^{2} (\mathbb{T}^{n})) \cap L^{2} ([0,T]; \dot{H}^{m_{2}} (\mathbb{T}^{n})) 
\end{equation*}
is called a Leray-Hopf weak solution to \eqref{deterministic GMHD} over $[0,T]$ if for any $t \in [0, T]$ they are weakly divergence-free and mean-zero, and satisfy \eqref{deterministic GMHD} distributionally and the energy inequality 
\begin{equation}\label{estimate 34}
\frac{1}{2} ( \lVert u (t) \rVert_{L_{x}^{2}}^{2} + \lVert b(t) \rVert_{L_{x}^{2}}^{2}) + \int_{0}^{t} \nu_{1} \lVert u\rVert_{\dot{H}_{x}^{m_{1}}}^{2} + \nu_{2} \lVert b \rVert_{\dot{H}_{x}^{m_{2}}}^{2} ds \leq \frac{1}{2} (\lVert u^{\text{in}} \rVert_{L_{x}^{2}}^{2} + \lVert b^{\text{in}} \rVert_{L_{x}^{2}}^{2}). 
\end{equation} 
Moreover, $(u,b)$ such that $u, b \in C_{t}^{0}L_{x}^{2}$ is called a weak solution to \eqref{deterministic GMHD} over $[0,T]$ if for any $t \in [0,T]$ they are weakly divergence-free and mean-zero, and satisfy \eqref{deterministic GMHD} distributionally. Analogous statements can be made in the inviscid case (cf. \cite[Def. 3.1]{BV19b}). 
\end{define} 
Mathematical analysis on the classical MHD system was pioneered by Duvaut and Lions \cite{DL72} and fundamental results such as the global existence of a Leray-Hopf weak solution in case $n \in \{2,3\}$, and its uniqueness in case $n = 2$ are well-known (e.g., \cite[The. 3.1]{ST83}). The investigation of the generalized NS equations was initiated by Lions in \cite[Rem. 8.1]{L59}, followed by \cite[Rem. 6.11]{L69} that already claimed the uniqueness of its Leray-Hopf weak solution when $m_{1} \geq \frac{1}{2} + \frac{n}{4}$. This range of $m_{1} \in [\frac{1}{2} + \frac{n}{4}, \infty)$ corresponds to the $L^{2}(\mathbb{T}^{n})$-critical and $L^{2}(\mathbb{T}^{n})$-subcritical regime of the NS equations considering the rescaling property that if $(u,\pi)(t,x)$ solves the generalized NS equations, then so does $(u_{\lambda}, \pi_{\lambda})(t,x) \triangleq (\lambda^{2m_{1} - 1} u, \lambda^{4m_{1} -2} \pi) (\lambda^{2m_{1}}t, \lambda x)$; we call the complement $(0, \frac{1}{2} + \frac{n}{4})$ the $L^{2}(\mathbb{T}^{n})$-supercritical regime. Analogous results for the MHD system were obtained by Wu \cite{W03}. We also mention logarithmic improvements beyond the critical threshold that was initiated by Tao \cite{T09} and extended by Barbato et al. \cite{BMR14} for the NS equations (see \cite{W11, Y18} for the case of the MHD system). Next, let us consider the stochastic generalized MHD system:
\begin{subequations}\label{stochastic GMHD}
\begin{align}
&du + [\nu_{1} (-\Delta)^{m_{1}} u + \text{div} (u\otimes u - b \otimes b) + \nabla \pi] dt = F_{1}(u) dB_{1}, \hspace{2mm} \nabla\cdot u = 0, \hspace{5mm} t > 0, \label{estimate 35}\\
&db + [\nu_{2} (-\Delta)^{m_{2}} b + \text{div} (u\otimes b - b\otimes u)]dt = F_{2} (b) dB_{2}, \hspace{10mm} \nabla\cdot b = 0, \hspace{5mm} t > 0, \label{estimate 36}
\end{align}
\end{subequations} 
where either $F_{k} \equiv 1$ and both $B_{k}$ for $k \in \{1,2\}$ are $G_{k}G_{k}^{\ast}$-Wiener processes on a probability space $(\Omega, \mathcal{F}, \textbf{P})$ with $G_{k}$ to be described subsequently or $F_{k}(x) = x$ and $B_{k}$ for $k \in \{1,2\}$ are both $\mathbb{R}$-valued Wiener processes; here we indicated an adjoint operator by an asterisk. In the former case, Flandoli and Romito \cite{FR08} proved the existence of a Leray-Hopf weak solution to the 3D stochastic classical NS equations (see \cite[Def. 3.3]{FR08} and \cite{GRZ09}); analogous result for the 3D stochastic classical MHD system was proven by the author \cite{Y19}. 
\begin{define}
If for any solution $(u, b, B_{1}, B_{2})$ and $(\tilde{u}, \tilde{b}, \tilde{B}_{1}, \tilde{B}_{2})$ to \eqref{stochastic GMHD} with same initial distributions, defined potentially on different filtered probability spaces, $\mathcal{L} (u,b) = \mathcal{L} (\tilde{u}, \tilde{b})$; i.e., they have same probability laws, then uniqueness in law holds for \eqref{stochastic GMHD}. If for any solutions $(u, b, B_{1}, B_{2})$ and $(\tilde{u}, \tilde{b}, B_{1}, B_{2})$ with common initial condition defined on same probability space, $(u,b)(t) = (\tilde{u}, \tilde{b})(t)$ for all $t$ $\textbf{P}$-almost surely ($\textbf{P}$-a.s.), then path-wise uniqueness holds for \eqref{stochastic GMHD}. Moreover, if a solution is adapted to the canonical left-continuous filtration generated by $(B_{1}, B_{2})$ and augmented by all the negligible sets, then it is a probabilistically strong solution. While uniqueness in law does not imply path-wise uniqueness (e.g., \cite[Exa. 2.2]{C03}), Yamada-Watanabe theorem states that path-wise uniqueness implies uniqueness in law, and path-wise uniqueness and existence of a probabilistically weak solution together imply existence of a probabilistically  strong solution. Conversely, Cherny's theorem \cite[The. 3.2]{C03} states that existence of a probabilistically  strong solution and uniqueness in law together imply path-wise uniqueness. 
\end{define} 
The stochastic classical MHD system has caught much attention: existence of global weak solution in case of an additive noise and multiplicative noise if $n=3$, along with path-wise uniqueness if $n=2$ as long as noise is Lipschitz \cite{S10, SS99}; ergodicity in case of an additive noise if $n =2$ \cite{BD07}, large deviation principle if $n =2$ \cite{CM10} (see also \cite{S21} on tamed stochastic MHD system). 
\begin{remark}\label{Remark 1.1}
At the level of path-wise uniqueness, the results in the stochastic and deterministic cases were more or less comparable. Hence, there has been considerable effort in the community of stochastic partial differential equations (PDEs) in fluid mechanics to prove uniqueness in law in the 3D case (e.g., \cite[p 878--879]{DD03}). Due to Cherny's theorem, this can imply path-wise uniqueness if the existence of a probabilistically strong solution can additionally be shown. However, as pointed out by Flandoli \cite[p. 84]{F08}, the latter result was absent in the literature for a long time. Indeed, the Leray-Hopf weak solution of the 3D stochastic classical NS equations that was constructed in \cite{FR08} was a probabilistically weak solution. Without path-wise uniqueness in hand, this result cannot imply the existence of a probabilistically strong solution via Yamada-Watanabe theorem. Working on the case of an additive noise path-wise will not solve this issue either because the standard approach of obtaining uniform bounds and relying on compactness to deduce a convergent subsequence will return a limit that depends on the fixed path. 
\end{remark} 
\noindent Next, let us discuss the convex integration technique, that was particularly extended to a probabilistic setting to remarkably prove not only non-uniqueness in law, which immediately implies path-wise non-uniqueness, but also the existence of a strong solution to the 3D stochastic classical NS equations in \cite{HZZ19} by Hofmanov$\acute{\mathrm{a}}$ et al., although at the level of analytically weak solution, not a Leray-Hopf weak solution. 

The origin of convex integration is accredited to the work of Nash \cite{N54} concerning isometric embeddings; it was Gromov who considered its work as part of homotopy-principle and established convex integration technique in \cite[Par. 2.4]{G86}.  M$\ddot{\mathrm{u}}$ller and $\acute{\mathrm{S}}$ver$\acute{\mathrm{a}}$k extended convex integration to Lipschitz mappings and obtained some unexpected solutions to certain PDEs \cite{MS98, MS03}. Effort to further advance convex integration technique was fueled by the famous open problem of Onsager's conjecture \cite{O49}, of which positive direction was solved in 1994 \cite{CET94, E94} but its negative direction, specifically that for any $\alpha \in [0, \frac{1}{3})$ there exists a solution $u(t) \in C^{\alpha}(\mathbb{T}^{3})$ for all $t$ to the 3D Euler equations that fails to conserve energy, had remained open for more than a decade. First, De Lellis and Sz$\acute{\mathrm{e}}$kelyhidi Jr. \cite{DS09} refined convex integration and proved the existence of a solution $u \in L_{t,x}^{\infty}$ to the $nD$ Euler equations with compact support for any $n \in \mathbb{N}\setminus \{1\}$, effectively extending the results of Scheffer \cite{S93} and Shnirelman \cite{S97} which proved same result but with regularity in $L_{t,x}^{2}$ for $n = 2$. This result was subsequently improved to the regularity level of $C_{t,x}$ by De Lellis and Sz$\acute{\mathrm{e}}$kelyhidi Jr. \cite{DS13},  $C_{t,x}^{\alpha}$ for $\alpha < \frac{1}{5}$ by Buckmaster et al. \cite{BDIS15}, and finally $C_{t,x}^{\alpha}$ for $\alpha < \frac{1}{3}$ by Isett \cite{I18}. By extending techniques from \cite{BDIS15, I18}, the authors in \cite{CDD18, D19} proved non-uniqueness of Leray-Hopf weak solution to the 3D generalized NS equations with $m_{1} < \frac{1}{5}$ and then $m_{1} < \frac{1}{3}$, respectively. Although an application of the convex integration to the classical NS equations was believed to be infeasible due to the viscous diffusion, by an addition of a new ingredient of  intermittency, Buckmaster and Vicol \cite{BV19a} proved the non-uniqueness of weak solution to the 3D classical NS equations. This result was extended to the full $L^{2}(\mathbb{T}^{3})$-supercritical regime; i.e., for any $m_{1} < \frac{5}{4}$ by Buckmaster et al. \cite{BCV18} and Luo and Titi \cite{LT20}. Concerning the MHD system, inspired by Taylor's conjecture \cite{T74}, Faraco et al. \cite{FLS21} adapted the approach of \cite{DS09} on ideal 3D MHD system and proved the existence of infinitely many bounded solutions with compact support in space-time that violate conservation of total energy and cross helicity but preserve magnetic helicity. Independently, Beekie et al. \cite{BBV21} employed the approach of \cite{BV19a} also on ideal 3D MHD system and proved that there exists $\beta > 0$ such that there are weak solutions $u,b \in C_{t}\dot{H}_{x}^{\beta}$ that do not conserve magnetic helicity and their total energy and cross helicity are non-trivial non-constant functions of time.  

Far-reaching consequences of convex integration have included the stochastic case. Using techniques from \cite{DS10}, Chiodaroli et al. \cite{CFF19} and Breit et al. \cite{BFH20} proved path-wise non-uniqueness of certain stochastic Euler equations (see also \cite{HZZ20}). Partially inspired by the ideas from \cite{BV19b}, Hofmanov$\acute{\mathrm{a}}$ et al. \cite{HZZ19} proved non-uniqueness in law of the 3D stochastic classical NS equations forced by additive or linear multiplicative noise. This inspired the author to extend to various models: 3D stochastic generalized NS equations with $m_{1} \in (\frac{13}{20}, \frac{5}{4})$ in \cite{Y20a} and $m_{1} \in (0, \frac{1}{2})$ in \cite{Y21c}; 2D stochastic generalized NS equations with $m_{1} \in (0,1)$ in \cite{Y20c}. 

Let us explain some of the motivation of this manuscript. The results of non-uniqueness in law for the 2D and 3D stochastic generalized NS equations in \cite{Y20a, Y20c} were successfully extended to the Boussinesq system by the author in \cite{Y21a} which we recall here for convenience. We denote by $\theta: \mathbb{R}_{+} \times \mathbb{T}^{n} \mapsto \mathbb{R}$ the temperature scalar field so that $nD$ stochastic generalized Boussinesq system reads 
\begin{subequations}\label{stochastic generalized Boussinesq}
\begin{align}
&du + [\nu_{1} (-\Delta)^{m_{1}} u + \text{div} (u\otimes u) + \nabla \pi] dt = \theta e^{n} dt + F_{1}(u) dB_{1}, \hspace{2mm} \nabla\cdot u = 0, \hspace{5mm} t > 0,  \label{estimate 10}\\
&d\theta + [-\nu_{2}\Delta \theta + \text{div} (u \theta)]dt = F_{2} (\theta) dB_{2},  \hspace{50mm} t > 0; \label{estimate 11}
\end{align}
\end{subequations} 
due to a technical reason that is explained in \cite[Rem. 2.1]{Y21a}, we chose $m_{2} = 1$ in \eqref{estimate 11} in comparison to \eqref{estimate 36}. For simplicity, let us consider the case $n = 2$ and an additive noise; i.e., $F_{k} \equiv 1$ and $B_{k}$ for $k \in \{1,2\}$ are certain $G_{k}G_{k}^{\ast}$-Wiener processes on a probability space $(\Omega, \mathcal{F}, \textbf{P})$ with $(\mathcal{F}_{t})_{t\geq 0}$ as the canonical left-continuous filtration generated by $(B_{1}, B_{2})$ augmented by all the $\textbf{P}$-negligible sets. Then, summarizing the work in \cite{Y21a} very briefly, at the crucial step of convex integration, one can consider a pair of Ornstein-Uhlenbeck processes $(z_{1}, z_{2})$ that satisfies 
\begin{subequations}\label{estimate 395}
\begin{align}
& dz_{1} + \nu_{1}(-\Delta)^{m_{1}} z_{1} dt + \nabla \pi_{1} dt = dB_{1}, \hspace{3mm} \nabla\cdot z_{1} = 0, \hspace{5mm}  t > 0, \hspace{2mm} z_{1}(0, x) \equiv 0,   \\
& dz_{2} - \nu_{2}\Delta z_{2} dt = dB_{2}, \hspace{43mm} t > 0, \hspace{2mm} z_{2}(0,x) \equiv 0, 
\end{align}
\end{subequations} 
(see \cite[Equ. (31)]{Y21a}), and define $v \triangleq u - z_{1}$ which will lead to the following stochastic Boussineq-Reynolds system: for $q \in \mathbb{N}_{0} \triangleq \mathbb{N} \cup \{0\}$, 
\begin{subequations}\label{estimate 1}
\begin{align}
& \partial_{t} v_{q} + \nu_{1}(-\Delta)^{m_{1}} v_{q} + \text{div} ((v_{q} + z_{1}) \otimes (v_{q} + z_{1})) + \nabla \pi_{q} = \theta_{q} e^{2} + \text{div} \mathring{R}_{q}, \hspace{1mm} \nabla\cdot v_{q} = 0,  \label{estimate 2}\\
& d\theta_{q} + [-\nu_{2}\Delta \theta_{q} + \text{div} ((v_{q} + z_{1} ) \theta_{q} )] dt = dB_{2}, \label{estimate 3}
\end{align}
\end{subequations} 
where $\mathring{R}_{q}$ will be a trace-free symmetric matrix (see \cite[Equ. (45)]{Y21a}). The critical point here is that one only has to add the Reynolds stress term on the equation of $v_{q}$ and not $\theta_{q}$; i.e., it suffices to perform convex integration only on the velocity vector field and not temperature scalar field. Indeed, this is because one can explicitly construct $(v_{q}, \theta_{q})$ at level $q = 0$ that solves \eqref{estimate 1} and satisfies certain inductive estimates (see \cite[Pro. 4.7]{Y21a} for details), assume the inductive hypothesis at the level $q$, construct only $v_{q+1}$ explicitly, substitute such $v_{q+1}$ into \eqref{estimate 3} at level $q+1$, observe that the resulting equation is a linear stochastic PDE and thus deduce $\theta_{q+1}$ uniquely for any given initial condition $\theta(0,x)$, substitute this $\theta_{q+1}$ this time into \eqref{estimate 2} which determines $\mathring{R}_{q+1}$ uniquely, resume the convex integration on only \eqref{estimate 2} following the previous works \cite{Y20a, Y20c} on the NS equations and prove that such $v_{q+1}$ particularly satisfies 
\begin{equation}\label{estimate 4} 
\lVert v_{q+1} (t) - v_{q}(t) \rVert_{L_{x}^{2}} \leq M_{0}(t)^{\frac{1}{2}} \delta_{q+1}^{\frac{1}{2}}
\end{equation} 
where $M_{0}(t)$ is a certain function and $\delta_{q+1} \searrow 0$ as $q\nearrow + \infty$ (see \eqref{estimate 93}-\eqref{estimate 94}). Concerning the limiting solution as $q\nearrow + \infty$, \eqref{estimate 4} can allow one to show that $\{v_{q}\}_{q=0}^{\infty}$ is Cauchy in $C_{t}\dot{H}^{\gamma} (\mathbb{T}^{2})$ for sufficiently small $\gamma$ (see \cite[Equ. (63)]{Y21a}). Remarkably, it turns out that \eqref{estimate 4} can deduce similar Cauchy-ness for $\{\theta_{q}\}_{q=0}^{\infty}$. Indeed, \eqref{estimate 3} shows that $\theta_{q+1} - \theta_{q}$ satisfies 
\begin{equation}\label{estimate 5}
\partial_{t} (\theta_{q+1} - \theta_{q}) - \nu_{2}\Delta (\theta_{q+1} - \theta_{q}) + (v_{q+1} + z_{1}) \cdot \nabla (\theta_{q+1} - \theta_{q}) + (v_{q+1} - v_{q}) \cdot \nabla \theta_{q} = 0,  
\end{equation} 
where the noise fortunately canceled out because it is only additive and hence $L^{2}(\mathbb{T}^{2})$-inner products with $\theta_{q+1} - \theta_{q}$ lead to  
\begin{equation}\label{estimate 6}
\frac{1}{2} \partial_{t} \lVert \theta_{q+1} - \theta_{q} \rVert_{L_{x}^{2}}^{2} + \nu_{2} \lVert \theta_{q+1} - \theta_{q} \rVert_{\dot{H}_{x}^{1}}^{2}  = \int_{\mathbb{T}^{n}} (v_{q+1} - v_{q}) \cdot \nabla (\theta_{q+1} - \theta_{q}) \theta_{q}dx 
\end{equation} 
(see \cite[Equ. (97)-(98)]{Y21a}). Due to $\nu_{2}\lVert \theta_{q+1} - \theta_{q} \rVert_{\dot{H}_{x}^{1}}^{2}$ from diffusion that can handle $\nabla (\theta_{q+1} - \theta_{q})$ on the right hand side of \eqref{estimate 6}, using the well-known $L_{\omega}^{p}C_{t}L_{x}^{p}$-estimate of $\theta_{q}$ for any $p \in [1, \infty)$ and \eqref{estimate 4} allows one to prove that $\{\theta_{q}\}_{q =0}^{\infty}$ is Cauchy in $\cap_{p \in [1,\infty)} L_{\omega}^{p} C_{t} L_{x}^{p} \cap L_{\omega}^{p} L_{t}^{2} \dot{H}_{x}^{1}$ and deduce a limiting solution $\lim_{q\to\infty} \theta_{q}  \triangleq \theta \in \cap_{p \in [1,\infty)} L_{\omega}^{p} C_{t} L_{x}^{p} \cap L_{\omega}^{p} L_{t}^{2} \dot{H}_{x}^{1}$ which is $(\mathcal{F}_{t})_{t\geq 0}$-adapted because each $\theta_{q}$ is $(\mathcal{F}_{t})_{t\geq 0}$-adapted (see \cite[Proof of The. 2.1]{Y21a} for details). Analogous attempt on the magnetic field would be to consider  
\begin{equation}\label{estimate 39}
db_{q} + [- \nu_{2} \Delta b_{q} + \text{div} ((v_{q} + z_{1}) \otimes b_{q} - b_{q} \otimes (v_{q} + z_{1})] dt = dB_{2} 
\end{equation} 
which leads to 
\begin{align*}
& \partial_{t} (b_{q+1} - b_{q}) - \nu_{2}\Delta (b_{q+1} - b_{q}) + (v_{q+1} + z_{1}) \cdot \nabla (b_{q+1} - b_{q}) + (v_{q+1} - v_{q}) \cdot \nabla b_{q} \\
& \hspace{21mm} - (b_{q+1} \cdot \nabla) (v_{q+1} - v_{q}) - (b_{q+1} - b_{q}) \cdot \nabla (v_{q}+ z_{1}) = 0 
\end{align*} 
and therefore, $L^{2}(\mathbb{T}^{2})$-inner products with $b_{q+1} - b_{q}$ give 
\begin{align}
& \frac{1}{2} \partial_{t} \lVert b_{q+1} - b_{q} \rVert_{L_{x}^{2}}^{2} + \nu_{2}\lVert b_{q+1} - b_{q} \rVert_{\dot{H}_{x}^{1}}^{2} = - \int_{\mathbb{T}^{2}} (v_{q+1} - v_{q}) \cdot \nabla b_{q} \cdot (b_{q+1} - b_{q}) \label{estimate 7} \\
& \hspace{27mm} - (b_{q+1} \cdot \nabla) (v_{q+1} - v_{q}) \cdot (b_{q+1} - b_{q}) - (b_{q+1} - b_{q}) \cdot \nabla v_{q} \cdot (b_{q+1} - b_{q}) dx,  \nonumber 
\end{align} 
and this estimate is simply too difficult to close with the only help from \eqref{estimate 4} due to a lack of $L_{\omega}^{p}C_{t}L_{x}^{p}$-estimate of $b_{q}$ for all $p \in [1, \infty)$ in contrast to $\theta_{q}$. One might attempt to obtain $L_{\omega}^{p}C_{t}L_{x}^{p}$-estimate of $b_{q}$ for some $p \in [1, \infty)$ but will quickly realize that it requires coupling with an estimate on $v_{q}$ in contrast to the case of $\theta_{q}$ in the Boussinesq system and that brings about many troubles due to the presence of the Reynolds stress $\mathring{R}_{q}$ in the equation of $v_{q}$. This difficulty seems to be absent in the deterministic case because, in pursuit of merely proving non-uniqueness at the regularity level of a weak solution, Cauchy-ness is unnecessary, although certainly sufficient. E.g., in the deterministic case, one may consider 
\begin{subequations}
\begin{align}
& \partial_{t} u_{q} + (u_{q} \cdot \nabla) u_{q} + \nabla \pi_{q} + \nu_{1}(-\Delta)^{m_{1}}u_{q} = (b_{q} \cdot \nabla ) b_{q}, \hspace{3mm} \nabla\cdot u_{q} = 0, \label{estimate 8}\\
& \partial_{t} b_{q} + (u_{q} \cdot \nabla) b_{q} - \nu_{2}\Delta b_{q} = (b_{q} \cdot \nabla) u_{q} + \mathring{R}_{q}, \hspace{13mm} \nabla\cdot b_{q} = 0, \label{estimate 9}
\end{align}
\end{subequations} 
explicitly construct $b_{q+1}$ via convex integration, substitute this $b_{q+1}$ to \eqref{estimate 8} at level $q+1$, consider it as essentially just the NS equations with an external force, use the classical compactness argument to deduce a convergent subsequence such that its limit $u_{q+1}$ solves \eqref{estimate 8}, substitute this $u_{q+1}$ back into \eqref{estimate 9} at level $q+1$, thereby determine $\mathring{R}_{q+1}$ in \eqref{estimate 9}, and try to complete the convex integration scheme. Alas, as elaborated in Remark \ref{Remark 1.1}, such a process of going through compactness argument to deduce a convergent subsequence is not desirable in the stochastic case because $(\mathcal{F}_{t})_{t\geq 0}$-adaptedness becomes lost and the resulting solution is probabilistically weak, not probabilistically strong. 
 
We are now convinced that in order to attain non-uniqueness result for the stochastic generalized MHD system \eqref{stochastic GMHD}, we must employ convex integration on both equations of the velocity and the magnetic fields. Thus, we turn to the approach of \cite{BBV21, FLS21}. An immediate difficulty is that those works were on the ideal MHD system. However, there is hope considering that some convex integration schemes on the Euler equations such as \cite{BDIS15} and \cite[Sec. 5]{BV19b} were extended to the viscous case, although significantly weaker than a full Laplacian, in \cite{CDD18, D19, Y21c}. Indeed, the author actually was able to prove non-uniqueness in law of the stochastic generalized MHD system \eqref{stochastic GMHD} if $\nu_{1} > 0, \nu_{2} > 0, m_{1}, m_{2} \in (0, \frac{3}{4})$ by adapting the approach of \cite{BBV21}. After such computations were completed, the author was informed that Chen and Liu \cite{CL21} made a similar comment very recently that the proof of \cite{BBV21} can be extended to the case $\nu_{1} \geq 0, m_{1} \in [0, \frac{3}{4})$ but $\nu_{2} = 0$, proved non-uniqueness of a certain 3D deterministic elastodynamics system and claimed similar result for 3D deterministic MHD system with $\nu_{1} \geq 0, m_{1} \in [0, 1)$ but $\nu_{2} = 0$. Because we believe that the proof of extending the convex integration scheme in \cite{BBV21} to the case $\nu_{1} > 0, \nu_{2} > 0, m_{1}, m_{2} \in (0, \frac{3}{4})$ is of independent interest mathematically, we will leave its main idea in the Appendix. The convex integration scheme within \cite{CL21} involves high-regularity estimate of its Reynolds stress; see ``$\lVert R_{q}^{i} \rVert_{C_{x,t}^{1}} + \lVert R_{q}^{v} \rVert_{C_{x,t}^{1}} \leq \lambda_{q}^{10}$'' in \cite[Equ. (2.7)]{CL21} and it was explained in previous works (e.g., \cite[Rem. 1.2]{Y20c} and \cite[Sec. 1.2]{Y21c}) that such estimate seems difficult in the stochastic setting because an analogous Reynolds stress for us will involve Brownian motion that has no regularity $C_{t}^{\alpha}$ for $\alpha \geq \frac{1}{2}$; see e.g., $z_{k}$ in \eqref{estimate 76}  that has temporal regularity of $C_{t}^{\frac{1}{2} - \delta}$ for $\delta > 0$ and the presence of such $z_{k}$, $k \in \{1,2\}$, in \eqref{estimate 426}. Relying on techniques from previous works such as \cite{HZZ19, Y20a, Y20c, Y21a}, we will overcome this difficulty and prove non-uniqueness in law of the 3D stochastic generalized MHD system with $\nu_{1}, \nu_{2}  > 0, m_{1}, m_{2} \in (0,1)$ at the level of probabilistically strong solution and thereby effectively show that the strategy of proving its path-wise uniqueness via a combination of uniqueness in law and existence of a strong solution and relying on Cherny's theorem fails.   

\section{Statement of main results}\label{Section 2}
Hereafter, for simplicity we assume $\nu_{1} = \nu_{2} = 1$. Our first set of results Theorems \ref{Theorem 2.1}-\ref{Theorem 2.2} concerns the additive noise case. The precise conditions on $G_{k}$ in Theorems \ref{Theorem 2.1}-\ref{Theorem 2.2} will be given in Section \ref{Subsection 3.1}. 
\begin{theorem}\label{Theorem 2.1} 
Suppose that $m_{k} \in (0,1), F_{k} \equiv 1,B_{k}$ is a $G_{k}G_{k}^{\ast}$-Wiener process and 
\begin{equation}\label{estimate 18}
Tr ((-\Delta)^{\frac{5}{2} - m_{k} + 2\sigma}  G_{k}G_{k}^{\ast}) < \infty 
\end{equation} 
for some $\sigma > 0$ for both $k \in \{1,2\}$. Then, given $T> 0, K > 1$, and $\kappa \in (0,1)$, there exist $\gamma \in (0,1)$ and a $\textbf{P}$-a.s. strictly positive stopping time $\mathfrak{t}$ such that 
\begin{equation}\label{estimate 19}
\textbf{P} ( \{ \mathfrak{t} \geq T \}) > \kappa 
\end{equation}
and the following is additionally satisfied. There exists a pair of $\{\mathcal{F}_{t}\}_{t\geq 0}$-adapted processes $(u,b)$ that is a weak solution of \eqref{stochastic GMHD} starting from a deterministic initial condition $(u^{\text{in}}, b^{\text{in}})$, satisfies 
\begin{equation}\label{estimate 20}
\text{esssup}_{\omega \in \Omega} \lVert u(\omega) \rVert_{C_{\mathfrak{t}} \dot{H}_{x}^{\gamma}} < \infty, \hspace{3mm} \text{esssup}_{\omega \in \Omega} \lVert b(\omega) \rVert_{C_{\mathfrak{t}} \dot{H}_{x}^{\gamma}} < \infty,
\end{equation}
and on a set $\{\mathfrak{t} \geq T \}$, 
\begin{equation}\label{estimate 21}
 \lVert b(T) \rVert_{L_{x}^{2}} > K [ ( \lVert u^{\text{in}} \rVert_{L_{x}^{2}} + \lVert b^{\text{in}} \rVert_{L_{x}^{2}}) + \sum_{k=1}^{2} \sqrt{ T Tr ( G_{k}G_{k}^{\ast} )}].
\end{equation}
\end{theorem}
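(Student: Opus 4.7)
The plan is to reduce \eqref{stochastic GMHD} to a pathwise convex integration problem via an Ornstein--Uhlenbeck decomposition $u = v + z_1$, $b = y + z_2$, where $z_k$ solves a linear equation of the form $dz_k + (-\Delta)^{m_k} z_k \, dt = dB_k$ (with Leray projection for $z_1$). Under the trace condition \eqref{estimate 18}, standard stochastic convolution estimates yield $z_k \in C_t^{1/2 - \delta} C_x^1$ almost surely for small $\delta > 0$. I would introduce a stopping time
\begin{equation*}
\mathfrak{t} = \mathfrak{t}_L \triangleq \inf\Bigl\{ t \geq 0 : \sum_{k=1,2} \bigl( \lVert z_k \rVert_{C_t^{1/2 - 2\delta} L_x^2} + \lVert z_k \rVert_{C_t C_x^1} \bigr) > L \Bigr\} \wedge L^{1/2}
\end{equation*}
and tune $L = L(T, \kappa)$ large enough so that \eqref{estimate 19} holds, via Chebyshev applied to the Gaussian stochastic convolutions. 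Then on $[0, \mathfrak{t}]$ I would build an iterative family $\{(v_q, y_q, \mathring{R}_q^v, \mathring{R}_q^m)\}_{q \in \mathbb{N}_0}$ solving a stochastic MHD--Reynolds system with Reynolds stresses on \emph{both} equations, since, as the discussion around \eqref{estimate 7} explains, the Boussinesq trick of performing convex integration on only one equation is not available here.

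The inductive hypothesis at level $q$ would comprise polynomial-in-$\lambda_q$ bounds in $C_{\mathfrak{t}} C_x^1$ (with $\lambda_q = a^{(b^q)}$ as standard), $C_{\mathfrak{t}} L_x^1$ smallness $\lVert \mathring{R}_q^v \rVert + \lVert \mathring{R}_q^m \rVert \leq c \delta_{q+1} M_L(t)$ for a deterministic $M_L = M_L(t)$, and a difference bound in the spirit of \eqref{estimate 4},
\begin{equation*}
\lVert v_{q+1}(t) - v_q(t) \rVert_{L_x^2} + \lVert y_{q+1}(t) - y_q(t) \rVert_{L_x^2} \leq M_L(t)^{1/2} \delta_{q+1}^{1/2}.
\end{equation*}
The base $q = 0$ would be chosen explicitly so that on $\{\mathfrak{t} \geq T\}$, $\lVert y_0(T) \rVert_{L_x^2}$ already exceeds the right-hand side of \eqref{estimate 21} by a factor absorbing both the telescoping perturbation loss $\sum_q M_L(T)^{1/2} \delta_{q+1}^{1/2}$ (made arbitrarily small by choice of the geometric sequence $\delta_q$) and the It\^o-isometry bound $\textbf{E} \lVert z_2(T) \rVert_{L_x^2}^2 \leq T \operatorname{Tr}(G_2 G_2^\ast)$, which is controlled deterministically on the stopping-time event.

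The hard part will be the perturbation step: constructing intermittent building blocks $(w_{q+1}^v, w_{q+1}^b)$ that simultaneously cancel both Reynolds stresses under fractional dissipation of order $m_k < 1$ on both equations. I would adapt the intermittent Mikado flows of \cite{BBV21} (whose extension to $m_k \in (0, 3/4)$ the author announces in the appendix) and push them to $m_1, m_2 \in (0,1)$ by tuning the concentration, oscillation, and intermittency parameters so that: (i) the quadratic interactions $w^v \otimes w^v - w^b \otimes w^b$ and the cross terms $w^v \otimes w^b - w^b \otimes w^v$ cancel $\mathring{R}_q^v$ and $\mathring{R}_q^m$ to leading order; (ii) the dissipation errors $(-\Delta)^{m_k} w_{q+1}$ remain subcritical, which is precisely what forces $m_k < 1$; (iii) the transport errors from $(v_q + z_1)\cdot\nabla w_{q+1}$ are absorbable despite $z_k$ being only $C_t^{1/2 - \delta}$-regular. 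The last point is the obstacle blocking a direct transfer of \cite{CL21}, whose scheme requires $C_{x,t}^1$ Reynolds stress; following \cite{HZZ19, Y20a, Y21a}, I would circumvent it by a space-time mollification of $\mathring{R}_q$ at scale $\ell = \ell(q)$ and control of the commutator errors, paying a cost absorbable into $\delta_{q+2}$. Finally, interpolating the $L^2$ difference bound against the $C_x^1$ polynomial bound yields Cauchy-ness of $\{(v_q, y_q)\}$ in $C_{\mathfrak{t}} \dot{H}_x^\gamma$ for some small $\gamma \in (0, 1)$, the limit $(v,y)$ is $\{\mathcal{F}_t\}_{t\geq 0}$-adapted since each iterate is built pathwise from $(z_1, z_2)$, and setting $u \triangleq v + z_1$, $b \triangleq y + z_2$ produces a weak solution of \eqref{stochastic GMHD} on $[0, \mathfrak{t}]$ satisfying \eqref{estimate 20} together with \eqref{estimate 21} on $\{\mathfrak{t} \geq T\}$ by the design of $y_0$.
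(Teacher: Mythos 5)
Your proposal follows essentially the same route as the paper's proof: the Ornstein--Uhlenbeck decomposition \eqref{z}--\eqref{estimate 399} with a stopping time of the type \eqref{estimate 84}, convex integration on both equations of the MHD--Reynolds system \eqref{estimate 104} with space-time mollified stresses to circumvent the $C_{x,t}^{1}$ requirement of \cite{CL21}, inductive bounds as in \eqref{estimate 105} and \eqref{estimate 117}, an explicit start $(v_{0},\Xi_{0})$ with exponentially growing magnetic part to force \eqref{estimate 21}, and interpolation of the $L^{2}$ difference bound against the $C_{t,x}^{1}$ bound to get Cauchy-ness in $C_{t}\dot{H}_{x}^{\gamma}$ and adaptedness of the limit. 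The only cosmetic differences are the precise norms in the stopping time (the paper uses $\dot{H}_{x}^{(5+\sigma)/2}$ and $C_{t}^{1/2-2\delta}\dot{H}_{x}^{(3+\sigma)/2}$, controlling $C_{x}^{1}$ via Sobolev embedding) and the name of the building blocks (intermittent shear-type flows from \cite{BBV21,CL21} rather than Mikado flows), neither of which changes the argument.
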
 

\begin{theorem}\label{Theorem 2.2} 
Suppose that $m_{k} \in (0,1), F_{k} \equiv 1$, $B_{k}$ is a $G_{k}G_{k}^{\ast}$-Wiener process, and \eqref{estimate 18} holds for some $\sigma > 0$ for both $k \in \{1,2\}$. Then non-uniqueness in law for \eqref{stochastic GMHD} holds on $[0,\infty)$. Moreover, for all $T> 0$ fixed, non-uniqueness in law holds for \eqref{stochastic GMHD} on $[0,T]$. 
\end{theorem}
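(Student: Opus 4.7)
The plan is to deduce Theorem \ref{Theorem 2.2} from Theorem \ref{Theorem 2.1} by contrasting the convex-integration solution produced there with a standard Galerkin-type solution that obeys an It\^o energy inequality, following the strategy used in \cite{HZZ19, Y20a, Y20c, Y21a}. First I would fix the deterministic, divergence-free, mean-zero initial condition $(u^{\text{in}}, b^{\text{in}})$ supplied by Theorem \ref{Theorem 2.1} and construct, via the standard Galerkin-plus-compactness scheme adapted to \eqref{stochastic GMHD} with $F_k \equiv 1$ (parallel to \cite{FR08, Y19}), a probabilistically weak martingale solution $(\tilde{u}, \tilde{b}, \tilde{B}_1, \tilde{B}_2)$ on some stochastic basis, starting from the same initial datum. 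Applying It\^o's formula to $\lVert \tilde{u}\rVert_{L^2}^2 + \lVert \tilde{b}\rVert_{L^2}^2$ and taking expectations kills the martingale part and yields an absolute constant $C_0$ with
\begin{equation*}
\textbf{E}\bigl[\lVert \tilde{b}(T)\rVert_{L^2}^2\bigr] \leq \lVert u^{\text{in}}\rVert_{L^2}^2 + \lVert b^{\text{in}}\rVert_{L^2}^2 + T\sum_{k=1}^{2} Tr(G_k G_k^\ast) \leq C_0 M^2,
\end{equation*}
where $M := (\lVert u^{\text{in}}\rVert_{L^2} + \lVert b^{\text{in}}\rVert_{L^2}) + \sum_{k=1}^{2}\sqrt{T\, Tr(G_k G_k^\ast)}$ is exactly the quantity appearing on the right-hand side of \eqref{estimate 21}.

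Next I would fix $\kappa \in (0,1)$ close to $1$ and choose $K > 1$ so large that $C_0/K^2 < \kappa$, then invoke Theorem \ref{Theorem 2.1} with this triple $(T,K,\kappa)$ to produce an $\{\mathcal{F}_t\}_{t\geq 0}$-adapted weak solution $(u,b)$ starting from $(u^{\text{in}}, b^{\text{in}})$ satisfying
\begin{equation*}
\textbf{P}\bigl(\lVert b(T)\rVert_{L^2} > K M\bigr) \geq \textbf{P}(\{\mathfrak{t}\geq T\}) > \kappa.
\end{equation*}
Markov's inequality applied to the Galerkin solution gives the opposing estimate $\textbf{P}(\lVert \tilde{b}(T)\rVert_{L^2} > K M) \leq C_0/K^2 < \kappa$. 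Consequently the marginal distributions of $b(T)$ under the two solutions disagree, whence $\mathcal{L}(u,b) \neq \mathcal{L}(\tilde{u}, \tilde{b})$, which is non-uniqueness in law on $[0,T]$. To obtain non-uniqueness on $[0,\infty)$, I would note that any solution defined globally in time restricts to a solution on $[0,T]$, so distinct $[0,T]$-marginals force distinct global laws; both candidates extend globally (the Galerkin solution is already defined on $[0,\infty)$, while $(u,b)$ can be concatenated after time $T$ with any martingale solution restarted from $(u(T), b(T))$).

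The main obstacle is the bookkeeping needed to place both candidates on the same measurable path space so that their laws are directly comparable, and to verify that the Galerkin construction indeed yields the displayed energy bound uniformly under the trace condition \eqref{estimate 18}. The regularity \eqref{estimate 20} guarantees $(u,b) \in C_t \dot{H}_x^{\gamma} \hookrightarrow C_t L_x^2$, which is the natural path space for the Galerkin martingale solution as well, so both laws live on the same Polish space of analytically weak solutions. Beyond this and the routine It\^o energy estimate, no new PDE analysis is required: all the difficult work sits inside Theorem \ref{Theorem 2.1}, and the reduction of Theorem \ref{Theorem 2.2} to it is essentially a soft probabilistic comparison.
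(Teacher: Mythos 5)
Your overall comparison strategy (convex-integration solution versus a Galerkin martingale solution satisfying the energy bound \eqref{estimate 88}, distinguished through the lower bound \eqref{estimate 21}) is exactly the paper's, but there is a genuine gap at the step you treat as soft bookkeeping. The pair $(u,b)$ produced by Theorem \ref{Theorem 2.1} solves \eqref{stochastic GMHD} only up to the stopping time $\mathfrak{t} = T_{L}$; on the event $\{\mathfrak{t} < T\}$, which has probability up to $1-\kappa > 0$, it is not known to solve the equation on all of $[0,T]$. Hence $\mathcal{L}(u,b)$, as you use it, is not yet the law of a solution on $[0,T]$ (let alone $[0,\infty)$), and comparing its time-$T$ marginal with that of the Galerkin solution does not by itself yield non-uniqueness in law for the equation on $[0,T]$. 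Your proposed fix --- concatenating ``after time $T$ with any martingale solution restarted from $(u(T),b(T))$'' --- is aimed at the wrong time point: the problem is not extending past $T$ but repairing the trajectory on $(\mathfrak{t}, T]$ when $\mathfrak{t} < T$, so the gluing must occur at the stopping time $\tau_{L}$, not at $T$.

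This is precisely the content of the machinery the paper invokes and which your proposal omits: one first shows that $P = \mathcal{L}(u,b)$ is a martingale solution on $[0,\tau_{L}]$ in the sense of Definition \ref{Definition 4.2} together with the identification $\tau_{L}(u,b) = T_{L}$ of \eqref{estimate 86} (Proposition \ref{Proposition 4.5}), then glues at $\tau_{L}$ via the measurable selection $\omega \mapsto Q_{\omega}$ of Lemma \ref{Lemma 4.2} and the concatenation Lemma \ref{Lemma 4.3}, for which the nontrivial point \eqref{estimate 87} (that $Q_{\omega}$ almost surely does not move the stopping time) must be verified (Proposition \ref{Proposition 4.6}). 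Only then does one obtain a global martingale solution $P \otimes_{\tau_{L}} R$ agreeing with $P$ on $[0,\tau_{L}]$, so that \eqref{estimate 85} and \eqref{estimate 21} give the moment (or, in your variant, probability) lower bound \eqref{estimate 89} for an honest solution law, to be contrasted with \eqref{estimate 88}. Your Markov-inequality comparison of probabilities instead of second moments is a harmless variation, and your restriction argument from $[0,T]$ to $[0,\infty)$ is fine; the missing extension-past-the-stopping-time argument is the real content separating Theorem \ref{Theorem 2.2} from Theorem \ref{Theorem 2.1}, and without it the proof is incomplete.
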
 

Our second set of results concerns the linear multiplicative noise case. Hofmanov$\acute{\mathrm{a}}$ et al. in \cite{HZZ19} provided a very nice approach in the case of the NS equations such that analogous computations, some of them being identical, to the additive case can imply the desired result in the linear multiplicative case. Surprisingly, new difficulties arise in the case of the MHD system. Let us sketch its ideas and continue furthermore in Remark \ref{Remark 5.1}. 
\begin{remark}\label{Remark 2.1}
In short, in the linear multiplicative case, there is a well-known transformation that can turn stochastic PDEs to random PDEs; i.e., 
\begin{equation}\label{estimate 40}
v \triangleq \Upsilon_{1}^{-1} u \text{ where } \Upsilon_{1} \triangleq e^{B_{1}} \hspace{1mm} \text{ and } \hspace{1mm}  \Xi \triangleq \Upsilon_{2}^{-1}b \text{ where } \Upsilon_{2} \triangleq e^{B_{2}} 
\end{equation}
that turns \eqref{stochastic GMHD} with $F_{k} (x) = x$ and $B_{k}$ being an $\mathbb{R}$-valued Wiener process, $k \in \{1,2\}$, to 
\begin{subequations} 
\begin{align}
& \partial_{t} v + \frac{1}{2} v + (-\Delta)^{m_{1}} v + \text{div} ( \Upsilon_{1} v \otimes v - \Upsilon_{1}^{-1} \Upsilon_{2}^{2} \Xi \otimes \Xi) + \Upsilon_{1}^{-1} \nabla \pi = 0,\label{estimate 22} \\
& \partial_{t} \Xi + \frac{1}{2} \Xi + (-\Delta)^{m_{2}} \Xi+ \text{div} (\Upsilon_{1} v \otimes \Xi - \Upsilon_{1}\Xi \otimes v) = 0. \label{estimate 23}
\end{align}
\end{subequations}
Now the corresponding nonlinear term for the NS equations is only $\text{div} (\Upsilon_{1} v \otimes v)$. One of the most technical terms in the Reynolds stress estimate within convex integration is an oscillation term and those corresponding to \eqref{estimate 22}-\eqref{estimate 23} are of the form 
\begin{subequations}
\begin{align}
& \text{div} ( \Upsilon_{1,l} w_{q+1}^{p} \otimes w_{q+1}^{p} - \Upsilon_{1,l}^{-1} \Upsilon_{2,l}^{2} d_{q+1}^{p} \otimes d_{q+1}^{p} + \mathring{R}_{l}^{v}) + \partial_{t} w_{q+1}^{t} , \label{estimate 24}\\
& \text{div} (\Upsilon_{1,l} w_{q+1}^{p} \otimes d_{q+1}^{p} - \Upsilon_{1,l} d_{q+1}^{p} \otimes w_{q+1}^{p} + \mathring{R}_{l}^{\Xi}) + \partial_{t} d_{q+1}^{t}, \label{estimate 25}
\end{align}
\end{subequations}
(see \eqref{estimate 396} and \eqref{estimate 397}) where we extended $\Upsilon_{k}$ from \eqref{estimate 40} to $t < 0$ by the value at $t = 0$ and mollified it to obtain 
\begin{equation}\label{estimate 292}
\Upsilon_{k,l} \triangleq \Upsilon_{k} \ast_{t} \vartheta_{l}, \hspace{2mm} k \in \{1,2\}
\end{equation}
with $\{\vartheta_{l} \}_{l > 0}$, specifically $\vartheta_{l} (\cdot) \triangleq \frac{1}{l} \vartheta (\frac{\cdot}{l})$, being a family of standard mollifiers on $\mathbb{R}$ with mass one and compact support in $\mathbb{R}_{+}$ (see \eqref{estimate 291}, \eqref{estimate 334}, and  \eqref{estimate 331} for definitions of others such as $w_{q+1}^{p}, d_{q+1}^{p}, \mathring{R}_{l}^{v}, \mathring{R}_{l}^{\Theta}, w_{q+1}^{t}$, and $d_{q+1}^{t}$). In the case of the NS equations, which corresponds only to $\text{div} (\Upsilon_{1,l} w_{q+1}^{p} \otimes w_{q+1}^{p} + \mathring{R}_{l}^{v}) + \partial_{t} w_{q+1}^{t}$, the oscillation term in the additive case was actually $\text{div} (w_{q+1}^{p} \otimes w_{q+1}^{p} + \mathring{R}_{l}^{v}) + \partial_{t} w_{q+1}^{t}$ (see \cite[Equ. (4.48)]{HZZ19}) where $w_{q+1}^{p} = \sum_{\xi \in \Lambda} a_{\xi} W_{\xi}$ and thus Hofmanov$\acute{\mathrm{a}}$ et al. strategically defined $w_{q+1}^{p}$ in the linear multiplicative case as $\sum_{\xi \in \Lambda} \bar{a}_{\xi} W_{\xi}$ where $\bar{a}_{\xi} \triangleq \Upsilon_{1,l}^{-\frac{1}{2}} a_{\xi}$ (see \cite[Equ. (6.20)]{HZZ19}). This effectively reduces the oscillation term in the linear multiplicative case to the oscillation term in the additive case, specifically $\Upsilon_{1,l} (\sum_{\xi \in \Lambda} \bar{a}_{\xi} W_{\xi}) \otimes (\sum_{\xi \in \Lambda}\bar{a}_{\xi} W_{\xi})$ to $(\sum_{\xi \in \Lambda} a_{\xi} W_{\xi}) \otimes (\sum_{\xi \in \Lambda} a_{\xi} W_{\xi})$, and makes their estimates in the additive case  become directly applicable in the linear multiplicative case. A glance at \eqref{estimate 24}-\eqref{estimate 25} shows that there is no way to reduce all four nonlinear terms therein to the corresponding oscillation terms in its additive case. Indeed, in order to cancel out $\Upsilon_{1,l}$ in $\Upsilon_{1,l} w_{q+1}^{p} \otimes w_{q+1}^{p}$ of \eqref{estimate 24}, we will have to ask $w_{q+1}^{p}$ to eliminate $\Upsilon_{1,l}^{-\frac{1}{2}}$. With that fixed, in order to cancel out $\Upsilon_{1,l}$ in $\Upsilon_{1,l} w_{q+1}^{p} \otimes d_{q+1}^{p}$ and $\Upsilon_{1,l} d_{q+1}^{p} \otimes w_{q+1}^{p}$ in \eqref{estimate 25}, we will have to ask $d_{q+1}^{p}$ to eliminate $\Upsilon_{1,l}^{-\frac{1}{2}}$ as well. However, with such a choice on $w_{q+1}^{p}$ and $d_{q+1}^{p}$, $\Upsilon_{1,l}^{-1} \Upsilon_{2,l}^{2} $ within $\Upsilon_{1,l}^{-1} \Upsilon_{2,l}^{2} d_{q+1}^{p} \otimes d_{q+1}^{p}$ of \eqref{estimate 24} does not cancel out; in fact, it becomes $\Upsilon_{1,l}^{-2} \Upsilon_{2,l}^{2}$. One easy way to get around this problem is to assume that $\Upsilon_{1} \equiv \Upsilon_{2}$ so that $\Upsilon_{1,l}^{-2} \Upsilon_{2,l}^{2}$ cancels out by itself; unfortunately, that will require compromising to $B_{1} \equiv B_{2}$. 
\end{remark}
We overcome this difficulty to achieve the desired result when $B_{1}$ is not identically equal to $B_{2}$; with details to be described in Remark \ref{Remark 5.1}, we now present our second set of results. 

\begin{theorem}\label{Theorem 2.3} 
Suppose that $F_{k}(x) = x$ and $B_{k}$ is an $\mathbb{R}$-valued Wiener process on $(\Omega, \mathcal{F}, \textbf{P})$ for both $k \in \{1,2\}$. Then, given $T > 0, K > 1$, and $\kappa \in (0,1)$, there exist $\gamma \in (0,1)$ and a $\textbf{P}$-a.s. strictly positive stopping time $\mathfrak{t}$ such that \eqref{estimate 19} holds and the following is additionally satisfied. There exists a pair of $\{\mathcal{F}_{t}\}_{t\geq 0}$-adapted processes $(u,b)$ that is a weak solution to \eqref{stochastic GMHD} starting from a deterministic initial condition $(u^{\text{in}}, b^{\text{in}})$, satisfies \eqref{estimate 20}, and on a set $\{\mathfrak{t} \geq T \}$, 
\begin{equation}\label{estimate 26}
\lVert b(T) \rVert_{L_{x}^{2}} > K e^{\frac{T}{2}} (\lVert u^{\text{in}} \rVert_{L_{x}^{2}} + \lVert b^{\text{in}} \rVert_{L_{x}^{2}}). 
\end{equation} 
\end{theorem}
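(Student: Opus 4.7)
The strategy is to apply convex integration to the transformed random PDE system \eqref{estimate 22}-\eqref{estimate 23} rather than to \eqref{stochastic GMHD} directly: since $\Upsilon_k=e^{B_k}$ are real-valued scalar time functions, the transformation \eqref{estimate 40} reduces each path of \eqref{stochastic GMHD} to a genuine random PDE with smooth coefficients. First I would fix a stopping time $\mathfrak{t}=\mathfrak{t}_L$ that uniformly bounds $\|\Upsilon_k\|_{C_t}$, $\|\Upsilon_k^{-1}\|_{C_t}$ and hence the ratios $\Upsilon_{1,l}\Upsilon_{2,l}^{-1}$ on $[0,\mathfrak{t}]$, and choose $L$ so large that $\mathbf{P}(\mathfrak{t}\ge T)>\kappa$, in analogy with the localization used in Theorem \ref{Theorem 2.1}.

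Next I would construct inductively a sequence $(v_q,\Xi_q,\mathring{R}_q^v,\mathring{R}_q^\Xi)$ solving the MHD--Reynolds version of \eqref{estimate 22}-\eqref{estimate 23}, using intermittent Alfv\'en-type building blocks $W_\xi,D_\xi$ from \cite{BBV21} in the spirit of the additive-noise scheme developed in the Appendix. The principal correctors $w_{q+1}^p$ and $d_{q+1}^p$ are obtained by multiplying the BBV amplitudes $a_\xi,b_\xi$ by appropriate time-dependent scalar functions of $\Upsilon_{1,l},\Upsilon_{2,l}$; with a careful choice of these scalars, three of the four oscillation products in \eqref{estimate 24}-\eqref{estimate 25} reduce to scalar-free expressions directly matched against the mollified Reynolds stresses $\mathring{R}_l^v,\mathring{R}_l^\Xi$. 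The main obstacle, highlighted in Remark \ref{Remark 2.1}, is that when $\Upsilon_1\not\equiv\Upsilon_2$ no single pair of scalar prefactors can simultaneously eliminate all four $\Upsilon$-coefficients; the fourth product necessarily retains a bounded residual of the form $\Upsilon_{1,l}\Upsilon_{2,l}^{-1}$. I would absorb this residual at the level of the low-frequency geometric matching, either by folding it into the amplitude of $d_{q+1}^p$ or by paying it at the level of the Reynolds stress, and then track its influence through all iterative estimates: the inductive $L^2$-bound analogous to \eqref{estimate 4}, the temporal correctors $w_{q+1}^t,d_{q+1}^t$, and the Reynolds-stress bounds. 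The stopping time $\mathfrak{t}$ keeps all such norms finite uniformly in $q$; the causal mollification \eqref{estimate 292} ensures that each iterate is $\{\mathcal{F}_t\}$-adapted and that $\Upsilon_{k,l}\to\Upsilon_k$ at the rate dictated by the $C_t^{\frac{1}{2}-\delta}$ regularity of $B_k$.

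Letting $q\to\infty$ yields limits $v,\Xi\in C_\mathfrak{t}\dot{H}_x^\gamma$ for some small $\gamma\in(0,1)$, and setting $(u,b)=(\Upsilon_1 v,\Upsilon_2\Xi)$ produces a weak solution of \eqref{stochastic GMHD} satisfying \eqref{estimate 20}. Adaptedness to $\{\mathcal{F}_t\}$ passes to the limit because each iterate is adapted and the mollifier $\vartheta_l$ is supported in $\mathbb{R}_+$.

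Finally, the quantitative lower bound \eqref{estimate 26} would be engineered at the base step by choosing $(v_0,\Xi_0)$ so that $\|\Xi_0(T)\|_{L_x^2}$ is sufficiently large, and then verifying that $\sum_{q\ge 0}\|\Xi_{q+1}-\Xi_q\|_{L_x^2}$ is small enough not to destroy this lower bound. The factor $e^{T/2}$ on the right-hand side of \eqref{estimate 26} arises from combining the transformation $b(T)=\Upsilon_2(T)\Xi(T)$ with the explicit damping $\tfrac12\Xi$ in \eqref{estimate 23} (the Stratonovich--It\^o correction); on $\{\mathfrak{t}\ge T\}$, the localized lower bound on $\Upsilon_2(T)$ together with the inductive control of $\Xi_q$ then delivers \eqref{estimate 26}.
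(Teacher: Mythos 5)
Your proposal follows essentially the same route as the paper's proof: transform to the random system \eqref{estimate 22}--\eqref{estimate 23}, localize $\Upsilon_{k}^{\pm 1}$ via the stopping time \eqref{estimate 258}, rerun the convex integration of Section \ref{Section 4} with amplitudes rescaled by $\Upsilon_{1,l}^{-\frac{1}{2}}$ so that three of the four products in \eqref{estimate 24}--\eqref{estimate 25} lose their $\Upsilon$-factors, absorb the unavoidable leftover $\Upsilon_{1,l}^{-2}\Upsilon_{2,l}^{2}$ contribution into the low-frequency matrix $\mathring{G}^{\Xi}$ and the oscillation Reynolds stress, and obtain \eqref{estimate 26} from the exponentially growing base flow $\Xi_{0}$ plus summable increments, with $L$ taken large enough to dominate $Ke^{\frac{T}{2}}$. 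The only substantive point you leave implicit is the quantitative bookkeeping the paper isolates in Remark \ref{Remark 5.1}: the inductive constant must be recalibrated to $m_{L}=\sqrt{3}L^{\frac{5}{4}}e^{\frac{5}{2}L^{\frac{1}{4}}}$ so that $\lVert \Upsilon_{k}^{\pm 1}\rVert_{C_{t}}\leq m_{L}^{\frac{2}{5}}$, which is precisely what keeps the retained $\Upsilon$-factors from exceeding the single power of $m_{L}$ permitted in the inductive bounds \eqref{estimate 268}--\eqref{estimate 270}.
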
 

\begin{theorem}\label{Theorem 2.4} 
Suppose that $F_{k}(x) = x$ and $B_{k}$ is an $\mathbb{R}$-valued Wiener process on $(\Omega, \mathcal{F}, \textbf{P})$ for both $k \in \{1,2\}$. Then non-uniqueness in law holds for \eqref{stochastic GMHD} on $[0, \infty)$. Moreover, for all $T> 0$ fixed, non-uniqueness in law holds for \eqref{stochastic GMHD} on $[0,T]$. 
\end{theorem}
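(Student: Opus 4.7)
The plan is to deduce Theorem~\ref{Theorem 2.4} from Theorem~\ref{Theorem 2.3} by exactly the quantitative-comparison strategy that passes from Theorem~\ref{Theorem 2.1} to Theorem~\ref{Theorem 2.2} in the additive case. All of the convex-integration machinery has been absorbed into Theorem~\ref{Theorem 2.3}; what remains is to exhibit a second, ``classical'' probabilistically weak solution of \eqref{stochastic GMHD} whose $L_{x}^{2}$-growth is strictly smaller than that granted by \eqref{estimate 26}, and then to convert the resulting mismatch of laws into both the $[0,T]$ and the $[0,\infty)$ statements.

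First I would fix $T > 0$ together with deterministic initial data $(u^{\text{in}}, b^{\text{in}})$, and invoke Theorem~\ref{Theorem 2.3} with, say, $K = 2$ and $\kappa = \tfrac{1}{2}$, producing an $\{\mathcal{F}_{t}\}_{t \geq 0}$-adapted weak solution $(u_{1}, b_{1})$ of \eqref{stochastic GMHD} on $(\Omega, \mathcal{F}, \textbf{P})$ together with a stopping time $\mathfrak{t}$ for which $\textbf{P}(\{\mathfrak{t} \geq T\}) > \tfrac{1}{2}$ and $\lVert b_{1}(T) \rVert_{L_{x}^{2}} > 2 e^{T/2} (\lVert u^{\text{in}} \rVert_{L_{x}^{2}} + \lVert b^{\text{in}} \rVert_{L_{x}^{2}})$ on $\{\mathfrak{t} \geq T\}$. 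In parallel, via Galerkin approximation combined with Jakubowski--Skorokhod tightness (a linear-multiplicative analogue of the additive construction in \cite{Y19, FR08}), I would build a probabilistically weak solution $(\tilde{u}, \tilde{b})$ of \eqref{stochastic GMHD} with the same deterministic initial datum, possibly on an enlarged filtered space, satisfying the pathwise energy inequality inherited from the Galerkin level. Applying It\^o's formula to $\lVert \tilde{u} \rVert_{L_{x}^{2}}^{2} + \lVert \tilde{b} \rVert_{L_{x}^{2}}^{2}$, the cubic interactions all vanish by $\nabla \cdot \tilde{u} = \nabla \cdot \tilde{b} = 0$, while the linear multiplicative drivings $\tilde{u} \, dB_{1}$ and $\tilde{b} \, dB_{2}$ contribute a drift $(\lVert \tilde{u} \rVert_{L_{x}^{2}}^{2} + \lVert \tilde{b} \rVert_{L_{x}^{2}}^{2}) \, dt$ from the quadratic variations; Gronwall then gives
\begin{equation*}
\textbf{E} \bigl[ \lVert \tilde{u}(T) \rVert_{L_{x}^{2}}^{2} + \lVert \tilde{b}(T) \rVert_{L_{x}^{2}}^{2} \bigr] \leq e^{T} \bigl( \lVert u^{\text{in}} \rVert_{L_{x}^{2}}^{2} + \lVert b^{\text{in}} \rVert_{L_{x}^{2}}^{2} \bigr),
\end{equation*}
and Chebyshev forces $\textbf{P} \bigl( \lVert \tilde{b}(T) \rVert_{L_{x}^{2}} > 2 e^{T/2} (\lVert u^{\text{in}} \rVert_{L_{x}^{2}} + \lVert b^{\text{in}} \rVert_{L_{x}^{2}}) \bigr) \leq \tfrac{1}{4} < \tfrac{1}{2}$. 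The time-$T$ marginal therefore separates $\mathcal{L}(u_{1}, b_{1})$ from $\mathcal{L}(\tilde{u}, \tilde{b})$; since both processes are defined on $[0,\infty)$, this yields $\mathcal{L}(u_{1}, b_{1}) \neq \mathcal{L}(\tilde{u}, \tilde{b})$ on the path spaces over both $[0,\infty)$ and $[0,T]$.

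The only genuine obstacle --- modest in comparison with the convex-integration effort folded into Theorem~\ref{Theorem 2.3} --- is verifying that the Galerkin construction for the 3D stochastic MHD system with fractional dissipation $m_{1}, m_{2} \in (0,1)$ and linear multiplicative $\mathbb{R}$-valued noise does produce a martingale solution whose pathwise energy inequality absorbs the It\^o correction exactly as above; this should follow from cosmetic modifications of the additive-noise theory underlying Theorem~\ref{Theorem 2.2}. Should the convex-integration solution of Theorem~\ref{Theorem 2.3} turn out to be defined only on $[0,T]$ rather than on $[0,\infty)$, the $[0,\infty)$ statement still follows by contraposition, since every weak solution on $[0,T]$ admits at least one weak extension to $[0,\infty)$ via the strong Markov property together with the existence of probabilistically weak solutions from arbitrary $L_{x}^{2}$ initial data.
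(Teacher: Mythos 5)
Your quantitative-comparison half is essentially the paper's argument: the paper also compares the convex-integration solution against a classical Galerkin solution satisfying $\mathbb{E}^{\mathcal{Q}}[\lVert \xi(T)\rVert_{L_{x}^{2}}^{2}]\leq e^{T}[\lVert \xi_{1}^{\text{in}}\rVert_{L_{x}^{2}}^{2}+\lVert \xi_{2}^{\text{in}}\rVert_{L_{x}^{2}}^{2}]$ (it uses second moments with $\kappa K^{2}\geq 1$ rather than your Chebyshev step, which is an immaterial difference). The genuine gap is earlier: you treat the pair $(u_{1},b_{1})$ from Theorem \ref{Theorem 2.3} as a solution on all of $[0,\infty)$ (or at least on $[0,T]$), but the convex-integration construction only produces a solution on $[0,\mathfrak{t}]$ with $\mathfrak{t}=T_{L}$ a stopping time satisfying merely $\textbf{P}(\{\mathfrak{t}\geq T\})>\kappa$; on $\{\mathfrak{t}<T\}$ the process is not a solution on $[0,T]$ at all. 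Hence even the $[0,T]$ assertion, and a fortiori the $[0,\infty)$ assertion, requires extending the law past $\mathfrak{t}$ while preserving it on $[0,\mathfrak{t}]$ so that the time-$T$ lower bound on $\{\mathfrak{t}\geq T\}$ survives. This is precisely the content of the probabilistically weak solution framework on the enlarged path space $\bar{\Omega}$ (Definitions \ref{Definition 5.1}--\ref{Definition 5.2}), the measurable-selection and gluing results (Lemmas \ref{Lemma 5.2}--\ref{Lemma 5.3}), and Propositions \ref{Proposition 5.4}--\ref{Proposition 5.5}, which show that $\mathcal{L}(u,b,B_{1},B_{2})$ is a probabilistically weak solution on $[0,\tau_{L}]$ and that $P\otimes_{\tau_{L}}R$ is one on $[0,\infty)$ agreeing with $P$ up to $\tau_{L}$; one must also verify the analogue of \eqref{estimate 87}, i.e., that $\tau_{L}$ is $Q_{\omega}$-a.s. constant on the glued piece. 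Your fallback sentence attributes this extension to ``the strong Markov property,'' which is not available here --- uniqueness fails, so there is no Markov semigroup to invoke --- and it glosses over exactly the step that carries the weight of the proof; the correct mechanism is a measurable selection of weak solutions started at $(\tau_{L}(\omega),\xi(\tau_{L}(\omega),\omega),\theta(\tau_{L}(\omega),\omega))$ combined with the gluing lemma.

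A secondary point: in the linear multiplicative case the laws being manipulated live on the enlarged space including the noise, so the moment comparison a priori disproves joint uniqueness in law; the paper then passes to non-uniqueness in law via Cherny's theorem in the form of \cite[Lem. C.1]{HZZ19}. Your direct comparison of the solution marginals at time $T$ is acceptable in spirit (the moments involve only the $\xi$-component), but it should be routed through this framework, since the objects you are comparing must first be exhibited as bona fide solution quadruples on $[0,\infty)$ --- which again returns you to the extension step above.
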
 

\begin{remark}
To the best of the author's knowledge, this is the first result of non-uniqueness in law, first non-uniqueness even path-wise actually, and the first construction of a probabilistically strong solution to the stochastic MHD system. We point out that because the MHD system with zero magnetic field reduces to the NS equations, non-uniqueness results of the NS equations already imply that of the MHD system if one takes zero magnetic field; in contrast, our non-uniqueness results hold with a non-zero magnetic field (see \eqref{estimate 21} and \eqref{estimate 26}). Very recently, \cite{HZZ21} gave different results concerning non-uniqueness in law of the 3D stochastic classical NS equations, partially inspired by \cite{BMS21}. We can pursue their type of results for the MHD system as well; such results, even if attained, will not imply Theorems \ref{Theorem 2.1}-\ref{Theorem 2.4}, as pointed out in \cite[p. 3]{HZZ19}. Finally, it is a natural question to ask if Theorems \ref{Theorem 2.1}-\ref{Theorem 2.4} can be improved to $m_{1}, m_{2} < \frac{5}{4}$ as in \cite{Y20a} on the stochastic generalized NS equations; at the time of writing this manuscript, the author was not able to achieve this task. 
\end{remark} 

In what follows, we provide a minimum amount of notations and setups of convex integration in Section \ref{Preliminaries}, prove Theorems \ref{Theorem 2.1}-\ref{Theorem 2.4} in Sections \ref{Section 4}-\ref{Section 5}, and provide further preliminaries in the Appendix for convenience. Our proof is inspired by many previous works,  especially \cite{BBV21, CL21, HZZ19, Y21a}, and there are some similarities to the work on 2D deterministic generalized NS equations by Luo and Qu \cite{LQ20} and its stochastic counterpart in \cite{Y20c}. 

\section{Preliminaries}\label{Preliminaries}
\subsection{Notations and assumptions}\label{Subsection 3.1}
We write $A \lesssim_{a,b} B$ and $A \approx_{a,b} B$ to imply that $A \leq C(a,b) B$ and $A = C(a,b)B$ for some constant $C = C(a,b) \geq 0$, respectively. We also write $A \overset{(\cdot)}{\lesssim}B$ to indicate that this inequality is due to an equation $(\cdot)$. We define $\mathbb{P} \triangleq \text{Id} - \nabla \Delta^{-1} \text{div}$ as the Leray projection onto the space of divergence-free vector fields, and $\mathbb{P}_{\leq r}$ to be the Fourier operator with a Fourier symbol of $1_{\lvert k \rvert \leq r} (k)$ and $\mathbb{P}_{> r} \triangleq \text{Id} - \mathbb{P}_{\leq r}$. Let us denote a tensor product by $\otimes$ while trace-free tensor products by $\mathring{\otimes}$. We write for $p \in [1, \infty]$, 
\begin{equation}\label{estimate 42}
\lVert g \rVert_{L^{p}} \triangleq \lVert g \rVert_{L_{t}^{\infty} L_{x}^{p}} \text{ and } \lVert g \rVert_{C_{t,x}^{N}} \triangleq \sum_{0 \leq k + \lvert \alpha \rvert \leq N} \lVert \partial_{t}^{k} D^{\alpha} g \rVert_{L^{\infty}},
\end{equation} 
where $k \in \mathbb{N}_{0}$ and $\alpha$ is a multi-index. We define $L_{\sigma}^{2} \triangleq \{f  \in L^{2}(\mathbb{T}^{3}): \nabla\cdot f = 0, \int_{\mathbb{T}^{3}} f dx = 0\}$. For any Polish space $H$, we write $\mathcal{B}(H)$ to denote the $\sigma$-algebra of Borel sets in $H$. We denote a mathematical expectation with respect to (w.r.t.) any probability measure $P$ by $\mathbb{E}^{P}$. We denote by $\langle \cdot, \cdot \rangle$ the $L^{2}(\mathbb{T}^{3})$-inner product while $\langle \langle A, B \rangle \rangle$ a quadratic variation of $A$ and $B$, and $\langle \langle A \rangle \rangle \triangleq \langle \langle A, A \rangle \rangle$. For $t \geq 0$, we let 
\begin{equation}\label{estimate 43}
\Omega_{t} \triangleq \{ (f,g): f, g \in C([t, \infty); H^{-3} (\mathbb{T}^{3})) \cap L_{\text{loc}}^{\infty} ([t, \infty); L_{\sigma}^{2}) \}. 
\end{equation} 
We define $\xi \triangleq (\xi_{1}, \xi_{2}): \Omega_{0} \mapsto H^{-3} (\mathbb{T}^{3}) \times H^{-3} (\mathbb{T}^{3})$ to be the canonical process by $\xi_{t}(\omega) \triangleq \omega(t)$. We denote by $\mathcal{P}(\Omega_{0})$ the set of all probability measures on $(\Omega_{0}, \mathcal{B})$ where $\mathcal{B}$ is the Borel $\sigma$-algebra of $\Omega_{0}$ from the topology of locally uniform convergence on $\Omega_{0}$. We equip $\Omega_{t}$ with Borel $\sigma$-algebra $\mathcal{B}^{t} \triangleq \sigma \{ \xi(s): s \geq t \}$, and additionally define for $t \geq 0$, $\mathcal{B}_{t}^{0} \triangleq \sigma \{\xi(s): s \leq t\}$ and $\mathcal{B}_{t} \triangleq \cap_{s > t} \mathcal{B}_{s}^{0}$. For any Hilbert spaces $U_{1}$ and $U_{2}$, we denote by $L_{2} (U_{k}, L_{\sigma}^{2}), k \in \{1,2 \}$, the spaces of all Hilbert-Schmidt operators from $U_{k}$ to $L_{\sigma}^{2}$ with norms $\lVert \cdot \rVert_{L_{2}(U_{k}, L_{\sigma}^{2})}$. We impose on $G_{k}: L_{\sigma}^{2} \mapsto L_{2} (U_{k}, L_{\sigma}^{2})$ to be $\mathcal{B}(L_{\sigma}^{2})/ \mathcal{B}(L_{2}(U_{k}, L_{\sigma}^{2}))$-measurable and satisfy for all $\phi, \psi_{j}, \psi \in C^{\infty} (\mathbb{T}^{3}) \cap L_{\sigma}^{2}$ such that $\lim_{j\to\infty} \lVert \psi_{j} - \psi \rVert_{L_{x}^{2}} = 0$
\begin{equation}
\lVert G_{k} (\phi) \rVert_{L_{2} (U_{k}, L_{\sigma}^{2})} \leq C (1+ \lVert \phi \rVert_{L_{x}^{2}}), \hspace{3mm} \lim_{j\to\infty} \lVert G_{k} (\psi_{j})^{\ast} \phi - G_{k} (\psi)^{\ast} \phi \rVert_{U_{k}} = 0. 
\end{equation} 
Finally, we assume the existence of Hilbert spaces $\tilde{U}_{1}, \tilde{U}_{2}$ such that the embeddings of $U_{k} \hookrightarrow \tilde{U}_{k}, k \in \{1,2\}$, are Hilbert-Schmidt. Let us define 
\begin{equation}\label{estimate 398}  
\bar{\Omega} \triangleq \prod_{k=1}^{2} C( [0,\infty); H^{-3} (\mathbb{T}^{3}) \times \tilde{U}_{k}) \cap L_{\text{loc}}^{\infty} ([0,\infty); L_{\sigma}^{2} \times \tilde{U}_{k})
\end{equation} 
and $\mathcal{P}(\bar{\Omega})$ to be the set of all probability measures on $(\bar{\Omega}, \bar{\mathcal{B}})$ where $\bar{\mathcal{B}}$ is the Borel $\sigma$-algebra of $\bar{\Omega}$. Analogously, we define  the canonical process on $\bar{\Omega}$ to be $(\xi, \zeta): \bar{\Omega} \mapsto \prod_{k=1}^{2} H^{-3} (\mathbb{T}^{3}) \times \tilde{U}_{k}$ by $(\xi, \zeta)_{t} (\omega) \triangleq \omega(t)$. Finally, we also define for $t \geq 0$, 
\begin{equation}
\bar{\mathcal{B}}^{t} \triangleq \sigma \{ (\xi, \zeta) (s): s \geq t \}, \hspace{1mm} \bar{\mathcal{B}}_{t}^{0} \triangleq \sigma\{ (\xi, \zeta)(s): s \leq t \}, \hspace{1mm}\text{ and } \hspace{1mm} \bar{\mathcal{B}}_{t} \triangleq \cap_{s > t} \bar{\mathcal{B}}_{s}^{0}.
\end{equation} 
 
\subsection{Convex integration}\label{Subsection 3.2}
Convex integration scheme on MHD system requires the following two geometric lemmas from \cite[Pro. 2.2-2.3]{CL21}, originally from \cite[Lem. 4.1-4.2]{BBV21}.  
\begin{lemma}\label{Lemma 3.1}
\rm{(\cite[Pro. 2.3]{CL21}, cf. \cite[Lem. 4.1]{BBV21})} There exists a set $\Lambda_{\Xi} \subset \mathbb{S}^{2} \cap \mathbb{Q}^{3}$ consisting of vectors $\xi$ with associated orthonormal basis $(\xi, \xi_{1}, \xi_{2}), \epsilon_{\Xi} > 0$, and smooth positive functions $\gamma_{\xi}: B_{\epsilon_{\Xi}} (0) \mapsto \mathbb{R}$, where $B_{\epsilon_{\Xi}}(0)$ is the ball of radius $\epsilon_{\Xi}$ centered at 0 in the space of $3\times 3$ skew-symmetric matrices, such that for $A \in B_{\epsilon_{\Xi}} (0)$, the following identity holds:
\begin{equation}\label{estimate 44}
A = \sum_{\xi \in \Lambda_{\Xi}} (\gamma_{\xi} (A))^{2} (\xi \otimes \xi_{2} - \xi_{2} \otimes \xi).
\end{equation}
\end{lemma}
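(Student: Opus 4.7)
The plan is to reduce the decomposition to a linear-algebraic fact in $\mathbb{R}^{3}$ via Hodge duality and then apply the implicit function theorem around a base decomposition of $0$. First I would identify the $3$-dimensional space of skew-symmetric $3\times 3$ matrices with $\mathbb{R}^{3}$ by the map $v\mapsto [v]_{\times}$, whose inverse sends $M$ to $(M_{32},M_{13},M_{21})^{T}$. A direct index computation using $\epsilon_{ijk}\epsilon_{klm}=\delta_{il}\delta_{jm}-\delta_{im}\delta_{jl}$ gives $(\xi\otimes\xi_{2}-\xi_{2}\otimes\xi)_{ij}=\epsilon_{ijk}(\xi\times\xi_{2})_{k}$, so that under this isomorphism the generator $\xi\otimes\xi_{2}-\xi_{2}\otimes\xi$ corresponds to $\xi\times\xi_{2}=-\xi_{1}$ whenever $(\xi,\xi_{1},\xi_{2})$ is a positively oriented orthonormal triple. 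Thus \eqref{estimate 44} is equivalent to writing every $v\in\mathbb{R}^{3}$ of small norm as a positive combination $v=-\sum_{\xi\in\Lambda_{\Xi}}(\gamma_{\xi}(v))^{2}\xi_{1}$.

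Next I would choose $\Lambda_{\Xi}\subset \mathbb{S}^{2}\cap \mathbb{Q}^{3}$ as a symmetric finite set that is large enough for two properties to hold: (i) the collection $\{-\xi_{1}\}_{\xi\in\Lambda_{\Xi}}$ contains three linearly independent vectors, and (ii) $0$ lies in the interior of the convex cone they generate. Density of rational points on $\mathbb{S}^{2}$ (via stereographic projection from a rational point) plus density of rational orthogonal matrices make it easy to produce $\Lambda_{\Xi}$ in rational form together with rational associated frames $(\xi,\xi_{1},\xi_{2})$; in particular one may take $\Lambda_{\Xi}$ symmetric under the involution obtained by swapping $\xi\leftrightarrow -\xi$ and $\xi_{2}\leftrightarrow -\xi_{2}$, which keeps $\xi\otimes\xi_{2}-\xi_{2}\otimes\xi$ unchanged but produces antipodal $\xi_{1}$'s, so that constant coefficients yield $\sum_{\xi}\bar{\gamma}^{2}(\xi\otimes\xi_{2}-\xi_{2}\otimes\xi)=0$. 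This furnishes a reference strictly positive vector $\bar{\gamma}=(\bar{\gamma}_{\xi})_{\xi\in\Lambda_{\Xi}}$ decomposing $A=0$.

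Now I would apply the implicit function theorem to the smooth map
\begin{equation*}
\Phi:\mathbb{R}^{|\Lambda_{\Xi}|}_{>0}\to\mathrm{Skew}_{3},\qquad \Phi(\gamma)\triangleq\sum_{\xi\in\Lambda_{\Xi}}\gamma_{\xi}^{2}(\xi\otimes\xi_{2}-\xi_{2}\otimes\xi).
\end{equation*}
Its differential at $\bar{\gamma}$ sends $h$ to $\sum_{\xi}2\bar{\gamma}_{\xi}h_{\xi}(\xi\otimes\xi_{2}-\xi_{2}\otimes\xi)$, and this linear map is surjective onto $\mathrm{Skew}_{3}$ by property (i) above. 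Hence $\Phi$ admits a smooth right inverse on a neighbourhood of $0\in\mathrm{Skew}_{3}$, and shrinking that neighbourhood to a ball $B_{\epsilon_{\Xi}}(0)$ on which the produced coefficients remain strictly positive (possible by continuity, since $\bar{\gamma}_{\xi}>0$) yields the desired smooth positive functions $\gamma_{\xi}:B_{\epsilon_{\Xi}}(0)\to\mathbb{R}$ satisfying \eqref{estimate 44}.

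\textbf{Main obstacle.} The genuine difficulty is not the implicit-function step but the \emph{simultaneous} requirements of rationality of $\xi$ (needed later so that $\xi\cdot x$ produces integer-frequency oscillations on $\mathbb{T}^{3}$) and positivity of every $\gamma_{\xi}$ throughout the ball, together with a careful symmetric choice of $\Lambda_{\Xi}$ that guarantees $0$ lies in the interior of the positive cone and not merely on its boundary. Once the symmetric rational configuration is fixed and $\bar{\gamma}_{\xi}>0$ is taken equal for antipodal pairs, both constraints can be met; the rest reduces to the standard Nash-type argument already codified in \cite[Lem.~4.1]{BBV21} and \cite[Prop.~2.3]{CL21}, on which I would rely for the explicit construction of $\Lambda_{\Xi}$.
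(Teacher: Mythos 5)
The paper itself does not prove this lemma; it is quoted verbatim from \cite[Pro. 2.3]{CL21} (cf. \cite[Lem. 4.1]{BBV21}), and your overall strategy is essentially the standard proof given there: identify $3\times 3$ skew-symmetric matrices with $\mathbb{R}^{3}$, observe $(\xi\otimes\xi_{2}-\xi_{2}\otimes\xi)_{ij}=\epsilon_{ijk}(\xi\times\xi_{2})_{k}$ with $\xi\times\xi_{2}=-\xi_{1}$, reduce to decomposing a small vector as a positive combination of the $\xi_{1}$'s, and obtain smooth positive coefficients near a strictly positive base decomposition of $0$ (by an implicit function theorem argument in your version, by explicit affine formulas in the cited proofs). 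Your remarks on rationality of the full frame $(\xi,\xi_{1},\xi_{2})$ are also to the point, since Remark \ref{Remark 3.1} later needs $N_{\Lambda}\xi, N_{\Lambda}\xi_{1}, N_{\Lambda}\xi_{2}\in\mathbb{Z}^{3}$.

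There is, however, one concrete step that fails as written: your choice of involution. If you send $(\xi,\xi_{2})\mapsto(-\xi,-\xi_{2})$, then the generator $\xi\otimes\xi_{2}-\xi_{2}\otimes\xi$ is \emph{unchanged}, hence so is the dual vector $\xi\times\xi_{2}=-\xi_{1}$ (indeed the positively oriented completion of $(-\xi,\cdot,-\xi_{2})$ is $\xi_{1}$ itself, not $-\xi_{1}$). This contradicts your claim that the involution ``produces antipodal $\xi_{1}$'s,'' and with constant coefficients the pair contributes $2\bar{\gamma}^{2}(\xi\otimes\xi_{2}-\xi_{2}\otimes\xi)\neq 0$, so you do not obtain a strictly positive $\bar{\gamma}$ with $\Phi(\bar{\gamma})=0$, and the implicit-function step has no admissible base point to expand around. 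The fix is to negate exactly one of the two vectors, e.g. pair $(\xi,\xi_{1},\xi_{2})$ with $(-\xi,-\xi_{1},\xi_{2})$: this flips the generator (equivalently gives antipodal $\xi_{1}$'s), so equal coefficients on each such pair do give $\Phi(\bar{\gamma})=0$, and then your surjectivity-plus-continuity argument (requiring the $\xi_{1}$'s to contain three linearly independent directions) goes through and yields smooth, strictly positive $\gamma_{\xi}$ on a small ball $B_{\epsilon_{\Xi}}(0)$.
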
 
 
\begin{lemma}\label{Lemma 3.2}
\rm{(\cite[Pro. 2.2]{CL21}, cf. \cite[Lem. 4.2]{BBV21})} There exists a set $\Lambda_{v} \subset \mathbb{S}^{2} \cap \mathbb{Q}^{3}$ consisting of vectors $\xi$ with associated orthonormal basis $(\xi, \xi_{1}, \xi_{2}), \epsilon_{v} > 0$, and smooth positive functions $\gamma_{\xi}: B_{\epsilon_{v}} (\text{Id}) \mapsto \mathbb{R}$, where $B_{\epsilon_{v}}(\text{Id})$ is the ball of radius $\epsilon_{v}$ centered at the identity in the space of $3\times 3$ symmetric matrices, such that for $A \in B_{\epsilon_{v}}(\text{Id})$, the following identity holds:
\begin{equation}\label{estimate 45}
A = \sum_{\xi \in \Lambda_{v}} (\gamma_{\xi} (A))^{2} (\xi \otimes \xi). 
\end{equation} 
\end{lemma}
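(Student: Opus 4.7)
The plan is to prove this by linear algebra combined with the density of rational points on $\mathbb{S}^2$. The real vector space $\text{Sym}_3$ of symmetric $3 \times 3$ matrices is six-dimensional, and the rank-one matrices $\{\xi \otimes \xi : \xi \in \mathbb{S}^2\}$ span it: taking $\xi = e_i$ produces the diagonal generators $e_i \otimes e_i$, while $(e_i + e_j)/\sqrt{2} \otimes (e_i + e_j)/\sqrt{2}$ combined with these diagonals recovers the off-diagonal symmetric generators $e_i \otimes e_j + e_j \otimes e_i$. Since $\mathbb{S}^2 \cap \mathbb{Q}^3$ is dense in $\mathbb{S}^2$ (for instance, via the rational parametrization coming from stereographic projection of $\mathbb{Q}^2$), I can select a finite subset $\Lambda_v \subset \mathbb{S}^2 \cap \mathbb{Q}^3$ such that $\{\xi \otimes \xi : \xi \in \Lambda_v\}$ still spans $\text{Sym}_3$.

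Next, I would arrange that $\text{Id}$ lies in the interior of the open positive cone $\mathcal{C} \triangleq \{\sum_{\xi \in \Lambda_v} c_\xi \xi \otimes \xi : c_\xi > 0\}$. The cleanest route is to enlarge $\Lambda_v$ so as to be invariant under the signed-permutation group $G \triangleq S_3 \ltimes \{\pm 1\}^3$, starting from the orbit of a Pythagorean unit vector such as $(3/5,\, 4/5,\, 0)$. Because $G$ acts irreducibly on $\mathbb{R}^3$, Schur's lemma forces the $G$-invariant matrix $\sum_{\xi \in \Lambda_v} \xi \otimes \xi$ to be a positive scalar multiple of $\text{Id}$; rescaling then yields $\text{Id} = \sum_{\xi \in \Lambda_v} c_\xi^0 \, \xi \otimes \xi$ with every $c_\xi^0 > 0$.

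The linear map $T: \mathbb{R}^{|\Lambda_v|} \to \text{Sym}_3$, $T(c) \triangleq \sum_\xi c_\xi \, \xi \otimes \xi$, is then surjective by construction; I pick a linear right inverse $S$ (e.g., the Moore--Penrose pseudoinverse) and define $c_\xi(A) \triangleq c_\xi^0 + (S(A - \text{Id}))_\xi$, so that $T(c(A)) = A$ and $A \mapsto c(A)$ is affine, hence smooth, in $A$. Choosing $\epsilon_v > 0$ small enough that $c_\xi(A) > 0$ for every $\xi \in \Lambda_v$ whenever $A \in B_{\epsilon_v}(\text{Id})$ (possible by continuity since $c_\xi^0 > 0$), the functions $\gamma_\xi(A) \triangleq \sqrt{c_\xi(A)}$ are smooth and strictly positive on $B_{\epsilon_v}(\text{Id})$ and satisfy \eqref{estimate 45}. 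The associated orthonormal basis $(\xi, \xi_1, \xi_2)$ is chosen by Gram--Schmidt for each $\xi \in \Lambda_v$.

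The main technical subtlety I would expect lies in the second step: simultaneously enforcing rationality of every $\xi \in \Lambda_v$, spanning of $\text{Sym}_3$ by the tensor squares, and strict positivity of the coefficients in the decomposition of $\text{Id}$. The rationality constraint rules out the simplest highly symmetric configurations (such as vertices of a regular tetrahedron on $\mathbb{S}^2$, which are not rational), so one must verify combinatorially that the signed-permutation orbit of some Pythagorean unit vector already spans $\text{Sym}_3$, possibly supplementing with a second orbit if the first happens to produce only a proper subspace. Neither obstacle is deep, but both need explicit checking; everything else is formal linear algebra and the openness of the positive orthant.
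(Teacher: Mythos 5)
Your proposal is correct, and it supplies an actual self-contained argument where the paper gives none: Lemma \ref{Lemma 3.2} is simply quoted from \cite[Pro. 2.2]{CL21} (cf. \cite[Lem. 4.2]{BBV21}), so there is no in-paper proof to compare against. Your route is the standard one in spirit but streamlined: spanning of the symmetric $3\times 3$ matrices by rational rank-one tensors, positivity of the coefficients of $\text{Id}$ via a signed-permutation-invariant set and Schur's lemma, and then an affine right inverse $c(A) = c^{0} + S(A-\text{Id})$ followed by a square root, which avoids even the implicit-function-theorem step that appears in some treatments in the convex integration literature; the price is only the finite check that the orbit of a Pythagorean vector such as $(3/5,4/5,0)$ spans $\mathrm{Sym}_{3}$, which indeed it does (the differences of tensor squares over sign flips give the off-diagonal generators, and the diagonal parts of the orbit span the diagonal matrices), so your worry about needing a second orbit is unfounded. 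One caveat worth flagging, though it is not a gap in the proof of the stated lemma: for the later use in Remark \ref{Remark 3.1} and \eqref{estimate 47} the associated orthonormal bases $(\xi,\xi_{1},\xi_{2})$ must be rational (some integer multiple in $\mathbb{Z}^{3}$) and satisfy $\xi_{1}\neq \xi_{1}'$ for distinct $\xi$; plain Gram--Schmidt does not automatically deliver rational completions, so one should instead choose them explicitly (e.g., for $\xi=(3/5,4/5,0)$ take $\xi_{1}=(-4/5,3/5,0)$, $\xi_{2}=(0,0,1)$, and transport by the signed permutations), which is easy for the orbit you construct but should be said.
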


\begin{remark}\label{Remark 3.1}
\rm{(cf. \cite[Rem. 2.4]{CL21}, \cite[Rem. 4.3-4.4]{BBV21})} We can choose $\Lambda_{\Xi}$ and $\Lambda_{v}$ so that $\Lambda_{\Xi}\cap \Lambda_{v} = \emptyset$  and for $\xi \neq \xi'$, their orthonormal bases satisfy $\xi_{1} \neq \xi_{1}'$. For convenience, let us set 
\begin{equation}\label{estimate 46}
\Lambda \triangleq \Lambda_{\Xi} \cup \Lambda_{v}; 
\end{equation} 
we also note that there exists $N_{\Lambda} \in \mathbb{N}$ such that 
\begin{equation}\label{estimate 47}
\{ N_{\Lambda} \xi, N_{\Lambda} \xi_{1}, N_{\Lambda} \xi_{2} \} \subset N_{\Lambda} \mathbb{S}^{2} \cap \mathbb{Z}^{3}. 
\end{equation} 
Finally, we denote by $M_{\ast}$ a universal geometric constant such that 
\begin{equation}\label{estimate 48}
\sum_{\xi \in \Lambda_{\Xi}} \lVert \gamma_{\xi} \rVert_{C^{1}(B_{\epsilon_{\Xi}} (0))} + \sum_{\xi \in \Lambda_{v}}  \lVert \gamma_{\xi} \rVert_{C^{1}(B_{\epsilon_{v}}( \text{Id} ))}  \leq M_{\ast}.
\end{equation} 
\end{remark}
Next, we describe the intermittent flow from \cite{CL21}. We let $\Psi: \mathbb{R} \mapsto \mathbb{R}$ be a smooth cutoff function supported on $[-1, 1]$. We assume that it is normalized in such a way that 
\begin{equation}\label{estimate 49}
\phi \triangleq - \frac{d^{2}}{(dx)^{2}} \Psi \text{ satisfies } \int_{\mathbb{R}} \phi^{2} (x) dx = 2 \pi. 
\end{equation} 
For parameters 
\begin{equation}\label{estimate 50}
0 < \sigma \ll r \ll 1 
\end{equation} 
to be specified shortly, we define the rescaled functions: 
\begin{equation}\label{estimate 51}
\phi_{r} (x) \triangleq \frac{1}{r^{\frac{1}{2}}} \phi(\frac{x}{r}), \hspace{1mm} \phi_{\sigma} (x)\triangleq \frac{1}{\sigma^{\frac{1}{2}}} \phi (\frac{x}{\sigma}), \hspace{1mm} \text{ and } \hspace{1mm} \Psi_{\sigma}(x) \triangleq \frac{1}{\sigma^{\frac{1}{2}}} \Psi( \frac{x}{\sigma}). 
\end{equation} 
We periodize these functions so that we can view the resulting functions, which we continue to denote respectively as $\phi_{r}, \phi_{\sigma}$, and $\Psi_{\sigma}$, as functions defined on $\mathbb{T}$. Then we fix a parameter $\lambda$ such that
\begin{equation}\label{estimate 52}
\lambda \sigma \in \mathbb{N}, 
\end{equation} 
as well as a large time-oscillation parameter 
\begin{equation}\label{estimate 53}
\mu \gg \sigma^{-1},
\end{equation} 
and define for every $\xi \in \Lambda$ 
\begin{subequations}\label{estimate 422}
\begin{align}
& \phi_{\xi} (t,x) \triangleq \phi_{\xi, r, \sigma, \lambda, \mu} (t,x) \triangleq \phi_{r} (\lambda \sigma N_{\Lambda} (\xi \cdot x + \mu t)), \label{estimate 54}\\
& \varphi_{\xi} (x) \triangleq \phi_{\xi, \sigma, \lambda} (x) \triangleq \phi_{\sigma} (\lambda \sigma N_{\Lambda} \xi_{1} \cdot x), \label{estimate 55}\\
& \Psi_{\xi} (x) \triangleq \Psi_{\xi, \sigma, \lambda} (x) \triangleq \Psi_{\sigma}(\lambda \sigma N_{\Lambda} \xi_{1} \cdot x), \label{estimate 56}
\end{align}
\end{subequations}
which are $(\mathbb{T} / \lambda \sigma)^{3}$-periodic. Due to \eqref{estimate 49} and \eqref{estimate 51}, they satisfy the identities of 
\begin{equation}\label{estimate 57}
- \Delta \Psi_{\xi} (x) = \lambda^{2} N_{\Lambda}^{2} \varphi_{\xi} (x), \hspace{1mm} \xi \cdot \nabla \phi_{\xi} = \mu^{-1} \partial_{t} \phi_{\xi}, \text{ and } \mathbb{P}_{=0} (\phi_{\xi}^{2} \varphi_{\xi}^{2}) = \frac{1}{(2\pi)^{3}} \int_{\mathbb{T}^{3}} \phi_{\xi}^{2} \varphi_{\xi}^{2} dx = 1 
\end{equation}
and the following estimates. 
\begin{lemma}
\rm{(\cite[Lem. 2.5]{CL21}, cf. \cite[Lem. 5.1-5.2]{BBV21})}  For any $p \in [1,\infty]$, $M , N \in \mathbb{N}$, and $\xi \neq \xi'$, the following estimates hold: 
\begin{subequations}\label{estimate 175}
\begin{align}
& \lVert \nabla^{M} \partial_{t}^{N} \phi_{\xi} \rVert_{C_{t}L_{x}^{p}} \lesssim (\lambda \sigma)^{M + N} r^{\frac{1}{p} - \frac{1}{2} - M - N} \mu^{N}, \label{estimate 58} \\
& \lVert \nabla^{M} \varphi_{\xi} \rVert_{L_{x}^{p}} + \lVert \nabla^{M} \Psi_{\xi} \rVert_{L_{x}^{p}} \lesssim \lambda^{M} \sigma^{\frac{1}{p} - \frac{1}{2}}, \label{estimate 59} \\
& \lVert \nabla^{M} (\phi_{\xi} \varphi_{\xi}) \rVert_{C_{t}L_{x}^{p}} + \lVert \nabla^{M} (\phi_{\xi} \Psi_{\xi} ) \rVert_{C_{t}L_{x}^{p}} \lesssim \lambda^{M} r^{\frac{1}{p} - \frac{1}{2}} \sigma^{\frac{1}{p} - \frac{1}{2}}, \label{estimate 60}\\ 
& \lVert \phi_{\xi} \varphi_{\xi} \phi_{\xi'} \varphi_{\xi'} \rVert_{C_{t}L_{x}^{p}} \lesssim \sigma^{\frac{2}{p} - 1} r^{-1},  \label{estimate 174}
\end{align}
\end{subequations}
where the implicit constants only depend on $p, N$, and $M$.  
\end{lemma}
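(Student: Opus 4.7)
The plan is to verify each of the four inequalities by reducing the $\mathbb{T}^3$ calculation to one-dimensional calculations on $\phi_r$, $\phi_\sigma$, and $\Psi_\sigma$, exploiting the orthonormal basis $(\xi,\xi_1,\xi_2)$ attached to each $\xi\in\Lambda$ together with the integrality $\lambda\sigma N_\Lambda\,\xi,\lambda\sigma N_\Lambda\,\xi_1\in\mathbb{Z}^3$ from \eqref{estimate 52} and \eqref{estimate 47}.

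First I would record the elementary one-dimensional scaling: because $r,\sigma\ll 1$ the supports of $\phi_r$ and $\phi_\sigma$, $\Psi_\sigma$ remain inside one fundamental period on $\mathbb{T}$, so periodization is harmless and a direct change of variable yields $\|\phi_r^{(k)}\|_{L^p(\mathbb{T})}\lesssim r^{1/p-1/2-k}$, and the same with $r$ replaced by $\sigma$ for $\phi_\sigma,\Psi_\sigma$. Since $\lambda\sigma N_\Lambda\xi\in\mathbb{Z}^3$, the map $y\mapsto \lambda\sigma N_\Lambda\xi\cdot y$ wraps $\mathbb{T}$ an integer number of times, so $\|\phi_r(\lambda\sigma N_\Lambda\xi\cdot\,)\|_{L^p(\mathbb{T})}=\|\phi_r\|_{L^p(\mathbb{T})}$, and Fubini in the orthonormal frame $(\xi,\xi_1,\xi_2)$ lifts the one-dimensional bound to $\mathbb{T}^3$ at the cost of an absolute constant.

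For \eqref{estimate 58} the chain rule gives $\nabla^M\partial_t^N\phi_\xi=(\lambda\sigma N_\Lambda)^{M+N}\mu^N\,\xi^{\otimes M}\,\phi_r^{(M+N)}(\lambda\sigma N_\Lambda(\xi\cdot x+\mu t))$, and the lifted one-dimensional bound delivers the claimed factor $(\lambda\sigma)^{M+N}\mu^N r^{1/p-1/2-M-N}$. Estimate \eqref{estimate 59} is the same computation applied to $\varphi_\xi,\Psi_\xi$, noting that each spatial derivative now costs $\lambda\sigma\cdot\sigma^{-1}=\lambda$. For \eqref{estimate 60} I apply Leibniz: each derivative landing on $\phi_\xi$ costs $\lambda\sigma/r$, each on $\varphi_\xi$ costs $\lambda$; since $\sigma\ll r$ by \eqref{estimate 50}, the derivative weight on $\varphi_\xi$ dominates, so the worst case is $\phi_\xi\,\nabla^M\varphi_\xi$, whose $L^p$ norm factorizes by Fubini in $(\xi,\xi_1,\xi_2)$-coordinates as $\|\phi_\xi\|_{L^p}\|\nabla^M\varphi_\xi\|_{L^p}\lesssim \lambda^M r^{1/p-1/2}\sigma^{1/p-1/2}$; the same argument works for $\phi_\xi\Psi_\xi$.

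The main (and only non-routine) obstacle is \eqref{estimate 174}, where the Cauchy–Schwarz bound $\|\phi_\xi\varphi_\xi\|_{L^{2p}}\|\phi_{\xi'}\varphi_{\xi'}\|_{L^{2p}}$ loses a power of $\sigma$ and must be sharpened by using the geometric hypothesis $\xi_1\neq\xi_1'$ from Remark \ref{Remark 3.1}. My plan is to put the two $\phi$-factors in $L^\infty$ and keep only the two $\varphi$-factors under the $L^p$ norm, obtaining
\begin{equation*}
\lVert \phi_\xi\varphi_\xi\phi_{\xi'}\varphi_{\xi'}\rVert_{L^p}
\leq \lVert\phi_\xi\rVert_{L^\infty}\lVert\phi_{\xi'}\rVert_{L^\infty}\lVert\varphi_\xi\varphi_{\xi'}\rVert_{L^p}
\lesssim r^{-1}\lVert \varphi_\xi\varphi_{\xi'}\rVert_{L^p}.
\end{equation*}
For the remaining factor I change variables with a (non-orthonormal) linear map whose first two rows are $\xi_1,\xi_1'$: since $\xi_1,\xi_1'$ lie in the finite set $\Lambda\subset\mathbb{S}^2\cap\mathbb{Q}^3$ and $\xi_1\neq\xi_1'$, the angle between them is bounded below by a constant depending only on $\Lambda$, so the Jacobian is bounded below by $|\sin\angle(\xi_1,\xi_1')|\gtrsim 1$. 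The integral then factorizes (up to this geometric constant) into two one-dimensional integrals of $|\phi_\sigma(\lambda\sigma N_\Lambda\,\cdot)|^p$, each contributing $\sigma^{1/p-1/2}$, giving $\|\varphi_\xi\varphi_{\xi'}\|_{L^p}\lesssim \sigma^{2/p-1}$ and hence \eqref{estimate 174}. Equivalently, one may argue directly that $\|\varphi_\xi\varphi_{\xi'}\|_{L^\infty}\lesssim \sigma^{-1}$ while the support of $\varphi_\xi\varphi_{\xi'}$ in $\mathbb{T}^3$, being the intersection of two transverse families of parallel slabs of thickness $\sim\sigma/(\lambda\sigma)$, has measure $\lesssim \sigma^2$, which yields the same bound.
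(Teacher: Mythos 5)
Your proposal is correct: the one-dimensional scaling bounds for $\phi_{r},\phi_{\sigma},\Psi_{\sigma}$, lifted to $\mathbb{T}^{3}$ via the integer frequencies $\lambda\sigma N_{\Lambda}\xi,\lambda\sigma N_{\Lambda}\xi_{1}\in\mathbb{Z}^{3}$ and the orthogonality of the oscillation directions, give \eqref{estimate 58}--\eqref{estimate 60}, and the cross estimate \eqref{estimate 174} follows exactly as you say from $\lVert\phi_{\xi}\rVert_{L^{\infty}}\lVert\phi_{\xi'}\rVert_{L^{\infty}}\lesssim r^{-1}$ together with the transversality $\xi_{1}\neq\xi_{1}'$ of Remark \ref{Remark 3.1}, implemented either by the change of variables or by the slab-intersection support bound. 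The paper itself does not prove this lemma but quotes it from \cite[Lem. 2.5]{CL21} (cf. \cite[Lem. 5.1-5.2]{BBV21}), and your argument is essentially the standard proof given there.
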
 

\section{Proofs of Theorems \ref{Theorem 2.1}-\ref{Theorem 2.2}}\label{Section 4}

We first give a definition of a solution to \eqref{stochastic GMHD}. 
\begin{define}\label{Definition 4.1}
Fix any $\gamma \in (0,1)$. Let $s \geq 0$ and $\xi^{\text{in}} = (\xi_{1}^{\text{in}}, \xi_{2}^{\text{in}}) \in L_{\sigma}^{2} \times L_{\sigma}^{2}$. Then $P \in \mathcal{P} (\Omega_{0})$ is a martingale solution to \eqref{stochastic GMHD} with initial condition $\xi^{\text{in}}$ at initial time $s$ if 
\begin{enumerate}
\item [] (M1) $P (\{ \xi(t) = \xi^{\text{in}} \hspace{1mm} \forall \hspace{1mm} t \in [0,s] \}) = 1$ and for all $l \in \mathbb{N}$, 
\begin{equation}
P ( \{ \xi \in \Omega_{0}: \int_{0}^{l} \sum_{k=1}^{2} \lVert G_{k} (\xi_{k} (r)) \rVert_{L_{2} (U_{k}, L_{\sigma}^{2})}^{2} dr < \infty \} ) = 1, 
\end{equation} 
\item [] (M2)  for every $\psi_{i} = (\psi_{i}^{1},\psi_{i}^{2}) \in (C^{\infty} (\mathbb{T}^{3}) \cap L_{\sigma}^{2})^{2}$ and $t \geq s$, the processes 
\begin{subequations}
\begin{align}
M_{1, t,s}^{i} \triangleq& \langle \xi_{1}(t) - \xi_{1}(s), \psi_{i}^{1} \rangle + \int_{s}^{t} \langle \text{div} ( \xi_{1}  \otimes \xi_{1} - \xi_{2} \otimes \xi_{2})(r) + (-\Delta)^{m_{1}} \xi_{1} (r), \psi_{i}^{1} \rangle dr,  \label{estimate 61} \\
M_{2,t,s}^{i} \triangleq& \langle \xi_{2} (t) - \xi_{2} (s), \psi_{i}^{2} \rangle + \int_{s}^{t} \langle \text{div} ( \xi_{1} \otimes \xi_{2} - \xi_{2} \otimes \xi_{1} )(r) + (-\Delta)^{m_{2}} \xi_{2} (r), \psi_{i}^{2} \rangle dr, \label{estimate 62} 
\end{align}
\end{subequations} 
are continuous, square-integrable $(\mathcal{B}_{t})_{t\geq s}$-martingale under $P$ such that $\langle \langle M_{k, t,s}^{i} \rangle \rangle = \int_{s}^{t} \lVert G_{k} (\xi_{k} (r))^{\ast} \psi_{i}^{k} \rVert_{U_{k}}^{2} dr$ for both $k \in \{1,2\}$, 
\item [] (M3) for any $q \in \mathbb{N}$ there exists a function $t \mapsto C_{t,q} \in \mathbb{R}_{+}$ for all $t \geq s$ such that
\begin{align}
&\mathbb{E}^{P} [ \sup_{r \in [0,t]} \lVert \xi_{1} (r) \rVert_{L_{x}^{2}}^{2q} + \int_{s}^{t} \lVert \xi_{1} \rVert_{\dot{H}_{x}^{\gamma}}^{2} dr \nonumber \\
& \hspace{5mm} + \sup_{r \in [0,t]} \lVert \xi_{2} (r) \rVert_{L_{x}^{2}}^{2q} + \int_{s}^{t} \lVert \xi_{2} \rVert_{\dot{H}_{x}^{\gamma}}^{2} dr] \leq C_{t,q} (1+ \lVert \xi_{1}^{\text{in}} \rVert_{L_{x}^{2}}^{2q} + \lVert \xi_{2}^{\text{in}} \rVert_{L_{x}^{2}}^{2q}). \label{estimate 63}
\end{align}
\end{enumerate} 
The set of all such martingale solutions with common constant $C_{t,q}$ in \eqref{estimate 63} for every $q \in \mathbb{N}$ and $t \geq s$ will be denoted by $\mathcal{C} ( s, \xi^{\text{in}}, \{C_{t,q} \}_{q\in \mathbb{N}, t \geq s} )$. 
\end{define} 
\begin{define}\label{Definition 4.2}
Fix any $\gamma \in (0,1)$. Let $s \geq 0$, $\xi^{\text{in}} = (\xi_{1}^{\text{in}}, \xi_{2}^{\text{in}}) \in L_{\sigma}^{2} \times L_{\sigma}^{2}$ and $\tau \geq s$ be a stopping time of $(\mathcal{B}_{t})_{t\geq s}$. Define the space of trajectories stopped at time $\tau$ by 
\begin{equation}\label{estimate 64} 
\Omega_{0, \tau} \triangleq \{ \omega( \cdot \wedge \tau(\omega)) : \omega \in \Omega_{0} \}
\end{equation} 
and $\mathcal{B}_{\tau}$ to be the $\sigma$-field associated to $\tau$. Then $P \in \mathcal{P} (\Omega_{0,\tau})$ is a martingale solution to \eqref{stochastic GMHD} on $[s, \tau]$ with initial condition $\xi^{\text{in}}$ at initial time $s$ if 
\begin{enumerate}
\item [] (M1) $P ( \{ \xi(t)= \xi^{\text{in}} \hspace{1mm} \forall \hspace{1mm} t \in [0,s]\}) = 1$ and for all $l \in \mathbb{N}$, 
\begin{equation}\label{estimate 65} 
P ( \{ \xi \in \Omega_{0}: \int_{0}^{l \wedge \tau} \sum_{k=1}^{2} \lVert G_{k} (\xi_{k} (r)) \rVert_{L_{2} (U_{k}, L_{\sigma}^{2})}^{2} dr < \infty \}) = 1, 
\end{equation} 
\item [] (M2) for every $\psi_{i} = (\psi_{i}^{1}, \psi_{i}^{2}) \in (C^{\infty} (\mathbb{T}^{3} ) \cap L_{\sigma}^{2} )^{2}$ and $t\geq s$, the processes 
\begin{subequations}
\begin{align}
M_{1, t\wedge \tau, s}^{i} \triangleq& \langle \xi_{1} (t\wedge \tau) - \xi_{1}^{\text{in}}, \psi_{i}^{1} \rangle \nonumber\\
& \hspace{10mm} + \int_{s}^{t \wedge \tau} \langle \text{div} (\xi_{1} \otimes \xi_{1} - \xi_{2} \otimes \xi_{2} )(r) + (-\Delta)^{m_{1}} \xi_{1} (r), \psi_{i}^{1} \rangle dr, \label{estimate 66} \\
M_{2, t \wedge \tau, s}^{i} \triangleq& \langle \xi_{2} (t \wedge \tau) - \xi_{2}^{\text{in}}, \psi_{i}^{2} \rangle \nonumber\\
&  \hspace{10mm} + \int_{s}^{t \wedge \tau} \langle \text{div} ( \xi_{1} \otimes \xi_{2} - \xi_{2} \otimes \xi_{1} ) (r) + (-\Delta)^{m_{2}} \xi_{2} (r), \psi_{i}^{2} \rangle dr, \label{estimate 67} 
\end{align}
\end{subequations} 
are continuous, square-integrable $(\mathcal{B}_{t})_{t\geq s}$-martingales under $P$ such that $\langle \langle M_{k, t \wedge \tau, s}^{i} \rangle \rangle$ $= \int_{s}^{t \wedge \tau} \lVert G_{k} (\xi_{k} (r))^{\ast} \psi_{i}^{k} \rVert_{U_{k}}^{2} dr$ for both $k \in \{1,2\}$, 
\item [] (M3) for any $q \in \mathbb{N}$, there exists a function $t \mapsto C_{t,q} \in \mathbb{R}_{+}$ for all $t \geq s$ such that  
\begin{align}
&\mathbb{E}^{P} [ \sup_{r \in [0, t \wedge \tau]} \lVert \xi_{1} (r) \rVert_{L_{x}^{2}}^{2q} + \int_{s}^{t \wedge \tau} \lVert \xi_{1} (r) \rVert_{\dot{H}_{x}^{\gamma}}^{2} dr \nonumber \\
& \hspace{10mm} + \sup_{r \in [0,t \wedge \tau]} \lVert \xi_{2} (r) \rVert_{L_{x}^{2}}^{2q} + \int_{s}^{t \wedge \tau} \lVert \xi_{2} (r) \rVert_{\dot{H}_{x}^{\gamma}}^{2} dr] \leq C_{t,q} (1+ \lVert \xi_{1}^{\text{in}} \rVert_{L_{x}^{2}}^{2q} + \lVert \xi_{2}^{\text{in}} \rVert_{L_{x}^{2}}^{2q}).  \label{estimate 68} 
\end{align} 
\end{enumerate} 
\end{define} 

The following result concerns the existence and stability of solution to \eqref{stochastic GMHD}; the proof of existence may be found in \cite[Sec. 3]{Y19} while the proof of stability is a straight-forward modification of analogous results such as \cite[The. 3.1]{HZZ19} and \cite[Pro. 4.1]{Y21a}:  
\begin{proposition}\label{Proposition 4.1}
For any $(s, \xi^{\text{in}}) \in [0,\infty) \times (L_{\sigma}^{2})^{2}$, there exists a martingale solution $P \in \mathcal{P} (\Omega_{0})$ to \eqref{stochastic GMHD} with initial condition $\xi^{\text{in}}$ at initial time $s$ that satisfies Definition \ref{Definition 4.1}. Moreover, if there exists a family $\{ (s_{l}, \xi_{l} ) \}_{l \in \mathbb{N}} \subset [0,\infty) \times (L_{\sigma}^{2})^{2}$ such that $\lim_{l\to\infty} \lVert (s_{l}, \xi_{l}) - (s, \xi^{\text{in}}) \rVert_{\mathbb{R} \times L_{\sigma}^{2} \times L_{\sigma}^{2}} = 0$ and $P_{l} \in \mathcal{C} (s_{l}, \xi_{l}, \{ C_{t,q} \}_{q \in \mathbb{N}, t \geq s_{l}} )$ is the martingale solution corresponding to $(s_{l}, \xi_{l})$, then there exists a subsequence $\{ P_{l_{k}} \}_{k \in \mathbb{N}}$ that converges weakly to some $P \in \mathcal{C} ( s, \xi^{\text{in}}, \{C_{t,q} \}_{q\in \mathbb{N}, t\geq s} )$. 
\end{proposition}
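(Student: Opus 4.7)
The plan is to handle existence and stability separately, both following the classical stochastic compactness pattern adapted to the coupled MHD pair $(u,b)$. For existence at a fixed $(s,\xi^{\text{in}})$, I would run a Galerkin scheme: project both equations onto the span $H_N$ of the first $N$ eigenfunctions of the Stokes/Laplace operator and solve the resulting finite-dimensional SDE system on $[s,\infty)$. Applying Ito's formula to $\lVert u_N\rVert_{L^2_x}^2 + \lVert b_N\rVert_{L^2_x}^2$, the convective term, the Lorentz coupling $(b\cdot\nabla)b$ tested against $u$, and the stretching $(b\cdot\nabla)u$ tested against $b$ cancel pairwise, leaving only the fractional dissipation and the Ito correction which is controlled via the linear-growth assumption on $G_k$. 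Gronwall and the Burkholder--Davis--Gundy inequality then yield \eqref{estimate 63} for any $\gamma \leq \min(m_1,m_2)$, with constants $C_{t,q}$ independent of $N$ and depending only on $t$, $q$, the growth constants, and $\lVert\xi^{\text{in}}\rVert_{L^2_x}$. Combining these bounds with fractional-in-time Holder regularity in $H^{-3}(\mathbb T^3)$ coming from the SPDE itself, one obtains tightness of the laws of $(u_N,b_N)$ on $\Omega_0$ (whose topology is non-Polish), extracts a weakly convergent subsequence by Prokhorov together with Jakubowski's generalization of Skorokhod's representation theorem, and verifies (M1)--(M3) for the limit in the standard way, following \cite[Sec. 3]{Y19}.

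For the stability claim, the decisive input is that each $P_l \in \mathcal{C}(s_l,\xi_l,\{C_{t,q}\})$ satisfies \eqref{estimate 63} with the \emph{same} constants $C_{t,q}$. This uniform moment bound, together with Aldous's tightness criterion applied to the semimartingale decomposition encoded in (M2)---whose drift is controlled in $H^{-3}$ via bilinear estimates on $\mathrm{div}(\xi\otimes\xi)$ and whose martingale part has quadratic variation bounded by the linear growth of $G_k$---implies tightness of $\{P_l\}$ on $\Omega_0$. Once a weak limit $P_{l_k}\rightharpoonup P$ is extracted, property (M1) is inherited from $(s_l,\xi_l)\to(s,\xi^{\text{in}})$ and continuity of the initial-segment evaluation; property (M3) follows from lower semicontinuity of $\sup_{r\in[0,t]}\lVert\cdot\rVert_{L^2_x}^{2q}$ and $\int_s^t\lVert\cdot\rVert_{\dot H^\gamma_x}^2 dr$ under weak convergence on $\Omega_0$; and (M2) is obtained by checking the martingale identities $\mathbb{E}^{P_{l_k}}[(M^i_{k,t,s}-M^i_{k,r,s})F]\to 0$ and the corresponding identity for $\langle\langle M^i_{k,t,s}\rangle\rangle-\int_s^t\lVert G_k(\xi_k)^{\ast}\psi_i^k\rVert_{U_k}^2 dr$ for bounded continuous $\mathcal{B}_r$-measurable $F$, then transferring to $P$.

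The principal obstacle, as usual in such stability arguments for nonlinear SPDEs, is passing to the limit in the quadratic drift terms $\mathrm{div}(\xi_1\otimes\xi_1-\xi_2\otimes\xi_2)$ and $\mathrm{div}(\xi_1\otimes\xi_2-\xi_2\otimes\xi_1)$ inside \eqref{estimate 61}--\eqref{estimate 62}, and simultaneously in the quadratic-variation term involving $G_k$. Weak convergence of $P_{l_k}$ on $\Omega_0$ alone does not suffice; one needs strong convergence of $\xi_k$ in $L^2_{\mathrm{loc}}([s,\infty);L^2_\sigma)$ on a representative realization. This is secured by the fractional Aubin--Lions embedding combining the $L^2_tH^\gamma_x$ bound from (M3) with the $W^{\alpha,p}_t H^{-3}_x$ regularity from the equation, which is compact into $L^2_t L^2_x$; under the Jakubowski representation this gives the required $\omega$-a.s. strong convergence. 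The continuity and linear-growth hypotheses imposed on $G_k$ in Section \ref{Subsection 3.1} then let one identify the martingale limits and their quadratic variations, completing the argument. The coupling between the $u$ and $b$ equations adds no essential difficulty beyond bookkeeping, and the weak diffusion $(-\Delta)^{m_k}$ with $m_k\in(0,1)$ is harmless at this stage because \eqref{estimate 63} still delivers enough spatial regularity to drive the compactness step through, exactly as in \cite[The. 3.1]{HZZ19} and \cite[Pro. 4.1]{Y21a}.
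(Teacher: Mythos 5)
Your proposal is correct and follows essentially the same route as the paper, which simply cites \cite[Sec. 3]{Y19} for the Galerkin/compactness construction of a martingale solution and \cite[The. 3.1]{HZZ19}, \cite[Pro. 4.1]{Y21a} for the stability statement; your outline (energy cancellation of the coupled nonlinear terms, uniform moments with common $C_{t,q}$, tightness on the non-Polish space $\Omega_{0}$ via Jakubowski--Skorokhod, and identification of the limit martingale problem using strong $L^{2}_{t,x}$ convergence from a fractional Aubin--Lions argument) is exactly what those references carry out. The only caveat, consistent with your own remark, is that (M3) is obtained for $\gamma$ no larger than $\min\{m_{1},m_{2}\}$, which suffices for the way Proposition \ref{Proposition 4.1} is used.
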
 
Proposition \ref{Proposition 4.1} leads to the following two results, of which proofs may be found in \cite[Pro. 3.2 and 3.4]{HZZ19}: 
\begin{lemma}\label{Lemma 4.2}
(\cite[Pro. 3.2]{HZZ19}) Let $\tau$ be a bounded stopping time of $(\mathcal{B}_{t})_{t\geq 0}$. Then for every $\omega \in \Omega_{0}$, there exists $Q_{\omega} \triangleq \delta_{\omega} \otimes_{\tau(\omega)} R_{\tau(\omega), \xi(\tau(\omega), \omega)} \in \mathcal{P} (\Omega_{0})$ with $\delta_{\omega}$ being a point-mass at $\omega$ such that 
\begin{subequations}
\begin{align} 
& Q_{\omega} ( \{ \omega' \in \Omega_{0}:\hspace{0.5mm}  \xi(t, \omega') = \omega(t) \hspace{1mm} \forall \hspace{1mm} t \in [0, \tau(\omega) ] \}) = 1, \label{estimate 69}  \\
& Q_{\omega}(A) = R_{\tau (\omega), \xi(\tau(\omega), \omega)} (A) \hspace{1mm} \forall \hspace{1mm} A \in \mathcal{B}^{\tau(\omega)}, \label{estimate 70}
\end{align}
\end{subequations} 
where $R_{\tau(\omega), \xi(\tau(\omega), \omega)} \in \mathcal{P}(\Omega_{0})$ is a martingale solution to \eqref{stochastic GMHD} with initial condition $\xi(\tau(\omega), \omega)$ at initial time $\tau(\omega)$, and the mapping $\omega \mapsto Q_{\omega}(B)$ is $\mathcal{B}_{\tau}$-measurable for every $B \in \mathcal{B}$. 
\end{lemma}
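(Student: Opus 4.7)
\medskip

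\noindent\textbf{Proof proposal.} The plan is to build $Q_\omega$ as a concatenation at the stopping time $\tau(\omega)$ of the point mass $\delta_\omega$ (which carries the past) with a martingale solution $R_{\tau(\omega),\xi(\tau(\omega),\omega)}$ (which carries the future). The only genuine ingredient beyond the existence/stability Proposition~\ref{Proposition 4.1} is a measurable selection of the family $\{R_{s,y}\}_{(s,y)\in[0,\infty)\times (L_\sigma^2)^2}$, after which the verification of \eqref{estimate 69}--\eqref{estimate 70} and of $\mathcal{B}_\tau$-measurability is routine.

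\medskip

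\noindent First I would establish measurable selection. Consider the multivalued map
\[
\Phi:[0,\infty)\times (L_\sigma^2)^2 \twoheadrightarrow \mathcal{P}(\Omega_0), \qquad \Phi(s,y)\triangleq \mathcal{C}(s,y,\{C_{t,q}\}_{q\in\mathbb{N},\,t\geq s}).
\]
By Proposition~\ref{Proposition 4.1}, each $\Phi(s,y)$ is nonempty. The stability assertion of Proposition~\ref{Proposition 4.1} implies that $\Phi$ has a closed graph with respect to the product topology and weak convergence in $\mathcal{P}(\Omega_0)$, hence upper-hemicontinuous with closed values in the Polish space $\mathcal{P}(\Omega_0)$. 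Applying the Kuratowski--Ryll-Nardzewski measurable selection theorem yields a Borel map $(s,y)\mapsto R_{s,y}$ such that $R_{s,y}\in\Phi(s,y)$ for every $(s,y)$. Since $\tau$ is a $(\mathcal{B}_t)_{t\geq 0}$-stopping time, the map $\omega\mapsto(\tau(\omega),\xi(\tau(\omega),\omega))$ is $\mathcal{B}_\tau$-measurable into $[0,\infty)\times (L_\sigma^2)^2$, so the composition $\omega\mapsto R_{\tau(\omega),\xi(\tau(\omega),\omega)}$ is $\mathcal{B}_\tau$-measurable into $\mathcal{P}(\Omega_0)$.

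\medskip

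\noindent Next I would define the concatenation. For each $\omega\in\Omega_0$, declare
\[
Q_\omega(A)\triangleq \int_{\Omega_0}\mathbf{1}_A\bigl(\omega\!\!\restriction_{[0,\tau(\omega)]}\oplus \omega'\!\!\restriction_{[\tau(\omega),\infty)}\bigr)\,R_{\tau(\omega),\xi(\tau(\omega),\omega)}(d\omega'), \qquad A\in\mathcal{B},
\]
where $\oplus$ denotes the canonical path-concatenation at time $\tau(\omega)$. The support of $R_{\tau(\omega),\xi(\tau(\omega),\omega)}$ consists of paths starting from $\xi(\tau(\omega),\omega)$ at time $\tau(\omega)$, so the concatenated trajectories are genuinely in $\Omega_0$. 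Property \eqref{estimate 69} is then immediate from the construction, since every sampled trajectory agrees with $\omega$ on $[0,\tau(\omega)]$; property \eqref{estimate 70} follows because on the germ $\sigma$-algebra $\mathcal{B}^{\tau(\omega)}$ the concatenation only sees the $R$-part. The $\mathcal{B}_\tau$-measurability of $\omega\mapsto Q_\omega(B)$ reduces, by a monotone-class argument on cylinder sets $B$, to the measurability of $\omega\mapsto R_{\tau(\omega),\xi(\tau(\omega),\omega)}$ already established, together with the $\mathcal{B}_\tau$-measurability of $\omega\mapsto \omega\!\!\restriction_{[0,\tau(\omega)]}$.

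\medskip

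\noindent The main obstacle is the measurable selection step: one must verify that the set-valued map $(s,y)\mapsto \Phi(s,y)$ really is closed-valued with a closed graph, which is exactly what the stability half of Proposition~\ref{Proposition 4.1} provides (a weakly convergent sequence of solutions with converging initial data produces a limit that again belongs to $\mathcal{C}(s,y,\{C_{t,q}\})$ with the \emph{same} constants $C_{t,q}$, so the set-valued map has the right values). Once the selection is fixed and $Q_\omega$ is defined as above, the remaining verifications are formal and follow the template of \cite[Pro.~3.2]{HZZ19}; I would not reproduce the cylinder-set computations in detail.
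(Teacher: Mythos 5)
Your proposal is correct and follows essentially the same route as the proof this paper defers to (\cite[Pro.~3.2]{HZZ19}): a measurable selection $(s,y)\mapsto R_{s,y}$ from the solution sets $\mathcal{C}(s,y,\{C_{t,q}\})$, whose nonemptiness, compactness and upper hemicontinuity come from the existence/stability statement (here Proposition \ref{Proposition 4.1}), followed by the Stroock--Varadhan-type concatenation $\delta_{\omega}\otimes_{\tau(\omega)}R_{\tau(\omega),\xi(\tau(\omega),\omega)}$ and a monotone-class argument for the $\mathcal{B}_{\tau}$-measurability. The only point to make explicit is that the selection must be performed with one fixed family of constants $\{C_{t,q}\}$ for which all the sets $\mathcal{C}(s,y,\{C_{t,q}\})$ are nonempty, which is exactly what the Galerkin construction behind Proposition \ref{Proposition 4.1} supplies.
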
 

\begin{lemma}\label{Lemma 4.3}
(\cite[Pro. 3.4]{HZZ19}) Let $\tau$ be a bounded stopping time of $(\mathcal{B}_{t})_{t\geq 0}$, $\xi^{\text{in}} \in L_{\sigma}^{2} \times L_{\sigma}^{2}$, and $P$ be a martingale solution to \eqref{stochastic GMHD} on $[0,\tau]$ with initial condition $\xi^{\text{in}}$ at initial time 0 that satisfies Definition \ref{Definition 4.2}. Suppose that there exists a Borel set $\mathcal{N} \subset \Omega_{0,\tau}$ such that $P(\mathcal{N}) = 0$ and $Q_{\omega}$ from Lemma \ref{Lemma 4.2} satisfies for every $\omega \in \Omega_{0} \setminus \mathcal{N}$ 
\begin{equation}\label{estimate 71}
Q_{\omega} (\{\omega' \in \Omega_{0}:\hspace{0.5mm}  \tau(\omega') = \tau(\omega) \}) = 1. 
\end{equation} 
Then the probability measure $P \otimes_{\tau}R \in \mathcal{P}(\Omega_{0})$ defined by 
\begin{equation}\label{estimate 72} 
P\otimes_{\tau} R (\cdot) \triangleq \int_{\Omega_{0}} Q_{\omega} (\cdot) P(d\omega) 
\end{equation} 
satisfies $P \otimes_{\tau}R \rvert_{\Omega_{0,\tau}} = P \rvert_{\Omega_{0,\tau}}$ and it is a martingale solution to \eqref{stochastic GMHD} on $[0,\infty)$ with initial condition $\xi^{\text{in}}$ at initial time 0. 
\end{lemma}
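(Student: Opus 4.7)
\medskip

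\noindent\textbf{Proof proposal for Lemma \ref{Lemma 4.3}.} The statement is a Stroock--Varadhan type concatenation: we have a martingale solution $P$ stopped at $\tau$ and, for $P$-a.e. $\omega$, a continuation $Q_{\omega}$ that agrees with $\omega$ on $[0,\tau(\omega)]$ and is a martingale solution of \eqref{stochastic GMHD} after time $\tau(\omega)$; we must check that gluing yields an honest martingale solution on $[0,\infty)$. The plan is to imitate the NS argument in \cite[Pro. 3.4]{HZZ19} with minor bookkeeping to account for the extra component $\xi_{2}$ and the second martingale $M_{2,\cdot,0}^{i}$.

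\emph{Step 1: well-posedness of the measure and restriction.} By Lemma \ref{Lemma 4.2} the map $\omega \mapsto Q_{\omega}(B)$ is $\mathcal{B}_{\tau}$-measurable for every $B \in \mathcal{B}$, so the right-hand side of \eqref{estimate 72} defines a probability measure on $(\Omega_{0},\mathcal{B})$. The relation \eqref{estimate 69} forces $Q_{\omega}(A) = \delta_{\omega}(A)$ for $A \in \mathcal{B}_{\tau(\omega)}^{0}$, which upon integration in $\omega$ gives $(P\otimes_{\tau}R)\rvert_{\Omega_{0,\tau}} = P\rvert_{\Omega_{0,\tau}}$. In particular (M1) of Definition \ref{Definition 4.1} at $\xi^{\text{in}}$ is inherited from $P$, and the integrability condition for $G_{k}(\xi_{k}(\cdot))$ on $[0,l]$ follows by splitting the integral at $\tau$: the piece on $[0,\tau]$ is finite $P$-a.s. by (M1) of Definition \ref{Definition 4.2}, and the piece on $[\tau,l]$ is finite $Q_{\omega}$-a.s. by (M1) of Definition \ref{Definition 4.1} applied to the continuation $R_{\tau(\omega),\xi(\tau(\omega),\omega)}$.

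\emph{Step 2: the martingale property (M2), which is the main obstacle.} Fix $\psi_{i}=(\psi_{i}^{1},\psi_{i}^{2})$ and $k\in\{1,2\}$. Decompose
\begin{equation*}
M_{k,t,0}^{i}(\omega) = M_{k,t\wedge\tau(\omega),0}^{i}(\omega) + \bigl(M_{k,t,0}^{i}(\omega) - M_{k,t\wedge\tau(\omega),0}^{i}(\omega)\bigr).
\end{equation*}
The first summand is $\mathcal{B}_{\tau}$-measurable in its terminal data, and by Step 1 it is a $(\mathcal{B}_{t})$-martingale under $P\otimes_{\tau}R$ because $P\otimes_{\tau}R$ and $P$ coincide on $\mathcal{B}_{\tau}$ and $M_{k,t\wedge\tau,0}^{i}$ is a $(\mathcal{B}_{t})_{t\geq 0}$-martingale under $P$ by Definition \ref{Definition 4.2}(M2). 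For the second summand, use \eqref{estimate 70} and the hypothesis \eqref{estimate 71} to recognise, for $\omega \notin \mathcal{N}$, that under $Q_{\omega}$ the process $t\mapsto M_{k,t,0}^{i} - M_{k,t\wedge\tau(\omega),0}^{i}$ agrees with the martingale $M_{k,t,\tau(\omega)}^{i}$ of the continuation $R_{\tau(\omega),\xi(\tau(\omega),\omega)}$, which is a $(\mathcal{B}_{t})_{t\geq\tau(\omega)}$-martingale with quadratic variation $\int_{\tau(\omega)}^{t}\lVert G_{k}(\xi_{k}(r))^{\ast}\psi_{i}^{k}\rVert_{U_{k}}^{2}dr$ by Definition \ref{Definition 4.1}(M2). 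Integrating the conditional martingale identity against $P(d\omega)$ via the disintegration \eqref{estimate 72}, and adding the two pieces, shows that $M_{k,t,0}^{i}$ is a continuous square-integrable $(\mathcal{B}_{t})$-martingale under $P\otimes_{\tau}R$ with $\langle\!\langle M_{k,t,0}^{i}\rangle\!\rangle = \int_{0}^{t}\lVert G_{k}(\xi_{k}(r))^{\ast}\psi_{i}^{k}\rVert_{U_{k}}^{2}dr$. The delicate point here is checking that the quadratic variation concatenates correctly; this is where the hypothesis $Q_{\omega}(\{\tau(\omega')=\tau(\omega)\})=1$ is essential, as otherwise the cross term in the Itô expansion on either side of $\tau$ would not line up and one could not identify the bracket as the deterministic additive extension.

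\emph{Step 3: the energy bound (M3).} Split the supremum and the time integral at $\tau$. The bound on $[0,\tau]$ is controlled by $C_{t,q}^{P}(1+\lVert\xi_{1}^{\text{in}}\rVert_{L^{2}}^{2q}+\lVert\xi_{2}^{\text{in}}\rVert_{L^{2}}^{2q})$ by Definition \ref{Definition 4.2}(M3). Conditional on $\mathcal{B}_{\tau}$, the portion on $[\tau,t]$ is bounded, under $Q_{\omega}$, by $C_{t,q}^{R}(1+\lVert\xi_{1}(\tau(\omega),\omega)\rVert_{L^{2}}^{2q}+\lVert\xi_{2}(\tau(\omega),\omega)\rVert_{L^{2}}^{2q})$ by Definition \ref{Definition 4.1}(M3) applied to $R_{\tau(\omega),\xi(\tau(\omega),\omega)}$. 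Taking $\mathbb{E}^{P}$ and controlling $\mathbb{E}^{P}[\lVert\xi_{k}(\tau)\rVert_{L^{2}}^{2q}]$ by the estimate of Definition \ref{Definition 4.2}(M3) (using that $\tau$ is bounded) produces a new constant $C_{t,q}$ that depends only on $t,q,C_{t,q}^{P},C_{t,q}^{R}$ and $\lVert\tau\rVert_{L^{\infty}}$, as required for Definition \ref{Definition 4.1}(M3).

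Assembling Steps 1--3 shows that $P\otimes_{\tau}R$ is a martingale solution of \eqref{stochastic GMHD} on $[0,\infty)$ with initial condition $\xi^{\text{in}}$ at time $0$, as claimed. The entire argument is a verbatim adaptation of \cite[Pro. 3.4]{HZZ19} to the two-component MHD setting, with the only non-cosmetic point being the simultaneous handling of the two martingales $M_{1,\cdot,0}^{i}$ and $M_{2,\cdot,0}^{i}$, which causes no new difficulty because Definitions \ref{Definition 4.1}--\ref{Definition 4.2} impose independent martingale conditions on each.
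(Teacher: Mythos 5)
Your proposal is correct and takes essentially the same approach as the paper, which does not reprove this lemma but defers to the concatenation argument of \cite[Pro. 3.4]{HZZ19}: your three steps (identification of the restriction via \eqref{estimate 69}, splitting each $M_{k,t,0}^{i}$ at $\tau$ and invoking \eqref{estimate 70}--\eqref{estimate 71} to glue the martingale property and quadratic variation for (M2), and splitting the moment bound at $\tau$ for (M3)) are exactly that argument carried over to the two-component MHD setting. No gap to report.
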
 

Now, similarly to \eqref{estimate 395} we consider 
\begin{subequations}\label{z}
\begin{align}
& dz_{1} + (-\Delta)^{m_{1}} z_{1} dt + \nabla \pi_{1} dt = dB_{1}, \hspace{3mm} \nabla\cdot z_{1} = 0 \text{ for } t > 0, \hspace{2mm} z_{1}(0, x) \equiv 0, \label{estimate 37}  \\
& dz_{2} + (-\Delta)^{m_{2}} z_{2} dt = dB_{2}, \hspace{15mm} \nabla\cdot z_{2} = 0  \text{ for } t > 0, \hspace{2mm} z_{2}(0,x) \equiv 0, \label{estimate 38}
\end{align}
\end{subequations}
and 
\begin{subequations}\label{estimate 399}
\begin{align}
& \partial_{t} v + (-\Delta)^{m_{1}} v + \text{div} ((v+ z_{1}) \otimes (v+ z_{1}) - (\Xi + z_{2}) \otimes (\Xi + z_{2}) ) + \nabla \pi_{2} = 0,  \nabla\cdot v = 0, \label{estimate 73} \\
& \partial_{t} \Xi + (-\Delta)^{m_{2}} \Xi + \text{div} ((v+ z_{1}) \otimes (\Xi + z_{2}) - (\Xi + z_{2}) \otimes (v+ z_{1} )) = 0,  \hspace{7mm} \nabla\cdot \Xi = 0,  \label{estimate 74}
\end{align}
\end{subequations} 
so that $(u, b) = (v+ z_{1}, \Xi + z_{2})$ solves \eqref{stochastic GMHD} with $\pi \triangleq \pi_{1} + \pi_{2}$. We fix $G_{k}G_{k}^{\ast}$-Wiener process $B_{k}$ on $(\Omega, \mathcal{F}, \textbf{P})$ for both $k \in \{1,2\}$ with $(\mathcal{F}_{t})_{t\geq 0}$ as the canonical filtration of $(B_{1}, B_{2})$ augmented by all the $\textbf{P}$-negligible sets. We see from \eqref{z} that 
\begin{equation}\label{estimate 75}
z_{1} (t) = \int_{0}^{t} e^{- (t-r) (-\Delta)^{m_{1}}} \mathbb{P} dB_{1}(r), \hspace{3mm} z_{2}(t) = \int_{0}^{t} e^{- (t-r) (-\Delta)^{m_{2}}} dB_{2}(r), 
\end{equation}  
where $e^{- (-\Delta)^{m_{k}}}$ are semigroups generated by $-(-\Delta)^{m_{k}}$ for $k \in \{1,2\}$. The following proposition concerning regularity of $z_{k},k \in \{1,2\}$, are consequences of \cite[Pro. 4.4]{Y21a} and \eqref{estimate 18}: 
\begin{proposition}\label{Proposition 4.4} 
For all $\delta \in (0, \frac{1}{2}), T > 0,$ and $l \in \mathbb{N}$, 
\begin{equation}\label{estimate 76} 
\sum_{k=1}^{2} \mathbb{E}^{\textbf{P}} [ \lVert z_{k} \rVert_{C_{T} \dot{H}_{x}^{\frac{5+ \sigma}{2}}}^{l} + \lVert z_{k} \rVert_{C_{T}^{\frac{1}{2} - \delta} \dot{H}_{x}^{\frac{3+ \sigma}{2}}}^{l} ] < \infty. 
\end{equation} 
\end{proposition}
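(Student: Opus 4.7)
The plan is to establish the two moment bounds separately for each $k \in \{1,2\}$, reducing both to Hilbert--Schmidt trace estimates for Fourier multipliers on $\mathbb{T}^{3}$ that can be evaluated explicitly. The key observations I would exploit are that $z_{k}$ is a centered Gaussian process valued in $L_{\sigma}^{2}$, so all $L^{l}(\Omega)$ norms are mutually equivalent by Gaussian hypercontractivity and it suffices to work at the level of second moments; and that the operators $(-\Delta)^{s}$, $e^{-\tau(-\Delta)^{m_{k}}}$, and $G_{k}G_{k}^{\ast}$ all commute and diagonalize simultaneously in the Fourier basis, which makes the trace computations transparent.

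For the spatial bound, I would apply the Ito isometry to \eqref{estimate 75} to obtain
\begin{equation*}
\mathbb{E}^{\mathbf{P}} \bigl[ \lVert z_{k}(t) \rVert_{\dot{H}^{\frac{5+\sigma}{2}}}^{2} \bigr] = \int_{0}^{t} \mathrm{Tr} \bigl( (-\Delta)^{\frac{5+\sigma}{2}} e^{-2(t-r)(-\Delta)^{m_{k}}} G_{k}G_{k}^{\ast} \bigr) dr,
\end{equation*}
diagonalize on the Fourier basis, and use $\int_{0}^{t} e^{-2(t-r) \lvert j \rvert^{2m_{k}}} dr \leq \tfrac{1}{2} \lvert j \rvert^{-2m_{k}}$ to absorb $2m_{k}$ spatial derivatives from the time integration, reducing the right-hand side to $\tfrac{1}{2} \mathrm{Tr}((-\Delta)^{\frac{5+\sigma}{2}-m_{k}} G_{k}G_{k}^{\ast})$. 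Since $\tfrac{5+\sigma}{2}-m_{k} \leq \tfrac{5}{2}-m_{k}+2\sigma$, this is finite by \eqref{estimate 18} uniformly in $t \in [0,T]$. Continuity of the trajectories in $\dot{H}^{(5+\sigma)/2}$ would then follow from the Da Prato--Kwapien--Zabczyk factorization method applied with a factorization exponent in $(0,1/2)$.

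For the temporal Holder bound, for $0 \leq s < t \leq T$ I would use the standard splitting
\begin{equation*}
z_{k}(t) - z_{k}(s) = \bigl( e^{-(t-s)(-\Delta)^{m_{k}}} - \mathrm{Id} \bigr) z_{k}(s) + \int_{s}^{t} e^{-(t-r)(-\Delta)^{m_{k}}} dB_{k}(r)
\end{equation*}
and treat the two pieces separately. On the stochastic-integral piece, the same trace manipulation as in the spatial step, now with the time integral restricted to $[s,t]$, yields $\mathbb{E} \lVert \cdot \rVert_{\dot{H}^{(3+\sigma)/2}}^{2} \lesssim (t-s) \mathrm{Tr}((-\Delta)^{(3+\sigma)/2} G_{k}G_{k}^{\ast}) \lesssim (t-s)$, where the inequality $\tfrac{3+\sigma}{2} \leq \tfrac{5}{2}-m_{k}+2\sigma$ is where the slack $m_{k} < 1$ enters. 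On the semigroup-increment piece, I would use the elementary Fourier-side inequality $\lvert e^{-\tau \lvert j \rvert^{2m_{k}}} - 1 \rvert \leq (\tau \lvert j \rvert^{2m_{k}})^{\alpha}$ valid for any $\alpha \in [0,1]$, with $\alpha = \tfrac{1}{2}-\delta'$ for some $0 < \delta' < \delta$; the resulting loss of $2\alpha m_{k} < 1$ spatial derivatives keeps the estimate inside $\dot{H}^{(5+\sigma)/2}$, which is already controlled from the spatial step. Summing the two pieces gives $\mathbb{E} \lVert z_{k}(t) - z_{k}(s) \rVert_{\dot{H}^{(3+\sigma)/2}}^{2} \lesssim (t-s)^{1-2\delta'}$; Gaussian hypercontractivity upgrades this to arbitrary $L^{l}(\Omega)$ moments, and Kolmogorov's continuity criterion with $l$ sufficiently large then produces a modification in $C_{T}^{1/2-\delta} \dot{H}_{x}^{(3+\sigma)/2}$ with finite $l$-th moment.

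The only genuinely delicate point is the Fourier-exponent bookkeeping: the hypothesis \eqref{estimate 18} is tailored exactly to absorb both the $-2m_{k}$ gained from the heat-type time integration in the spatial bound and the slightly lower regularity $\tfrac{3+\sigma}{2}$ needed in the temporal increment, while $m_{k} < 1$ is what provides a strictly positive Holder exponent $\tfrac{1}{2}-\delta$. No probabilistic tools beyond Ito isometry, Kolmogorov continuity (or the factorization method), and equivalence of Gaussian moments are required.
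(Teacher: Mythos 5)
Your argument is correct, and it is essentially the standard proof underlying this statement: the paper itself does not prove Proposition \ref{Proposition 4.4} but simply invokes \cite[Pro. 4.4]{Y21a} together with \eqref{estimate 18}, and the route you take (It\^{o} isometry for the spatial bound, factorization for continuity in $\dot{H}^{\frac{5+\sigma}{2}}$, the semigroup/stochastic-integral splitting of increments, Gaussian equivalence of moments, and Kolmogorov's criterion) is exactly what such a citation encodes. Your exponent bookkeeping checks out against \eqref{estimate 18}: $\frac{5+\sigma}{2}-m_{k}=\frac{5}{2}-m_{k}+\frac{\sigma}{2}\leq \frac{5}{2}-m_{k}+2\sigma$ for the spatial step, $\frac{3+\sigma}{2}\leq \frac{5}{2}-m_{k}+2\sigma$ for the increment's martingale part (using $m_{k}\leq 1$), and the loss $2\alpha m_{k}=(1-2\delta')m_{k}<1$ in the semigroup part keeps you inside $\dot{H}^{\frac{5+\sigma}{2}}$.

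Two small points deserve correction or precision, though neither breaks the proof. First, your ``key observation'' that $G_{k}G_{k}^{\ast}$ diagonalizes simultaneously with $(-\Delta)$ in the Fourier basis is not among the paper's hypotheses on $G_{k}$ and should not be assumed; fortunately it is also not needed, since $G_{k}G_{k}^{\ast}$ enters only through traces. Phrase the computation as $\mathbb{E}\lVert z_{k}(t)\rVert_{\dot{H}^{s}}^{2}=\int_{0}^{t}\lVert (-\Delta)^{\frac{s}{2}}e^{-(t-r)(-\Delta)^{m_{k}}}\mathbb{P}G_{k}\rVert_{L_{2}(U_{k},L_{\sigma}^{2})}^{2}dr$, expand each $\mathbb{P}G_{k}e_{i}$ in Fourier, apply the scalar bound $\int_{0}^{t}e^{-2(t-r)\lvert j\rvert^{2m_{k}}}dr\leq \frac{1}{2}\lvert j\rvert^{-2m_{k}}$ frequency by frequency, and use cyclicity of the trace; this yields the same conclusion with no commutation assumption. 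Second, the factorization step is not available for an arbitrary exponent in $(0,\frac{1}{2})$: one needs $\int_{0}^{T}\tau^{-2\alpha}\lVert(-\Delta)^{\frac{5+\sigma}{4}}e^{-\tau(-\Delta)^{m_{k}}}\mathbb{P}G_{k}\rVert_{L_{2}}^{2}d\tau<\infty$, which after the same frequency-wise computation requires $2\alpha m_{k}\leq \frac{3\sigma}{2}$, i.e. $\alpha$ small of order $\sigma$; this is precisely where the remaining slack between $\frac{\sigma}{2}$ and $2\sigma$ in \eqref{estimate 18} is spent, so it should be stated explicitly. Finally, since \eqref{estimate 76} is asserted for every $l\in\mathbb{N}$, note that it suffices to prove it for $l$ large and deduce the small-$l$ cases by H\"{o}lder's inequality, and that the Kolmogorov modification coincides with $z_{k}$ itself because $z_{k}$ already has continuous paths in a weaker topology.
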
 
Next, for every $\omega = (\omega_{1}, \omega_{2}) \in \Omega_{0}$ we define 
\begin{subequations}
\begin{align}
M_{1, t, 0}^{\omega} \triangleq& \omega_{1}(t) - \omega_{1}(0) + \int_{0}^{t} \mathbb{P} \text{div} ( \omega_{1} \otimes \omega_{1} - \omega_{2} \otimes \omega_{2})(r) + (-\Delta)^{m_{1}} \omega_{1} (r) dr, \label{estimate 77} \\
M_{2, t,0}^{\omega} \triangleq& \omega_{2}(t) - \omega_{2} (0)  + \int_{0}^{t} \text{div} ( \omega_{1} \otimes \omega_{2} - \omega_{2} \otimes \omega_{1} )(r) + (-\Delta)^{m_{2}} \omega_{2} (r) dr, \label{estimate 78} 
\end{align}
\end{subequations} 
and 
\begin{subequations}
\begin{align}
Z_{1}^{\omega}(t) \triangleq M_{1, t,0}^{\omega} - \int_{0}^{t} \mathbb{P} (-\Delta)^{m_{1}} e^{- (t-r) (-\Delta)^{m_{1}}} M_{1,r,0}^{\omega} dr, \label{estimate 79}\\
Z_{2}^{\omega} (t) \triangleq M_{2,t,0}^{\omega} - \int_{0}^{t} (-\Delta)^{m_{2}} e^{- (t-r) (-\Delta)^{m_{2}}} M_{2,r,0}^{\omega} dr. \label{estimate 80}
\end{align}
\end{subequations} 
If $P$ is a martingale solution to \eqref{stochastic GMHD}, then the mappings $\omega \mapsto M_{k, t,0}^{\omega}$ for both $k \in \{1,2\}$ are $G_{k}G_{k}^{\ast}$-Wiener processes under $P$ and 
\begin{equation}\label{estimate 81}
Z_{1}(t) = \int_{0}^{t} e^{- (t-r) (-\Delta)^{m_{1}}} \mathbb{P} dM_{1,r,0} \hspace{1mm} \text{ and } \hspace{1mm}  Z_{2} (t) = \int_{0}^{t} e^{- (t-r) (-\Delta)^{m_{2}}} dM_{2,r,0}. 
\end{equation} 
Because $M_{k, t,0}^{\omega}$ is a $G_{k}G_{k}^{\ast}$-Wiener process under $P$ for both $k \in \{1,2\}$, Proposition \ref{Proposition 4.4} gives for all $\delta \in (0, \frac{1}{2}), T > 0$, 
\begin{equation}\label{estimate 82} 
Z_{k} \in C_{T} \dot{H}_{x}^{\frac{5+ \sigma}{2}} \cap C_{T}^{\frac{1}{2} - \delta} \dot{H}_{x}^{\frac{3+ \sigma}{2}} \hspace{2mm} \textbf{P}\text{-a.s.} 
\end{equation} 
Now for the Sobolev constant $C_{S} > 0$ such that $\lVert f \rVert_{L^{\infty} (\mathbb{T}^{3})} \leq C_{S} \lVert f \rVert_{\dot{H}^{\frac{3+ \sigma}{2}}(\mathbb{T}^{3})}$ for all $f \in \dot{H}^{\frac{3+ \sigma}{2}} (\mathbb{T}^{3})$ such that $\int_{\mathbb{T}^{3}} f dx = 0$, we define for $\omega \in \Omega_{0}$  
\begin{align}
\tau_{L}^{\lambda} &(\omega) \triangleq \inf\{t \geq 0: C_{S} \max_{k=1,2} \lVert Z_{k}^{\omega} (t) \rVert_{\dot{H}_{x}^{\frac{5+\sigma}{2}}} > (L - \frac{1}{\lambda})^{\frac{1}{4}} \} \nonumber \\
& \wedge \inf \{t \geq 0: C_{S} \max_{k=1,2} \lVert Z_{k}^{\omega} \rVert_{C_{t}^{\frac{1}{2} - \delta} \dot{H}_{x}^{\frac{3+\sigma}{2}}} > (L - \frac{1}{\lambda})^{\frac{1}{2}} \} \wedge L \hspace{1mm} \text{ and } \hspace{1mm} \tau_{L}(\omega) \triangleq \lim_{\lambda \to \infty} \tau_{L}^{\lambda} (\omega) \label{estimate 83} 
\end{align} 
so that $(\tau_{L}^{\lambda})_{\lambda \in \mathbb{N}}$ is non-decreasing in $\lambda$. It follows from \cite[Lem. 3.5]{HZZ19} that $\tau_{L}$ is a $(\mathcal{B}_{t})_{t\geq 0}$-stopping time. We define for $L > 1$ and $\delta \in (0, \frac{1}{12})$, 
\begin{align}
T_{L} \triangleq& \inf\{t \geq 0: C_{S} \max_{k=1,2} \lVert z_{k} (t) \rVert_{\dot{H}_{x}^{\frac{5+ \sigma}{2}}} \geq L^{\frac{1}{4}} \} \nonumber\\
& \wedge \inf\{t \geq 0: C_{S} \max_{k=1,2} \lVert z_{k} \rVert_{C_{t}^{\frac{1}{2} - 2 \delta} \dot{H}_{x}^{\frac{3+ \sigma}{2}}} \geq L^{\frac{1}{2}} \} \wedge L\label{estimate 84}
\end{align}
and realize that $T_{L} > 0$ and $\lim_{L \to\infty} T_{L} = + \infty$ $\textbf{P}$-a.s. due to Proposition \ref{Proposition 4.4}. The stopping time $\mathfrak{t}$ in Theorem \ref{Theorem 2.1} is $T_{L}$ for $L$ sufficiently large. Next, we assume Theorem \ref{Theorem 2.1} on a probability space $(\Omega, \mathcal{F}, (\mathcal{F}_{t})_{t\geq 0}, \textbf{P})$ and denote by $P$ the law of the solution $(u,b)$ constructed from Theorem \ref{Theorem 2.1}, i.e., $P= \mathcal{L}(u,b)$. 
\begin{proposition}\label{Proposition 4.5}
\rm{(cf. \cite[Pro. 3.7]{HZZ19})} Let $\tau_{L}$ be defined by \eqref{estimate 83}. Then $P = \mathcal{L} (u,b)$ is a martingale solution over $[0, \tau_{L}]$ according to Definition \ref{Definition 4.2}. 
\end{proposition}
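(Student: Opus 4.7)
The plan is to verify conditions (M1), (M2), (M3) of Definition \ref{Definition 4.2} for $P = \mathcal{L}(u,b)$ with $s = 0$, $\tau = \tau_{L}$, and $\xi^{\text{in}} = (u^{\text{in}}, b^{\text{in}})$, where $(u, b) = (v + z_{1}, \Xi + z_{2})$ is the solution produced by Theorem \ref{Theorem 2.1}.

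First I would set up a pathwise identification that transports computations between the original probability space and the canonical path space. Since $(u, b)$ solves the stochastic generalized MHD system \eqref{stochastic GMHD} on $(\Omega, \mathcal{F}, \textbf{P})$, substituting $\omega = (u, b)$ into the definitions \eqref{estimate 77}-\eqref{estimate 78} gives $M_{k, t, 0}^{(u,b)} = \int_{0}^{t} G_{k} \, dB_{k}$ $\textbf{P}$-a.s. for $k \in \{1, 2\}$; inserting this into \eqref{estimate 79}-\eqref{estimate 80} and comparing with the OU representations \eqref{estimate 75} yields $Z_{k}^{(u,b)} = z_{k}$ $\textbf{P}$-a.s. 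Consequently, the pullback $\tau_{L} \circ (u, b)$ of the canonical stopping time \eqref{estimate 83} is $\textbf{P}$-a.s. controlled by $T_{L}$ defined in \eqref{estimate 84}, so the bounds from Theorem \ref{Theorem 2.1} apply on $[0, \tau_{L} \circ (u, b)]$.

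Next I would check (M1), (M2), (M3). For (M1), the initial condition is clear because $(u, b)$ starts from the deterministic datum $(u^{\text{in}}, b^{\text{in}})$, and the integrability \eqref{estimate 65} follows from the linear growth estimate on $G_{k}$ imposed in Section \ref{Subsection 3.1} combined with the $L^{\infty}(\Omega)$-bound \eqref{estimate 20}. For (M2), testing \eqref{stochastic GMHD} against $\psi_{i}^{k} \in C^{\infty}(\mathbb{T}^{3}) \cap L_{\sigma}^{2}$ identifies $M_{k, t, 0}^{i}$ evaluated along $(u, b)$ with the It\^o integral $\int_{0}^{t} \langle \psi_{i}^{k}, G_{k}(\xi_{k}) \, dB_{k} \rangle$, a continuous square-integrable $(\mathcal{F}_{t})_{t \geq 0}$-martingale with quadratic variation $\int_{0}^{t} \lVert G_{k}(\xi_{k}(r))^{\ast} \psi_{i}^{k} \rVert_{U_{k}}^{2} \, dr$; optional stopping at $\tau_{L}$ and pushing forward by $(u, b)$ transfer these properties to $M_{k, t \wedge \tau_{L}, 0}^{i}$ under $P$ with respect to $(\mathcal{B}_{t})_{t \geq 0}$. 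For (M3), the estimate \eqref{estimate 68} follows directly from \eqref{estimate 20} together with the mean-zero embedding $\dot{H}_{x}^{\gamma} \hookrightarrow L_{x}^{2}$ on $\mathbb{T}^{3}$.

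The hard part is the pathwise identification $Z_{k}^{(u,b)} = z_{k}$ $\textbf{P}$-a.s. and the resulting coincidence of the two stopping times, because $\tau_{L}$ is built from strict inequalities on $\Omega_{0}$ with thresholds $(L - \tfrac{1}{\lambda})^{1/4}$, $(L - \tfrac{1}{\lambda})^{1/2}$ and H\"older exponent $\tfrac{1}{2} - \delta$ (passed to the limit in $\lambda$), while $T_{L}$ is built from non-strict inequalities with thresholds $L^{1/4}$, $L^{1/2}$ and H\"older exponent $\tfrac{1}{2} - 2\delta$ on $\Omega$. Reconciling these requires the temporal continuity granted by Proposition \ref{Proposition 4.4} and a monotone passage to the limit in $\lambda$ to handle the strict-versus-non-strict and $\delta$-versus-$2\delta$ gaps. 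Once this step is secured, the verification of (M1)-(M3) is essentially mechanical and parallels \cite[Pro. 3.7]{HZZ19}.
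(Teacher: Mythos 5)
Your proposal follows essentially the same route as the paper, whose proof simply defers to \cite[Pro.~3.7]{HZZ19} (and \cite[Pro.~4.5]{Y21a}) and records that the key point is the $\textbf{P}$-a.s. identification of the stopping times, $\tau_{L}(u,b)=T_{L}$ in \eqref{estimate 86}; your pathwise identification $Z_{k}^{(u,b)}=z_{k}$ followed by the verification of (M1)--(M3) is exactly that argument, spelled out in more detail. The only nuance is that the paper insists on the full equality $\tau_{L}(u,b)=T_{L}$ (needed later in \eqref{estimate 85}), whereas for Proposition \ref{Proposition 4.5} itself your ``controlled by $T_{L}$'' plus the coincidence you then sketch suffices, so there is no substantive gap.
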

\begin{proof}[Proof of Proposition \ref{Proposition 4.5}]
The proof is a straight-forward modification of the proofs of \cite[Pro. 3.7]{HZZ19} (\cite[Pro. 4.5]{Y21a} in case of a system of equations) that particularly shows (see \cite[Equ. (3.15)]{HZZ19} and \cite[Equ. (315)]{Y21a}) 
\begin{equation}\label{estimate 86}
\tau_{L} (u,b) = T_{L} \hspace{3mm} \textbf{P}\text{-a.s.}
\end{equation}
\end{proof}
\begin{proposition}\label{Proposition 4.6}
\rm{(cf. \cite[Pro. 3.8]{HZZ19})} Let $\tau_{L}$ be defined by \eqref{estimate 83} and $P = \mathcal{L}(u,b)$ constructed from Theorem \ref{Theorem 2.1}. Then $P \otimes_{\tau_{L}} R$ defined by \eqref{estimate 72} is a martingale solution on $[0,\infty)$ according to Definition \ref{Definition 4.1}. 
\end{proposition}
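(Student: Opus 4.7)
The plan is to invoke the concatenation result in Lemma \ref{Lemma 4.3}. By Proposition \ref{Proposition 4.5}, $P = \mathcal{L}(u,b)$ is a martingale solution over $[0,\tau_{L}]$ according to Definition \ref{Definition 4.2}. Lemma \ref{Lemma 4.2}, combined with the existence part of Proposition \ref{Proposition 4.1}, supplies for every $\omega \in \Omega_{0}$ a measure $Q_{\omega} = \delta_{\omega} \otimes_{\tau_{L}(\omega)} R_{\tau_{L}(\omega), \xi(\tau_{L}(\omega), \omega)}$ that coincides with $\delta_{\omega}$ up to time $\tau_{L}(\omega)$ and whose post-$\tau_{L}(\omega)$ behaviour is a martingale solution started from $\xi(\tau_{L}(\omega),\omega)$ at time $\tau_{L}(\omega)$. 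Thus $P \otimes_{\tau_{L}} R$ in \eqref{estimate 72} is well-defined, and the task reduces to verifying the hypothesis \eqref{estimate 71} of Lemma \ref{Lemma 4.3}, which is the only non-automatic input.

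To establish \eqref{estimate 71}, the first step is to observe that the processes $Z_{k}^{\omega}$ defined in \eqref{estimate 79}--\eqref{estimate 80} depend on $\omega$ only through the trajectory of $\omega$ on $[0,t]$ for any $t$, because the stochastic convolution in \eqref{estimate 81} can be rewritten via integration by parts in terms of $M_{k,\cdot,0}^{\omega}$, which is itself path-measurable. Consequently, for $\omega' \in \Omega_{0}$ with $\omega'(t) = \omega(t)$ for all $t \in [0,\tau_{L}(\omega)]$, the norms appearing inside the infima of \eqref{estimate 83} at $\lambda$-level agree with those for $\omega$ on the corresponding time interval, and taking $\lambda \to \infty$ together with the almost-sure continuity statement \eqref{estimate 82} forces $\tau_{L}(\omega') = \tau_{L}(\omega)$. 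This is exactly the argument used in \cite[Pro. 3.8]{HZZ19} and adapts verbatim to the pair $(Z_{1},Z_{2})$ since the stopping time only sees the maximum over $k \in \{1,2\}$. Combined with \eqref{estimate 69}, this yields a $P$-null set $\mathcal{N}$ outside of which \eqref{estimate 71} holds.

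Given \eqref{estimate 71}, Lemma \ref{Lemma 4.3} furnishes that $P \otimes_{\tau_{L}} R \rvert_{\Omega_{0,\tau_{L}}} = P \rvert_{\Omega_{0,\tau_{L}}}$ and that (M1) and (M2) of Definition \ref{Definition 4.1} are satisfied on $[0,\infty)$. For the moment bound (M3) of Definition \ref{Definition 4.1}, decompose the expectation at $\tau_{L}$ through \eqref{estimate 72}: on $[0,\tau_{L}(\omega)]$ the estimate is inherited from condition (M3) of Definition \ref{Definition 4.2} applied to $P$, while on $[\tau_{L}(\omega),t]$ one applies (M3) of Definition \ref{Definition 4.1} to $R_{\tau_{L}(\omega),\xi(\tau_{L}(\omega),\omega)}$ with the random initial datum $\xi(\tau_{L}(\omega),\omega)$. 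Integrating the resulting estimate against $P(d\omega)$ and using that the constants $C_{t,q}$ in Proposition \ref{Proposition 4.1} may be chosen uniformly over the family $\{R_{s,\xi^{\text{in}}}\}$ produces a bound of the required form with possibly enlarged constants.

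The main obstacle is the verification of \eqref{estimate 71}, i.e., showing that $\tau_{L}$ is essentially determined by the path up to time $\tau_{L}$. The technicality is that $\tau_{L}$ is a limit of the $\tau_{L}^{\lambda}$ and each $\tau_{L}^{\lambda}$ involves norms of $Z_{k}^{\omega}$ that are Hölder-type and therefore not trivially local in time; the argument from \cite[Pro. 3.7--3.8]{HZZ19} handles this by using the fact that $Z_{k}^{\omega}$ inherits path-measurability from $M_{k,\cdot,0}^{\omega}$ and that the defining infima are right-continuous in $\lambda$. All subsequent algebraic manipulations are routine extensions of the Navier--Stokes case to the MHD pair $(\xi_{1},\xi_{2})$.
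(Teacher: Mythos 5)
Your proposal is correct and follows essentially the same route as the paper: Proposition \ref{Proposition 4.5} gives that $P$ is a martingale solution on $[0,\tau_{L}]$, the key step is verifying \eqref{estimate 87} (i.e.\ \eqref{estimate 71}) by the path-determination argument for $\tau_{L}$ adapted from \cite[Pro. 3.8]{HZZ19} to the pair $(Z_{1},Z_{2})$, and then Lemma \ref{Lemma 4.3} yields the conclusion. Your separate verification of (M3) is harmless but redundant, since Lemma \ref{Lemma 4.3} already asserts that $P\otimes_{\tau_{L}}R$ is a martingale solution in the sense of Definition \ref{Definition 4.1}, which includes the moment bound.
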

\begin{proof}[Proof of Proposition \ref{Proposition 4.6}]
The proof is a straight-forward modification of the proofs of \cite[Pro. 3.8]{HZZ19} (\cite[Pro. 4.6]{Y21a} in case of a system of equations) that particular shows that there exists a $P$-measurable set $\mathcal{N} \subset \Omega_{0}$ such that $P(\mathcal{N}) = 0$ and for all $\omega \in \Omega_{0} \setminus \mathcal{N}$, 
\begin{equation}\label{estimate 87}
Q_{\omega} ( \{ \omega' \in \Omega_{0}: \tau_{L} (\omega') = \tau_{L} (\omega) \}) = 1 
\end{equation}
(see \cite[Equ. (3.19)]{HZZ19} and \cite[Equ. (324)]{Y21a}). 
\end{proof}

We re now ready to prove Theorem \ref{Theorem 2.2} assuming Theorem \ref{Theorem 2.1}. 
\begin{proof}[Proof of Theorem \ref{Theorem 2.2}]
We fix $T > 0$ arbitrarily, $K > 1$, and $\kappa \in (0,1)$ such that $\kappa K^{2} \geq 1$, rely on Theorem \ref{Theorem 2.1} and Proposition \ref{Proposition 4.6} to deduce the existence of $L > 1$ and a martingale solution $P \otimes_{\tau_{L}} R$ to \eqref{stochastic GMHD} on $[0, \infty)$ such that $P \otimes_{\tau_{L}} R = P$ on $[0, \tau_{L}]$ where $P =  \mathcal{L} (u,b)$ for the solution $(u,b)$ constructed in Theorem \ref{Theorem 2.1}. Hence, $P \otimes_{\tau_{L}} R$ starts with a deterministic initial condition $\xi^{\text{in}} = (u^{\text{in}}, b^{\text{in}})$ from the proof of Theorem \ref{Theorem 2.1} and satisfies 
\begin{equation}\label{estimate 85}
P \otimes_{\tau_{L} } R ( \{ \tau_{L} \geq T \}) \overset{\eqref{estimate 72} \eqref{estimate 87}}{=} P(\{\tau_{L} \geq T \})  \overset{\eqref{estimate 86}}{=} \textbf{P} ( \{ T_{L} \geq T \}) \overset{\eqref{estimate 19}}{>} \kappa. 
\end{equation} 
This implies 
\begin{equation}\label{estimate 89}
\mathbb{E}^{P \otimes_{\tau_{L} R}} [ \lVert \xi_{2}(T) \rVert_{L_{x}^{2}}^{2} ] \overset{\eqref{estimate 21} \eqref{estimate 85}}{>} \kappa K^{2} [ \lVert u^{\text{in}} \rVert_{L_{x}^{2}}^{ 2} + \lVert b^{\text{in}} \rVert_{L_{x}^{2}} + T \sum_{k=1}^{2} Tr(G_{k}G_{k}^{\ast})]. 
\end{equation} 
On the other hand, it is well-known that a classical Galerkin approximation (e.g., \cite{FR08, Y19}) and \cite[The. 4.2.4]{Z12} in case of a fractional Laplacian) can give another martingale solution $\Theta$ to \eqref{stochastic GMHD} such that 
\begin{equation}\label{estimate 88}
\mathbb{E}^{\Theta} [ \lVert \xi (T) \rVert_{L_{x}^{2}}^{2} ] \leq \lVert \xi^{\text{in}} \rVert_{L_{x}^{2}}^{2} + T \sum_{k=1}^{2} Tr (G_{k}G_{k}^{\ast}). 
\end{equation} 
\end{proof}

Considering \eqref{estimate 73}-\eqref{estimate 74}, for $q \in \mathbb{N}_{0}$ we aim to construct a solution $(v_{q}, \Xi_{q}, \mathring{R}_{q}^{v}, \mathring{R}_{q}^{\Xi})$ to 
\begin{subequations}\label{estimate 104}
\begin{align}
& \partial_{t} v_{q} + (-\Delta)^{m_{1}} v_{q} + \text{div} (( v_{q} + z_{1} ) \otimes (v_{q} + z_{1}) - (\Xi_{q} + z_{2}) \otimes (\Xi_{q} + z_{2} )) + \nabla \pi_{q} = \text{div} \mathring{R}_{q}^{v}, \label{estimate 90} \\
& \partial_{t} \Xi_{q} + (-\Delta)^{m_{2}} \Xi_{q} + \text{div} ((v_{q} + z_{1}) \otimes (\Xi_{q} + z_{2}) - (\Xi_{q} + z_{2}) \otimes (v_{q} + z_{1})) = \text{div} \mathring{R}_{q}^{\Xi}, \label{estimate 91} \\
& \nabla\cdot v_{q} = 0, \hspace{1mm} \nabla\cdot \Xi_{q} = 0, \label{estimate 92} 
\end{align}
\end{subequations} 
where $\mathring{R}_{q}^{v}$ is a symmetric trace-free matrix and $\mathring{R}_{q}^{\Xi}$ a skew-symmetric matrix, called the Reynolds stress and the magnetic Reynolds stress, respectively. We see that $\pi_{q}$ can be deduced as 
\begin{equation*}
\pi_{q} = (-\Delta)^{-1} \text{divdiv} ( ( v_{q} + z_{1}) \otimes (v_{q} + z_{1}) - (\Xi_{q} + z_{2}) \otimes (\Xi_{q} + z_{2}) - \mathring{R}_{q}^{v} )
\end{equation*} 
with $\int_{\mathbb{T}^{3}} \pi_{q} dx = 0$. For any $a \in \mathbb{N}, b \in \mathbb{N}, \beta \in (0,1)$, and $L \geq 1$ to be specified subsequently, we define 
\begin{equation}\label{estimate 93}
\lambda_{q} \triangleq a^{b^{q}}, \hspace{2mm} \delta_{q} \triangleq \lambda_{q}^{-2\beta}, 
\end{equation} 
and 
\begin{equation}\label{estimate 94}
M_{0}(t) \triangleq L^{4} e^{4Lt}. 
\end{equation} 
We see from \eqref{estimate 84} that for any $\delta \in (0, \frac{1}{12}), t \in [0, T_{L}]$, and both $k \in \{1,2\}$, 
\begin{equation}\label{estimate 95}
\lVert z_{k} (t) \rVert_{L_{x}^{\infty}} \leq L^{\frac{1}{4}}, \hspace{3mm} \lVert z_{k} (t) \rVert_{\dot{W}_{x}^{1,\infty}} \leq L^{\frac{1}{4}}, \hspace{1mm} \text{ and } \hspace{1mm} \lVert z_{k} \rVert_{C_{t}^{\frac{1}{2} - 2 \delta} L_{x}^{\infty}} \leq L^{\frac{1}{2}} 
\end{equation} 
by the definition of $C_{S}$. We see that if $b \geq 2$ and 
\begin{equation}\label{estimate 96}
a^{\beta b} > (2\pi)^{3} + 1, 
\end{equation} 
which we will assume hereafter, then $\sum_{1 \leq \iota \leq q} \delta_{\iota}^{\frac{1}{2}} < \frac{1}{2}$ for any $q \in \mathbb{N}$ which guarantees the second inequalities in \eqref{estimate 97}-\eqref{estimate 98} below. In fact, to obtain only $\sum_{1 \leq \iota \leq q} \delta_{\iota}^{\frac{1}{2}} < \frac{1}{2}$ for any $q \in \mathbb{N}$, we only need $a^{\beta b} > 3$; however, we will need \eqref{estimate 96} subsequently in the computation of \eqref{estimate 125}. We set the convention that $\sum_{1 \leq \iota \leq 0} \triangleq 0$. We fix a universal constant $c_{v} > 0$ determined by \eqref{estimate 156}; for this fixed $c_{v} > 0$, let $c_{\Xi}> 0$ be another universal constant determined by \eqref{estimate 157}. For such fixed $c_{v}, c_{\Xi} > 0$, we assume the following bounds over $t \in [0, T_{L}]$ inductively: 
\begin{subequations}\label{estimate 105}
\begin{align}
& \lVert v_{q} \rVert_{C_{t}L_{x}^{2}} \leq M_{0}(t)^{\frac{1}{2}} (1+ \sum_{1 \leq \iota \leq q} \delta_{\iota}^{\frac{1}{2}}) \leq 2 M_{0}(t)^{\frac{1}{2}}, \label{estimate 97}\\
& \lVert \Xi_{q} \rVert_{C_{t}L_{x}^{2}} \leq M_{0}(t)^{\frac{1}{2}} (1+ \sum_{1 \leq \iota \leq q} \delta_{\iota}^{\frac{1}{2}}) \leq 2 M_{0}(t)^{\frac{1}{2}}, \label{estimate 98}\\
& \lVert v_{q} \rVert_{C_{t,x}^{1}} \leq M_{0}(t)^{\frac{1}{2}} \lambda_{q}^{4}, \hspace{9mm} \lVert \Xi_{q} \rVert_{C_{t,x}^{1}} \leq M_{0}(t)^{\frac{1}{2}} \lambda_{q}^{4}, \label{estimate 99}\\
& \lVert \mathring{R}_{q}^{v} \rVert_{C_{t}L_{x}^{1}} \leq c_{v} M_{0}(t) \delta_{q+1}, \hspace{3mm} \lVert \mathring{R}_{q}^{\Xi} \rVert_{C_{t}L_{x}^{1}} \leq c_{\Xi} M_{0}(t) \delta_{q+1}. \label{estimate 100}
\end{align}
\end{subequations} 
The operators $\mathcal{R}$ and $\mathcal{R}^{\Xi}$ in the following statement are defined in Lemma \ref{divergence inverse operator}. 
\begin{proposition}\label{Proposition 4.7}
Let 
\begin{equation}\label{estimate 101}
v_{0}(t,x) \triangleq \frac{L^{2} e^{2L t}}{(2\pi)^{\frac{3}{2}}} 
\begin{pmatrix}
\sin(x^{3}) \\
0\\
0
\end{pmatrix} \hspace{2mm} 
\text{ and } \hspace{2mm} 
\Xi_{0}(t,x) \triangleq \frac{L^{2} e^{2L t}}{(2\pi)^{3}} 
\begin{pmatrix}
\sin(x^{3})\\
\cos(x^{3}) \\
0 
\end{pmatrix}.  
\end{equation}
Then, together with
\begin{subequations} 
\begin{align}
\mathring{R}_{0}^{v} (t,x) &\triangleq \frac{2L^{3} e^{2L t}}{(2\pi)^{\frac{3}{2}}} 
\begin{pmatrix}
0 & 0 & - \cos(x^{3}) \\
0 & 0 & 0 \\
-\cos(x^{3}) & 0 & 0 
\end{pmatrix} \label{estimate 102}  \\
&+ ( \mathcal{R} (-\Delta)^{m_{1}} v_{0} + v_{0} \mathring{\otimes} z_{1}+  z_{1} \mathring{\otimes} v_{0} + z_{1} \mathring{\otimes} z_{1} - \Xi_{0}\mathring{\otimes} z_{2} - z_{2} \mathring{\otimes} \Xi_{0} - z_{2} \mathring{\otimes} z_{2}) (t,x), \nonumber \\
\mathring{R}_{0}^{\Xi} (t,x) &\triangleq \frac{2L^{3} e^{2L t}}{(2\pi)^{3}} 
\begin{pmatrix}
0 & 0 & -\cos(x^{3}) \\
0 & 0 & \sin(x^{3}) \\
\cos(x^{3}) & -\sin(x^{3}) & 0 
\end{pmatrix}   \label{estimate 103} \\
&+ (\mathcal{R}^{\Xi} (-\Delta)^{m_{2}} \Xi_{0} + v_{0} \otimes z_{2} + z_{1} \otimes \Xi_{0} + z_{1} \otimes z_{2} - \Xi_{0} \otimes z_{1} - z_{2} \otimes v_{0} - z_{2} \otimes z_{1}) (t,x), \nonumber 
\end{align} 
\end{subequations}
$(v_{0}, \Xi_{0})$ satisfy \eqref{estimate 104} and \eqref{estimate 105} at level $q= 0$ provided 
\begin{subequations} \label{estimate 106}
\begin{align}
&(40)^{\frac{4}{3}} < L, \label{estimate 106a}\\
& ((2\pi)^{3} + 1)^{2} 20 \pi^{\frac{3}{2}} \max\{ \frac{1}{c_{v}}, \frac{1}{c_{\Xi}} \} < a^{2\beta b} 20 \pi^{\frac{3}{2}} \max \{ \frac{1}{c_{v}}, \frac{1}{c_{\Xi}} \} \leq L \leq \frac{ (2\pi)^{\frac{3}{2}} a^{4} -2}{2}, \label{estimate 106b}
\end{align}
\end{subequations} 
where the inequality $((2\pi)^{3} + 1)^{2} < a^{2\beta b}$ in \eqref{estimate 106b} is assumed to justify \eqref{estimate 96}. Finally, $v_{0}(0,x), \Xi_{0}(0,x), \mathring{R}_{0}^{v}(0,x)$, and $\mathring{R}_{0}^{\Xi}(0,x)$ are all deterministic. 
\end{proposition}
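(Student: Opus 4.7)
The plan is to verify four things: (i) that the Reynolds system (4.15a)--(4.15c) is satisfied at level $q=0$; (ii) the $L^2$ and $C^1_{t,x}$ inductive bounds (4.16a)--(4.16c); (iii) the $L^1$ Reynolds stress bounds (4.16d); and (iv) determinism of the initial data.

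For (i), the key observation is that both $v_0$ and $\Xi_0$ have a vanishing third component and depend on $x$ only through $x^3$. This immediately makes them divergence-free and forces the three nonlinear self-interaction terms $\mathrm{div}(v_0\otimes v_0)$, $\mathrm{div}(\Xi_0\otimes\Xi_0)$, and $\mathrm{div}(v_0\otimes \Xi_0 - \Xi_0\otimes v_0)$ to vanish identically, since every contracted index produces either $v_0^3=0$ or $\Xi_0^3=0$ or a derivative in an absent variable. What remains of (4.15a)--(4.15b) is linear. I would then compute the row-wise divergence of the explicit matrices in (4.28)--(4.29); a short calculation shows that these produce exactly $\partial_t v_0$ and $\partial_t \Xi_0$. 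The $\mathcal{R}(-\Delta)^{m_k}$ contributions in $\mathring{R}_0^v$ and $\mathring{R}_0^\Xi$ then cancel the fractional-diffusion terms by the defining property of the inverse-divergence operators from Lemma \ref{divergence inverse operator}, while the remaining trace-free (respectively skew-symmetric) tensor products account for the cross terms with $z_1$ and $z_2$; any pure-gradient residue is absorbed into $\pi_0$ via the formula just below (4.15c).

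For (ii), direct computation yields $\|v_0(t)\|_{L^2_x}^2 = \tfrac12 L^4 e^{4Lt}$ and $\|\Xi_0(t)\|_{L^2_x}^2 = L^4 e^{4Lt}/(2\pi)^3$, both $\leq M_0(t)$, which gives (4.16a)--(4.16b) at $q=0$ since the inductive sum is empty by convention. For (4.16c), the $C^1_{t,x}$ norms of $v_0$ and $\Xi_0$ are controlled by $L^2 e^{2Lt}(2+2L)/(2\pi)^{3/2}$, and the upper bound on $L$ in (4.30b) is precisely what is needed to dominate this by $M_0(t)^{1/2}\lambda_0^4 = L^2 e^{2Lt} a^4$ (the same constraint is weaker for $\Xi_0$ because of the larger denominator $(2\pi)^3$).

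Item (iii) is the main technical burden. Each Reynolds stress splits into three blocks: the explicit matrix, of order $L^3 e^{2Lt}$ in $C_t L^1_x$ up to universal constants; the $\mathcal{R}$ or $\mathcal{R}^\Xi$ applied to $(-\Delta)^{m_k}$ of a smooth function, controllable by boundedness of these inverse-divergence operators on mean-zero $L^p$ spaces (Lemma \ref{divergence inverse operator}) combined with $m_k<1$ and the explicit form of $v_0,\Xi_0$; and cross terms involving $z_1, z_2$, which on $[0,T_L]$ are bounded by (4.22) to give $\|z_k\|_{L^\infty_{t,x}}\leq L^{1/4}$, yielding products of size at most $L^{9/4} e^{2Lt}$ and $L^{1/2}$. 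Aggregating, the dominant contribution is $O(L^3 e^{2Lt})$, which must be shown to be $\leq c_v M_0(t)\delta_1 = c_v L^4 e^{4Lt} a^{-2\beta b}$ (and analogously with $c_\Xi$ for $\mathring{R}_0^\Xi$); this reduces to an inequality of the form $a^{2\beta b} \leq c_v L/C$ for some geometric constant $C$, ensured by the lower bound on $L$ in (4.30b), in which the factor $20\pi^{3/2}\max\{1/c_v,1/c_\Xi\}$ is chosen to absorb every numerical constant arising in the three blocks (with the absolute lower bound (4.30a) providing any additional slack). I expect this bookkeeping to be the hardest part, because it requires tracking exactly which power of $L$ and $e^{2Lt}$ each of the six cross tensor products contributes and checking that every single one has the factor $L^4 e^{4Lt}$ to spare. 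Finally, (iv) is immediate because $z_1(0,\cdot) \equiv z_2(0,\cdot) \equiv 0$, so every $z_k$-cross term vanishes at $t=0$ and the remaining expressions are manifestly deterministic functions of $(t,x)$.
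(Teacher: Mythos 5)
Your proposal is correct and follows essentially the same route as the paper's proof: explicit computation of the $L^2$ and $C^{1}_{t,x}$ norms, the splitting of each Reynolds stress into the explicit matrix (whose divergence is exactly $\partial_t v_0$, resp. $\partial_t\Xi_0$), the inverse-divergence of the fractional diffusion (handled via the eigenfunction property $\Delta v_0=-v_0$, $\Delta\Xi_0=-\Xi_0$ together with Lemma \ref{divergence inverse operator}), and the $z_k$-cross terms bounded through \eqref{estimate 95}, with \eqref{estimate 106a}--\eqref{estimate 106b} absorbing the constants and $z_k(0,\cdot)\equiv 0$ giving determinism at $t=0$. The bookkeeping you flag as the hardest part is exactly the computation \eqref{estimate 108}--\eqref{estimate 115} in the paper, and your estimates match it.
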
 

\begin{proof}[Proof of Proposition \ref{Proposition 4.7}]
First, $v_{0}$ and $\Xi_{0}$ are both mean-zero and divergence-free; thus, $\mathcal{R}(-\Delta)^{m_{1}} v_{0}$ and $\mathcal{R}^{\Xi} (-\Delta)^{m_{2}} \Xi_{0}$ are well-defined. Moreover, as $\mathcal{R} (-\Delta)^{m_{1}} v_{0}$ is trace-free and symmetric by Lemma \ref{divergence inverse operator}, we see that $\mathring{R}_{0}^{v}$ is trace-free and symmetric. Similarly, $\mathcal{R}^{\Xi} (-\Delta)^{m_{2}} \Xi_{0}$ is skew-symmetric by Lemma \ref{divergence inverse operator} and this implies that $\mathring{R}_{0}^{\Xi}$ is skew-symmetric. Moreover, it can be readily verified that \eqref{estimate 104} is satisfied with $\pi_{0} \triangleq \frac{2}{3} v_{0} \cdot z_{1} + \frac{ \lvert z_{1} \rvert^{2}}{3} - \frac{2}{3} \Xi_{0} \cdot z_{2} - \frac{ \lvert z_{2} \rvert^{2}}{3}$. Next, we compute 
\begin{equation}\label{estimate 109}
\lVert v_{0}(t) \rVert_{L_{x}^{2}}  = \frac{ M_{0}(t)^{\frac{1}{2}}}{\sqrt{2}} \leq M_{0}(t)^{\frac{1}{2}} \hspace{1mm} \text{ and } \hspace{1mm} \lVert \Xi_{0}(t) \rVert_{L_{x}^{2}} = \frac{M_{0}(t)^{\frac{1}{2}}}{(2\pi)^{\frac{3}{2}}} \leq M_{0}(t)^{\frac{1}{2}}, 
\end{equation} 
which verifies \eqref{estimate 97}-\eqref{estimate 98} at level $q= 0$. We also compute 
\begin{subequations}
\begin{align}
&\lVert v_{0} \rVert_{C_{t,x}^{1}} = \frac{L^{2} e^{2L t} 2(L+1)}{(2\pi)^{\frac{3}{2}}} \overset{\eqref{estimate 106b}}{\leq} M_{0}(t)^{\frac{1}{2}} \lambda_{0}^{4},\\
& \lVert \Xi_{0} \rVert_{C_{t,x}^{1}} = \frac{ L^{2} e^{2L t} (1 + \sqrt{2} + 2L)}{(2\pi)^{3}} \overset{\eqref{estimate 106b}}{\leq} M_{0}(t)^{\frac{1}{2}} \lambda_{0}^{4}, 
\end{align}
\end{subequations}
which verifies \eqref{estimate 99} at level $q=0$. Next, we verify \eqref{estimate 100} at level $q=0$. First, we can compute directly from \eqref{estimate 102} 
\begin{align}
& \lVert \mathring{R}_{0}^{v} (t) \rVert_{L_{x}^{1}} \leq 16 \pi^{\frac{1}{2}} L^{3} e^{2L t}+ \lVert \mathcal{R} (-\Delta)^{m_{1}} v_{0}(t) \rVert_{L_{x}^{1}} \nonumber \\
&\hspace{5mm} + (\lVert v_{0} \mathring{\otimes} z_{1} \rVert_{L_{x}^{1}} + \lVert z_{1} \mathring{\otimes} v_{0} \rVert_{L_{x}^{1}} + \lVert \Xi_{0} \mathring{\otimes} z_{2} \rVert_{L_{x}^{1}} + \lVert z_{2} \mathring{\otimes} \Xi_{0} \rVert_{L_{x}^{1}} + \lVert z_{1} \mathring{\otimes} z_{1} \rVert_{L_{x}^{1}} + \lVert z_{2} \mathring{\otimes} z_{2} \rVert_{L_{x}^{1}})(t). \label{estimate 108}
\end{align} 
We use the fact that $\Delta v_{0} = -v_{0}$ and directly compute using \eqref{estimate 107} and \eqref{estimate 109} 
\begin{equation}\label{estimate 110}
\lVert \mathcal{R} (-\Delta)^{m_{1}} v_{0}(t) \rVert_{L_{x}^{1}}  \leq ( 2\pi)^{\frac{3}{2}} 18 \lVert (-\Delta)^{m_{1} - \frac{1}{2}} v_{0}(t) \rVert_{L_{x}^{2}} \leq  36 \sqrt{2} \pi^{\frac{3}{2}} \lVert v_{0}(t) \rVert_{L^{2}} =  36 \pi^{\frac{3}{2}} M_{0}(t)^{\frac{1}{2}}. 
\end{equation} 
Next, straight-forward computations using H$\ddot{\mathrm{o}}$lder's inequalities give us 
\begin{align}
& (\lVert v_{0} \mathring{\otimes} z_{1} \rVert_{L_{x}^{1}} + \lVert z_{1} \mathring{\otimes} v_{0} \rVert_{L_{x}^{1}} + \lVert \Xi_{0} \mathring{\otimes} z_{2} \rVert_{L_{x}^{1}} + \lVert z_{2} \mathring{\otimes} \Xi_{0} \rVert_{L_{x}^{1}} + \lVert z_{1} \mathring{\otimes} z_{1} \rVert_{L_{x}^{1}} + \lVert z_{2} \mathring{\otimes} z_{2} \rVert_{L_{x}^{1}})(t)   \nonumber\\
\overset{\eqref{estimate 109} \eqref{estimate 95} }{\leq}&  40 M_{0}(t)^{\frac{1}{2}} L^{\frac{1}{4}} \pi^{\frac{3}{2}} + 20 M_{0}(t)^{\frac{1}{2}} L^{\frac{1}{4}} + 160 \pi^{3} L^{\frac{1}{2}}.  \label{estimate 111}
\end{align}
Applying \eqref{estimate 110}-\eqref{estimate 111} to \eqref{estimate 108} gives us 
\begin{align*}
\lVert \mathring{R}_{0}^{v} (t) \rVert_{L_{x}^{1}} \overset{\eqref{estimate 110} \eqref{estimate 111} \eqref{estimate 108} \eqref{estimate 106a}}{\leq} 20 \pi^{\frac{3}{2}} L^{3} e^{2L t} \overset{\eqref{estimate 106b}}{\leq}  c_{v} M_{0}(t) \delta_{1}. 
\end{align*} 
Similarly, we compute from \eqref{estimate 103}  
\begin{align}
& \lVert \mathring{R}_{0}^{\Xi} (t) \rVert_{L_{x}^{1}} \leq \frac{16 L^{3} e^{2L t}}{\pi} + \lVert \mathcal{R}^{\Xi} (-\Delta)^{m_{2}} \Xi_{0}(t) \rVert_{L_{x}^{1}}  \label{estimate 113}\\
& \hspace{5mm} + (\lVert v_{0} \otimes z_{2} \rVert_{L_{x}^{1}} + \lVert z_{1} \otimes \Xi_{0} \rVert_{L_{x}^{1}} + \lVert z_{1} \otimes z_{2} \rVert_{L_{x}^{1}} + \lVert \Xi_{0} \otimes z_{1} \rVert_{L_{x}^{1}} + \lVert z_{2} \otimes v_{0} \rVert_{L_{x}^{1}} + \lVert z_{2} \otimes z_{1} \rVert_{L_{x}^{1}}) (t). \nonumber 
\end{align} 
As $\Delta \Xi_{0} = -\Xi_{0}$, similarly to \eqref{estimate 110} we can estimate via \eqref{estimate 112} and \eqref{estimate 109}, 
\begin{equation}\label{estimate 114} 
 \lVert \mathcal{R}^{\Xi} (-\Delta)^{m_{2}} \Xi_{0}(t) \rVert_{L_{x}^{1}} 
\leq  6 (2\pi)^{\frac{3}{2}} \lVert (-\Delta)^{m_{2} - \frac{1}{2}} \Xi_{0}(t) \rVert_{L_{x}^{2}} \leq 6 (2\pi)^{\frac{3}{2}} \lVert \Xi_{0}(t) \rVert_{L_{x}^{2}}  =  6 M_{0}(t)^{\frac{1}{2}}. 
\end{equation}
Finally, we estimate 
\begin{align}
& (\lVert v_{0} \otimes z_{2} \rVert_{L_{x}^{1}} + \lVert z_{1} \otimes \Xi_{0} \rVert_{L_{x}^{1}} + \lVert z_{1} \otimes z_{2} \rVert_{L_{x}^{1}} + \lVert \Xi_{0} \otimes z_{1} \rVert_{L_{x}^{1}} + \lVert z_{2} \otimes v_{0} \rVert_{L_{x}^{1}} + \lVert z_{2} \otimes z_{1} \rVert_{L_{x}^{1}})(t) \nonumber \\
\overset{\eqref{estimate 109}\eqref{estimate 95}}{\leq}& 36 \pi^{\frac{3}{2}} M_{0}(t)^{\frac{1}{2}} L^{\frac{1}{4}} + 18 L^{\frac{1}{4}} M_{0}(t)^{\frac{1}{2}} + 144 \pi^{3} L^{\frac{1}{2}}. \label{estimate 115}
\end{align} 
Thus, applying \eqref{estimate 114}-\eqref{estimate 115} to \eqref{estimate 113} gives us 
\begin{align}
\lVert \mathring{R}_{0}^{\Xi} (t) \rVert_{L_{x}^{1}} \overset{\eqref{estimate 113} \eqref{estimate 114} \eqref{estimate 115} \eqref{estimate 106a}}{\leq} 20 \pi^{\frac{3}{2}} M_{0}(t) L^{-1} \overset{\eqref{estimate 106b}}{\leq}  c_{\Xi} M_{0}(t) \delta_{1}. 
\end{align} 
Finally, it is clear that $v_{0} (0,x)$ and $\Xi_{0}(0,x)$ are both deterministic. As $z_{1}(0,x) \equiv z_{2} (0,x) \equiv 0$ by \eqref{estimate 37}-\eqref{estimate 38}, we see that $\mathring{R}_{0}^{v} (0,x)$ and $\mathring{R}_{0}^{\Xi} (0,x)$ are both deterministic. 
\end{proof}

\begin{proposition}\label{Proposition 4.8}
Let $L$ satisfy 
\begin{equation}\label{estimate 116}
\max \{ (40)^{\frac{4}{3}}, (( 2\pi)^{3} + 1)^{2} 20 \pi^{\frac{3}{2}} \max \{ \frac{1}{c_{v}}, \frac{1}{c_{\Xi}} \} \} < L 
\end{equation} 
and suppose that $(v_{q}, \Xi_{q}, \mathring{R}_{q}^{v}, \mathring{R}_{q}^{\Xi})$ are $(\mathcal{F}_{t})_{t\geq 0}$-adapted processes that solve \eqref{estimate 104} and satisfy \eqref{estimate 105}. Then there exist a choice of parameters $a, b,$ and $\beta$ such that \eqref{estimate 106} is fulfilled and $(\mathcal{F}_{t})_{t\geq 0}$-adapted processes $(v_{q+1}, \Xi_{q+1}, \mathring{R}_{q+1}^{v}, \mathring{R}_{q+1}^{\Xi})$ that satisfy \eqref{estimate 104} and \eqref{estimate 105} at level $q+1$, and for all $t \in [0, T_{L}]$, 
\begin{equation}\label{estimate 117}
\lVert v_{q+1}(t) - v_{q}(t) \rVert_{L_{x}^{2}} \leq M_{0}(t)^{\frac{1}{2}} \delta_{q+1}^{\frac{1}{2}} \hspace{1mm} \text{ and } \hspace{1mm} \lVert \Xi_{q+1}(t) - \Xi_{q}(t) \rVert_{L_{x}^{2}} \leq M_{0}(t)^{\frac{1}{2}} \delta_{q+1}^{\frac{1}{2}}. 
\end{equation} 
Finally, if $(v_{q}, \Xi_{q}, \mathring{R}_{q}^{v}, \mathring{R}_{q}^{\Xi}) (0,x)$ are deterministic, then so are $(v_{q+1}, \Xi_{q+1}, \mathring{R}_{q+1}^{v}, \mathring{R}_{q+1}^{\Xi})(0,x)$. 
\end{proposition}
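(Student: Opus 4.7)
My plan is to carry out one inductive step of a Buckmaster--Vicol-type convex integration scheme adapted to the coupled MHD system, with velocity and magnetic perturbations built from the two geometric decompositions in Lemmas \ref{Lemma 3.1}--\ref{Lemma 3.2} and with the stochastic dependence handled pathwise through $z_1, z_2$. First I would mollify $(v_q, \Xi_q, \mathring{R}_q^v, \mathring{R}_q^\Xi)$ and $(z_1, z_2)$ in space and time at scale $l$, a negative power of $\lambda_{q+1}$, using the one-sided mollifier $\vartheta_l$ of \eqref{estimate 292} whose support lies in $\mathbb{R}_+$. This simultaneously avoids loss of derivatives, preserves $(\mathcal{F}_t)_{t\ge 0}$-adaptedness, and leaves all values at $t=0$ unchanged, hence deterministic.

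The principal perturbations are then built from the intermittent flows $\phi_\xi, \varphi_\xi$ of Section \ref{Subsection 3.2}. For the magnetic perturbation I would set $d_{q+1}^p \triangleq \sum_{\xi \in \Lambda_\Xi} a_{\xi, \Xi} \phi_\xi \varphi_\xi \xi_2$, with amplitudes $a_{\xi, \Xi}$ obtained by applying Lemma \ref{Lemma 3.1} to a suitably rescaled version of $-\mathring{R}_l^\Xi$. For the velocity perturbation I would split $w_{q+1}^p \triangleq \sum_{\xi \in \Lambda_\Xi} a_{\xi, \Xi} \phi_\xi \varphi_\xi \xi + \sum_{\xi \in \Lambda_v} a_{\xi, v} \phi_\xi \varphi_\xi \xi$: the first piece is coupled to $d_{q+1}^p$ via the same amplitudes $a_{\xi, \Xi}$, so that the zero-frequency part of $w_{q+1}^p \otimes d_{q+1}^p - d_{q+1}^p \otimes w_{q+1}^p$ gives $\sum a_{\xi, \Xi}^2 (\xi \otimes \xi_2 - \xi_2 \otimes \xi)$ and cancels $-\mathring{R}_l^\Xi$ by Lemma \ref{Lemma 3.1} and the normalization $\mathbb{P}_{=0}(\phi_\xi^2\varphi_\xi^2)=1$ from \eqref{estimate 57}; the second piece has amplitudes $a_{\xi, v}$ determined via Lemma \ref{Lemma 3.2} applied to a rescaled matrix that absorbs $\mathring{R}_l^v$ together with the extra symmetric contributions from $d_{q+1}^p \otimes d_{q+1}^p$ and the $\Lambda_\Xi$ part of $w_{q+1}^p \otimes w_{q+1}^p$. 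The disjointness $\Lambda_v \cap \Lambda_\Xi = \emptyset$ and $\xi_1 \ne \xi_1'$ for $\xi \ne \xi'$ from Remark \ref{Remark 3.1} keep the two cancellations independent. I then add divergence-free correctors $w_{q+1}^c, d_{q+1}^c$ and temporal correctors $w_{q+1}^t, d_{q+1}^t$ of the standard form $\mu^{-1} \mathbb{P}\mathbb{P}_{\ne 0} \sum a_\xi^2 \phi_\xi^2 \varphi_\xi^2 \xi$, which exploit $\xi \cdot \nabla \phi_\xi = \mu^{-1} \partial_t \phi_\xi$ from \eqref{estimate 57} to remove the leading high-time-frequency error, and set $v_{q+1} \triangleq v_l + w_{q+1}^p + w_{q+1}^c + w_{q+1}^t$ and $\Xi_{q+1} \triangleq \Xi_l + d_{q+1}^p + d_{q+1}^c + d_{q+1}^t$.

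The estimate \eqref{estimate 117} would follow from the $L^2$-bound on $w_{q+1}^p, d_{q+1}^p$ via $\mathbb{P}_{=0}(\phi_\xi^2\varphi_\xi^2)=1$, the orthogonality of the intermittent flows across distinct $\xi$, and $L^\infty$-control of the amplitudes (of order $M_0^{1/2}\delta_{q+1}^{1/2}$ from the $C^0$-bounds on $\mathring{R}_l^{v,\Xi}$), with the correctors shown via \eqref{estimate 175} to be smaller than the principal parts by a negative power of $\lambda_{q+1}$. The bounds \eqref{estimate 97}--\eqref{estimate 99} at level $q+1$ then follow by summing a geometric series in $\delta_\iota^{1/2}$ and from the derivative estimates \eqref{estimate 175}. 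The new Reynolds stresses $\mathring{R}_{q+1}^v$ and $\mathring{R}_{q+1}^\Xi$ are defined by applying $\mathcal{R}$ (with symmetric trace-free output) and $\mathcal{R}^\Xi$ (with skew-symmetric output, as required) to the residuals obtained by inserting $(v_{q+1}, \Xi_{q+1})$ into \eqref{estimate 90}--\eqref{estimate 91} at level $q+1$. These residuals decompose into the usual linear error $(-\Delta)^{m_k}$ on the perturbations, a mollification error from $v_q \rightsquigarrow v_l$, $\mathring{R}_q^{v,\Xi}\rightsquigarrow \mathring{R}_l^{v,\Xi}$, $z_k \rightsquigarrow z_{k,l}$, a Nash/flow error, the stochastic cross error between $(v_l+z_{1,l}, \Xi_l+z_{2,l})$ and the perturbations, and the oscillation error from the non-zero-frequency part of the quadratic perturbation terms. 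Each is estimated in $C_t L_x^1$ using \eqref{estimate 175} and the $C_{t,x}^N$-bounds on $a_{\xi, v}, a_{\xi, \Xi}$ obtained from Sobolev estimates on $\mathring{R}_l^{v,\Xi}$.

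To close the induction I would fix an ansatz $\sigma = \lambda_{q+1}^{-\alpha_1}$, $r = \lambda_{q+1}^{-\alpha_2}$, $\mu = \lambda_{q+1}^{\alpha_3}$, $l = \lambda_{q+1}^{-\alpha_4}$ with $0 < \alpha_2 < \alpha_1 < \alpha_3$ consistent with \eqref{estimate 50} and \eqref{estimate 53}, then choose $\beta > 0$ small and $b \in \mathbb{N}$ large so that the total stresses at level $q+1$ are bounded by $c_v M_0(t) \delta_{q+2}$ and $c_\Xi M_0(t) \delta_{q+2}$ respectively, and finally $a \in \mathbb{N}$ large so that \eqref{estimate 96} and the standing thresholds inherited from Proposition \ref{Proposition 4.7} hold. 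The main technical obstacle is the linear error $\mathcal{R}(-\Delta)^{m_k} w_{q+1}^p$, whose size scales like $\lambda_{q+1}^{2m_k - 1}$ times the $L^2$-size of $w_{q+1}^p$; the inverse divergence gains only one factor of $\lambda_{q+1}^{-1}$, so the net cost is $\lambda_{q+1}^{2m_k - 2}$, which is strictly negative precisely because $m_k < 1$. The argument therefore closes, but narrowly, forcing $\beta$ very small and tightly constraining the ansatz. A secondary technical point is the MHD-mixed oscillation term from $w_{q+1}^p \otimes d_{q+1}^p - d_{q+1}^p \otimes w_{q+1}^p$, where Remark \ref{Remark 3.1} is essential in ruling out accidental low-frequency collisions with the cancellation designed for $\mathring{R}_l^v$. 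Finally, $(\mathcal{F}_t)_{t\ge 0}$-adaptedness propagates because every step is a pathwise deterministic functional of $(v_q, \Xi_q, \mathring{R}_q^{v,\Xi}, z_1, z_2)$ built from mollification, algebraic composition, and Fourier multipliers, and determinism at $t = 0$ follows from $\mathrm{supp}\,\vartheta_l \subset \mathbb{R}_+$ together with $z_k(0) \equiv 0$ from \eqref{estimate 37}--\eqref{estimate 38}.
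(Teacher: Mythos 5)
Your proposal follows essentially the same route as the paper's proof: one-sided space-time mollification at scale $l$ to preserve adaptedness and the value at $t=0$, magnetic amplitudes from Lemma \ref{Lemma 3.1} applied to $-\mathring{R}_{l}^{\Xi}/\rho_{\Xi}$, the velocity amplitudes from Lemma \ref{Lemma 3.2} applied to a matrix that absorbs $\mathring{R}_{l}^{v}$ together with the $\Lambda_{\Xi}$ self-interactions (this is exactly the paper's $\mathring{G}^{\Xi}$ in \eqref{estimate 145} entering \eqref{estimate 153}), curl-curl and temporal correctors, and the same linear/corrector/oscillation/commutator splitting of the new stresses, closed by taking $\beta$ small and $a$ large.

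One caveat on your accounting of the "main technical obstacle": the inverse divergence applied to the dissipative term gives a cost of $\lambda_{q+1}^{2m_k-1}$ (one derivative gained), not $\lambda_{q+1}^{2m_k-2}$, so for $m_k\in(\tfrac12,1)$ this factor alone is growing and the error does \emph{not} close on $L^2$-based bookkeeping. The missing factor of (almost) $\lambda_{q+1}^{-1}$ comes from intermittency: the stress is measured in $C_tL_x^{1}$ via $L_x^{p^{\ast}}$ with $p^{\ast}$ close to $1$ as in \eqref{p ast}, where the building blocks carry the small factors $r^{\frac{1}{p^{\ast}}-\frac12}\sigma^{\frac{1}{p^{\ast}}-\frac12}$, and this is precisely why $\eta$ must be tied to $1-\max\{m_{1}^{\ast},m_{2}^{\ast}\}$ as in \eqref{eta} (compare \eqref{estimate 232} and \eqref{estimate 249}). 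Since you do set up the intermittent flows and plan $L^1$-estimates through \eqref{estimate 175}, this is a misattribution in the heuristic rather than a fatal flaw, but as stated it would wrongly suggest the scheme closes for $m_k<1$ without intermittency, which already fails for the Navier--Stokes part of the system.
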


Taking Proposition \ref{Proposition 4.8} for granted, we are ready to prove Theorem \ref{Theorem 2.1}.
\begin{proof}[Proof of Theorem \ref{Theorem 2.1}]
Given any $T> 0, K > 1$, and $\kappa \in (0,1)$, starting from $(v_{0}, \Xi_{0}, \mathring{R}_{0}^{v}, \mathring{R}_{0}^{\Xi})$ in Proposition \ref{Proposition 4.7}, by taking $L > 0$ sufficiently large that satisfies \eqref{estimate 116}, Proposition \ref{Proposition 4.8} gives us $(v_{q}, \Xi_{q}, \mathring{R}_{q}^{v}, \mathring{R}_{q}^{\Xi})$ for all $q \in \mathbb{N}$ that satisfy \eqref{estimate 104}, \eqref{estimate 105}, and \eqref{estimate 117}. Then, for all $\gamma \in (0, \frac{\beta}{4+ \beta})$ and $t \in [0, T_{L}]$, by Gagliardo-Nirenberg's inequality and the fact that $b^{q+1} \geq b(q+1)$ for all $q \geq 0$ and $b\geq 2$, we can compute 
\begin{align}
& \sum_{q \geq 0} \lVert v_{q+1}(t) - v_{q}(t) \rVert_{\dot{H}_{x}^{\gamma}} + \lVert \Xi_{q+1} (t) - \Xi_{q} (t) \rVert_{\dot{H}_{x}^{\gamma}} \label{estimate 118}   \\
\overset{\eqref{estimate 117} \eqref{estimate 99}}{\lesssim}& \sum_{q \geq 0} (M_{0}(t)^{\frac{1}{2}} \delta_{q+1}^{\frac{1}{2}})^{1-\gamma} (M_{0}(t)^{\frac{1}{2}} \lambda_{q+1}^{4})^{\gamma} \lesssim M_{0}(t)^{\frac{1}{2}} \sum_{q \geq 0} a^{b (q+1) [-\beta (1- \gamma ) + 4 \gamma ]} \lesssim M_{0}(L)^{\frac{1}{2}}. \nonumber 
\end{align} 
Thus, $\{v_{q} \}_{q=0}$ and $ \{\Xi_{q}\}_{q = 0}$ are both Cauchy in $C([0, T_{L}]; \dot{H}^{\gamma} (\mathbb{T}^{3}))$ and thus we deduce the limiting processes $\lim_{q\to \infty} v_{q} \triangleq v$ and $\lim_{q\to\infty} \Xi_{q} \triangleq \Xi$ both in $C([0, T_{L} ]; \dot{H}^{\gamma} (\mathbb{T}^{3}))$ for which there exists a deterministic constant $C_{L} > 0$ such that 
\begin{equation}\label{estimate 119}
\lVert v \rVert_{C([0, T_{L} ]; \dot{H}_{x}^{\gamma})} + \lVert\Xi \rVert_{C([0, T_{L} ] ; \dot{H}_{x}^{\gamma} )} \leq C_{L}. 
\end{equation}
Because $(v_{q}, \Xi_{q})$ are $(\mathcal{F}_{t})_{t\geq 0}$-adapted due to Propositions \ref{Proposition 4.7}-\ref{Proposition 4.8}, we see that $(v, \Xi)$ are $(\mathcal{F}_{t})_{t\geq 0}$-adapted. Additionally, because for all $t \in [0, T_{L}]$, $\lVert \mathring{R}_{q}^{v} \rVert_{C_{t}L_{x}^{1}}, \lVert \mathring{R}_{q}^{\Xi} \rVert_{C_{t}L_{x}^{1}} \to 0$ as $q\to \infty$ due to \eqref{estimate 100}, $(v, \Xi)$ is a weak solution to \eqref{estimate 399}. Consequently from \eqref{z} and \eqref{estimate 399}, we see that $(u,b) = (v + z_{1}, \Xi + z_{2})$ solves \eqref{stochastic GMHD}. We see that $v(0,x) = u^{\text{in}}(x)$ and $\Xi (0,x) = b^{\text{in}} (x)$ as $z_{1} (0,x) \equiv 0$ and $z_{2}(0,x) \equiv 0$ due to \eqref{z} and choose $L = L(T, K, c_{v}, c_{\Xi}, \lVert u^{\text{in}} \rVert_{L_{x}^{2}}, \lVert b^{\text{in}} \rVert_{L_{x}^{2}}) > 0$ that satisfies \eqref{estimate 116} to be larger if necessary to satisfy 
\begin{subequations}\label{estimate 400}
\begin{align}
& ( \frac{1}{(2\pi)^{3}} + \frac{1}{(2\pi)^{\frac{3}{2}}}) + \frac{1}{L} < ( \frac{1}{(2\pi)^{\frac{3}{2}}} - \frac{1}{(2\pi)^{3}})e^{LT}, \nonumber\\
& \hspace{3mm} \text{ which is equivalent to } (( \frac{1}{(2\pi)^{3}} + \frac{1}{(2\pi)^{\frac{3}{2}}}) L^{2} + L) e^{LT} < (\frac{1}{(2\pi)^{\frac{3}{2}}} - \frac{1}{(2\pi)^{3}}) M_{0}(T)^{\frac{1}{2}}, \label{estimate 120}\\
& K \lVert u^{\text{in}} \rVert_{L_{x}^{2}} + L^{\frac{1}{4}} (2\pi)^{\frac{3}{2}} + K \sum_{k=1}^{2} \sqrt{ T Tr (G_{k}G_{k}^{\ast} )} \leq (e^{LT} -K) \lVert b^{\text{in}} \rVert_{L_{x}^{2}} + L e^{LT}. \label{estimate 121} 
\end{align}
\end{subequations}
Now because $\lim_{L\to\infty} T_{L} = + \infty$ $\textbf{P}$-a.s., for the fixed $T > 0$ and $\kappa > 0$, increasing $L$ sufficiently larger if necessary allows us to obtain \eqref{estimate 19}. Next, we see from \eqref{estimate 75} that $z_{1}(t)$ and $z_{2}(t)$ are both $(\mathcal{F}_{t})_{t\geq 0}$-adapted. As we already verified that $(v, \Xi)$ are $(\mathcal{F}_{t})_{t\geq 0}$-adapted, we see that $(u,b)$ are $(\mathcal{F}_{t})_{t\geq 0}$-adapted. Moreover, \eqref{estimate 95} and \eqref{estimate 119} imply \eqref{estimate 20}. Next, as $b^{q+1} \geq b(q+1)$ for all $q \in \mathbb{N}$ if $b \geq 2$, we can compute for all $t \in [0, T_{L}]$ 
\begin{equation}\label{estimate 125}
\lVert \Xi(t) - \Xi_{0}(t) \rVert_{L_{x}^{2}} \overset{\eqref{estimate 117}}{\leq} M_{0}(t)^{\frac{1}{2}} \sum_{q\geq 0} \delta_{q+1}^{\frac{1}{2}} \overset{\eqref{estimate 106}}{<} M_{0}(t)^{\frac{1}{2}} \frac{1}{(2\pi)^{3}}. 
\end{equation}
This allows us to compute  
\begin{align}
(\lVert \Xi(0) \rVert_{L_{x}^{2}} + L) e^{LT} \leq& ( \lVert \Xi(0) - \Xi_{0}(0) \rVert_{L_{x}^{2}} + \lVert \Xi_{0} (0) \rVert_{L_{x}^{2}} + L) e^{LT} \nonumber\\
\overset{\eqref{estimate 125}  \eqref{estimate 109}}{\leq}& ( \frac{M_{0}(0)^{\frac{1}{2}} }{(2\pi)^{3}}+ \frac{ M_{0}(0)^{\frac{1}{2}}}{(2\pi)^{\frac{3}{2}}} + L) e^{LT} \overset{\eqref{estimate 120}}{<} ( \frac{1}{(2\pi)^{\frac{3}{2}}} - \frac{1}{(2\pi)^{3}}) M_{0}(T)^{\frac{1}{2}}  \nonumber \\
\overset{\eqref{estimate 109} \eqref{estimate 125}}{<}& \lVert \Xi_{0}(T) \rVert_{L_{x}^{2}} - \lVert \Xi(T) - \Xi_{0}(T) \rVert_{L_{x}^{2}} \leq \lVert \Xi(T) \rVert_{L_{x}^{2}}. \label{estimate 127}
\end{align}
Therefore, on $\{T_{L} \geq T \}$, we are now able to deduce \eqref{estimate 21} as follows: 
\begin{align}
 \lVert b(T) \rVert_{L_{x}^{2}} \geq& \lVert \Xi (T) \rVert_{L_{x}^{2}} - \lVert z_{2}(T) \rVert_{L_{x}^{2}} \overset{ \eqref{estimate 127}}{>}(\lVert \Xi(0) \rVert_{L_{x}^{2}} + L) e^{LT} - \lVert z_{2}(T) \rVert_{L_{x}^{\infty}} (2\pi)^{\frac{3}{2}} \\
\overset{\eqref{estimate 95}}{\geq}& ( \lVert b^{\text{in}} \rVert_{L_{x}^{2}} + L) e^{LT} - L^{\frac{1}{4}} (2\pi)^{\frac{3}{2}} 
\overset{\eqref{estimate 121} }{\geq} K [\lVert u^{\text{in}} \rVert_{L_{x}^{2}} + \lVert b^{\text{in}} \rVert_{L_{x}^{2}} + \sum_{k=1}^{2} \sqrt{T Tr (G_{k}G_{k}^{\ast} )}].  \nonumber
\end{align}
Finally, because $v_{0} (0,x)$ and $\Xi_{0} (0,x)$ from Proposition \ref{Proposition 4.7} are deterministic, Proposition \ref{Proposition 4.8} implies that $v(0,x)$ and $\Xi(0,x)$ remain deterministic; as $z_{1}(0,x) \equiv z_{2}(0,x) \equiv 0$ by \eqref{z}, we conclude that $u^{\text{in}}$ and $b^{\text{in}}$ are both deterministic. 
\end{proof}

\subsection{Proof of Proposition \ref{Proposition 4.8}}
\subsubsection{Choice of parameters}\label{Subsection 4.1} 
We define 
\begin{equation}\label{estimate 128}
m_{1}^{\ast} \triangleq 
\begin{cases}
0 & \text{ if } m_{1} \in (0, \frac{1}{2}], \\
2m_{1} - 1 & \text{ if } m_{1} \in (\frac{1}{2}, 1), 
\end{cases} \hspace{2mm} 
\text{ and } \hspace{2mm} 
m_{2}^{\ast} \triangleq 
\begin{cases}
0 & \text{ if } m_{2} \in (0, \frac{1}{2}], \\
2m_{2} - 1 & \text{ if } m_{2} \in (\frac{1}{2}, 1), 
\end{cases}
\end{equation} 
from which it follows that $m_{1}^{\ast}, m_{2}^{\ast} \in [0, 1)$. We fix 
\begin{equation}\label{eta}
\eta \in \mathbb{Q}_{+} \cap ( \frac{1- \max \{m_{1}^{\ast}, m_{2}^{\ast} \}}{16}, \frac{1- \max \{m_{1}^{\ast}, m_{2}^{\ast} \}}{8} ] \subset (0, \frac{1}{8}].
\end{equation} 
Let us fix 
\begin{equation}\label{alpha}
\alpha \triangleq \frac{ 1- \max\{m_{1}^{\ast}, m_{2}^{\ast} \}}{1600}. 
\end{equation}
We let ``$\lambda$'' in the convex integration from Section \ref{Subsection 3.2} to be $\lambda_{q+1}$ and choose
\begin{equation}\label{sigma, r, mu}
\sigma \triangleq \lambda_{q+1}^{2\eta - 1}, \hspace{1mm} r \triangleq \lambda_{q+1}^{6\eta - 1}, \hspace{1mm} \text{ and } \hspace{1mm} \mu \triangleq \lambda_{q+1}^{1-\eta}. 
\end{equation}
As $\eta \in (0, \frac{1}{8}]$, we see that this choice satisfies \eqref{estimate 50} and \eqref{estimate 53}. Using the fact that $\eta \in \mathbb{Q}_{+}$, we can also fulfill \eqref{estimate 52} by appropriately choosing 
\begin{equation}\label{b}
b \in \{ \iota \in \mathbb{N}: \iota > \frac{16}{\alpha} \} 
\end{equation}
as long as $a \in \mathbb{N}$. In fact, we will take $a \in 2 \mathbb{N}$ so that $\lambda_{q+1} \sigma \in 2 \mathbb{N}$ because we will apply Lemma \ref{Lemma 6.3} with ``$k \in \mathbb{N}$'' of its hypothesis being $\frac{\lambda_{q+1} \sigma}{2}$. For convenience, we also choose 
\begin{equation}\label{l}
l \triangleq \lambda_{q+1}^{ - \frac{3\alpha}{2}} \lambda_{q}^{-2},  
\end{equation}
from which these useful estimates follow:
\begin{equation}\label{estimate 130}
l \lambda_{q}^{4} \overset{\eqref{b} \eqref{l}}{<} \lambda_{q+1}^{-\alpha}, \hspace{2mm} L \lesssim l^{-1}, \hspace{2mm}  \text{ and } \hspace{2mm} l^{-1} \overset{\eqref{b}}{<} \lambda_{q+1}^{\frac{13 \alpha}{8}}.
\end{equation}
Concerning \eqref{estimate 106}, due to the hypothesis \eqref{estimate 116}, we can choose $a \in 2\mathbb{N}$ sufficiently large so that the last inequality in \eqref{estimate 106b} is satisfied while $\beta > 0$ sufficiently small so that the first and second inequalities in \eqref{estimate 106b} hold. Thus, we consider such $m_{1}^{\ast}, m_{2}^{\ast}, \eta, \alpha, \sigma, r, \mu, b,$ and $l$ fixed, preserving our freedom to take $a \in 2\mathbb{N}$ larger and $\beta > 0$ smaller as needed, at minimum guaranteeing 
\begin{equation}\label{estimate 129}
16 \beta b < \alpha. 
\end{equation} 
\subsubsection{Mollification} 
In addition to the mollifiers $\{ \vartheta_{l}\}_{l}$ from Section \ref{Section 2}, we let $\{ \varrho_{l}\}_{l > 0}$, specifically $\varrho_{l} (\cdot) \triangleq  l^{-3} \varrho(\frac{\cdot}{l})$, be a family of standard mollifiers on $\mathbb{R}^{3}$ with mass one. We extend $v_{q}, \Xi_{q}, \mathring{R}_{q}^{v}, \mathring{R}_{q}^{\Xi}$, and $z_{k}$ for $k \in \{1,2\}$ to $t < 0$ by their respective values at $t =0 $ and mollify in space and time to obtain 
\begin{subequations}\label{estimate 291}
\begin{align}
&v_{l} \triangleq (v_{q} \ast_{x} \varrho_{l}) \ast_{t} \vartheta_{l}, \hspace{5mm} \Xi_{l} \triangleq (\Xi_{q}\ast_{x} \varrho_{l}) \ast_{t} \vartheta_{l}, \\
&\mathring{R}_{l}^{v} \triangleq (\mathring{R}_{q}^{v} \ast_{x} \varrho_{l}) \ast_{t} \vartheta_{l}, \hspace{3mm} \mathring{R}_{l}^{\Xi} \triangleq (\mathring{R}_{q}^{v} \ast_{x} \varrho_{l})\ast_{t} \vartheta_{l}, \hspace{3mm} z_{k,l} \triangleq (z_{k} \ast_{x} \varrho_{l} ) \ast_{t} \vartheta_{l}. 
\end{align}
\end{subequations}
One can directly verify from \eqref{estimate 104} that the corresponding mollified system of equations is 
\begin{subequations}\label{estimate 204}
\begin{align}
& \partial_{t} v_{l} + (-\Delta)^{m_{1}} v_{l} + \text{div} ((v_{l} + z_{1,l} ) \otimes (v_{l} + z_{1,l} ) - (\Xi_{l} + z_{2,l}) \otimes (\Xi_{l} + z_{2,l} )) + \nabla \pi_{l}  \nonumber \\
& \hspace{55mm} = \text{div} ( \mathring{R}_{l}^{v} + R_{\text{com1}}^{v}), \hspace{5mm} \nabla \cdot v_{l} = 0,  \\
& \partial_{t} \Xi_{l} + (-\Delta)^{m_{2}} \Xi_{l} + \text{div} (( v_{l} + z_{1,l} ) \otimes (\Xi_{l} + z_{2,l}) - (\Xi_{l} + z_{2,l} ) \otimes (v_{l} + z_{1,l} )) \nonumber \\
& \hspace{55mm} = \text{div} ( \mathring{R}_{l}^{\Xi} + R_{\text{com1}}^{\Xi}), \hspace{4mm} \nabla \cdot \Xi_{l} = 0, 
\end{align}
\end{subequations} 
where 
\begin{subequations}\label{estimate 235}
\begin{align}
\pi_{l} \triangleq& \pi_{q} \ast_{x} \varrho_{l} \ast_{t} \vartheta_{l} - \frac{1}{3} (\lvert v_{l} + z_{1,l} \rvert^{2} - \lvert \Xi_{l} + z_{2,l} \rvert^{2})   \nonumber\\
& \hspace{18mm} + \frac{1}{3} (\lvert v_{q} + z_{1} \rvert^{2} - \lvert \Xi_{q} + z_{2} \rvert^{2}) \ast_{x} \varrho_{l} \ast_{t} \vartheta_{l},  \\
R_{\text{com1}}^{v} \triangleq& (v_{l} + z_{1,l} )\mathring{\otimes} (v_{l} + z_{1,l}) - (\Xi_{l} + z_{2,l}) \mathring{\otimes} (\Xi_{l} + z_{2,l}) \nonumber\\
& \hspace{10mm} - ((v_{q} + z_{1}) \mathring{\otimes} (v_{q} +z_{1}) - (\Xi_{q} + z_{2}) \mathring{\otimes} (\Xi_{q} + z_{2} )) \ast_{x} \varrho_{l} \ast_{t} \vartheta_{l}, \\
R_{\text{com1}}^{\Xi} \triangleq & (v_{l} + z_{1,l}) \otimes (\Xi_{l} + z_{2,l}) - (\Xi_{l} + z_{2,l}) \otimes (v_{l} + z_{1,l}) \nonumber \\
& \hspace{10mm} - ((v_{q} + z_{1}) \otimes (\Xi_{q} + z_{2} ) - (\Xi_{q} + z_{2}) \otimes (v_{q} + z_{1} )) \ast_{x} \varrho_{l} \ast_{t} \vartheta_{l}. 
\end{align}
\end{subequations}
Next, we can estimate 
\begin{equation}\label{estimate 293}
\lVert R_{\text{com1}}^{v} \rVert_{C_{t}L_{x}^{1}} \leq \sum_{k=1}^{8} I_{k} 
\end{equation} 
where 
\begin{subequations}\label{estimate 294}
\begin{align}
I_{1} \triangleq& \lVert v_{l} \mathring{\otimes} v_{l} - (v_{q} \mathring{\otimes} v_{q}) \ast_{x} \varrho_{l} \ast_{t} \vartheta_{l} \rVert_{C_{t}L_{x}^{1}}, \hspace{5mm}  I_{2} \triangleq \lVert z_{1,l} \mathring{\otimes} z_{1,l} - (z_{1} \mathring{\otimes} z_{1}) \ast_{x} \varrho_{l}\ast_{t}\vartheta_{l} \rVert_{C_{t}L_{x}^{1}}, \\
I_{3} \triangleq& \lVert v_{l} \mathring{\otimes} z_{1,l} - (v_{q} \mathring{\otimes} z_{1}) \ast_{x} \varrho_{l} \ast_{t} \vartheta_{l} \rVert_{C_{t}L_{x}^{1}},  \hspace{4mm} 
I_{4} \triangleq \lVert z_{1,l} \mathring{\otimes} v_{l} - (z_{1} \mathring{\otimes} v_{q}) \ast_{x} \varrho_{l} \ast_{t} \vartheta_{l} \rVert_{C_{t}L_{x}^{1}}, \\
I_{5} \triangleq& \lVert \Xi_{l} \mathring{\otimes} \Xi_{l} - (\Xi_{q} \mathring{\otimes} \Xi_{q}) \ast_{x} \varrho_{l} \ast_{t} \vartheta_{l} \rVert_{C_{t}L_{x}^{1}},  \hspace{3mm} 
I_{6} \triangleq \lVert z_{2,l} \mathring{\otimes} z_{2,l} - (z_{2} \mathring{\otimes} z_{2}) \ast_{x} \varrho_{l} \ast_{t} \vartheta_{l} \rVert_{C_{t}L_{x}^{1}}, \\
I_{7} \triangleq& \lVert \Xi_{l} \mathring{\otimes} z_{2,l} - (\Xi_{q} \mathring{\otimes} z_{2}) \ast_{x} \varrho_{l} \ast_{t} \vartheta_{l} \rVert_{C_{t}L_{x}^{1}}, \hspace{3mm} 
I_{8} \triangleq \lVert z_{2,l} \mathring{\otimes} \Xi_{l} - (z_{2} \mathring{\otimes} \Xi_{q}) \ast_{x} \varrho_{l} \ast_{t} \vartheta_{l} \rVert_{C_{t}L_{x}^{1}}. 
\end{align}
\end{subequations}
Further applying Minkowski's inequalities such as 
\begin{align*}
I_{1} +I_{5} 
\lesssim& \lVert v_{l} - v_{q} \rVert_{C_{t}L_{x}^{\infty}} \lVert v_{q} \rVert_{C_{t}L_{x}^{1}} + \lVert \Xi_{l} - \Xi_{q} \rVert_{C_{t}L_{x}^{\infty}} \lVert \Xi_{q} \rVert_{C_{t}L_{x}^{1}} \\
&+ \lVert v_{q} \mathring{\otimes} v_{q} - (v_{q} \mathring{\otimes} v_{q}) \ast_{x} \varrho_{l} \ast_{t} \vartheta_{l} \rVert_{C_{t}L_{x}^{1}} + \lVert \Xi_{q} \mathring{\otimes} \Xi_{q} - (\Xi_{q} \mathring{\otimes} \Xi_{q}) \ast_{x} \varrho_{l} \ast_{t} \vartheta_{l} \rVert_{C_{t}L_{x}^{1}}, 
\end{align*}
and applying standard mollifier estimates (e.g., \cite[Lem. 1]{CDS12}) lead to 
\begin{align}
\lVert R_{\text{com1}}^{v} \rVert_{C_{t}L_{x}^{1}} 
\lesssim& l ( \lVert v_{q} \rVert_{C_{t,x}^{1}} + \lVert \Xi_{q} \rVert_{C_{t,x}^{1}} + \sum_{k=1}^{2} \lVert z_{k} \rVert_{C_{t}C_{x}^{1}}) ( \lVert v_{q} \rVert_{C_{t}L_{x}^{2}} + \lVert \Xi_{q} \rVert_{C_{t}L_{x}^{2}} + \sum_{k=1}^{2} \lVert z_{k} \rVert_{C_{t,x}}) \nonumber \\
&+ l^{\frac{1}{2} - 2\delta} \sum_{k=1}^{2} \lVert z_{k} \rVert_{C_{t}^{\frac{1}{2} - 2 \delta} C_{x}} ( \sum_{k=1}^{2} \lVert z_{k} \rVert_{C_{t,x}} + \lVert v_{q} \rVert_{C_{t} L_{x}^{2}} + \lVert \Xi_{q} \rVert_{C_{t}L_{x}^{2}}).  \label{estimate 247}
\end{align}
Similar applications of Minkowski's inequalities and standard mollifier estimates give 
\begin{align}
\lVert R_{\text{com1}}^{\Xi} \rVert_{C_{t}L_{x}^{1}} \leq& \sum_{k=1}^{8} II_{k}  \nonumber \\
\lesssim& l ( \lVert v_{q} \rVert_{C_{t,x}^{1}} + \lVert \Xi_{q} \rVert_{C_{t,x}^{1}} + \sum_{k=1}^{2} \lVert z_{k} \rVert_{C_{t}C_{x}^{1}}) ( \lVert v_{q} \rVert_{C_{t}L_{x}^{2}} + \lVert \Xi_{q} \rVert_{C_{t}L_{x}^{2}} + \sum_{k=1}^{2} \lVert z_{k} \rVert_{C_{t,x}})  \nonumber \\
&+ l^{\frac{1}{2} - 2 \delta} \sum_{k=1}^{2} \lVert z_{k} \rVert_{C_{t}^{\frac{1}{2} - 2 \delta} C_{x}} ( \sum_{k=1}^{2} \lVert z_{k} \rVert_{C_{t,x}} + \lVert v_{q} \rVert_{C_{t}L_{x}^{2}}+ \lVert \Xi_{q} \rVert_{C_{t}L_{x}^{2}}), \label{estimate 248}
\end{align}
where we split $\lVert R_{\text{com1}}^{\Xi} \rVert_{C_{t}L_{x}^{1}}$ to  
\begin{subequations}\label{estimate 295}
\begin{align}
II_{1} \triangleq& \lVert v_{l} \otimes \Xi_{l} - (v_{q} \otimes \Xi_{q}) \ast_{x} \varrho_{l} \ast_{t} \vartheta_{l} \rVert_{C_{t}L_{x}^{1}}, \\
II_{2} \triangleq& \lVert z_{1,l} \otimes z_{2,l} - (z_{1} \otimes z_{2}) \ast_{x} \varrho_{l} \ast_{t} \vartheta_{l} \rVert_{C_{t}L_{x}^{1}}, \\
II_{3} \triangleq& \lVert v_{l} \otimes z_{2,l} - (v_{q} \otimes z_{2}) \ast_{x} \varrho_{l} \ast_{t} \vartheta_{l} \rVert_{C_{t}L_{x}^{1}}, \\
II_{4} \triangleq& \lVert z_{1,l} \otimes \Xi_{l} - (z_{1} \otimes \Xi_{q}) \ast_{x} \varrho_{l} \ast_{t} \vartheta_{l} \rVert_{C_{t}L_{x}^{1}}, \\
II_{5} \triangleq& \lVert \Xi_{l} \otimes v_{l} - (\Xi_{q} \otimes v_{q}) \ast_{x} \varrho_{l} \ast_{t} \vartheta_{l} \rVert_{C_{t}L_{x}^{1}}, \\
II_{6} \triangleq& \lVert z_{2,l} \otimes z_{1,l} - (z_{2} \otimes z_{1}) \ast_{x} \varrho_{l} \ast_{t} \vartheta_{l} \rVert_{C_{t}L_{x}^{1}}, \\
II_{7} \triangleq& \lVert \Xi_{l} \otimes z_{1,l} - (\Xi_{q} \otimes z_{1}) \ast_{x} \varrho_{l} \ast_{t} \vartheta_{l} \rVert_{C_{t}L_{x}^{1}}, \\
II_{8} \triangleq& \lVert z_{2,l} \otimes v_{l} - (z_{2} \otimes v_{q}) \ast_{x} \varrho_{l} \ast_{t} \vartheta_{l} \rVert_{C_{t}L_{x}^{1}}. 
\end{align}
\end{subequations}
For $a \in 2 \mathbb{N}$ sufficiently large, we can also estimate by Young's inequality for convolution and the fact that mollifiers have unit mass 
\begin{subequations}\label{estimate 301}
\begin{align}
& \lVert v_{q} - v_{l} \rVert_{C_{t}L_{x}^{2}} + \lVert \Xi_{q} - \Xi_{l} \rVert_{C_{t}L_{x}^{2}}\overset{\eqref{estimate 99}}{\lesssim} l M_{0}(t)^{\frac{1}{2}} \lambda_{q}^{4}  \overset{\eqref{estimate 130}}{\lesssim} M_{0}(t)^{\frac{1}{2}} \lambda_{q+1}^{-\alpha} \overset{\eqref{estimate 129}}{\ll} M_{0}(t)^{\frac{1}{2}} \delta_{q+1}^{\frac{1}{2}}, \label{estimate 199} \\
& \lVert v_{l} \rVert_{C_{t}L_{x}^{2}} \leq \lVert v_{q} \rVert_{C_{t}L_{x}^{2}} \leq M_{0}(t)^{\frac{1}{2}} (1+ \sum_{1 \leq \iota \leq q} \delta_{\iota}^{\frac{1}{2}}), \label{estimate 200}\\
& \lVert \Xi_{l} \rVert_{C_{t}L_{x}^{2}}\leq\lVert \Xi_{q}\rVert_{C_{t}L_{x}^{2}}\leq M_{0}(t)^{\frac{1}{2}}(1+\sum_{1\leq \iota\leq q} \delta_{\iota}^{\frac{1}{2}}). \label{estimate 201}
\end{align}
\end{subequations}
\subsubsection{Perturbation} 
We let $\chi: [0,\infty) \mapsto \mathbb{R}$ be a smooth function such that 
\begin{equation}\label{estimate 131}
\chi(z) \triangleq 
\begin{cases}
1 & \text{ if } z \in [0, 1],\\
z & \text{ if } z \geq 2, 
\end{cases}
\end{equation} 
and $z \leq 2 \chi(z) \leq 4 z$ for $z \in (1,2)$. We define for $t \in [0, T_{L}]$, 
\begin{equation}\label{estimate 133}
\rho_{\Xi} (t, x) \triangleq 2 \delta_{q+1} \epsilon_{\Xi}^{-1} c_{\Xi} M_{0}(t) \chi ( \frac{ \lvert \mathring{R}_{l}^{\Xi} (t ,x) \rvert}{c_{\Xi} \delta_{q+1} M_{0}(t)} ), 
\end{equation} 
where $\epsilon_{\Xi} > 0$ is from Lemma \ref{Lemma 3.1} and $c_{\Xi} > 0$ is from \eqref{estimate 100}. It follows from \eqref{estimate 131} that 
\begin{equation}\label{estimate 132}
 \lvert \frac{ \mathring{R}_{l}^{\Xi} (t, x) \rvert}{\rho_{\Xi} (t,x)} \rvert = \lvert \frac{ \mathring{R}_{l}^{\Xi} (t,x)}{2 \delta_{q+1} \epsilon_{\Xi}^{-1} c_{\Xi} M_{0}(t) \chi ((c_{\Xi} \delta_{q+1} M_{0}(t))^{-1} \lvert \mathring{R}_{l}^{\Xi} (t, x) \rvert )} \rvert \leq \epsilon_{\Xi}.
\end{equation} 
Moreover, for all $p \in [1,\infty)$ we have the following estimate: 
\begin{equation}\label{estimate 134}
\lVert \rho_{\Xi} \rVert_{C_{t}L_{x}^{p}} 
\overset{\eqref{estimate 133}\eqref{estimate 131} }{\leq}12 \epsilon_{\Xi}^{-1} ((8 \pi^{3})^{\frac{1}{p}} \delta_{q+1} c_{\Xi} M_{0} (t) + \lVert \mathring{R}_{l}^{\Xi} \rVert_{C_{t}L_{x}^{p}}).
\end{equation} 
Using $W^{4,1} (\mathbb{T}^{3}) \hookrightarrow L^{\infty} (\mathbb{T}^{3})$ and chain rule estimates such as \cite[Equ. (130)]{BDIS15} we can attain the following bounds: 
\begin{subequations}\label{estimate 394}
\begin{align}
& \lVert \rho_{\Xi} \rVert_{C_{t,x}} 
\overset{\eqref{estimate 134}}{\leq}12 \epsilon_{\Xi}^{-1} (\delta_{q+1} c_{\Xi} M_{0}(t) + \lVert \mathring{R}_{l}^{\Xi} \rVert_{C_{t}L_{x}^{\infty}})\overset{\eqref{estimate 100}}{\lesssim}  l^{-4} M_{0}(t) \delta_{q+1}, \label{estimate 135}\\
&  \lVert \rho_{\Xi} \rVert_{C_{t}C_{x}^{j}} \lesssim \delta_{q+1} \epsilon_{\Xi}^{-1} c_{\Xi} \lVert M_{0}(s) [ (c_{\Xi} \delta_{q+1} M_{0}(s))^{-1} \lVert \mathring{R}_{l}^{\Xi} (s) \rVert_{C_{x}^{j}} + (c_{\Xi} \delta_{q+1} M_{0}(s))^{-j} \lVert \mathring{R}_{l}^{\Xi} (s) \rVert_{C_{x}^{1}}^{j}] \rVert_{C_{t}} \nonumber\\
& \hspace{70mm}  \overset{\eqref{estimate 100}}{\lesssim} \delta_{q+1} M_{0}(t) l^{-5j} \hspace{2mm} \forall \hspace{1mm} j \geq 1, \label{estimate 136} \\
& \lVert \rho_{\Xi} \rVert_{C_{t}^{1}C_{x}^{j}} \lesssim l^{-5j - 5} M_{0}(t) \delta_{q+1} \hspace{2mm} \hspace{61mm} \forall \hspace{1mm}  j \geq 0, \label{estimate 137} \\
& \lVert \rho_{\Xi} \rVert_{C_{t}^{2}C_{x}} \overset{\eqref{estimate 135} \eqref{estimate 100}}{\lesssim} l^{-10} M_{0}(t) \delta_{q+1}. \label{estimate 408}
\end{align}
\end{subequations} 
In particular, to deduce \eqref{estimate 137} one can use the fact that $\partial_{t} M_{0}(t) = 4L M_{0}(t)$ by \eqref{estimate 94}  so that 
\begin{equation}\label{estimate 409}
\partial_{t}\rho_{\Xi} \overset{\eqref{estimate 133}}{=} 4L \rho_{\Xi} + 2 \epsilon_{\Xi}^{-1} \chi'  ( (c_{\Xi} \delta_{q+1} M_{0}(t))^{-1} \lvert \mathring{R}_{l}^{\Xi} (t,x) \rvert ) (\partial_{t} \lvert \mathring{R}_{l}^{\Xi} \rvert - \lvert \mathring{R}_{l}^{\Xi}\rvert  4L ), 
\end{equation}
which also allows us to compute $\partial_{t}^{2} \rho_{\Xi}$ to deduce \eqref{estimate 408} as well. Immediate consequences of \eqref{estimate 135}-\eqref{estimate 137} using 
\begin{equation}\label{estimate 144}
\rho_{\Xi}(t) \overset{\eqref{estimate 133}\eqref{estimate 131}}{\geq} \delta_{q+1} \epsilon_{\Xi}^{-1}c_{\Xi} M_{0}(t), 
\end{equation} 
include the following estimates:
\begin{subequations}\label{estimate 421}
\begin{align}
& \lVert \rho_{\Xi}^{\frac{1}{2}} \rVert_{C_{t,x}} \overset{\eqref{estimate 135}}{\lesssim} \delta_{q+1}^{\frac{1}{2}} l^{-2} M_{0}(t)^{\frac{1}{2}}, \label{estimate 141}\\
& \lVert \rho_{\Xi}^{\frac{1}{2}} \rVert_{C_{t}C_{x}^{j}} \overset{\eqref{estimate 144} \eqref{estimate 136}}{\lesssim} \delta_{q+1}^{\frac{1}{2}} l^{-5j} M_{0}(t)^{\frac{1}{2}} \hspace{27mm} \forall \hspace{1mm} j \geq 1, \label{estimate 142}\\
& \lVert \rho_{\Xi}^{\frac{1}{2}} \rVert_{C_{t}^{1}C_{x}^{j}} \overset{\eqref{estimate 144} \eqref{estimate 136} \eqref{estimate 137}}{\lesssim} \delta_{q+1}^{\frac{1}{2}} l^{-5j - 5} M_{0}(t)^{\frac{1}{2}} \hspace{15mm}\forall \hspace{1mm} j \geq 0, \label{estimate 143} 
\end{align}
\end{subequations} 
where we used \cite[Equ. (130)]{BDIS15} in \eqref{estimate 142}-\eqref{estimate 143}. Now we define the magnetic amplitude functions
\begin{equation}\label{estimate 138}
a_{\xi} (t,x) \triangleq \rho_{\Xi}^{\frac{1}{2}}(t,x) \gamma_{\xi} ( - \frac{ \mathring{R}_{l}^{\Xi}(t,x)}{\rho_{\Xi}(t,x)})  \hspace{10mm} \forall \hspace{1mm}  \xi \in \Lambda_{\Xi} 
\end{equation} 
where $\gamma_{\xi}$ is that of Lemma \ref{Lemma 3.1} and $- \frac{\mathring{R}_{l}^{\Xi}}{\rho_{\Xi}} \in B_{\epsilon_{\Xi}}(0)$ due to \eqref{estimate 132}. It follows that 
\begin{equation}\label{estimate 139}
\sum_{\xi \in \Lambda_{\Xi}} a_{\xi}^{2} \mathbb{P}_{=0} (\phi_{\xi}^{2} \varphi_{\xi}^{2}) (\xi \otimes \xi_{2} - \xi_{2} \otimes \xi) 
\overset{\eqref{estimate 57}}{=} \sum_{\xi \in \Lambda_{\Xi}} a_{\xi}^{2} (\xi \otimes \xi_{2} - \xi_{2} \otimes \xi)  
\overset{\eqref{estimate 138} \eqref{estimate 44}}{=} - \mathring{R}_{l}^{\Xi}.
\end{equation} 
This leads to an identity of 
\begin{equation}\label{estimate 207}
 \sum_{\xi \in \Lambda_{\Xi}} a_{\xi}^{2} \phi_{\xi}^{2} \varphi_{\xi}^{2}  (\xi \otimes \xi_{2} - \xi_{2} \otimes \xi) + \mathring{R}_{l}^{\Xi} \overset{\eqref{estimate 139}}{=} \sum_{\xi \in \Lambda_{\Xi}} a_{\xi}^{2} \mathbb{P}_{\neq 0} (\phi_{\xi}^{2}\varphi_{\xi}^{2}) (\xi \otimes \xi_{2} - \xi_{2} \otimes \xi). 
\end{equation} 
Let us estimate for all $\xi \in \Lambda_{\Xi}, t \in [0, T_{L}]$ by taking $c_{\Xi}$ sufficiently small so that 
\begin{equation}\label{estimate 157}
c_{\Xi} \leq \min\{ \frac{c_{v} \epsilon_{\Xi}}{ 12 (8 \pi^{3} + 1) M_{\ast}^{2} \lvert \Lambda_{\Xi} \rvert }, \frac{ \epsilon_{\Xi}}{12 (8\pi^{3} + 1) M_{\ast}^{2} 9 C_{\ast}^{2} (8\pi^{3}) \lvert \Lambda_{\Xi} \rvert^{2}  } \}
\end{equation} 
for $M_{\ast}$ from \eqref{estimate 48}, and using the fact that mollifiers have mass one, 
\begin{align}
&\lVert a_{\xi} \rVert_{C_{t}L_{x}^{2}}  
\overset{\eqref{estimate 138}\eqref{estimate 132}}{\leq} \lVert \rho_{\Xi} \rVert_{C_{t}L_{x}^{1}}^{\frac{1}{2}} \lVert \gamma_{\xi} \rVert_{C(B_{\epsilon_{\Xi}}(0))}  \label{estimate 146}\\
\overset{\eqref{estimate 134}}{\leq}& (12 \epsilon_{\Xi}^{-1} (8 \pi^{3} \delta_{q+1} c_{\Xi} M_{0}(t) + \lVert \mathring{R}_{l}^{\Xi} \rVert_{C_{t}L_{x}^{1}})^{\frac{1}{2}} M_{\ast}   \overset{\eqref{estimate 105}}{\leq} \min \{ ( \frac{c_{v} }{\lvert \Lambda_{\Xi} \rvert} )^{\frac{1}{2}}, \frac{1}{3 C_{\ast} (8\pi^{3})^{\frac{1}{2}}  \lvert \Lambda_{\Xi} \rvert  } \} \delta_{q+1}^{\frac{1}{2}} M_{0}(t)^{\frac{1}{2}}  \nonumber 
\end{align}
for $C_{\ast}$ from Lemma \ref{Lemma 6.2}. The bound by $ ( \frac{c_{v}}{\lvert \Lambda_{\Xi} \rvert})^{\frac{1}{2}} \delta_{q+1}^{\frac{1}{2}} M_{0}(t)^{\frac{1}{2}}$ will be subsequently needed in \eqref{estimate 147} and the bound by $\frac{1}{ 3  C_{\ast} (8\pi^{3})^{\frac{1}{2}} \lvert \Lambda_{\Xi} \rvert } \delta_{q+1}^{\frac{1}{2}} M_{0}(t)^{\frac{1}{2}}$ in \eqref{estimate 401}. Next, we can estimate 
\begin{equation}\label{estimate 150}
\lVert a_{\xi} \rVert_{C_{t}C_{x}^{j}} \lesssim \delta_{q+1}^{\frac{1}{2}} l^{-5j-2} M_{0}(t)^{\frac{1}{2}} \hspace{3mm} \forall \hspace{1mm} j \geq 0, \xi \in \Lambda_{\Xi}; 
\end{equation} 
the case $j = 0$ can be proven immediately as 
\begin{align*}
\lVert a_{\xi} \rVert_{C_{t,x}} \overset{\eqref{estimate 138}\eqref{estimate 132}}{\leq} \lVert \rho_{\Xi}^{\frac{1}{2}} \rVert_{C_{t,x}} \lVert \gamma_{\xi} \rVert_{C(B_{\epsilon_{\Xi}} (0))}  
\overset{\eqref{estimate 141} \eqref{estimate 48}}{\lesssim}& \delta_{q+1}^{\frac{1}{2}} l^{-2} M_{0}(t)^{\frac{1}{2}}  
\end{align*}
whereas the case $j \geq 1$ may be handled using \cite[Equ. (130)]{BDIS15}, \eqref{estimate 144}, \eqref{estimate 394}, \eqref{estimate 421}, and \eqref{estimate 132}. Next, we can compute using \eqref{estimate 48} and \eqref{estimate 421}
\begin{equation}\label{estimate 151}
\lVert a_{\xi} \rVert_{C_{t}^{1}C_{x}^{j}} \lesssim \delta_{q+1}^{\frac{1}{2}} l^{-5j - 5} M_{0}(t)^{\frac{1}{2}} + \delta_{q+1}^{\frac{1}{2}} l^{-2} M_{0}(t)^{\frac{1}{2}} \lVert \gamma_{\xi} ( - \frac{ \mathring{R}_{l}^{\Xi}}{\rho_{\Xi}}) \rVert_{C_{t}^{1}C_{x}^{j}} \hspace{3mm} \forall \hspace{1mm} j \geq 0, \xi \in \Lambda_{\Xi}, 
\end{equation} 
which will lead to via \eqref{estimate 132}, \eqref{estimate 144}, \eqref{estimate 100}, and \eqref{estimate 394},  
\begin{equation}\label{estimate 309}
\lVert a_{\xi} \rVert_{C_{t}^{1}C_{x}^{j}} \lesssim \delta_{q+1}^{\frac{1}{2}} l^{-7-5j} M_{0}(t)^{\frac{1}{2}} \hspace{5mm} \forall \hspace{1mm} j \in \{0,1,2\}, \xi \in \Lambda_{\Xi}. 
\end{equation}  
We can also compute $\partial_{t}^{2} a_{\xi}$ directly from \eqref{estimate 138} and estimate 
\begin{equation}\label{estimate 406}
\lVert a_{\xi} \rVert_{C_{t}^{2}C_{x}} \overset{\eqref{estimate 100} \eqref{estimate 132} \eqref{estimate 394} \eqref{estimate 144} \eqref{estimate 141} }{\lesssim} l^{-12} \delta_{q+1}^{\frac{1}{2}} M_{0}(t)^{\frac{1}{2}} \hspace{3mm} \forall \hspace{1mm} \xi \in \Lambda_{\Xi}. 
\end{equation} 
Next, the geometric lemma for velocity allows us to control matrices in a neighborhood of an identity rather than the origin. Moreover, due to extra self-interacting terms, $w_{q+1}^{p}$ will need more wave vectors than $d_{q+1}^{p}$, to be defined in \eqref{estimate 166}. For this purpose we define 
\begin{equation}\label{estimate 145} 
\mathring{G}^{\Xi} \triangleq \sum_{\xi \in \Lambda_{\Xi}} a_{\xi}^{2} (\xi\otimes \xi - \xi_{2} \otimes \xi_{2}). 
\end{equation} 
We will need the following estimates of $\mathring{G}^{\Xi}$: 
\begin{subequations}\label{estimate 403}
\begin{align}
& \lVert \mathring{G}^{\Xi} \rVert_{C_{t}L_{x}^{1}} \overset{\eqref{estimate 145}}{\leq} 6 \sum_{\xi \in\Lambda_{\Xi}} \lVert a_{\xi} \rVert_{C_{t}L_{x}^{2}}^{2} \overset{\eqref{estimate 146}}{\leq} 6 c_{v} \delta_{q+1}M_{0}(t), \label{estimate 147}\\
& \lVert \mathring{G}^{\Xi} \rVert_{C_{t}C_{x}^{j}} \overset{\eqref{estimate 145}}{\lesssim}  \sum_{\xi \in \Lambda_{\Xi}} \lVert a_{\xi} \rVert_{C_{t,x}} \lVert a_{\xi} \rVert_{C_{t}C_{x}^{j}} \overset{\eqref{estimate 150}}{\lesssim} \delta_{q+1} l^{-5j-4} M_{0}(t) \hspace{7mm} \forall \hspace{1mm} j\geq 0, \label{estimate 148}\\
&\lVert \mathring{G}^{\Xi} \rVert_{C_{t}^{1}C_{x}^{j}}  \overset{\eqref{estimate 150} \eqref{estimate 309}}{\lesssim} \delta_{q+1} l^{-9-5j} M_{0}(t) \hspace{35mm} \forall \hspace{1mm} j \in \{0,1,2\}, \label{estimate 149}\\
& \lVert \mathring{G}^{\Xi} \rVert_{C_{t}^{2}C_{x}} \overset{\eqref{estimate 150}}{\lesssim} l^{-14} \delta_{q+1} M_{0}(t).  \label{estimate 411}
\end{align}
\end{subequations} 
Next, we define $\rho_{v}$ and the associated velocity amplitude function as 
\begin{subequations}
\begin{align}
\rho_{v} (t,x) \triangleq& 2\epsilon_{v}^{-1} c_{v} \delta_{q+1} M_{0}(t) \chi \left( \frac{ \lvert \mathring{R}_{l}^{v} (t,x) + \mathring{G}^{\Xi} (t,x) \rvert }{c_{v} \delta_{q+1} M_{0}(t)} \right), \label{estimate 152}\\
a_{\xi} (t,x) \triangleq& \rho_{v}^{\frac{1}{2}} (t,x) \gamma_{\xi} \left( \text{Id} - \frac{ \mathring{R}_{l}^{v}(t,x) + \mathring{G}^{\Xi}(t,x)}{\rho_{v}(t,x)} \right) \hspace{5mm} \forall \hspace{1mm} \xi \in \Lambda_{v}, \label{estimate 153} 
\end{align}
\end{subequations} 
for $\gamma_{\xi}$ from Lemma \ref{Lemma 3.2}; we recall from Remark \ref{Remark 3.1} that $\Lambda_{v}\cap \Lambda_{\Xi} =\emptyset$ so that there is no discrepancy in the definitions of $a_{\xi}$ for $\xi \in \Lambda_{v}$ and $\Lambda_{\Xi}$. Similarly to \eqref{estimate 132}, we can verify that 
\begin{equation}\label{estimate 155}
\lvert \frac{ \mathring{R}_{l}^{v} (t,x) + \mathring{G}^{\Xi} (t,x)}{\rho_{v} (t,x) } \rvert 
\overset{\eqref{estimate 152}}{=} \lvert \frac{ \mathring{R}_{l}^{v} (t,x) + \mathring{G}^{\Xi} (t,x)}{2\epsilon_{v}^{-1} c_{v} \delta_{q+1} M_{0}(t) \chi \left( \frac{ \lvert \mathring{R}_{l}^{v} (t,x) + \mathring{G}^{\Xi} (t,x)\rvert }{c_{v} \delta_{q+1} M_{0}(t)} \right) } \rvert \overset{\eqref{estimate 131}}{\leq} \epsilon_{v}. 
\end{equation} 
We can estimate for ay $p \in [1,\infty)$, 
\begin{equation}\label{estimate 154}
\lVert \rho_{v} \rVert_{C_{t}L_{x}^{p}} \overset{\eqref{estimate 152} \eqref{estimate 131}}{\leq} 8 \epsilon_{v}^{-1} (c_{v} \delta_{q+1} M_{0}(t) (8\pi^{3})^{\frac{1}{p}} + \lVert \mathring{R}_{l}^{v} + \mathring{G}^{\Xi} \rVert_{C_{t}L_{x}^{p}}).
\end{equation} 
Due to \eqref{estimate 155}, for $c_{v}> 0$ sufficiently small that 
\begin{equation}\label{estimate 156}
c_{v} \leq \frac{ \epsilon_{v}}{72  C_{\ast}^{2} (8\pi^{3} + 7) 8\pi^{3} M_{\ast}^{2} \lvert \Lambda_{v} \rvert^{2} }, 
\end{equation} 
where $C_{\ast}$ is from Lemma \ref{Lemma 6.2} and $M_{\ast}$ is from \eqref{estimate 48} , we can estimate for $\xi \in \Lambda_{v}$ using the fact that mollifiers have mass one, 
\begin{align}
\lVert a_{\xi} \rVert_{C_{t}L_{x}^{2}} \overset{\eqref{estimate 155} \eqref{estimate 154}\eqref{estimate 48} }{\leq}& [8\epsilon_{v}^{-1} ( c_{v} \delta_{q+1} M_{0}(t) 8\pi^{3} + \lVert \mathring{R}_{l}^{v} + \mathring{G}^{\Xi} \rVert_{C_{t}L_{x}^{1}})]^{\frac{1}{2}} M_{\ast} \nonumber \\
\overset{\eqref{estimate 100} \eqref{estimate 147}}{\leq}& M_{\ast} (8 \epsilon_{v}^{-1})^{\frac{1}{2}}   ( c_{v} \delta_{q+1} M_{0}(t) 8 \pi^{3} + c_{v} M_{0}(t) \delta_{q+1} + 6 c_{v} \delta_{q+1} M_{0}(t))^{\frac{1}{2}} \nonumber \\
\overset{\eqref{estimate 156}}{\leq}& \frac{ \delta_{q+1}^{\frac{1}{2}} M_{0}(t)^{\frac{1}{2}}}{3  C_{\ast} (8\pi^{3})^{\frac{1}{2}} \lvert \Lambda_{v} \rvert }. \label{estimate 169}
\end{align}
We emphasize that the smallness of $c_{v}$ only depends on $M_{\ast}$ and $\Lambda_{v}$; thus, the fact that the smallness of $c_{\Xi}$ depended on $c_{v}$ in \eqref{estimate 157} does not make this a circulatory argument. Next, we estimate 
\begin{subequations}\label{estimate 404}
\begin{align}
&\lVert \rho_{v} \rVert_{C_{t,x}} \overset{\eqref{estimate 154}}{\lesssim} \delta_{q+1}M_{0}(t) + \lVert \mathring{R}_{l}^{v} \rVert_{C_{t}W_{x}^{4,1}} + \lVert \mathring{G}^{\Xi} \rVert_{C_{t,x}} \overset{ \eqref{estimate 148} \eqref{estimate 100} }{\lesssim}  \delta_{q+1} l^{-4} M_{0}(t), \label{estimate 158}\\
&\lVert \rho_{v} \rVert_{C_{t}C_{x}^{j}} \overset{\eqref{estimate 152} \eqref{estimate 148}\eqref{estimate 100}}{\lesssim}  \delta_{q+1} l^{-9j} M_{0}(t) \hspace{6mm} \forall \hspace{1mm} j \geq 1, 
\label{estimate 159}\\
&\lVert \rho_{v} \rVert_{C_{t}^{1}C_{x}^{j}} \overset{\eqref{estimate 148} \eqref{estimate 149} \eqref{estimate 100}}{\lesssim}  l^{-9j-9} M_{0}(t) \delta_{q+1} \hspace{3mm} \forall \hspace{1mm} j \in \{0,1,2\}, \label{estimate 160}  \\
& \lVert \rho_{v} \rVert_{C_{t}^{2}C_{x}} \overset{\eqref{estimate 148} \eqref{estimate 100} \eqref{estimate 160}}{\lesssim} \delta_{q+1} M_{0}(t) l^{-18}, \label{estimate 412}
\end{align}
\end{subequations}
where to verify \eqref{estimate 160} one can directly compute $\partial_{t} \rho_{v}$ and use the fact that $\partial_{t} M_{0}(t) = 4L M_{0}(t)$, from which the estimate of \eqref{estimate 412} also follows. The estimates \eqref{estimate 158}-\eqref{estimate 160}, along with the lower bound on $\rho_{v}$
\begin{equation}\label{estimate 323}
\rho_{v}(t) \overset{\eqref{estimate 152} \eqref{estimate 131}}{\geq} \delta_{q+1} \epsilon_{v}^{-1}c_{v} M_{0}(t), 
\end{equation}  
which is analogous to \eqref{estimate 144}, lead to the following estimates via \cite[Equ. (130)]{BDIS15}:
\begin{subequations}
\begin{align}
& \lVert \rho_{v}^{\frac{1}{2}} \rVert_{C_{t,x}} \overset{\eqref{estimate 158}}{\lesssim} \delta_{q+1}^{\frac{1}{2}} l^{-2} M_{0}(t)^{\frac{1}{2}}, \label{estimate 161}\\
& \lVert \rho_{v}^{\frac{1}{2}} \rVert_{C_{t}C_{x}^{j}} \overset{\eqref{estimate 159}}{\lesssim} \delta_{q+1}^{\frac{1}{2}} M_{0}(t)^{\frac{1}{2}} l^{-9j} \hspace{6mm} \forall \hspace{1mm} j \geq 0, \label{estimate 162}\\
& \lVert \rho_{v}^{\frac{1}{2}} \rVert_{C_{t}^{1} C_{x}^{j}} \overset{\eqref{estimate 160}}{\lesssim} \delta_{q+1}^{\frac{1}{2}}  M_{0}(t)^{\frac{1}{2}} l^{-9j-9} \hspace{3mm} \forall \hspace{1mm} j \in \{0,1,2\}. \label{estimate 163}
\end{align} 
\end{subequations} 
With these estimates in hand, we can now obtain 
\begin{subequations}\label{estimate 187}
\begin{align}
&  \lVert a_{\xi} \rVert_{C_{t}C_{x}^{j}} \overset{\eqref{estimate 161} \eqref{estimate 162}}{\lesssim} \delta_{q+1}^{\frac{1}{2}} M_{0}(t)^{\frac{1}{2}} l^{-9j-2} \hspace{4mm} \forall \hspace{1mm} j \geq 0, \hspace{9mm} \xi \in \Lambda_{v}, \label{estimate 164}\\
& \lVert a_{\xi} \rVert_{C_{t}^{1}C_{x}^{j}} \lesssim \delta_{q+1}^{\frac{1}{2}} M_{0}(t)^{\frac{1}{2}} l^{-9j - 11} \hspace{19mm} \forall \hspace{1mm} j \in \{0,1,2\}, \xi \in \Lambda_{v}, \label{estimate 165} 
\end{align}
\end{subequations} 
where for \eqref{estimate 165} one can estimate 
\begin{align*}
 \lVert a_{\xi} \rVert_{C_{t}^{1}C_{x}^{j}} \overset{\eqref{estimate 163} \eqref{estimate 48} \eqref{estimate 161}}{\lesssim} l^{-9j-9} M_{0}(t)^{\frac{1}{2}} \delta_{q+1}^{\frac{1}{2}} + \delta_{q+1}^{\frac{1}{2}} l^{-2} M_{0}(t)^{\frac{1}{2}} \lVert \gamma_{\xi} (\text{Id} - \frac{ \mathring{R}_{l}^{v} + \mathring{G}^{\Xi}}{\rho_{v}} ) \rVert_{C_{t}^{1}C_{x}^{j}}
\end{align*}
and then $\lVert\gamma_{\xi} (\text{Id} - \frac{ \mathring{R}_{l}^{v} + \mathring{G}^{\Xi}}{\rho_{v}} ) \rVert_{C_{t}^{1}C_{x}^{j}} \lesssim l^{-9j - 9}$ using \cite[Equ. (130)]{BDIS15}.  Furthermore, we can directly compute $\partial_{t}^{2} a_{\xi}$ from \eqref{estimate 153} and estimate 
\begin{equation}\label{estimate 413}
 \lVert a_{\xi} \rVert_{C_{t}^{2}C_{x}} \overset{\eqref{estimate 100} \eqref{estimate 403}\eqref{estimate 155} \eqref{estimate 404} \eqref{estimate 323} }{\lesssim} l^{-18} M_{0}(t)^{\frac{1}{2}} \delta_{q+1}^{\frac{1}{2}}. 
\end{equation}
Finally, we have the following identity
\begin{align}
& \sum_{\xi \in \Lambda_{v}} a_{\xi}^{2} \phi_{\xi}^{2} \varphi_{\xi}^{2} (\xi \otimes \xi) + \mathring{R}_{l}^{v} + \mathring{G}^{\Xi}  \nonumber \\
\overset{\eqref{estimate 57} \eqref{estimate 153} }{=}& \sum_{\xi \in \Lambda_{v}} \rho_{v} \gamma_{\xi}^{2} \left(\text{Id} - \frac{ \mathring{R}_{l}^{v} + \mathring{G}^{\Xi}}{\rho_{v}} \right) (\xi \otimes \xi)  + \sum_{\xi \in \Lambda_{v}} a_{\xi}^{2} \mathbb{P}_{\neq 0} (\phi_{\xi}^{2} \varphi_{\xi}^{2}) (\xi \otimes \xi) + \mathring{R}_{l}^{v} + \mathring{G}^{\Xi} \nonumber \\
\overset{\eqref{estimate 45}}{=}& \rho_{v} \text{Id} + \sum_{\xi \in \Lambda_{v}} a_{\xi}^{2} \mathbb{P}_{\neq 0} (\phi_{\xi}^{2} \varphi_{\xi}^{2}) (\xi \otimes \xi).  \label{estimate 216}
\end{align} 
Now we define the principal part of the perturbation as 
\begin{equation}\label{estimate 166}
w_{q+1}^{p} \triangleq \sum_{\xi \in \Lambda} a_{\xi} \phi_{\xi} \varphi_{\xi} \xi, \hspace{3mm} d_{q+1}^{p} \triangleq \sum_{\xi \in \Lambda_{\Xi}} a_{\xi} \phi_{\xi} \varphi_{\xi} \xi_{2}. 
\end{equation} 
Because neither $w_{q+1}^{p}$ or $d_{q+1}^{p}$ is divergence-free, we define the correctors by 
\begin{subequations}\label{estimate 330}
\begin{align}
w_{q+1}^{c} \triangleq \frac{1}{N_{\Lambda}^{2} \lambda_{q+1}^{2}} \sum_{\xi \in \Lambda}& \text{curl} ( \nabla a_{\xi} \times (\phi_{\xi} \Psi_{\xi} \xi)) \nonumber \\
&+ \nabla a_{\xi} \times \text{curl} ( \phi_{\xi} \Psi_{\xi} \xi) + a_{\xi} \nabla \phi_{\xi} \times \text{curl} (\Psi_{\xi} \xi), \label{estimate 167}\\
d_{q+1}^{c} \triangleq \frac{1}{N_{\Lambda}^{2} \lambda_{q+1}^{2}} \sum_{\xi \in \Lambda_{\Xi}}& \text{curl}( \nabla a_{\xi} \times (\phi_{\xi} \Psi_{\xi} \xi_{2})) +\nabla a_{\xi} \times\text{curl}(\phi_{\xi}\Psi_{\xi}\xi_{2}) - a_{\xi} \Delta \phi_{\xi} \Psi_{\xi} \xi_{2}. \label{estimate 168}
\end{align}
\end{subequations} 
Because $\nabla\cdot (\Psi_{\xi} \xi) = \nabla\cdot (\Psi_{\xi} \xi_{2}) = 0$, we see that due to \eqref{estimate 57} 
\begin{equation}\label{estimate 438}
\text{curl curl} (\Psi_{\xi} \xi) = (\lambda_{q+1}^{2} N_{\Lambda}^{2} \varphi_{\xi}) \xi \hspace{1mm} \text{ and } \hspace{1mm} \text{curl curl} (\Psi_{\xi} \xi_{2}) = (\lambda_{q+1}^{2} N_{\Lambda}^{2} \varphi_{\xi}) \xi_{2}
\end{equation} 
which lead to 
\begin{subequations}\label{estimate 193}
\begin{align}
&\frac{1}{N_{\Lambda}^{2} \lambda_{q+1}^{2}} \text{curl curl} \sum_{\xi \in \Lambda} a_{\xi} \phi_{\xi} \Psi_{\xi} \xi = w_{q+1}^{p} + w_{q+1}^{c}, \label{estimate 503}\\
& \frac{1}{N_{\Lambda}^{2} \lambda_{q+1}^{2}} \text{curl curl}\sum_{\xi\in\Lambda_{\Xi}}a_{\xi}\phi_{\xi}\Psi_{\xi}\xi_{2} = d_{q+1}^{p} + d_{q+1}^{c} \label{estimate 504}
\end{align}
\end{subequations} 
via the identities of 
\begin{align*}
\nabla \times \nabla \times A = \nabla (\nabla \cdot A) - \Delta A \hspace{1mm} \text{ and } \hspace{1mm} \text{curl} (A \times B) = A (\nabla \cdot B) - B(\nabla \cdot A) + (B\cdot \nabla)A  - (A\cdot \nabla) B. 
\end{align*}
We note that the difference between $w_{q+1}^{c}$ and $d_{q+1}^{c}$ in \eqref{estimate 330} is due to the fact that upon computing $\frac{1}{N_{\Lambda}^{2} \lambda_{q+1}^{2}} \text{curl curl} \sum_{\xi \in \Lambda} a_{\xi} \phi_{\xi} \Psi_{\xi} \xi$ and $\frac{1}{N_{\Lambda}^{2} \lambda_{q+1}^{2}} \text{curl curl}\sum_{\xi\in\Lambda_{\Xi}}a_{\xi}\phi_{\xi}\Psi_{\xi}\xi_{2}$, one sees that $\nabla \phi_{\xi} \times \Psi_{\xi} \xi = 0$ but $\nabla \phi_{\xi} \times \Psi_{\xi} \xi_{2} \neq 0$.  From \eqref{estimate 193} we see that 
\begin{equation}\label{estimate 405}
\nabla\cdot (w_{q+1}^{p} + w_{q+1}^{c}) = 0 \text{ and } \nabla\cdot (d_{q+1}^{p} + d_{q+1}^{c}) = 0 
\end{equation} 
and $w_{q+1}^{p}+ w_{q+1}^{c}$ and $d_{q+1}^{p} + d_{q+1}^{c}$ are both mean-zero. Next, we define temporal correctors: 
\begin{equation}\label{estimate 334} 
w_{q+1}^{t} \triangleq - \mu^{-} \sum_{\xi \in \Lambda} \mathbb{P} \mathbb{P}_{\neq 0} (a_{\xi}^{2} \mathbb{P}_{\neq 0} (\phi_{\xi}^{2} \varphi_{\xi}^{2} ))\xi \hspace{1mm} \text{ and }  \hspace{1mm} d_{q+1}^{t} \triangleq  \mu^{-1} \sum_{\xi\in\Lambda_{\Xi}}\mathbb{P} \mathbb{P}_{\neq 0} (a_{\xi}^{2}\mathbb{P}_{\neq 0} (\phi_{\xi}^{2}\varphi_{\xi}^{2})) \xi_{2}.
\end{equation}  
It follows from the definitions of $\mathbb{P}$ and $\mathbb{P}_{\neq 0}$ that both $w_{q+1}^{t}$ and $d_{q+1}^{t}$ are divergence-free and mean-zero and from the definition of $\mathbb{P} = \text{Id} - \nabla \Delta^{-1} \text{div}$ that  
\begin{subequations}\label{estimate 217}
\begin{align}
& \partial_{t} w_{q+1}^{t} = - \mu^{-1} \sum_{\xi \in \Lambda} \mathbb{P}_{\neq 0} \partial_{t} (a_{\xi}^{2} \mathbb{P}_{\neq 0} (\phi_{\xi}^{2} \varphi_{\xi}^{2} )) \xi + \mu^{-1} \sum_{\xi \in \Lambda} \nabla \Delta^{-1} \text{div} \partial_{t} (a_{\xi}^{2} \mathbb{P}_{\neq 0} (\phi_{\xi}^{2} \varphi_{\xi}^{2} )) \xi, \label{estimate 214} \\
& \partial_{t} d_{q+1}^{t} = \mu^{-1} \sum_{\xi \in \Lambda_{\Xi}} \mathbb{P}_{\neq 0} \partial_{t} (a_{\xi}^{2} \mathbb{P}_{\neq 0} (\phi_{\xi}^{2} \varphi_{\xi}^{2} )) \xi_{2} - \mu^{-1} \sum_{\xi \in \Lambda_{\Xi}} \nabla \Delta^{-1} \text{div} \partial_{t} (a_{\xi}^{2} \mathbb{P}_{\neq 0} (\phi_{\xi}^{2} \varphi_{\xi}^{2} )) \xi_{2}.\label{estimate 215}
\end{align}
\end{subequations}
At last, we define 
\begin{equation}\label{estimate 176}
w_{q+1} \triangleq w_{q+1}^{p} + w_{q+1}^{c} + w_{q+1}^{t} \hspace{2mm} \text{ and } \hspace{2mm} d_{q+1} \triangleq d_{q+1}^{p} + d_{q+1}^{c}+ d_{q+1}^{t},  
\end{equation} 
which are both mean-zero and divergence-free due to \eqref{estimate 405}; moreover, we define 
\begin{equation}\label{estimate 203} 
v_{q+1} \triangleq v_{l} + w_{q+1} \hspace{2mm} \text{ and } \hspace{2mm}  \Xi_{q+1} \triangleq \Xi_{l} + d_{q+1}.
\end{equation} 
Now we have for all $j \in \mathbb{N}_{0}$, by taking $a \in 2 \mathbb{N}$ sufficiently large, 
\begin{subequations}
\begin{align}
\lVert D^{j} a_{\xi} \rVert_{C_{t} L_{x}^{2}} \overset{\eqref{estimate 146} \eqref{estimate 150}}{\leq} \frac{ \delta_{q+1}^{\frac{1}{2}} M_{0}(t)^{\frac{1}{2}}}{3 C_{\ast} (8\pi^{3})^{\frac{1}{2}} \lvert \Lambda_{\Xi} \rvert} l^{-8j} \hspace{5mm} & \forall \hspace{1mm} \xi \in \Lambda_{\Xi},  \label{estimate 401}\\
\lVert D^{j} a_{\xi} \rVert_{C_{t}L_{x}^{2}} \overset{\eqref{estimate 169} \eqref{estimate 164}}{\leq} \frac{ \delta_{q+1}^{\frac{1}{2}} M_{0}(t)^{\frac{1}{2}}}{3 C_{\ast} (8\pi^{3})^{\frac{1}{2}}\lvert \Lambda_{v} \rvert  } l^{-12j} \hspace{5mm} & \forall \hspace{1mm} \xi \in \Lambda_{v}. \label{estimate 402}
\end{align}
\end{subequations}
Thus, because $\phi_{\xi}$ and $\varphi_{\xi}$ are both $(\mathbb{T}/\lambda_{q+1}\sigma)^{3}$-periodic due to \eqref{estimate 47} and \eqref{estimate 52}, in order to apply Lemma \ref{Lemma 6.2} we set first in case $\xi \in \Lambda_{\Xi}$, ``$f$'' = $a_{\xi}$, ``$g$'' = $\phi_{\xi} \varphi_{\xi}$, ``$\kappa$'' = $\lambda_{q+1} \sigma \in\mathbb{N}$ due to \eqref{estimate 52}, ``$N$'' = 1, ``$p$'' = 2, ``$\zeta$'' = $l^{-8}$, ``$C_{f}$'' = $\frac{\delta_{q+1}^{\frac{1}{2}} M_{0}(t)^{\frac{1}{2}}}{3 C_{\ast} (8\pi^{3})^{\frac{1}{2}} \lvert \Lambda_{\Xi} \rvert}$ while in case $\xi \in \Lambda_{v}$, we set identically with the only exceptions of ``$\zeta$'' = $l^{-12}$ and ``$C_{f}$'' = $\frac{\delta_{q+1}^{\frac{1}{2}} M_{0}(t)^{\frac{1}{2}}}{3C_{\ast} (8\pi^{3})^{\frac{1}{2}} \lvert \Lambda_{v} \rvert}$ for which both conditions in \eqref{estimate 140} may be verified using \eqref{eta}-\eqref{l}. Therefore, 
\begin{subequations}
\begin{align}
& \lVert a_{\xi} \phi_{\xi} \varphi_{\xi} \rVert_{C_{t}L_{x}^{2}} \overset{\eqref{estimate 171}}{\leq} \left( \frac{ \delta_{q+1}^{\frac{1}{2}} M_{0}(t)^{\frac{1}{2}}}{3 C_{\ast} (8 \pi^{3})^{\frac{1}{2}} \lvert \Lambda_{\Xi} \rvert} \right) C_{\ast} \lVert \phi_{\xi} \varphi_{\xi} \rVert_{C_{t}L_{x}^{2}}   \overset{\eqref{estimate 57}}{=} \frac{ \delta_{q+1}^{\frac{1}{2}} M_{0}(t)^{\frac{1}{2}}}{3 \lvert \Lambda_{\Xi} \rvert} \hspace{3mm} \forall \hspace{1mm} \xi \in \Lambda_{\Xi},  \label{estimate 172}\\
& \lVert a_{\xi} \phi_{\xi} \varphi_{\xi} \rVert_{C_{t}L_{x}^{2}} \overset{\eqref{estimate 171} }{\leq} \left( \frac{ \delta_{q+1}^{\frac{1}{2}} M_{0}(t)^{\frac{1}{2}}}{3 C_{\ast} (8 \pi^{3})^{\frac{1}{2}} \lvert \Lambda_{v} \rvert} \right) C_{\ast} \lVert \phi_{\xi} \varphi_{\xi} \rVert_{C_{t}L_{x}^{2}} 
  \overset{\eqref{estimate 57}}{=} \frac{ \delta_{q+1}^{\frac{1}{2}} M_{0}(t)^{\frac{1}{2}}}{3 \lvert \Lambda_{v} \rvert} \hspace{3mm} \forall \hspace{1mm} \xi \in \Lambda_{v}. \label{estimate 173}  
\end{align}
\end{subequations}
It follows that 
\begin{subequations}
\begin{align}
&\lVert d_{q+1}^{p} \rVert_{C_{t}L_{x}^{2}}  \overset{\eqref{estimate 166}}{\leq} \sum_{\xi \in \Lambda_{\Xi}} \lVert a_{\xi} \phi_{\xi} \varphi_{\xi} \rVert_{C_{t}L_{x}^{2}} \overset{\eqref{estimate 172}}{\leq}  \frac{ \delta_{q+1}^{\frac{1}{2}} M_{0}(t)^{\frac{1}{2}}}{3}, \label{estimate 177}\\
&\lVert w_{q+1}^{p} \rVert_{C_{t}L_{x}^{2}} \overset{\eqref{estimate 166} \eqref{estimate 46} }{\leq} \sum_{\xi \in \Lambda_{v}} \lVert a_{\xi} \phi_{\xi} \varphi_{\xi} \rVert_{C_{t}L_{x}^{2}} +  \sum_{\xi \in \Lambda_{\Xi}} \lVert a_{\xi} \phi_{\xi} \varphi_{\xi} \rVert_{C_{t}L_{x}^{2}}  \overset{\eqref{estimate 173}}{\leq}  \frac{2\delta_{q+1}^{\frac{1}{2}} M_{0}(t)^{\frac{1}{2}}}{3}. \label{estimate 178}
\end{align}
\end{subequations} 
Next, using the fact that $\phi_{\xi}$ and $\Psi_{\xi}$ have oscillations in orthogonal directions, we can compute for all $p \in [1,\infty]$ 
\begin{align}
& \lVert d_{q+1}^{c} \rVert_{C_{t}L_{x}^{p}} \label{estimate 179}\\
\lesssim& \lambda_{q+1}^{-2} \sum_{\xi \in \Lambda_{\Xi}} \lVert a_{\xi} \rVert_{C_{t}C_{x}^{2}} \lVert \phi_{\xi} \rVert_{C_{t}L_{x}^{p}} \lVert \Psi_{\xi} \rVert_{L_{x}^{p}} + \lVert a_{\xi} \rVert_{C_{t}C_{x}^{1}} (\lVert \phi_{\xi} \rVert_{C_{t}W_{x}^{1,p}} \lVert\Psi_{\xi} \rVert_{L_{x}^{p}} + \lVert \phi_{\xi} \rVert_{C_{t}L_{x}^{p}} \lVert \Psi_{\xi} \rVert_{W_{x}^{1,p}}) \nonumber\\
& \hspace{10mm} + \lVert a_{\xi} \rVert_{C_{t,x}} \lVert \phi_{\xi}\rVert_{C_{t}W_{x}^{2,p}} \lVert\Psi_{\xi} \rVert_{L_{x}^{p}}  \overset{\eqref{estimate 175} \eqref{estimate 150} \eqref{eta}-\eqref{sigma, r, mu} \eqref{estimate 130} }{\lesssim}\delta_{q+1}^{\frac{1}{2}} M_{0}(t)^{\frac{1}{2}} l^{-2} r^{\frac{1}{p} - \frac{3}{2}} \sigma^{\frac{1}{p} + \frac{1}{2}}.  \nonumber 
\end{align} 
Similarly,
\begin{align}
& \lVert w_{q+1}^{c} \rVert_{C_{t}L_{x}^{p}} \label{estimate 180}\\
\lesssim& \lambda_{q+1}^{-2} (\sum_{\xi \in\Lambda_{v}} + \sum_{\xi \in \Lambda_{\Xi}}) \lVert a_{\xi} \rVert_{C_{t}C_{x}^{2}} \lVert \phi_{\xi} \rVert_{C_{t}L_{x}^{p}} \lVert \Psi_{\xi} \rVert_{L_{x}^{p}} + \lVert a_{\xi} \rVert_{C_{t}C_{x}^{1}} (\lVert \phi_{\xi} \rVert_{C_{t}W_{x}^{1,p}} \lVert \Psi_{\xi} \rVert_{L_{x}^{p}} + \lVert \phi_{\xi} \rVert_{C_{t}L_{x}^{p}} \lVert \Psi_{\xi}\rVert_{W_{x}^{1,p}}) \nonumber\\
& \hspace{11mm} + \lVert a_{\xi} \rVert_{C_{t,x}} \lVert \phi_{\xi} \rVert_{C_{t}W_{x}^{1,p}} \lVert \Psi_{\xi} \rVert_{W_{x}^{1,p}} \overset{\eqref{estimate 175}   \eqref{eta}-\eqref{sigma, r, mu} \eqref{estimate 150} \eqref{estimate 164}}{\lesssim} \delta_{q+1}^{\frac{1}{2}} M_{0}(t)^{\frac{1}{2}} l^{-2} r^{\frac{1}{p} - \frac{3}{2}} \sigma^{\frac{1}{p} + \frac{1}{2}}. \nonumber
\end{align} 
Next, for $p \in (1,\infty)$, we can estimate via \eqref{estimate 60}  
\begin{subequations}\label{estimate 236}
\begin{align}
&  \lVert d_{q+1}^{t} \rVert_{C_{t}L_{x}^{p}} \lesssim \mu^{-1} \sum_{\xi \in \Lambda_{\Xi}} \lVert a_{\xi}\rVert_{C_{t,x}}^{2} \lVert \phi_{\xi} \varphi_{\xi} \rVert_{C_{t}L_{x}^{2p}}^{2} 
\overset{\eqref{estimate 150} }{\lesssim} \mu^{-1} \delta_{q+1} l^{-4} M_{0}(t) r^{\frac{1}{p} - 1} \sigma^{\frac{1}{p} - 1}, \label{estimate 181} \\
& \lVert w_{q+1}^{t} \rVert_{C_{t}L_{x}^{p}} \lesssim \mu^{-1} \sum_{\xi \in \Lambda} \lVert a_{\xi}\rVert_{C_{t,x}}^{2} \lVert \phi_{\xi} \varphi_{\xi} \rVert_{C_{t}L_{x}^{2p}}^{2}   
\overset{\eqref{estimate 150} \eqref{estimate 164} }{\lesssim} \mu^{-1} \delta_{q+1} l^{-4} M_{0}(t) r^{\frac{1}{p} - 1} \sigma^{\frac{1}{p} - 1}. \label{estimate 182} 
\end{align}
\end{subequations}
We are now ready to obtain for $a \in 2 \mathbb{N}$ sufficiently large  
\begin{equation}\label{estimate 202}
\lVert d_{q+1} \rVert_{C_{t}L_{x}^{2}} \leq \frac{\delta_{q+1}^{\frac{1}{2}} M_{0}(t)^{\frac{1}{2}}}{2} \hspace{2mm} \text{ and } \hspace{2mm} \lVert w_{q+1} \rVert_{C_{t}L_{x}^{2}} \leq \frac{ 3  \delta_{q+1}^{\frac{1}{2}} M_{0}(t)^{\frac{1}{2}}}{4}; 
\end{equation} 
e.g., the first is estimated by 
\begin{align}
\lVert d_{q+1} \rVert_{C_{t}L_{x}^{2}} 
\overset{\eqref{estimate 176}}{\leq}& \lVert d_{q+1}^{p} \rVert_{C_{t}L_{x}^{2}} + \lVert d_{q+1}^{c} \rVert_{C_{t}L_{x}^{2}} + \lVert d_{q+1}^{t} \rVert_{C_{t}L_{x}^{2}} \nonumber \\
\overset{\eqref{estimate 177} \eqref{estimate 179}  \eqref{estimate 181}}{\leq}& \frac{ \delta_{q+1}^{\frac{1}{2}} M_{0}(t)^{\frac{1}{2}}}{3} + C [ \delta_{q+1}^{\frac{1}{2}} M_{0}(t)^{\frac{1}{2}} l^{-2} r^{-1} \sigma + \mu^{-1}\delta_{q+1}   l^{-4} M_{0}(t) r^{-\frac{1}{2}} \sigma^{-\frac{1}{2}}] \nonumber \\
\overset{\eqref{estimate 130} \eqref{eta} \eqref{alpha}}{\leq}& \frac{\delta_{q+1}^{\frac{1}{2}} M_{0}(t)^{\frac{1}{2}}}{2}, \label{estimate 340}
\end{align} 
with the second estimate similarly via \eqref{estimate 177} and \eqref{estimate 181} replaced by \eqref{estimate 178} and \eqref{estimate 182}. Next, we estimate for all $p \in [1,\infty]$ 
\begin{subequations}\label{estimate 185}
\begin{align}
&\lVert d_{q+1}^{p} \rVert_{C_{t}L_{x}^{p}} \lesssim \sum_{\xi \in \Lambda_{\Xi}} \lVert a_{\xi} \rVert_{C_{t,x}} \lVert \phi_{\xi} \varphi_{\xi} \rVert_{C_{t}L_{x}^{p}} 
\overset{\eqref{estimate 150} \eqref{estimate 60}}{\lesssim} \delta_{q+1}^{\frac{1}{2}} l^{-2} M_{0}(t)^{\frac{1}{2}} r^{\frac{1}{p} - \frac{1}{2}} \sigma^{\frac{1}{p} - \frac{1}{2}}, \label{estimate 183}\\
& \lVert w_{q+1}^{p} \rVert_{C_{t}L_{x}^{p}} \lesssim \sum_{\xi \in \Lambda} \lVert a_{\xi} \rVert_{C_{t,x}} \lVert \phi_{\xi} \varphi_{\xi} \rVert_{C_{t}L_{x}^{p}} 
\overset{\eqref{estimate 150} \eqref{estimate 164} \eqref{estimate 60}}{\lesssim} \delta_{q+1}^{\frac{1}{2}} l^{-2} M_{0}(t)^{\frac{1}{2}} r^{\frac{1}{p} - \frac{1}{2}} \sigma^{\frac{1}{p} - \frac{1}{2}}.\label{estimate 184}
\end{align}
\end{subequations}
We are now ready to establish the following estimate: for any $p \in (1,\infty)$, 
\begin{align}
& \lVert d_{q+1} \rVert_{C_{t}L_{x}^{p}}  + \lVert w_{q+1} \rVert_{C_{t}L_{x}^{p}} \label{estimate 186}\\
\overset{\eqref{estimate 176}}{\leq}& \lVert d_{q+1}^{p} \rVert_{C_{t}L_{x}^{p}} + \lVert d_{q+1}^{c} \rVert_{C_{t}L_{x}^{p}} + \lVert d_{q+1}^{t} \rVert_{C_{t}L_{x}^{p}} +  \lVert w_{q+1}^{p} \rVert_{C_{t}L_{x}^{p}} + \lVert w_{q+1}^{c} \rVert_{C_{t}L_{x}^{p}} + \lVert w_{q+1}^{t} \rVert_{C_{t}L_{x}^{p}}  \nonumber\\
&\overset{\eqref{estimate 179}-\eqref{estimate 236} \eqref{estimate 185}}{\lesssim} \delta_{q+1}^{\frac{1}{2}} l^{-2} M_{0}(t)^{\frac{1}{2}} r^{\frac{1}{p} - \frac{1}{2}} \sigma^{\frac{1}{p} - \frac{1}{2}} + \delta_{q+1}^{\frac{1}{2}} M_{0}(t)^{\frac{1}{2}} l^{-2} r^{\frac{1}{p} - \frac{3}{2}} \sigma^{\frac{1}{p} + \frac{1}{2}} \nonumber\\
& \hspace{18mm} + \delta_{q+1} \mu^{-1} l^{-4} M_{0}(t) r^{\frac{1}{p} - 1} \sigma^{\frac{1}{p} - 1} \overset{\eqref{estimate 130}}{\lesssim}  \delta_{q+1}^{\frac{1}{2}}  l^{-2} M_{0}(t)^{\frac{1}{2}} r^{\frac{1}{p} - \frac{1}{2}} \sigma^{\frac{1}{p} - \frac{1}{2}}. \nonumber
\end{align}
Next, for all $p \in [1,\infty]$, 
\begin{align}
& \lVert d_{q+1}^{p} \rVert_{C_{t}W_{x}^{1,p}} + \lVert w_{q+1}^{p} \rVert_{C_{t}W_{x}^{1,p}} \lesssim \sum_{\xi \in \Lambda} \lVert a_{\xi} \rVert_{C_{t}C_{x}^{1}} \lVert \phi_{\xi} \varphi_{\xi} \rVert_{C_{t}L_{x}^{p}} + \lVert a_{\xi} \rVert_{C_{t,x}} \lVert \phi_{\xi} \varphi_{\xi} \rVert_{C_{t}W_{x}^{1,p}} \nonumber \\
&\overset{\eqref{estimate 175} \eqref{estimate 150} \eqref{estimate 187}}{\lesssim} (\delta_{q+1}^{\frac{1}{2}} l^{-7} M_{0}(t)^{\frac{1}{2}} + \delta_{q+1}^{\frac{1}{2}} l^{-11} M_{0}(t)^{\frac{1}{2}}) r^{\frac{1}{p} - \frac{1}{2}} \sigma^{\frac{1}{p} -\frac{1}{2}} + \delta_{q+1}^{\frac{1}{2}} l^{-2} M_{0}(t)^{\frac{1}{2}} \lambda_{q+1} r^{\frac{1}{p} - \frac{1}{2}} \sigma^{\frac{1}{p} - \frac{1}{2}} \nonumber \\
& \hspace{50mm} \overset{\eqref{estimate 130}\eqref{alpha}}{\lesssim} \delta_{q+1}^{\frac{1}{2}}  l^{-2} M_{0}(t)^{\frac{1}{2}} r^{\frac{1}{p} - \frac{1}{2}} \sigma^{\frac{1}{p} - \frac{1}{2}} \lambda_{q+1}. \label{estimate 188}
\end{align}
Next, we estimate for any $p \in [1,\infty]$, using again the fact that $\phi_{\xi}$ and $\Psi_{\xi}$ have oscillations  ini orthogonal directions, we deduce  
\begin{align}
&\lVert d_{q+1}^{c} \rVert_{C_{t}W_{x}^{1,p}} \nonumber \\
\lesssim& \lambda_{q+1}^{-2} \sum_{\xi \in \Lambda_{\Xi}} \lVert a_{\xi} \rVert_{C_{t}C_{x}^{3}} \lVert \phi_{\xi} \Psi_{\xi} \rVert_{C_{t}L_{x}^{p}} + \lVert a_{\xi} \rVert_{C_{t}C_{x}^{1}} \lVert \phi_{\xi} \Psi_{\xi} \rVert_{C_{t}W_{x}^{2,p}} + \lVert a_{\xi} \rVert_{C_{t}C_{x}^{2}} \lVert \phi_{\xi} \Psi_{\xi}\rVert_{C_{t}W_{x}^{1,p}} \nonumber\\
& \hspace{2mm} + \lVert a_{\xi} \rVert_{C_{t}C_{x}^{1}} \lVert \phi_{\xi} \rVert_{C_{t}W_{x}^{2,p}} \lVert \Psi_{\xi} \rVert_{L_{x}^{p}} + \lVert a_{\xi} \rVert_{C_{t,x}} (\lVert \phi_{\xi} \rVert_{C_{t}W_{x}^{3,p}} \lVert \Psi_{\xi} \rVert_{L_{x}^{p}} + \lVert \phi_{\xi} \rVert_{C_{t}W_{x}^{2,p}} \lVert \Psi_{\xi} \rVert_{W_{x}^{1,p}})  \nonumber\\
\overset{\eqref{estimate 150}\eqref{estimate 175}}{\lesssim}&  \lambda_{q+1}^{-2} l^{-2} M_{0}(t)^{\frac{1}{2}} \delta_{q+1}^{\frac{1}{2}} [ l^{-15} r^{\frac{1}{p} - \frac{1}{2}} \sigma^{\frac{1}{p} - \frac{1}{2}} + l^{-5} \lambda_{q+1}^{2} r^{\frac{1}{p} - \frac{1}{2}} \sigma^{\frac{1}{p} - \frac{1}{2}} + l^{-10} \lambda_{q+1} r^{\frac{1}{p} - \frac{1}{2}} \sigma^{\frac{1}{p} - \frac{1}{2}} \nonumber\\
& \hspace{30mm} + l^{-5} \lambda_{q+1}^{2} r^{\frac{1}{p} - \frac{5}{2}} \sigma^{\frac{1}{p} + \frac{3}{2}} + \lambda_{q+1}^{3} r^{\frac{1}{p} - \frac{7}{2}} \sigma^{\frac{1}{p} + \frac{5}{2}} + \lambda_{q+1}^{3} r^{\frac{1}{p} - \frac{5}{2}} \sigma^{\frac{1}{p} + \frac{3}{2}}]  \nonumber\\
\overset{\eqref{estimate 130} \eqref{alpha}}{\lesssim}& \delta_{q+1}^{\frac{1}{2}} \lambda_{q+1} l^{-2} M_{0}(t)^{\frac{1}{2}} r^{\frac{1}{p} - \frac{3}{2}} \sigma^{\frac{1}{p} + \frac{1}{2}}. \label{estimate 189}
\end{align} 
Similarly, 
\begin{align}
& \lVert w_{q+1}^{c} \rVert_{C_{t}W_{x}^{1,p}} \lesssim \lambda_{q+1}^{-2} \sum_{\xi \in \Lambda} \lVert a_{\xi} \rVert_{C_{t}C_{x}^{3}} \lVert \phi_{\xi} \Psi_{\xi} \rVert_{C_{t}L_{x}^{p}} + \lVert a_{\xi} \rVert_{C_{t}C_{x}^{1}} \lVert \phi_{\xi} \Psi_{\xi} \rVert_{C_{t}W_{x}^{2,p}} + \lVert a_{\xi} \rVert_{C_{t}C_{x}^{2}} \lVert \phi_{\xi} \Psi_{\xi} \rVert_{C_{t}W_{x}^{1,p}} \nonumber \\
& \hspace{15mm} + \lVert a_{\xi} \rVert_{C_{t}C_{x}^{1}} \lVert \phi_{\xi} \rVert_{C_{t}W_{x}^{1,p}} \lVert \Psi_{\xi} \rVert_{W_{x}^{1,p}} + \lVert a_{\xi} \rVert_{C_{t,x}} ( \lVert \phi_{\xi} \rVert_{C_{t}W_{x}^{2,p}} \lVert \Psi_{\xi} \rVert_{W_{x}^{1,p}} + \lVert \phi_{\xi} \rVert_{C_{t}W_{x}^{1,p}} \lVert \Psi_{\xi} \rVert_{W_{x}^{2,p}}) \nonumber \\
& \hspace{5mm} \overset{\eqref{estimate 175} \eqref{estimate 187}\eqref{estimate 150}}{\lesssim}  \delta_{q+1}^{\frac{1}{2}} \lambda_{q+1}^{-2} M_{0}(t)^{\frac{1}{2}} l^{-2}  \nonumber\\
& \hspace{10mm} \times [ l^{-29} r^{\frac{1}{p} - \frac{1}{2}} \sigma^{\frac{1}{p} - \frac{1}{2}} + l^{-11} \lambda_{q+1}^{2} r^{\frac{1}{p} - \frac{1}{2}} \sigma^{\frac{1}{p} - \frac{1}{2}}  + l^{-20} \lambda_{q+1} r^{\frac{1}{p} - \frac{1}{2}} \sigma^{\frac{1}{p} - \frac{1}{2}}  + l^{-11}\lambda_{q+1}^{2} r^{\frac{1}{p} - \frac{3}{2}} \sigma^{\frac{1}{p} + \frac{1}{2}} \nonumber \\
& \hspace{20mm} + \lambda_{q+1}^{3} r^{\frac{1}{p} - \frac{5}{2}} \sigma^{\frac{1}{p} + \frac{3}{2}} + \lambda_{q+1}^{3} r^{\frac{1}{p} - \frac{3}{2}} \sigma^{\frac{1}{p} + \frac{1}{2}} ] \lesssim \delta_{q+1}^{\frac{1}{2}} \lambda_{q+1} M_{0}(t)^{\frac{1}{2}} l^{-2} r^{\frac{1}{p} - \frac{3}{2}} \sigma^{\frac{1}{p} + \frac{1}{2}}. \label{estimate 190}
\end{align} 
Finally, for all $p \in (1,\infty)$, we can estimate 
\begin{align}
& \lVert d_{q+1}^{t} \rVert_{C_{t}W_{x}^{1,p}}  + \lVert w_{q+1}^{t} \rVert_{C_{t}W_{x}^{1,p}} \nonumber\\
&\lesssim \mu^{-1} (\sum_{\xi \in \Lambda_{v}} + \sum_{\xi \in \Lambda_{\Xi}}) \lVert a_{\xi} \rVert_{C_{t,x}} \lVert a_{\xi} \rVert_{C_{t}C_{x}^{1}} \lVert \phi_{\xi} \varphi_{\xi} \rVert_{C_{t}L_{x}^{2p}}^{2} + \lVert a_{\xi}\rVert_{C_{t,x}}^{2} \lVert \phi_{\xi} \varphi_{\xi} \rVert_{C_{t}L_{x}^{2p}} \lVert \nabla (\phi_{\xi} \varphi_{\xi}) \rVert_{C_{t}L_{x}^{2p}} \nonumber \\
&\overset{\eqref{estimate 175} \eqref{estimate 150} \eqref{estimate 187}}{\lesssim} \delta_{q+1} \mu^{-1} [ l^{-13} M_{0}(t) r^{\frac{1}{p} - 1} \sigma^{\frac{1}{p} - 1} + l^{-4} M_{0}(t) \lambda_{q+1} r^{\frac{1}{p} - 1} \sigma^{\frac{1}{p} - 1}] \nonumber \\
& \hspace{30mm} \overset{\eqref{estimate 130}}{\lesssim}\delta_{q+1} \mu^{-1} l^{-4} M_{0}(t) \lambda_{q+1} r^{\frac{1}{p} -1} \sigma^{\frac{1}{p} -1}. \label{estimate 191}
\end{align}
This leads us to conclude for all $p \in (1,\infty)$, 
\begin{align}
& \lVert d_{q+1} \rVert_{C_{t}W_{x}^{1,p}} + \lVert  w_{q+1} \rVert_{C_{t}W_{x}^{1,p}} \label{estimate 192}\\
\overset{\eqref{estimate 176} \eqref{estimate 188}-\eqref{estimate 191}}{\lesssim}&  l^{-2} M_{0}(t)^{\frac{1}{2}} r^{\frac{1}{p} - \frac{1}{2}} \sigma^{\frac{1}{p} - \frac{1}{2}} \lambda_{q+1} \delta_{q+1}^{\frac{1}{2}} + \lambda_{q+1} l^{-2} M_{0}(t)^{\frac{1}{2}} r^{\frac{1}{p} - \frac{3}{2}} \sigma^{\frac{1}{p} + \frac{1}{2}} \delta_{q+1}^{\frac{1}{2}} \nonumber \\
& \hspace{8mm} + \mu^{-1} l^{-4} M_{0}(t) \lambda_{q+1} r^{\frac{1}{p} - 1} \sigma^{\frac{1}{p} - 1} \delta_{q+1}  \lesssim  \delta_{q+1}^{\frac{1}{2}} l^{-2} M_{0}(t)^{\frac{1}{2}} \lambda_{q+1} r^{\frac{1}{p} - \frac{1}{2}} \sigma^{\frac{1}{p} - \frac{1}{2}}.  \nonumber
\end{align} 
Next, to estimate $\lVert d_{q+1} \rVert_{C_{t,x}^{1}}$ and $\lVert w_{q+1} \rVert_{C_{t,x}^{1}}$, we compute 
\begin{align}
& \lVert d_{q+1}^{p} + d_{q+1}^{c} \rVert_{C_{t,x}^{1}}\nonumber  \\
\overset{\eqref{estimate 193}}{\lesssim}& \lambda_{q+1}^{-2} \sum_{\xi \in \Lambda_{\Xi}} \lVert a_{\xi} \rVert_{C_{t}^{1}C_{x}^{2}} \lVert \phi_{\xi} \Psi_{\xi}\rVert_{C_{t}C_{x}} + \lVert a_{\xi} \rVert_{C_{t,x}} [ \lVert \phi_{\xi} \rVert_{C_{t}^{1}C_{x}^{2}} \lVert \Psi_{\xi} \rVert_{C_{x}} + \lVert \phi_{\xi} \rVert_{C_{t}^{1}C_{x}} \lVert \Psi_{\xi} \rVert_{C_{x}^{2}}] \nonumber \\
& \hspace{10mm} + \lVert a_{\xi} \rVert_{C_{t}C_{x}^{3}} \lVert \phi_{\xi} \Psi_{\xi} \rVert_{C_{t} C_{x}} + \lVert a_{\xi} \rVert_{C_{t,x}} \lVert \phi_{\xi} \Psi_{\xi} \rVert_{C_{t}C_{x}^{3}}   \nonumber \\
\overset{\eqref{estimate 175}\eqref{estimate 150} \eqref{estimate 309} }{\lesssim}& \delta_{q+1}^{\frac{1}{2}}\lambda_{q+1}^{-2} M_{0}(t)^{\frac{1}{2}}[ l^{-2} (\lambda_{q+1}^{3} \sigma^{\frac{5}{2}} r^{-\frac{7}{2}} \mu + \lambda_{q+1}^{3} \sigma^{\frac{1}{2}} r^{-\frac{3}{2}} \mu) \nonumber\\
& \hspace{5mm} + l^{-17} r^{-\frac{1}{2}} \sigma^{-\frac{1}{2}} + l^{-2} \lambda_{q+1}^{3} r^{-\frac{1}{2}} \sigma^{-\frac{1}{2}}] 
\lesssim \delta_{q+1}^{\frac{1}{2}} l^{-2} M_{0}(t)^{\frac{1}{2}} \lambda_{q+1}\sigma^{\frac{1}{2}} r^{-\frac{3}{2}} \mu. \label{estimate 194}
\end{align} 
Similarly, we can estimate 
\begin{align}
& \lVert w_{q+1}^{p}+ w_{q+1}^{c} \rVert_{C_{t,x}^{1}} \nonumber \\
&\overset{\eqref{estimate 193}}{\lesssim} \lambda_{q+1}^{-2} \sum_{\xi \in \Lambda} \lVert a_{\xi} \rVert_{C_{t}^{1}C_{x}^{2}} \lVert \phi_{\xi} \Psi_{\xi} \rVert_{C_{t,x}} + \lVert a_{\xi} \rVert_{C_{t,x}} (\lVert \phi_{\xi} \rVert_{C_{t}^{1}C_{x}^{2}} \lVert \Psi_{\xi} \rVert_{C_{x}} + \lVert \phi_{\xi} \rVert_{C_{t}^{1}C_{x}} \lVert \Psi_{\xi} \rVert_{C_{x}^{2}}) \nonumber \\
& \hspace{10mm} + \lVert a_{\xi} \rVert_{C_{t}C_{x}^{3}} \lVert \phi_{\xi} \Psi_{\xi} \rVert_{C_{t,x}} + \lVert a_{\xi} \rVert_{C_{t,x}} \lVert \phi_{\xi} \Psi_{\xi} \rVert_{C_{t}C_{x}^{3}} \nonumber \\
&\overset{\eqref{estimate 175}\eqref{estimate 187}\eqref{estimate 150}\eqref{estimate 309}}{\lesssim}  \delta_{q+1}^{\frac{1}{2}} \lambda_{q+1}^{-2} M_{0}(t)^{\frac{1}{2}} [ l^{-29} r^{-\frac{1}{2}} \sigma^{-\frac{1}{2}} + l^{-2} (\lambda_{q+1}^{3} \sigma^{\frac{5}{2}} r^{-\frac{7}{2}} \mu  \nonumber \\
& \hspace{20mm} + \lambda_{q+1}^{3} \sigma^{\frac{1}{2}} r^{-\frac{3}{2}} \mu) + l^{-2} \lambda_{q+1}^{3} r^{-\frac{1}{2}} \sigma^{-\frac{1}{2}}]  \lesssim \delta_{q+1}^{\frac{1}{2}} l^{-2} M_{0}(t)^{\frac{1}{2}} \lambda_{q+1} \sigma^{\frac{1}{2}} r^{-\frac{3}{2}} \mu. \label{estimate 195}
\end{align}
Next, we bound $\mathbb{P}\mathbb{P}_{\neq 0}$ in the expense of $\lambda_{q+1}^{\alpha}$ similarly to \cite[Equ. (7.46c)]{BV19b} as follows: for $p \in \mathbb{R}_{+}$ sufficiently large 
\begin{align}
\lVert d_{q+1}^{t} \rVert_{C_{t,x}^{1}} 
\overset{\eqref{estimate 406} \eqref{estimate 309}\eqref{estimate 150} \eqref{estimate 175} \eqref{eta}-\eqref{sigma, r, mu}}{\lesssim} & \delta_{q+1} M_{0}(t) l^{-4} \lambda_{q+1}^{1+ \alpha} \sigma^{\frac{1}{p}} r^{\frac{1}{p} -2} \nonumber\\
& \hspace{10mm}  \overset{\eqref{sigma, r, mu}}{\lesssim} \delta_{q+1} M_{0}(t) l^{-4} \lambda_{q+1}^{1+ \alpha} r^{-2}. \label{estimate 410}
\end{align} 
Similarly, we can compute for $p \in \mathbb{R}_{+}$ sufficiently large 
\begin{align}
\lVert w_{q+1}^{t} \rVert_{C_{t,x}^{1}} \overset{\eqref{estimate 187} \eqref{estimate 413} \eqref{estimate 175}}{\lesssim}&  \delta_{q+1} l^{-4} M_{0}(t) \lambda_{q+1}^{1+ \frac{\alpha}{2-5\eta}} \sigma^{\frac{\alpha}{2-5\eta} + \frac{1}{p}} r^{\frac{1}{p} - 2 - \frac{\alpha}{2-5\eta}} \mu^{\frac{\alpha}{2-5\eta}}  \nonumber\\
& \hspace{20mm} \overset{\eqref{estimate 130}\eqref{eta}-\eqref{sigma, r, mu}}{\lesssim} \delta_{q+1} M_{0}(t) l^{-4}  \lambda_{q+1}^{1+ \alpha} r^{-2}. \label{estimate 197}
\end{align} 
Thus, we are now able to conclude 
\begin{align}
 \lVert w_{q+1} \rVert_{C_{t,x}^{1}} + \lVert d_{q+1} \rVert_{C_{t,x}^{1}}  
\overset{\eqref{estimate 176}\eqref{estimate 194}-\eqref{estimate 197}}{\lesssim}& \delta_{q+1}^{\frac{1}{2}} l^{-2} M_{0}(t)^{\frac{1}{2}} \lambda_{q+1} \sigma^{\frac{1}{2}} r^{-\frac{3}{2}}\mu + \delta_{q+1} M_{0}(t) l^{-4} \lambda_{q+1}^{1+ \alpha} r^{-2} \nonumber\\
& \hspace{2mm} \overset{\eqref{estimate 130} \eqref{alpha} \eqref{eta} }{\lesssim} \delta_{q+1}^{\frac{1}{2}} l^{-2} M_{0}(t)^{\frac{1}{2}}\lambda_{q+1} \sigma^{\frac{1}{2}} r^{-\frac{3}{2}} \mu.  \label{estimate 198}
\end{align}
We are now ready to verify \eqref{estimate 117} as follows:
\begin{subequations}
\begin{align}
&\lVert v_{q+1} (t) - v_{q}(t) \rVert_{L_{x}^{2}} \overset{\eqref{estimate 203}}{\leq} \lVert w_{q+1}(t) \rVert_{L_{x}^{2}} + \lVert v_{l}(t) - v_{q}(t) \rVert_{L_{x}^{2}} \overset{\eqref{estimate 202} \eqref{estimate 199}}{\leq} M_{0}(t)^{\frac{1}{2}}\delta_{q+1}^{\frac{1}{2}},    \\
&\lVert \Xi_{q+1} (t) - \Xi_{q}(t) \rVert_{L_{x}^{2}} \overset{\eqref{estimate 203}}{\leq} \lVert d_{q+1}(t) \rVert_{L_{x}^{2}} + \lVert \Xi_{l}(t) - \Xi_{q}(t) \rVert_{L_{x}^{2}}  \overset{\eqref{estimate 202} \eqref{estimate 199}}{\leq} M_{0}(t)^{\frac{1}{2}}\delta_{q+1}^{\frac{1}{2}}.   
\end{align}
\end{subequations}
We also verify \eqref{estimate 97}-\eqref{estimate 98} at level $q+1$ via definitions from \eqref{estimate 203} as 
\begin{subequations}\label{estimate 425}
\begin{align}
&\lVert v_{q+1} \rVert_{C_{t}L_{x}^{2}} \overset{\eqref{estimate 200}}{\leq} \Vert w_{q+1} \rVert_{C_{t}L_{x}^{2}} + M_{0}(t)^{\frac{1}{2}} (1+ \sum_{1\leq \iota \leq q} \delta_{\iota}^{\frac{1}{2}}) \overset{\eqref{estimate 202}}{\leq} M_{0}(t)^{\frac{1}{2}} (1+ \sum_{1 \leq \iota \leq q+1} \delta_{\iota}^{\frac{1}{2}}), \label{estimate 275}\\
& \lVert \Xi_{q+1} \rVert_{C_{t}L_{x}^{2}} \overset{ \eqref{estimate 200}}{\leq} \Vert d_{q+1} \rVert_{C_{t}L_{x}^{2}} + M_{0}(t)^{\frac{1}{2}} (1+ \sum_{1\leq \iota \leq q} \delta_{\iota}^{\frac{1}{2}}) \overset{\eqref{estimate 202}}{\leq} M_{0}(t)^{\frac{1}{2}} (1+ \sum_{1 \leq \iota \leq q+1} \delta_{\iota}^{\frac{1}{2}}). \label{estimate 276}
\end{align} 
\end{subequations}
Next, we can also compute for $a \in 2 \mathbb{N}$ sufficiently large  
\begin{align*}
\lVert v_{q+1} \rVert_{C_{t,x}^{1}}  + \lVert \Xi_{q+1} \rVert_{C_{t,x}^{1}} &
\overset{\eqref{estimate 203} \eqref{estimate 198}}{\lesssim} \lVert v_{q} \rVert_{C_{t,x}^{1}} + \lVert \Xi_{q} \rVert_{C_{t,x}^{1}} + \delta_{q+1}^{\frac{1}{2}} l^{-2} M_{0}(t)^{\frac{1}{2}} \lambda_{q+1} \sigma^{\frac{1}{2}} r^{-\frac{3}{2}} \mu \\
& \overset{\eqref{estimate 99}}{\lesssim} M_{0}(t)^{\frac{1}{2}} \lambda_{q}^{4} + l^{-2} M_{0}(t)^{\frac{1}{2}} \lambda_{q+1} \sigma^{\frac{1}{2}} r^{-\frac{3}{2}} \mu \overset{\eqref{estimate 130} \eqref{sigma, r, mu} }{\leq} M_{0}(t)^{\frac{1}{2}} \lambda_{q+1}^{4} 
\end{align*} 
which evidently verifies \eqref{estimate 99} at level $q+1$. 

\subsubsection{Reynolds stress} 
In order to determine the magnetic Reynolds stress at level $q+1$, we first write using \eqref{estimate 91}, \eqref{estimate 203}, and \eqref{estimate 204}, 
\begin{align}
 \text{div} \mathring{R}_{q+1}^{\Xi} 
=& - \text{div} ((v_{l} + z_{1,l} ) \otimes (\Xi_{l} + z_{2,l}) - (\Xi_{l} + z_{2,l} ) \otimes (v_{l} + z_{1,l} ) ) + \text{div} (\mathring{R}_{l}^{\Xi} + R_{\text{com1}}^{\Xi}) \nonumber \\
&+ \partial_{t} (d_{q+1}^{p} + d_{q+1}^{c} + d_{q+1}^{t}) + (-\Delta)^{m_{2}} d_{q+1} \nonumber \\
&+ \text{div} ((v_{l} + w_{q+1}) \otimes (\Xi_{l} + d_{q+1}) - (\Xi_{l} + d_{q+1} ) \otimes (v_{l} + w_{q+1})) \nonumber \\
&+ \text{div}  (v_{q+1} \otimes z_{2} + z_{1} \otimes \Xi_{q+1} + z_{1} \otimes z_{2} - \Xi_{q+1} \otimes z_{1} - z_{2} \otimes v_{q+1} - z_{2} \otimes z_{1}) \label{estimate 205}    
\end{align} 
where we may rewrite using \eqref{estimate 176}-\eqref{estimate 203}
\begin{align}
& - (v_{l} + z_{1,l}) \otimes (\Xi_{l} + z_{2,l}) + (\Xi_{l} +z_{2,l} ) \otimes (v_{l} + z_{1,l} ) \nonumber \\
&+ (v_{l} + w_{q+1}) \otimes (\Xi_{l} + d_{q+1}) - (\Xi_{l} + d_{q+1}) \otimes (v_{l} + w_{q+1})  \nonumber \\
=& - v_{q+1} \otimes z_{2,l} - z_{1,l} \otimes \Xi_{q+1} - z_{1,l} \otimes z_{2,l}  + \Xi_{q+1} \otimes z_{1,l} + z_{2,l} \otimes v_{q+1} + z_{2,l} \otimes z_{1,l}  \nonumber \\
&+ (v_{l} + z_{1,l} ) \otimes d_{q+1} + w_{q+1} \otimes (\Xi_{l} + z_{2,l})  \nonumber \\
&+ w_{q+1}^{p} \otimes d_{q+1}^{p} + w_{q+1}^{p} \otimes (d_{q+1}^{c} + d_{q+1}^{t}) + (w_{q+1}^{c} + w_{q+1}^{t}) \otimes d_{q+1}  \nonumber \\
& - (\Xi_{l} + z_{2,l} ) \otimes w_{q+1} - d_{q+1} \otimes (v_{l} + z_{1,l})  \nonumber \\
&  - d_{q+1}^{p} \otimes w_{q+1}^{p} - d_{q+1}^{p} \otimes (w_{q+1}^{c} + w_{q+1}^{t}) - (d_{q+1}^{c} + d_{q+1}^{t}) \otimes w_{q+1}. \label{estimate 206}
\end{align}
Applying \eqref{estimate 206} to \eqref{estimate 205} leads us to 
\begin{align}
& \text{div} \mathring{R}_{q+1}^{\Xi} \label{estimate 208}\\ 
=& \underbrace{(-\Delta)^{m_{2}} d_{q+1} + \partial_{t} (d_{q+1}^{p} + d_{q+1}^{c})}_{\text{Part of div} R_{\text{lin}}^{\Xi}} \nonumber \\
& \underbrace{+ \text{div} ((v_{l} + z_{1,l} ) \otimes d_{q+1} + w_{q+1} \otimes (\Xi_{l} + z_{2,l} ) - (\Xi_{l} + z_{2,l}) \otimes w_{q+1} - d_{q+1} \otimes (v_{l} + z_{1,l} ))}_{\text{Another part of div} R_{\text{lin}}^{\Xi}}  \nonumber \\
& \underbrace{\text{div} ((w_{q+1}^{c} + w_{q+1}^{t}) \otimes d_{q+1} - (d_{q+1}^{c}+  d_{q+1}^{t} ) \otimes w_{q+1}  }_{\text{Part of div} R_{\text{corr}}^{\Xi}} \nonumber \\
& \hspace{20mm} \underbrace{+ w_{q+1}^{p} \otimes (d_{q+1}^{c} + d_{q+1}^{t}) - d_{q+1}^{p} \otimes (w_{q+1}^{c}+ w_{q+1}^{t}))}_{\text{Another part of div} R_{\text{corr}}^{\Xi}} \nonumber    \\
& + \underbrace{ \text{div} (w_{q+1}^{p} \otimes d_{q+1}^{p} - d_{q+1}^{p} \otimes w_{q+1}^{p} + \mathring{R}_{l}^{\Xi} ) + \partial_{t} d_{q+1}^{t}}_{\text{div} R_{\text{osc}}^{\Xi}}  \nonumber \\
&+\underbrace{ \text{div} (v_{q+1} \otimes z_{2} - v_{q+1} \otimes z_{2,l} - \Xi_{q+1} \otimes z_{1} + \Xi_{q+1} \otimes z_{1,l} + z_{1} \otimes \Xi_{q+1} - z_{1,l} \otimes \Xi_{q+1}}_{\text{Part of div} R_{\text{com2}}^{\Xi}} \nonumber \\
& \hspace{10mm} \underbrace{ - z_{2} \otimes v_{q+1} + z_{2,l} \otimes v_{q+1} + z_{1} \otimes z_{2} - z_{1,l} \otimes z_{2,l} - z_{2} \otimes z_{1} + z_{2,l} \otimes z_{1,l})}_{\text{Another part of div} R_{\text{com2}}^{\Xi}} + \text{div} R_{\text{com1}}^{\Xi}. \nonumber 
\end{align}
Similarly, using \eqref{estimate 90}, \eqref{estimate 203}, and \eqref{estimate 204}, we first write 
\begin{align*}
 \text{div} \mathring{R}_{q+1}^{v} - \nabla \pi_{q+1} &= - \text{div} ((v_{l} + z_{1,l} ) \otimes (v_{l} + z_{1,l} ) - (\Xi_{l} + z_{2,l} ) \otimes (\Xi_{l} + z_{2,l} )) - \nabla \pi_{l} \\
& + \text{div} (\mathring{R}_{l}^{v}+ R_{\text{com1}}^{v}) + \partial_{t}(w_{q+1}^{p}+ w_{q+1}^{c} + w_{q+1}^{t}) + (-\Delta)^{m_{1}} w_{q+1} \\
&+ \text{div} ((v_{l} + w_{q+1} ) \otimes (v_{l} + w_{q+1}) - (\Xi_{l} + d_{q+1} ) \otimes (\Xi_{l} + d_{q+1} )) \\
&+ \text{div} (v_{q+1} \otimes z_{1} + z_{1} \otimes v_{q+1} + z_{1} \otimes z_{1} - \Xi_{q+1} \otimes z_{2} - z_{2} \otimes \Xi_{q+1} - z_{2} \otimes z_{2}),
\end{align*}
where due to \eqref{estimate 176} -\eqref{estimate 203}
\begin{align*}
& - (v_{l} + z_{1,l} ) \otimes (v_{l} + z_{1,l} ) + (\Xi_{l} + z_{2,l} ) \otimes (\Xi_{l} + z_{2,l} ) \\
&+ (v_{l} + w_{q+1} ) \otimes (v_{l} + w_{q+1} ) - (\Xi_{l} + d_{q+1}) \otimes (\Xi_{l} + d_{q+1}) \\
=& - v_{q+1} \otimes z_{1,l} - z_{1,l} \otimes v_{q+1} - z_{1,l} \otimes z_{1,l} \\
& + \Xi_{q+1} \otimes z_{2,l} + z_{2,l} \otimes \Xi_{q+1} + z_{2,l} \otimes z_{2,l} \\
& + (v_{l} +z_{1,l}) \otimes w_{q+1}+ w_{q+1}\otimes (v_{l} + z_{1,l}) \\
&+ w_{q+1}^{p} \otimes w_{q+1}^{p} + w_{q+1}^{p} \otimes (w_{q+1}^{c} + w_{q+1}^{t}) + (w_{q+1}^{c} + w_{q+1}^{t}) \otimes w_{q+1}\\
& - (\Xi_{l} + z_{2,l}) \otimes d_{q+1} - d_{q+1} \otimes (\Xi_{l} + z_{2,l}) \\
& - d_{q+1}^{p} \otimes d_{q+1}^{p} - d_{q+1}^{p} \otimes (d_{q+1}^{c} + d_{q+1}^{t}) - (d_{q+1}^{c} + d_{q+1}^{t}) \otimes d_{q+1}, 
\end{align*}
which leads us to 
\begin{align}
& \text{div} \mathring{R}_{q+1}^{v} - \nabla \pi_{q+1} \label{estimate 209}\\
=& \underbrace{(-\Delta)^{m_{1}} w_{q+1} + \partial_{t} (w_{q+1}^{p} + w_{q+1}^{c})}_{\text{Part of (div} (R_{\text{lin}}^{v}) + \nabla \pi_{\text{lin}})} \nonumber \\
&+ \underbrace{\text{div} ((v_{l} + z_{1,l} ) \otimes w_{q+1} + w_{q+1} \otimes (v_{l} + z_{1,l} ) - (\Xi_{l} + z_{2,l} ) \otimes d_{q+1} - d_{q+1} \otimes (\Xi_{l} + z_{2,l}))}_{\text{Another part of (div} (R_{\text{lin}}^{v}) + \nabla \pi_{\text{lin}})}  \nonumber \\
&+ \underbrace{\text{div} ((w_{q+1}^{c}+ w_{q+1}^{t}) \otimes w_{q+1} - (d_{q+1}^{c} + d_{q+1}^{t}) \otimes d_{q+1}}_{\text{Part of (div} (R_{\text{corr}}^{v}) + \nabla \pi_{\text{corr}})} \nonumber \\
& \hspace{30mm} + \underbrace{w_{q+1}^{p} \otimes (w_{q+1}^{c} + w_{q+1}^{t}) - d_{q+1}^{p} \otimes (d_{q+1}^{c} + d_{q+1}^{t}))}_{\text{Another part of (div} (R_{\text{corr}}^{v}) + \nabla \pi_{\text{corr}})}  \nonumber \\
&+ \underbrace{\text{div} (w_{q+1}^{p} \otimes w_{q+1}^{p} - d_{q+1}^{p} \otimes d_{q+1}^{p} + \mathring{R}_{l}^{v}) + \partial_{t} w_{q+1}^{t}}_{\text{div} (R_{\text{osc}}^{v}) + \nabla \pi_{\text{osc}}}  \nonumber \\
&+ \underbrace{\text{div} ( v_{q+1} \otimes z_{1} - v_{q+1} \otimes z_{1,l} - \Xi_{q+1} \otimes z_{2} + \Xi_{q+1} \otimes z_{2,l} + z_{1} \otimes v_{q+1} - z_{1,l} \otimes v_{q+1}}_{\text{Part of (div} (R_{\text{com2}}^{v}) + \nabla \pi_{\text{com2}})}  \nonumber \\
& \hspace{10mm} \underbrace{-z_{2} \otimes \Xi_{q+1} + z_{2,l} \otimes \Xi_{q+1} + z_{1} \otimes z_{1} - z_{1,l} \otimes z_{1,l} - z_{2}\otimes z_{2} + z_{2,l} \otimes z_{2,l})}_{\text{Another part of (div} (R_{\text{com2}}^{v}) + \nabla \pi_{\text{com2}})}  \nonumber \\
&+ R_{\text{com1}}^{v} - \nabla \pi_{l}. \nonumber 
\end{align}
Now let us compute the oscillation terms in more details: 
\begin{align}
\text{div} R_{\text{osc}}^{\Xi} 
\overset{\eqref{estimate 208} \eqref{estimate 46}}{=}& \text{div} ( \sum_{\xi \in \Lambda_{\Xi}} a_{\xi}^{2}\phi_{\xi}^{2} \varphi_{\xi}^{2} (\xi \otimes \xi_{2} - \xi_{2} \otimes \xi) \nonumber  \\
& \hspace{10mm} + \sum_{\xi \in \Lambda, \xi' \in \Lambda_{\Xi}: \xi \neq \xi'} a_{\xi} a_{\xi'} \phi_{\xi} \phi_{\xi'} \varphi_{\xi} \varphi_{\xi'} (\xi \otimes \xi_{2}' - \xi_{2}' \otimes \xi) + \mathring{R}_{l}^{\Xi}) + \partial_{t} d_{q+1}^{t} \nonumber \\
\overset{\eqref{estimate 207}}{=}& \text{div} (\sum_{\xi \in \Lambda_{\Xi}} a_{\xi}^{2} \mathbb{P}_{\geq \frac{\lambda_{q+1} \sigma}{2}} (\phi_{\xi}^{2} \varphi_{\xi}^{2}) (\xi \otimes \xi_{2} - \xi_{2} \otimes \xi) \nonumber \\
& \hspace{10mm} + \sum_{\xi \in \Lambda, \xi' \in \Lambda_{\Xi}: \xi \neq \xi' } a_{\xi} a_{\xi'} \phi_{\xi} \phi_{\xi'} \varphi_{\xi} \varphi_{\xi'} ( \xi \otimes \xi_{2}' - \xi_{2}' \otimes \xi)) + \partial_{t} d_{q+1}^{t} \nonumber \\
=& \text{div} (E_{1}^{\Xi} + E_{2}^{\Xi}) + \partial_{t} d_{q+}^{t}, \label{estimate 363} 
\end{align}
where 
\begin{subequations}\label{estimate 446}
\begin{align}
& E_{1}^{\Xi} \triangleq \sum_{\xi \in \Lambda_{\Xi}} a_{\xi}^{2} \mathbb{P}_{\geq \frac{\lambda_{q+1} \sigma}{2}} (\phi_{\xi}^{2}\varphi_{\xi}^{2}) (\xi \otimes \xi_{2} - \xi_{2} \otimes \xi), \label{estimate 210}\\
& E_{2}^{\Xi} \triangleq \sum_{\xi \in \Lambda, \xi' \in \Lambda_{\Xi}: \xi \neq \xi'} a_{\xi} a_{\xi'} \phi_{\xi} \phi_{\xi'} \varphi_{\xi} \varphi_{\xi'} (\xi \otimes \xi_{2}' - \xi_{2}' \otimes \xi), \label{estimate 211} 
\end{align}
\end{subequations}
and we used the fact that $\phi_{\xi}^{2}\varphi_{\xi}^{2}$ is $(\mathbb{T}/\lambda_{q+1} \sigma)^{3}$-periodic so that minimal active frequency in $\mathbb{P}_{\neq 0} (\phi_{\xi}^{2}\varphi_{\xi}^{2})$ is given by $\lambda_{q+1}\sigma$ (cf. \cite[p. 247]{BV19b}). By definition from \eqref{estimate 334}, $d_{q+1}^{t}$ and hence $\partial_{t} d_{q+1}^{t}$ is divergence-free and mean-zero while it is also clear that $\text{div} E_{1}^{\Xi}$ is mean-zero. Moreover, it can be verified that $\text{div} E_{1}^{\Xi}$ is divergence-free using 
\begin{equation}\label{estimate 212}
\xi_{2} \cdot \nabla \phi_{\xi} = 0, \hspace{2mm} \xi_{2} \cdot \nabla \varphi_{\xi} =0, \hspace{2mm} \xi \cdot \nabla \varphi_{\xi} = 0, 
\end{equation} 
although  
\begin{equation}\label{estimate 213}
\xi \cdot \nabla \phi_{\xi} = \phi_{r}' (\lambda_{q+1} \sigma N_{\Lambda} (\xi \cdot x + \mu t)) \lambda_{q+1} \sigma N_{\Lambda}, 
\end{equation} 
all due to \eqref{estimate 422}. Therefore, $\mathcal{R}^{\Xi} (\text{div} E_{1}^{\Xi} + \partial_{t} d_{q+1}^{t})$ is well-defined (see Lemma \ref{divergence inverse operator}), allowing us to define 
\begin{equation}\label{estimate 423}
R_{\text{osc}}^{\Xi} \triangleq \mathcal{R}^{\Xi} (\text{div} E_{1}^{\Xi} + \partial_{t} d_{q+1}^{t}) + E_{2}^{\Xi}
\end{equation} 
where we further rewrite for subsequent convenience  
\begin{align}
 \text{div} E_{1}^{\Xi} +\partial_{t} d_{q+1}^{t} 
&\overset{\eqref{estimate 210}}{=} \sum_{\xi \in \Lambda_{\Xi}} \text{div} [a_{\xi}^{2} \mathbb{P}_{\geq \frac{\lambda_{q+1} \sigma}{2}} (\phi_{\xi}^{2}\varphi_{\xi}^{2}) (\xi \otimes \xi_{2} - \xi_{2} \otimes \xi) ] + \partial_{t} d_{q+1}^{t} \label{estimate 239}\\
&\overset{\eqref{estimate 212}\eqref{estimate 213}}{=} \sum_{\xi \in \Lambda_{\Xi}} \mathbb{P}_{\neq 0} [\mathbb{P}_{\geq \frac{\lambda_{q+1} \sigma}{2}} (\phi_{\xi}^{2} \varphi_{\xi}^{2}) (\xi \otimes \xi_{2} - \xi_{2} \otimes \xi) \nabla a_{\xi}^{2} \nonumber \\
& \hspace{20mm}  - \xi_{2} a_{\xi}^{2} \mathbb{P}_{\geq \frac{\lambda_{q+1} \sigma}{2}} (\xi \cdot \nabla \phi_{\xi} 2 \phi_{\xi} \varphi_{\xi}^{2} )] + \partial_{t} d_{q+1}^{t} \nonumber\\
&\overset{\eqref{estimate 57}}{=} \sum_{\xi \in \Lambda_{\Xi}} \mathbb{P}_{\neq 0} [ \mathbb{P}_{\geq \frac{\lambda_{q+1} \sigma}{2}} (\phi_{\xi}^{2} \varphi_{\xi}^{2}) (\xi \otimes \xi_{2} - \xi_{2} \otimes \xi) \nabla a_{\xi}^{2}  \nonumber\\
& \hspace{20mm} - \xi_{2} a_{\xi}^{2} \mathbb{P}_{\geq \frac{\lambda_{q+1} \sigma}{2}} (\mu^{-1} \partial_{t} \phi_{\xi} 2 \phi_{\xi} \varphi_{\xi}^{2})] + \partial_{t} d_{q+1}^{t} \nonumber\\
&\overset{\eqref{estimate 215}}{=} \sum_{\xi \in \Lambda_{\Xi}} \mathbb{P}_{\neq 0} [ \mathbb{P}_{\geq \frac{\lambda_{q+1} \sigma}{2}} (\phi_{\xi}^{2} \varphi_{\xi}^{2}) (\xi \otimes \xi_{2} - \xi_{2} \otimes \xi) \nabla a_{\xi}^{2} ] \nonumber\\
& + \mu^{-1} \sum_{\xi \in \Lambda_{\Xi}} \xi_{2} \mathbb{P}_{\neq 0} [\partial_{t} a_{\xi}^{2} \mathbb{P}_{\neq 0} (\phi_{\xi}^{2} \varphi_{\xi}^{2})] - \mu^{-1} \sum_{\xi \in \Lambda_{\Xi}} \nabla \Delta^{-1} \text{div} (\partial_{t} (a_{\xi}^{2} \mathbb{P}_{\neq 0} (\phi_{\xi}^{2} \varphi_{\xi}^{2} ))\xi_{2}). \nonumber
\end{align} 
Next, we compute 
\begin{align}
 \text{div} R_{\text{osc}}^{v}& + \nabla \pi_{\text{osc}}  \overset{\eqref{estimate 209}}{=} \text{div} (w_{q+1}^{p} \otimes w_{q+1}^{p} - d_{q+1}^{p} \otimes d_{q+1}^{p} + \mathring{R}_{l}^{v}) + \partial_{t} w_{q+1}^{t} \label{estimate 263} \\
\overset{\eqref{estimate 57}}{=}& \text{div} ( \sum_{\xi \in \Lambda_{v}} a_{\xi}^{2} \phi_{\xi}^{2} \varphi_{\xi}^{2} (\xi \otimes \xi) + \sum_{\xi \in \Lambda_{\Xi}} a_{\xi}^{2} (\xi \otimes \xi - \xi_{2} \otimes \xi_{2}) \nonumber\\
& \hspace{30mm} + \sum_{\xi \in \Lambda_{\Xi}} a_{\xi}^{2} \mathbb{P}_{\neq 0} (\phi_{\xi}^{2} \varphi_{\xi}^{2}) (\xi \otimes \xi - \xi_{2} \otimes \xi_{2}) + \mathring{R}_{l}^{v}) \nonumber \\
&+ \text{div} (\sum_{\xi, \xi' \in\Lambda: \xi \neq \xi'} a_{\xi} a_{\xi'} \phi_{\xi}\phi_{\xi'} \varphi_{\xi} \varphi_{\xi'} \xi \otimes \xi' - \sum_{\xi,\xi' \in \Lambda_{\Xi}: \xi \neq \xi'} a_{\xi} a_{\xi'} \phi_{\xi} \phi_{\xi'} \varphi_{\xi} \varphi_{\xi'} \xi_{2} \otimes \xi_{2}') + \partial_{t} w_{q+1}^{t} \nonumber 
\end{align}
and use the definition of $\mathring{G}^{\Xi}$ from \eqref{estimate 145} to continue by 
\begin{align}
&\text{div} R_{\text{osc}}^{v} + \nabla \pi_{\text{osc}} \label{estimate 264} \\
\overset{\eqref{estimate 216}}{=}& \nabla \rho_{v} + \text{div} ( \sum_{\xi \in \Lambda_{v}} a_{\xi}^{2} \mathbb{P}_{\neq 0} (\phi_{\xi}^{2} \varphi_{\xi}^{2}) (\xi \otimes \xi))  + \text{div} (\sum_{\xi \in \Lambda_{\Xi}} a_{\xi}^{2} \mathbb{P}_{\geq \frac{\lambda_{q+1} \sigma}{2}} (\phi_{\xi}^{2} \varphi_{\xi}^{2}) (\xi \otimes \xi - \xi_{2} \otimes \xi_{2} ))  \nonumber \\
&+ \text{div} (\sum_{\xi, \xi'\in\Lambda: \xi \neq \xi'} a_{\xi} a_{\xi'} \phi_{\xi} \phi_{\xi'} \varphi_{\xi} \varphi_{\xi'} (\xi \otimes \xi') \nonumber\\
& \hspace{20mm} - \sum_{\xi, \xi' \in \Lambda_{\Xi}: \xi \neq \xi'} a_{\xi} a_{\xi'} \phi_{\xi} \phi_{\xi'} \varphi_{\xi} \varphi_{\xi'} (\xi_{2} \otimes \xi_{2}') ) + \partial_{t} w_{q+1}^{t} \nonumber \\
\overset{\eqref{estimate 217}\eqref{estimate 57}}{=}& \nabla \rho_{v} + \sum_{\xi \in \Lambda} \mathbb{P}_{\neq 0} (\xi (\xi \cdot \nabla a_{\xi}^{2}) \mathbb{P}_{\geq \frac{\lambda_{q+1} \sigma}{2}} (\phi_{\xi}^{2} \varphi_{\xi}^{2}) \nonumber\\
& \hspace{20mm} + \xi a_{\xi}^{2} \mathbb{P}_{\geq \frac{\lambda_{q+1} \sigma}{2}} (2 \phi_{\xi} \xi \cdot \nabla \phi_{\xi} \varphi_{\xi}^{2} + \phi_{\xi}^{2} 2 \varphi_{\xi} \xi \cdot \nabla \varphi_{\xi} ))  \nonumber \\
& - \mu^{-1} \sum_{\xi \in \Lambda} \mathbb{P}_{\neq 0} (\partial_{t} a_{\xi}^{2} \mathbb{P}_{\neq 0} (\phi_{\xi}^{2} \varphi_{\xi}^{2} )) \xi - \sum_{\xi \in \Lambda} \mathbb{P}_{\neq 0} (\xi a_{\xi}^{2} \mathbb{P}_{\geq \frac{\lambda_{q+1} \sigma}{2}}( 2 \phi_{\xi} \xi \cdot \nabla \phi_{\xi} \varphi_{\xi}^{2} ))  \nonumber \\
&+ \mu^{-1} \sum_{\xi \in \Lambda} \nabla \Delta^{-1} \text{div} \partial_{t} (a_{\xi}^{2} \mathbb{P}_{\neq 0} (\phi_{\xi}^{2} \varphi_{\xi}^{2} )) \xi   - \sum_{\xi \in \Lambda_{\Xi}} \mathbb{P}_{\neq 0} (\xi_{2} (\xi_{2} \cdot \nabla a_{\xi}^{2} ) \mathbb{P}_{\geq \frac{\lambda_{q+1} \sigma}{2}} (\phi_{\xi}^{2} \varphi_{\xi}^{2} ))  \nonumber \\
&+ \xi_{2} a_{\xi}^{2} \mathbb{P}_{\geq \frac{\lambda_{q+1} \sigma}{2}} (\xi_{2} \cdot \nabla \phi_{\xi} 2 \phi_{\xi} \varphi_{\xi}^{2} + \xi_{2} \cdot \nabla \varphi_{\xi} 2 \varphi_{\xi} \phi_{\xi}^{2})  \nonumber \\
& + \text{div} (\sum_{\xi, \xi'\in\Lambda: \xi \neq \xi'} a_{\xi} a_{\xi'} \phi_{\xi} \phi_{\xi'} \varphi_{\xi} \varphi_{\xi'} (\xi \otimes \xi') - \sum_{\xi, \xi' \in \Lambda_{\Xi}: \xi \neq \xi'} a_{\xi} a_{\xi'} \phi_{\xi} \phi_{\xi'} \varphi_{\xi} \varphi_{\xi'} (\xi_{2} \otimes \xi_{2}') )  \nonumber \\
\overset{\eqref{estimate 212}}{=}&  \nabla \rho_{v} + \sum_{\xi \in \Lambda} \mathbb{P}_{\neq 0} ( ( \xi \otimes \xi) \nabla a_{\xi}^{2} \mathbb{P}_{\geq \frac{\lambda_{q+1} \sigma}{2}} (\phi_{\xi}^{2} \varphi_{\xi}^{2} ))  \nonumber \\
& - \mu^{-1} \sum_{\xi \in \Lambda} \mathbb{P}_{\neq 0} (\partial_{t} a_{\xi}^{2} \mathbb{P}_{\neq 0} (\phi_{\xi}^{2} \varphi_{\xi}^{2} )) \xi + \mu^{-1} \sum_{\xi \in \Lambda} \nabla \Delta^{-1} \text{div} \partial_{t} (a_{\xi}^{2} \mathbb{P}_{\neq 0} (\phi_{\xi}^{2} \varphi_{\xi}^{2} )) \xi  \nonumber \\
& - \sum_{\xi \in \Lambda_{\Xi}} \mathbb{P}_{\neq 0} (( \xi_{2} \otimes \xi_{2} ) \nabla a_{\xi}^{2} \mathbb{P}_{\geq \frac{\lambda_{q+1} \sigma}{2}} (\phi_{\xi}^{2} \varphi_{\xi}^{2} ) ) \nonumber \\
&+  \text{div} (\sum_{\xi, \xi'\in\Lambda: \xi \neq \xi'} a_{\xi} a_{\xi'} \phi_{\xi} \phi_{\xi'} \varphi_{\xi} \varphi_{\xi'} (\xi \otimes \xi') - \sum_{\xi, \xi' \in \Lambda_{\Xi}: \xi \neq \xi'} a_{\xi} a_{\xi'} \phi_{\xi} \phi_{\xi'} \varphi_{\xi} \varphi_{\xi'} (\xi_{2} \otimes \xi_{2}') ). \nonumber 
\end{align} 
Therefore, we are able to define at last 
\begin{subequations}\label{estimate 427}
\begin{align}
&\mathring{R}_{q+1}^{\Xi} \triangleq R_{\text{lin}}^{\Xi} + R_{\text{corr}}^{\Xi} + R_{\text{osc}}^{\Xi} + R_{\text{com1}}^{\Xi} + R_{\text{com2}}^{\Xi},  \label{estimate 219} \\
& \mathring{R}_{q+1}^{v} \triangleq R_{\text{lin}}^{v} + R_{\text{corr}}^{v} + R_{\text{osc}}^{v} + R_{\text{com1}}^{v} + R_{\text{com2}}^{v},   \label{estimate 218} \\
& \pi_{q+1} \triangleq - \pi_{\text{lin}} - \pi_{\text{corr}}  - \pi_{\text{osc}} - \pi_{\text{com2}} + \pi_{l}, 
\end{align}
\end{subequations} 
 where besides $R_{\text{osc}}^{\Xi}$ in \eqref{estimate 423} and $R_{\text{com1}}^{v}, R_{\text{com1}}^{\Xi}$, and $\pi_{l}$ in \eqref{estimate 235} we define 
\begin{subequations}\label{estimate 426}
\begin{align}
R_{\text{lin}}^{\Xi} \triangleq& \mathcal{R}^{\Xi} ( ( -\Delta)^{m_{2}} d_{q+1} ) +\mathcal{R}^{\Xi} (\partial_{t} (d_{q+1}^{p} + d_{q+1}^{c}))  \label{estimate 220} \\
 & + (v_{l} + z_{1,l} ) \otimes d_{q+1} + w_{q+1} \otimes (\Xi_{l} + z_{2,l}) - (\Xi_{l} + z_{2,l} ) \otimes w_{q+1} - d_{q+1} \otimes (v_{l} + z_{1,l} ),\nonumber  \\
 R_{\text{corr}}^{\Xi} =& (w_{q+1}^{c} + w_{q+1}^{t}) \otimes d_{q+1} - (d_{q+1}^{c}+ d_{q+1}^{t}) \otimes w_{q+1}   \label{estimate 221} \\
 &+ w_{q+1}^{p} \otimes (d_{q+1}^{c} + d_{q+1}^{t}) - d_{q+1}^{p} \otimes (w_{q+1}^{c}+ w_{q+1}^{t}), \nonumber\\
 R_{\text{com2}}^{\Xi} \triangleq& v_{q+1} \otimes (z_{2} - z_{2,l} ) - \Xi_{q+1} \otimes (z_{1} - z_{1,l}) + (z_{1} - z_{1,l}) \otimes \Xi_{q+1} - (z_{2} - z_{2,l}) \otimes v_{q+1} \nonumber \\
 & + (z_{1} - z_{1,l}) \otimes z_{2}+ z_{1,l} \otimes (z_{2} - z_{2,l}) - (z_{2}- z_{2,l}) \otimes z_{1} - z_{2,l} \otimes (z_{1} - z_{1,l}),    \label{estimate 223} \\
 R_{\text{lin}}^{v} \triangleq&  \mathcal{R} ( (-\Delta)^{m_{1}} w_{q+1}) + \mathcal{R}( \partial_{t} (w_{q+1}^{p} + w_{q+1}^{c} ))    \label{estimate 224} \\
 &+ (v_{l} + z_{1,l}) \mathring{\otimes} w_{q+1} + w_{q+1} \mathring{\otimes} (v_{l} + z_{1,l})  - (\Xi_{l} + z_{2,l} ) \mathring{\otimes} d_{q+1} - d_{q+1} \mathring{\otimes} (\Xi_{l} + z_{2,l}), \nonumber \\
 \pi_{\text{lin}} \triangleq& \frac{2}{3} [ (v_{l}+ z_{1,l}) \cdot w_{q+1} - (\Xi_{l} + z_{2,l}) \cdot d_{q+1}],  \label{estimate 225} \\
 R_{\text{corr}}^{v} \triangleq& (w_{q+1}^{c} + w_{q+1}^{t}) \mathring{\otimes} w_{q+1} - (d_{q+1}^{c}+  d_{q+1}^{t}) \mathring{\otimes} d_{q+1}  \label{estimate 226} \\
 &+ w_{q+1}^{p} \mathring{\otimes} (w_{q+1}^{c} + w_{q+1}^{t}) - d_{q+1}^{p} \mathring{\otimes} (d_{q+1}^{c} + d_{q+1}^{t}), \nonumber \\
 \pi_{\text{corr}} \triangleq& \frac{1}{3} [ (w_{q+1}^{c} + w_{q+1}^{t}) \cdot (w_{q+1} + w_{q+1}^{p}) - (d_{q+1}^{c} + d_{q+1}^{t}) \cdot (d_{q+1} + d_{q+1}^{p} )],  \label{estimate 227} \\
 R_{\text{osc}}^{v} \triangleq&   \mathcal{R} [\sum_{\xi \in \Lambda} \mathbb{P}_{\neq 0} ( ( \xi \otimes \xi) \nabla a_{\xi}^{2} \mathbb{P}_{\geq \frac{\lambda_{q+1} \sigma}{2}} (\phi_{\xi}^{2} \varphi_{\xi}^{2} ))  - \mu^{-1} \sum_{\xi \in \Lambda} \mathbb{P}_{\neq 0} (\partial_{t} a_{\xi}^{2} \mathbb{P}_{\neq 0} (\phi_{\xi}^{2} \varphi_{\xi}^{2} )) \xi   \label{estimate 228} \\
& - \sum_{\xi \in \Lambda_{\Xi}} \mathbb{P}_{\neq 0} (( \xi_{2} \otimes \xi_{2} ) \nabla a_{\xi}^{2} \mathbb{P}_{\geq \frac{\lambda_{q+1} \sigma}{2}} (\phi_{\xi}^{2} \varphi_{\xi}^{2} ) )] \nonumber \\
&+  \sum_{\xi, \xi'\in\Lambda: \xi \neq \xi'} a_{\xi} a_{\xi'} \phi_{\xi} \phi_{\xi'} \varphi_{\xi} \varphi_{\xi'} (\xi \mathring{\otimes} \xi') - \sum_{\xi, \xi' \in \Lambda_{\Xi}: \xi \neq \xi'} a_{\xi} a_{\xi'} \phi_{\xi} \phi_{\xi'} \varphi_{\xi} \varphi_{\xi'} (\xi_{2} \mathring{\otimes} \xi_{2}'), \nonumber \\
\pi_{\text{osc}} \triangleq& \rho_{v} + \mu^{-1} \sum_{\xi \in \Lambda} \Delta^{-1} \text{div} \partial_{t} (a_{\xi}^{2} \mathbb{P}_{\neq 0} (\phi_{\xi}^{2} \varphi_{\xi}^{2} )) \xi  \label{estimate 229} \\
& + \frac{1}{3} [\sum_{\xi, \xi' \in \Lambda: \xi\neq \xi'} a_{\xi}  a_{\xi'} \phi_{\xi} \phi_{\xi'} \varphi_{\xi} \varphi_{\xi'} \xi \cdot \xi' - \sum_{\xi, \xi' \in \Lambda_{\Xi}: \xi \neq \xi'} a_{\xi} a_{\xi'} \phi_{\xi} \phi_{\xi'} \varphi_{\xi} \varphi_{\xi'} \xi_{2} \cdot \xi_{2}'], \nonumber \\
R_{\text{com2}}^{v} \triangleq& v_{q+1} \mathring{\otimes} (z_{1} - z_{1,l}) - \Xi_{q+1} \mathring{\otimes} (z_{2} - z_{2,l}) + (z_{1} - z_{1,l} ) \mathring{\otimes} v_{q+1} - (z_{2} - z_{2,l}) \mathring{\otimes} \Xi_{q+1}  \label{estimate 230} \\
& + (z_{1} - z_{1,l}) \mathring{\otimes} z_{1} + z_{1,l} \mathring{\otimes} (z_{1}- z_{1,l}) - (z_{2} - z_{2,l}) \mathring{\otimes} z_{2} - z_{2,l} \mathring{\otimes} (z_{2} - z_{2,l}), \nonumber \\
\pi_{\text{com2}} \triangleq& \frac{2}{3} [v_{q+1} \cdot (z_{1} - z_{1,l} ) - \Xi_{q+1} \cdot (z_{2} - z_{2,l})] + \frac{1}{3}[ \lvert z_{1} \rvert^{2} -\lvert z_{1,l} \rvert^{2} - \lvert z_{2} \rvert^{2} + \lvert z_{2,l} \rvert^{2}].\label{estimate 231} 
\end{align}
\end{subequations} 
Now we estimate each term separately. We fix 
\begin{equation}\label{p ast}
p^{\ast} \triangleq \frac{1- 4 \eta}{1- \frac{9\eta}{2} + 30 \alpha}
\end{equation} 
which may be readily verified to lie in the range of $(1,2)$ using \eqref{eta}-\eqref{alpha}. We first estimate $R_{\text{lin}}^{\Xi}$ from \eqref{estimate 220} and $R_{\text{lin}}^{v}$ from \eqref{estimate 224}. First, concerning the diffusive terms, we can use Gagliardo-Nirenberg's inequality to estimate 
\begin{align}
& \lVert \mathcal{R}^{\Xi} ( (-\Delta)^{m_{2}} d_{q+1}) \rVert_{C_{t}L_{x}^{p^{\ast}}} + \lVert \mathcal{R} ((-\Delta)^{m_{1}} w_{q+1} ) \rVert_{C_{t}L_{x}^{p^{\ast}}} \nonumber\\
\lesssim& \lVert d_{q+1} \rVert_{C_{t}W_{x}^{m_{2}^{\ast}, p^{\ast}}} + \lVert w_{q+1} \rVert_{C_{t}W_{x}^{m_{1}^{\ast}, p^{\ast}}} 
\lesssim \delta_{q+1}^{\frac{1}{2}} l^{-2} M_{0}(t)^{\frac{1}{2}} r^{\frac{1}{p^{\ast}} - \frac{1}{2}} \sigma^{\frac{1}{p^{\ast}} - \frac{1}{2}} \lambda_{q+1}^{\max \{m_{1}^{\ast}, m_{2}^{\ast} \}}. \label{estimate 232}
\end{align}
Next, we estimate temporal derivatives from \eqref{estimate 220} and \eqref{estimate 224} as follows:
\begin{align}
& \lVert \mathcal{R}^{\Xi} (\partial_{t} (d_{q+1}^{p}+ d_{q+1}^{c})) \rVert_{C_{t}L_{x}^{p^{\ast}}} + \lVert \mathcal{R} (\partial_{t} (w_{q+1}^{p} + w_{q+1}^{c})) \rVert_{C_{t}L_{x}^{p^{\ast}}} \label{estimate 233}\\
\overset{\eqref{estimate 193}}{\lesssim}& \lambda_{q+1}^{-2}[ \sum_{\xi \in \Lambda_{\Xi}} \lVert \partial_{t} \text{curl} (a_{\xi} \phi_{\xi} \Psi_{\xi} \xi_{2}) \rVert_{C_{t}L_{x}^{p^{\ast}}} + \sum_{\xi \in \Lambda} \lVert \partial_{t} \text{curl} (a_{\xi} \phi_{\xi} \Psi_{\xi} \xi ) \rVert_{C_{t}L_{x}^{p^{\ast}}} ] \nonumber \\
\lesssim& \lambda_{q+1}^{-2} [ \sum_{\xi \in \Lambda} \lVert a_{\xi} \rVert_{C_{t}^{1}C_{x}^{1}} \lVert \phi_{\xi} \Psi_{\xi} \rVert_{C_{t}L_{x}^{p^{\ast}}} + \lVert a_{\xi} \rVert_{C_{t,x}} \lVert \phi_{\xi} \rVert_{C_{t}^{1} W_{x}^{1, p^{\ast}}} \lVert \Psi_{\xi} \rVert_{L_{x}^{p^{\ast}}} \nonumber \\
& + ( \lVert a_{\xi} \rVert_{C_{t}^{1}C_{x}} \lVert \phi_{\xi} \rVert_{C_{t}L_{x}^{p^{\ast}}} + \lVert a_{\xi} \rVert_{C_{t,x}} \lVert \phi_{\xi} \rVert_{C_{t}^{1}L_{x}^{p^{\ast}}}) \lVert \Psi_{\xi} \rVert_{W_{x}^{1,p^{\ast}}}] \nonumber  \\
&\overset{\eqref{estimate 150} \eqref{estimate 309} \eqref{estimate 187} \eqref{estimate 175}}{\lesssim} \delta_{q+1}^{\frac{1}{2}} \lambda_{q+1}^{-2} M_{0}(t)^{\frac{1}{2}}[ l^{-20} r^{\frac{1}{p^{\ast}} -\frac{1}{2}} \sigma^{\frac{1}{p^{\ast}} -\frac{1}{2}} + l^{-2} \lambda_{q+1}^{2} \sigma^{\frac{1}{p^{\ast}} + \frac{3}{2}} r^{\frac{1}{p^{\ast} }-\frac{5}{2}} \mu \nonumber  \\
& \hspace{6mm} + l^{-11} r^{\frac{1}{p^{\ast}} - \frac{1}{2}} \lambda_{q+1} \sigma^{\frac{1}{p^{\ast}} - \frac{1}{2}} + l^{-2} \lambda_{q+1}^{2} r^{\frac{1}{p^{\ast}} - \frac{3}{2}} \sigma^{\frac{1}{p^{\ast}} + \frac{1}{2}} \mu]  
\lesssim\delta_{q+1}^{\frac{1}{2}} M_{0}(t)^{\frac{1}{2}} l^{-2} r^{\frac{1}{p^{\ast}} - \frac{3}{2}} \sigma^{\frac{1}{p^{\ast}} + \frac{1}{2}} \mu.  \nonumber
\end{align}  
Finally, we estimate the rest of the terms in \eqref{estimate 220} and \eqref{estimate 224} by 
\begin{align}
&  \lVert  (v_{l} + z_{1,l} ) \otimes d_{q+1} + w_{q+1} \otimes (\Xi_{l} + z_{2,l}) - (\Xi_{l} + z_{2,l} ) \otimes w_{q+1} - d_{q+1} \otimes (v_{l} + z_{1,l} ) \rVert_{C_{t}L_{x}^{p^{\ast}}} \nonumber\\
&  + \lVert (v_{l} + z_{1,l}) \mathring{\otimes} w_{q+1} + w_{q+1} \mathring{\otimes} (v_{l} + z_{1,l})  - (\Xi_{l} + z_{2,l} ) \mathring{\otimes} d_{q+1} - d_{q+1} \mathring{\otimes} (\Xi_{l} + z_{2,l}) \rVert_{C_{t}L_{x}^{p^{\ast}}} \nonumber\\
\overset{\eqref{estimate 186}}{\lesssim}& ( l \lVert v_{q} \rVert_{C_{t,x}^{1}} + \lVert z_{1} \rVert_{C_{t}L_{x}^{\infty}} + l \lVert \Xi_{q} \rVert_{C_{t,x}^{1}} + \lVert z_{2} \rVert_{C_{t}L_{x}^{\infty}}) \delta_{q+1}^{\frac{1}{2}} l^{-2} M_{0}(t)^{\frac{1}{2}} r^{\frac{1}{p^{\ast}} - \frac{1}{2}} \sigma^{\frac{1}{p^{\ast}} - \frac{1}{2}} \nonumber\\
\overset{\eqref{estimate 95} \eqref{estimate 99}}{\lesssim}& \delta_{q+1}^{\frac{1}{2}} l^{-1} M_{0}(t) \lambda_{q}^{4} r^{\frac{1}{p^{\ast}} - \frac{1}{2}} \sigma^{\frac{1}{p^{\ast}} - \frac{1}{2}}. \label{estimate 234}
\end{align} 
Therefore, using the fact that due to \eqref{p ast},
\begin{subequations}\label{estimate 237}
\begin{align}
& \frac{27 \alpha}{8} + (8\eta - 2) \frac{1}{p^{\ast}} - 4 \eta + 1 + \max\{m_{1}^{\ast}, m_{2}^{\ast} \} \overset{\eqref{eta}}{\leq} - \frac{453\alpha}{8} - \frac{3}{8} (1- \max\{m_{1}^{\ast}, m_{2}^{\ast}\}),\\
& \frac{27\alpha}{8} + (8\eta- 2) \frac{1}{p^{\ast}} - 9 \eta + 2 = - \frac{453\alpha}{8}, \\
& 2 \alpha + (8\eta -2) \frac{1}{p^{\ast}} - 4 \eta + 1 = -1+ 5\eta - 58 \alpha, 
\end{align}
\end{subequations}
we are now able to conclude from \eqref{estimate 220} and \eqref{estimate 224} for $a \in 2 \mathbb{N}$ sufficiently large 
\begin{align}
& \lVert R_{\text{lin}}^{\Xi} \rVert_{C_{t}L_{x}^{1}} + \lVert R_{\text{lin}}^{v} \rVert_{C_{t}L_{x}^{1}} \label{estimate 249} \\
&\overset{\eqref{estimate 232}-\eqref{estimate 234}}{\lesssim}  \delta_{q+1}^{\frac{1}{2}} l^{-2} M_{0}(t)^{\frac{1}{2}} r^{\frac{1}{p^{\ast}} - \frac{1}{2}} \sigma^{\frac{1}{p^{\ast}} - \frac{1}{2}} \lambda_{q+1}^{\max \{m_{1}^{\ast}, m_{2}^{\ast} \}} + \delta_{q+1}^{\frac{1}{2}} M_{0}(t)^{\frac{1}{2}} l^{-2} r^{\frac{1}{p^{\ast}} - \frac{3}{2}} \sigma^{\frac{1}{p^{\ast}} + \frac{1}{2}} \mu  \nonumber \\
 & \hspace{21mm} + \delta_{q+1}^{\frac{1}{2}} l^{-1} M_{0}(t) \lambda_{q}^{4} r^{\frac{1}{p^{\ast}} - \frac{1}{2}} \sigma^{\frac{1}{p^{\ast}} - \frac{1}{2}} \nonumber\\
 &\overset{\eqref{estimate 130}\eqref{estimate 129}}{\lesssim} M_{0}(t) \delta_{q+2} [ \lambda_{q+1}^{\frac{27 \alpha}{8} + (8\eta -2) \frac{1}{p^{\ast}} - 4 \eta + 1 + \max\{m_{1}^{\ast}, m_{2}^{\ast} \} } + \lambda_{q+1}^{\frac{27 \alpha}{8} + (8\eta -2) \frac{1}{p^{\ast}} - 9 \eta +2}  + \lambda_{q+1}^{2\alpha + (8\eta - 2) \frac{1}{p^{\ast}} - 4 \eta + 1}]    \nonumber\\
&\overset{\eqref{estimate 237}}{\lesssim} M_{0}(t) \delta_{q+2}[ \lambda_{q+1}^{- \frac{453\alpha}{8} - \frac{3}{8} (1- \max\{m_{1}^{\ast}, m_{2}^{\ast} \} )} + \lambda_{q+1}^{- \frac{453\alpha}{8}} + \lambda_{q+1}^{-1 + 5 \eta - 58 \alpha}] \overset{\eqref{eta}}{\ll} M_{0}(t)\delta_{q+2}. \nonumber
\end{align} 
Second, we consider $R_{\text{corr}}^{\Xi}$ from \eqref{estimate 221} and $R_{\text{corr}}^{v}$ from \eqref{estimate 226}. Using the fact that 
\begin{equation}\label{estimate 238}
\frac{79 \alpha}{8} - 11 \eta + 2 + (8\eta -2) \frac{1}{p^{\ast}} \overset{\eqref{p ast}}{=} -\frac{401\alpha}{8} - 2\eta, 
\end{equation} 
we can compute from \eqref{estimate 221} and \eqref{estimate 226} by taking $a \in 2 \mathbb{N}$ sufficiently large 
\begin{align}
&\hspace{5mm}  \lVert R_{\text{corr}}^{\Xi} \rVert_{C_{t}L_{x}^{p^{\ast}}} + \lVert R_{\text{corr}}^{v} \rVert_{C_{t}L_{x}^{p^{\ast}}} \label{estimate 250}\\ 
&\lesssim ( \lVert w_{q+1}^{c} \rVert_{C_{t}L_{x}^{2p^{\ast}}} + \lVert w_{q+1}^{t} \rVert_{C_{t}L_{x}^{2p^{\ast}}} + \lVert d_{q+1}^{c} \rVert_{C_{t}L_{x}^{2p^{\ast}}} + \lVert d_{q+1}^{t} \rVert_{C_{t}L_{x}^{2p^{\ast}}}) \nonumber \\
& \hspace{2mm}  \times (\lVert w_{q+1}^{p} \rVert_{C_{t}L_{x}^{2p^{\ast}}} + \lVert w_{q+1}^{c} \rVert_{C_{t}L_{x}^{2p^{\ast}}} + \lVert w_{q+1}^{t} \rVert_{C_{t}L_{x}^{2p^{\ast}}} + \lVert d_{q+1}^{p} \rVert_{C_{t}L_{x}^{2p^{\ast}}} + \lVert d_{q+1}^{c} \rVert_{C_{t}L_{x}^{2p^{\ast}}} + \lVert d_{q+1}^{t} \rVert_{C_{t}L_{x}^{2p^{\ast}}}) \nonumber \\
&\overset{ \eqref{estimate 179}-\eqref{estimate 236} \eqref{estimate 185}}{\lesssim}  (\delta_{q+1}^{\frac{1}{2}} M_{0}(t)^{\frac{1}{2}} l^{-2} r^{\frac{1}{2p^{\ast}} - \frac{3}{2}} \sigma^{\frac{1}{2p^{\ast}} + \frac{1}{2}} + \delta_{q+1} M_{0}(t) \mu^{-1} l^{-4} r^{\frac{1}{2p^{\ast}} - 1} \sigma^{\frac{1}{2p^{\ast}} - 1}) \nonumber \\
& \times (\delta_{q+1}^{\frac{1}{2}} l^{-2} M_{0}(t)^{\frac{1}{2}} r^{\frac{1}{2p^{\ast}} - \frac{1}{2}} \sigma^{\frac{1}{2p^{\ast}} - \frac{1}{2}} + \delta_{q+1}^{\frac{1}{2}} M_{0}(t)^{\frac{1}{2}} l^{-2} r^{\frac{1}{2p^{\ast}} - \frac{3}{2}} \sigma^{\frac{1}{2p^{\ast}} + \frac{1}{2}} + \delta_{q+1} M_{0}(t) \mu^{-1} l^{-4} r^{\frac{1}{2p^{\ast}} - 1} \sigma^{\frac{1}{2p^{\ast}} - 1}) \nonumber \\
\lesssim& (M_{0}(t) \mu^{-1} l^{-4} r^{\frac{1}{2p^{\ast}} - 1} \sigma^{\frac{1}{2p^{\ast}} - 1})(l^{-2} M_{0}(t)^{\frac{1}{2}} r^{\frac{1}{2p^{\ast}} - \frac{1}{2}} \sigma^{\frac{1}{2p^{\ast}} - \frac{1}{2}})  \nonumber\\
&\hspace{13mm} \overset{\eqref{estimate 130} \eqref{estimate 129} }{\lesssim} M_{0}(t) \delta_{q+2} M_{0}(t)^{\frac{1}{2}} \lambda_{q+1}^{\frac{79 \alpha}{8} - 11 \eta + 2  + (8\eta - 2) \frac{1}{p^{\ast}}} \nonumber\\
& \hspace{9mm} \overset{\eqref{estimate 238}}{\lesssim} M_{0}(t) \delta_{q+2} M_{0}(L)^{\frac{1}{2}} \lambda_{q+1}^{ -\frac{401\alpha}{8} - 2\eta} \ll M_{0}(t) \delta_{q+2}. \nonumber 
\end{align}
Third, we consider $R_{\text{osc}}^{\Xi}$ from \eqref{estimate 423} and $R_{\text{osc}}^{v}$ from \eqref{estimate 228}. Within \eqref{estimate 239} we further rewrite 
\begin{align}
& \nabla \Delta^{-1} \text{div} (\partial_{t} (a_{\xi}^{2} \mathbb{P}_{\neq 0} ( \phi_{\xi}^{2} \varphi_{\xi}^{2})) \xi_{2}) \label{estimate 240}\\
\overset{\eqref{estimate 57}}{=}&  \nabla \Delta^{-1} \text{div} (\partial_{t}a_{\xi}^{2} \mathbb{P}_{\geq \frac{\lambda_{q+1} \sigma}{2}} (\phi_{\xi}^{2} \varphi_{\xi}^{2}) \xi_{2}) + \nabla \Delta^{-1} \text{div} (a_{\xi}^{2} \mathbb{P}_{\geq \frac{\lambda_{q+1} \sigma}{2}} (2 \phi_{\xi} \mu \xi \cdot \nabla \phi_{\xi} \varphi_{\xi}^{2}) \xi_{2})\nonumber \\
=& \nabla \Delta^{-1} \text{div} (\partial_{t} a_{\xi}^{2} \mathbb{P}_{\geq \frac{\lambda_{q+1} \sigma}{2}} (\phi_{\xi}^{2} \varphi_{\xi}^{2}) \xi_{2}) + \mu \nabla \Delta^{-1} (\xi_{2} \cdot \nabla a_{\xi}^{2} \mathbb{P}_{\geq \frac{\lambda_{q+1} \sigma}{2}} (\xi\cdot \nabla \phi_{\xi}^{2} \varphi_{\xi}^{2} )) \nonumber\\
\overset{\eqref{estimate 212}}{=}& \nabla \Delta^{-1} \text{div} (\partial_{t} a_{\xi}^{2} \mathbb{P}_{\geq \frac{\lambda_{q+1} \sigma}{2}} (\phi_{\xi}^{2} \varphi_{\xi}^{2}) \xi_{2}) \nonumber\\
& + \mu\nabla \Delta^{-1} (\xi \cdot \nabla) (\xi_{2} \cdot \nabla a_{\xi}^{2} \mathbb{P}_{\geq \frac{\lambda_{q+1} \sigma}{2}} (\phi_{\xi}^{2} \varphi_{\xi}^{2} )) - \mu \nabla \Delta^{-1} [ (\xi\cdot \nabla) (\xi_{2} \cdot \nabla a_{\xi}^{2}) \mathbb{P}_{\geq \frac{\lambda_{q+1} \sigma}{2}} (\phi_{\xi}^{2} \varphi_{\xi}^{2} )].  \nonumber
\end{align}
Applying \eqref{estimate 240} within \eqref{estimate 239} gives us 
\begin{align}
& \text{div} E_{1}^{\Xi} + \partial_{t} d_{q+1}^{t} \label{extra}\\
=& \sum_{\xi \in \Lambda_{\Xi}} \mathbb{P}_{\neq 0} [ \mathbb{P}_{\geq \frac{\lambda_{q+1} \sigma}{2}} (\phi_{\xi}^{2}\varphi_{\xi}^{2}) (\xi \otimes \xi_{2} - \xi_{2} \otimes \xi) \nabla a_{\xi}^{2} ]  + \mu^{-1} \sum_{\xi \in \Lambda_{\Xi}} \xi_{2} \mathbb{P}_{\neq 0} [ \partial_{t} a_{\xi}^{2} \mathbb{P}_{\neq 0} (\phi_{\xi}^{2} \varphi_{\xi}^{2} )] \nonumber\\
&- \sum_{\xi \in \Lambda_{\Xi}} \mu^{-1} \nabla \Delta^{-1} \text{div} (\partial_{t} a_{\xi}^{2} \mathbb{P}_{\geq \frac{\lambda_{q+1} \sigma}{2}} (\phi_{\xi}^{2} \varphi_{\xi}^{2}) \xi_{2}) \nonumber\\
& \hspace{5mm} + \nabla \Delta^{-1} (\xi \cdot \nabla) (\xi_{2} \cdot \nabla a_{\xi}^{2} \mathbb{P}_{\geq \frac{\lambda_{q+1} \sigma}{2}} (\phi_{\xi}^{2} \varphi_{\xi}^{2} )) - \nabla \Delta^{-1} ( ( \xi \cdot \nabla) (\xi_{2} \cdot \nabla a_{\xi}^{2} ) \mathbb{P}_{\geq \frac{\lambda_{q+1} \sigma}{2}} (\phi_{\xi}^{2} \varphi_{\xi}^{2} )).\nonumber
\end{align}
Therefore, using the fact that
\begin{equation}\label{estimate 242}
\frac{313\alpha}{8} - 2 \eta + (8\eta -2) (\frac{1}{p^{\ast}} - 1) \overset{\eqref{p ast}}{=} - \frac{167\alpha}{8} - \eta
\end{equation} 
and relying on Lemma \ref{Lemma 6.3}, we can estimate from \eqref{extra}
\begin{align}
\lVert \mathcal{R}^{\Xi} (\text{div} E_{1}^{\Xi} + \partial_{t} d_{q+1}^{t} ) &\rVert_{C_{t}L_{x}^{p^{\ast}}}  
\lesssim \sum_{\xi \in \Lambda_{\Xi}} \lVert (-\Delta)^{-\frac{1}{2}} \mathbb{P}_{\neq 0} [ \mathbb{P}_{\geq \frac{\lambda_{q+1} \sigma}{2}} (\phi_{\xi}^{2} \varphi_{\xi}^{2}) \nabla a_{\xi}^{2} ] \rVert_{C_{t}L_{x}^{p^{\ast}}} \label{estimate 244}\\
& + \mu^{-1} \lVert (-\Delta)^{-\frac{1}{2}} \mathbb{P}_{\neq 0} [\partial_{t} a_{\xi}^{2} \mathbb{P}_{\geq \frac{\lambda_{q+1} \sigma}{2}} (\phi_{\xi}^{2} \varphi_{\xi}^{2} )] \rVert_{C_{t}L_{x}^{p^{\ast}}} \nonumber\\
&  + \lVert (-\Delta)^{-\frac{1}{2}} \mathbb{P}_{\neq 0} (\nabla a_{\xi}^{2} \mathbb{P}_{\geq \frac{\lambda_{q+1} \sigma}{2}} (\phi_{\xi}^{2} \varphi_{\xi}^{2} ) ) \rVert_{C_{t}L_{x}^{p^{\ast}}} \nonumber\\
&  + \lVert (-\Delta)^{-\frac{1}{2}} \mathbb{P}_{\neq 0} ( ( \xi \cdot \nabla) (\xi_{2} \cdot \nabla a_{\xi}^{2} ) \mathbb{P}_{\geq \frac{\lambda_{q+1} \sigma}{2}} (\phi_{\xi}^{2} \varphi_{\xi}^{2} )) \rVert_{C_{t}L_{x}^{p^{\ast}}} \nonumber \\
\overset{\eqref{estimate 241}}{\lesssim}& \sum_{\xi \in \Lambda_{\Xi}} (\lambda_{q+1} \sigma)^{-1} [\lVert \nabla a_{\xi}^{2} \rVert_{C_{t}C_{x}^{2}} + \mu^{-1} \lVert \partial_{t} a_{\xi}^{2} \rVert_{C_{t}C_{x}^{2}} + \lVert \nabla^{2} a_{\xi}^{2} \rVert_{C_{t}C_{x}^{2}}] \lVert \phi_{\xi}^{2} \varphi_{\xi}^{2} \rVert_{C_{t}L_{x}^{p^{\ast}}} \nonumber \\
\overset{\eqref{estimate 150}\eqref{estimate 309}\eqref{estimate 175}}{\lesssim}&  \delta_{q+1} \lambda_{q+1}^{-1} \lambda_{q+1}^{1-2\eta} [ l^{-24} M_{0}(t) + \lambda_{q+1}^{\eta-1} l^{-19} M_{0}(t)] (\lambda_{q+1}^{8\eta -2})^{\frac{1}{p^{\ast}} - 1} \nonumber \\
\overset{\eqref{eta} \eqref{estimate 130}\eqref{estimate 129}}{\lesssim}& M_{0}(t) \delta_{q+2} \lambda_{q+1}^{\frac{ 313 \alpha}{8} - 2 \eta + (8\eta -2) (\frac{1}{p^{\ast}} - 1)} \overset{\eqref{estimate 242}}{\approx} M_{0}(t) \delta_{q+2} \lambda_{q+1}^{- \frac{167 \alpha}{8} - \eta} \ll M_{0}(t) \delta_{q+2}. \nonumber 
\end{align} 
On the other hand, it is more straight-forward to estimate $E_{2}^{\Xi}$ from \eqref{estimate 211} by taking $a \in 2 \mathbb{N}$ sufficiently large 
\begin{align}
&\lVert E_{2}^{\Xi} \rVert_{C_{t}L_{x}^{1}} \lesssim \sum_{\xi\in \Lambda, \xi' \in \Lambda_{\Xi}: \xi \neq \xi'} \lVert a_{\xi} \rVert_{C_{t,x}} \lVert a_{\xi'} \rVert_{C_{t,x}} \lVert \phi_{\xi} \phi_{\xi'} \varphi_{\xi} \varphi_{\xi'} \rVert_{C_{t}L_{x}^{1}} \nonumber \\
\overset{\eqref{estimate 187}\eqref{estimate 150}\eqref{estimate 175}}{\lesssim}& \delta_{q+1} l^{-4} M_{0}(t) \sigma r^{-1} 
\overset{\eqref{estimate 130} \eqref{estimate 129}}{\lesssim} M_{0}(t) \delta_{q+2} \lambda_{q+1}^{\frac{53 \alpha}{8} - 4 \eta} \overset{\eqref{eta}}{\ll} M_{0}(t) \delta_{q+2}. \label{estimate 243}
\end{align}
Due to \eqref{estimate 244}-\eqref{estimate 243} applied to \eqref{estimate 423} we conclude 
\begin{equation}\label{estimate 251}
\lVert R_{\text{osc}}^{\Xi} \rVert_{C_{t}L_{x}^{1}} \overset{\eqref{estimate 423}}{=}\lVert  \mathcal{R}^{\Xi} (\text{div} E_{1}^{\Xi} + \partial_{t} d_{q+1}^{t}) + E_{2}^{\Xi} \rVert_{C_{t}L_{x}^{1}} \ll M_{0}(t) \delta_{q+2}. 
\end{equation} 
Next, from \eqref{estimate 228}, using the fact that due to \eqref{p ast}
\begin{equation}\label{estimate 245}
- 2 \eta + \frac{101 \alpha}{2} + (8\eta -2) (\frac{1}{p^{\ast}} - 1) = - \frac{19\alpha}{2} - \eta, 
\end{equation} 
we can compute for $a \in 2 \mathbb{N}$ sufficiently large 
\begin{align}
& \lVert R_{\text{osc}}^{v} \rVert_{C_{t}L_{x}^{1}} \nonumber \\
\lesssim& \sum_{\xi \in \Lambda} \lVert (-\Delta)^{-\frac{1}{2}} \mathbb{P}_{\neq 0} (\nabla a_{\xi}^{2} \mathbb{P}_{\geq \frac{\lambda_{q+1} \sigma}{2}} (\phi_{\xi}^{2} \varphi_{\xi}^{2} )) \rVert_{C_{t}L_{x}^{p^{\ast}}} + \mu^{-1} \lVert (-\Delta)^{-\frac{1}{2}} \mathbb{P}_{\neq 0} (\partial_{t} a_{\xi}^{2} \mathbb{P}_{\geq \frac{\lambda_{q+1} \sigma}{2}} (\phi_{\xi}^{2} \varphi_{\xi}^{2} )) \rVert_{C_{t}L_{x}^{p^{\ast}}} \nonumber\\
&+ \sum_{\xi \in \Lambda_{\Xi}} \lVert (-\Delta)^{-\frac{1}{2}} \mathbb{P}_{\neq 0} (\nabla a_{\xi}^{2} \mathbb{P}_{\geq \frac{\lambda_{q+1} \sigma}{2}} (\phi_{\xi}^{2} \varphi_{\xi}^{2} ))  \rVert_{C_{t}L_{x}^{p^{\ast}}} + \sum_{\xi, \xi' \in \Lambda: \xi \neq \xi'} \lVert a_{\xi} \rVert_{C_{t,x}} \lVert a_{\xi'} \rVert_{C_{t,x}} \lVert \phi_{\xi} \phi_{\xi'} \varphi_{\xi} \varphi_{\xi'} \rVert_{C_{t}L_{x}^{1}} \nonumber\\
&\overset{\eqref{estimate 150} \eqref{estimate 187} \eqref{estimate 241} \eqref{estimate 175}}{\lesssim} \sum_{\xi \in \Lambda} (\lambda_{q+1} \sigma)^{-1} [ \lVert \nabla a_{\xi}^{2} \rVert_{C_{t}C_{x}^{2}} + \mu^{-1} \lVert \partial_{t} a_{\xi}^{2} \rVert_{C_{t}C_{x}^{2}}] \lVert \phi_{\xi}^{2} \varphi_{\xi}^{2} \rVert_{C_{t}L_{x}^{p^{\ast}}}\nonumber \\
& \hspace{25mm} + \sum_{\xi \in \Lambda_{\Xi}} (\lambda_{q+1} \sigma)^{-1} \lVert \nabla a_{\xi}^{2} \rVert_{C_{t}C_{x}^{2}} \lVert \phi_{\xi}^{2} \varphi_{\xi}^{2} \rVert_{C_{t}L_{x}^{p^{\ast}}} + (\delta_{q+1}^{\frac{1}{2}} l^{-2} M_{0}(t)^{\frac{1}{2}})^{2} \sigma r^{-1}  \nonumber\\
&\overset{\eqref{estimate 150}\eqref{estimate 187} \eqref{estimate 175}}{\lesssim}\lambda_{q+1}^{-2\eta} [ \delta_{q+1}^{\frac{1}{2}} l^{-2} M_{0}(t)^{\frac{1}{2}} \delta_{q+1}^{\frac{1}{2}} l^{-29} M_{0}(t)^{\frac{1}{2}} \nonumber\\
& \hspace{10mm} + \lambda_{q+1}^{\eta -1} \delta_{q+1}^{\frac{1}{2}} l^{-2} M_{0}(t)^{\frac{1}{2}} \delta_{q+1}^{\frac{1}{2}} l^{-29} M_{0}(t)^{\frac{1}{2}}] (r^{\frac{1}{2p^{\ast}} - \frac{1}{2}} \sigma^{\frac{1}{2p^{\ast}} - \frac{1}{2}})^{2} \nonumber\\
&\hspace{10mm} +  \lambda_{q+1}^{-2\eta} \delta_{q+1}^{\frac{1}{2}} l^{-2} M_{0}(t)^{\frac{1}{2}} \delta_{q+1}^{\frac{1}{2}} l^{-27} M_{0}(t)^{\frac{1}{2}} (r^{\frac{1}{p^{\ast}} -\frac{1}{2}} \sigma^{\frac{1}{2p^{\ast}} - \frac{1}{2}})^{2} + \delta_{q+1} l^{-4} M_{0}(t) \lambda_{q+1}^{-4\eta}\nonumber \\
&\hspace{10mm} \overset{\eqref{estimate 130}\eqref{estimate 129}}{\lesssim} M_{0}(t) \delta_{q+2} [\lambda_{q+1}^{-2 \eta + \frac{101\alpha}{2} + (8\eta -2) (\frac{1}{p^{\ast}} -1 )} + \lambda_{q+1}^{\frac{53\alpha}{8} - 4 \eta}] \nonumber \\
& \hspace{10mm} \overset{\eqref{estimate 245}}{\approx} M_{0}(t) \delta_{q+2} [\lambda_{q+1}^{- \frac{19\alpha}{2} - \eta} + \lambda_{q+1}^{\frac{53\alpha}{8} - 4 \eta}]\overset{\eqref{eta}}{\ll} M_{0}(t) \delta_{q+2}. \label{estimate 246} 
\end{align}  
Fourth, we consider $R_{\text{com1}}^{v}$ and $R_{\text{com1}}^{\Xi}$ from \eqref{estimate 235}. Continuing from \eqref{estimate 247}-\eqref{estimate 248} we can estimate using the fact that $\delta \in (0, \frac{1}{12})$ so that $\frac{1}{2} - 2 \delta > \frac{1}{3}$, for $a \in 2 \mathbb{N}$ sufficiently large 
\begin{align}
&\lVert R_{\text{com1}}^{\Xi} \rVert_{C_{t}L_{x}^{1}} + \lVert R_{\text{com1}}^{v} \rVert_{C_{t}L_{x}^{1}}  \nonumber\\
\overset{\eqref{estimate 247} \eqref{estimate 248}}{\lesssim}& l ( \lVert v_{q} \rVert_{C_{t,x}^{1}} + \lVert \Xi_{q} \rVert_{C_{t,x}^{1}} + \sum_{k=1}^{2} \lVert z_{k} \rVert_{C_{t}C_{x}^{1}}) ( \lVert v_{q} \rVert_{C_{t}L_{x}^{2}} + \lVert \Xi_{q} \rVert_{C_{t}L_{x}^{2}} + \sum_{k=1}^{2} \lVert z_{k} \rVert_{C_{t,x}}) \nonumber \\
&+ l^{\frac{1}{2} - 2\delta} \sum_{k=1}^{2} \lVert z_{k} \rVert_{C_{t}^{\frac{1}{2} - 2 \delta} C_{x}} ( \sum_{k=1}^{2} \lVert z_{k} \rVert_{C_{t,x}} + \lVert v_{q} \rVert_{C_{t} L_{x}^{2}} + \lVert \Xi_{q} \rVert_{C_{t}L_{x}^{2}}) \nonumber\\
\overset{\eqref{estimate 95} \eqref{estimate 105}}{\lesssim}& l M_{0}(t) \lambda_{q}^{4} + l^{\frac{1}{3}} M_{0}(t) \overset{\eqref{b} \eqref{l} \eqref{estimate 129}}{\lesssim} M_{0}(t) \delta_{q+2} \lambda_{q+1}^{-\frac{3\alpha}{8} - \frac{2}{3b}} \ll M_{0}(t) \delta_{q+2}. \label{estimate 252}
\end{align}
Fifth, we consider $R_{\text{com2}}^{\Xi}$ from \eqref{estimate 223} and $R_{\text{com2}}^{v}$ from \eqref{estimate 230}. We estimate from \eqref{estimate 223} and \eqref{estimate 230} using again the fact that $\delta \in (0,\frac{1}{12})$ so that $\frac{1}{2} - 2 \delta > \frac{1}{3}$, for $a \in 2 \mathbb{N}$ sufficiently large 
\begin{align}
&\lVert R_{\text{com2}}^{\Xi} \rVert_{C_{t}L_{x}^{1}} + \lVert R_{\text{com2}}^{v} \rVert_{C_{t}L_{x}^{1}} \nonumber\\
&\lesssim  \sum_{k=1}^{2} \lVert z_{k} - z_{k,l} \rVert_{C_{t}L_{x}^{\infty} } ( \lVert v_{q+1} \rVert_{C_{t}L_{x}^{2}} + \lVert \Xi_{q+1} \rVert_{C_{t}L_{x}^{2}} + \lVert z_{1} \rVert_{C_{t}L_{x}^{\infty}} + \lVert z_{2} \rVert_{C_{t}L_{x}^{\infty}} + \lVert z_{1,l} \rVert_{C_{t}L_{x}^{\infty}} + \lVert z_{2,l} \rVert_{C_{t}L_{x}^{\infty}})  \nonumber\\
&\overset{\eqref{estimate 95}\eqref{estimate 99}}{\lesssim}  (l L^{\frac{1}{4}} + l^{\frac{1}{2} - 2 \delta} L^{\frac{1}{2}} ) M_{0}(t)^{\frac{1}{2}} \lesssim l^{\frac{1}{3}} M_{0} (t) \overset{\eqref{estimate 129}}{\lesssim} \delta_{q+2} M_{0}(t) \lambda_{q+1}^{\frac{\alpha}{8} - \frac{\alpha}{2} - \frac{2}{3b} } \ll M_{0}(t)\delta_{q+2}. \label{estimate 253}
\end{align}
We now conclude by applying \eqref{estimate 249}, \eqref{estimate 250}, \eqref{estimate 251}, \eqref{estimate 246}, \eqref{estimate 252}-\eqref{estimate 253} to \eqref{estimate 218} -\eqref{estimate 219}  
\begin{equation}
\lVert \mathring{R}_{q+1}^{v} \rVert_{C_{t}L_{x}^{1}} \leq c_{v} M_{0}(t) \delta_{q+2} \hspace{2mm} \text{ and } \hspace{2mm} \lVert \mathring{R}_{q+1}^{\Xi} \rVert_{C_{t}L_{x}^{1}} \leq c_{\Xi} M_{0}(t) \delta_{q+2}. 
\end{equation} 

At last, similarly to previous works (e.g., \cite{HZZ19}), we conclude by commenting on how $(v_{q+1}, \Xi_{q+1}, \mathring{R}_{q+1}^{v}, \mathring{R}_{q+1}^{\Xi})$ are $(\mathcal{F}_{t})_{t\geq 0}$-adapted and that $(v_{q+1}, \Xi_{q+1}, \mathring{R}_{q+1}^{v}, \mathring{R}_{q+1}^{\Xi} )(0,x)$ are all deterministic if $(v_{q}, \Xi_{q}, \mathring{R}_{q}^{v}, \mathring{R}_{q}^{\Xi} )(0,x)$ are deterministic. First, we recall that $z_{1}$ and $z_{2}$ from \eqref{estimate 75} are both $(\mathcal{F}_{t})_{t\geq 0}$-adapted. Due to the compact support of $\vartheta_{l}$ in $\mathbb{R}_{+}$, it follows that $z_{1,l}$ and $z_{2,l}$ are both $(\mathcal{F}_{t})_{t\geq 0}$-adapted. Similarly, because $(v_{q}, \Xi_{q}, \mathring{R}_{q}^{v}, \mathring{R}_{q}^{\Xi})$ are all $(\mathcal{F}_{t})_{t\geq 0}$-adapted by hypothesis, we see that $(v_{l}, \Xi_{l}, \mathring{R}_{l}^{v}, \mathring{R}_{l}^{\Xi}$) are $(\mathcal{F}_{t})_{t\geq 0}$-adapted. Because $M_{0}(t)$ is deterministic, we see that $\rho_{\Xi}$ in \eqref{estimate 133} is $(\mathcal{F}_{t})_{t\geq 0}$-adapted. Due to $\rho_{\Xi}$ and $\mathring{R}_{l}^{\Xi}$ being $(\mathcal{F}_{t})_{t\geq 0}$-adapted, we see that $a_{\xi}$ for $\xi \in \Lambda_{\Xi}$ from \eqref{estimate 138} are $(\mathcal{F}_{t})_{t\geq 0}$-adapted. This leads to $\mathring{G}^{\Xi}$ from \eqref{estimate 145} being $(\mathcal{F}_{t})_{t\geq 0}$-adapted and consequently so is $\rho_{v}$ from \eqref{estimate 152}. Due to $\rho_{v},\mathring{R}_{l}^{v}$, and $\mathring{G}^{\Xi}$ being $(\mathcal{F}_{t})_{t\geq 0}$-adapted, we deduce that $a_{\xi}$ for $\xi \in \Lambda_{v}$ from \eqref{estimate 153} are also $(\mathcal{F}_{t})_{t\geq 0}$-adapted. Due to $\phi_{\xi}, \varphi_{\xi}$, and $\Psi_{\xi}$ from \eqref{estimate 422} being deterministic, we deduce that $w_{q+1}^{p}, d_{q+1}^{p}, w_{q+1}^{c}, d_{q+1}^{c}, w_{q+1}^{t}$, and $d_{q+1}^{t}$ from \eqref{estimate 166}, \eqref{estimate 330}, and \eqref{estimate 334} are all $(\mathcal{F}_{t})_{t\geq 0}$-adapted. This implies that both $w_{q+1}$ and $d_{q+1}$ from \eqref{estimate 176} are $(\mathcal{F}_{t})_{t\geq 0}$-adapted and consequently so are $v_{q+1}$ and $\Xi_{q+1}$ from \eqref{estimate 203}. It follows that all of $R_{\text{lin}}^{v}, R_{\text{corr}}^{v}, R_{\text{osc}}^{v}, R_{\text{com1}}^{v}, R_{\text{com2}}^{v}, R_{\text{lin}}^{\Xi}, R_{\text{corr}}^{\Xi}, R_{\text{osc}}^{\Xi}, R_{\text{com1}}^{\Xi}$, and $R_{\text{com2}}^{\Xi}$ from \eqref{estimate 423}, \eqref{estimate 235}, and \eqref{estimate 426}, are $(\mathcal{F}_{t})_{t\geq 0}$-adapted and consequently so are $\mathring{R}_{q+1}^{v}$ and $\mathring{R}_{q+1}^{\Xi}$ from \eqref{estimate 427}.  The verification that $(v_{q+1}, \Xi_{q+1}, \mathring{R}_{q+1}^{v}, \mathring{R}_{q+1}^{\Xi} )(0,x)$ are all deterministic is similar and thus omitted. 
 
\section{Proofs of Theorems \ref{Theorem 2.3}-\ref{Theorem 2.4}}\label{Section 5}
Within the following definition of a solution, we recall the definitions of $\tilde{U}_{1}, \tilde{U}_{2}, \bar{\Omega},$ and $\bar{\mathcal{B}}_{t}$ from Section \ref{Subsection 3.1}, 
\begin{define}\label{Definition 5.1}
Fix any $\gamma \in (0,1)$. Let $s \geq 0, \xi^{\text{in}} = (\xi_{1}^{\text{in}}, \xi_{2}^{\text{in}}) \in L_{\sigma}^{2} \times L_{\sigma}^{2}$, and $\theta^{\text{in}} = (\theta_{1}^{\text{in}}, \theta_{2}^{\text{in}}) \in \tilde{U}_{1}\times \tilde{U}_{2}$. Then $P \in \mathcal{P} (\bar{\Omega})$ is a probabilistically weak solution to \eqref{stochastic GMHD} with initial condition $(\xi^{\text{in}}, \theta^{\text{in}})$ at initial time $s$ if 
\begin{enumerate}
\item [] (M1) $P ( \{ \xi(t) = \xi^{\text{in}}, \theta(t) = \theta^{\text{in}} \hspace{1mm} \forall \hspace{1mm} t \in [0,s] \}) = 1$ and for all $l \in \mathbb{N}$, 
\begin{equation}
P ( \{  (\xi, \theta) \in \bar{\Omega}: \int_{0}^{l} \sum_{k=1}^{2} \lVert F_{k} (\xi_{k} (r)) \rVert_{L_{2} (U_{k}, L_{\sigma}^{2})}^{2} dr < \infty \}) = 1, 
\end{equation} 
\item [] (M2) under $P$, $\theta$ is a cylindrical $(\bar{\mathcal{B}}_{t})_{t\geq s}$-Wiener process on $U_{1} \times U_{2}$, starting from initial condition $\theta^{\text{in}}$ at initial time $s$ and for every $\psi_{i} = (\psi_{i}^{1}, \psi_{i}^{2}) \in (C^{\infty} (\mathbb{T}^{3}) \cap L_{\sigma}^{2})^{2}$ and $t\geq s$, 
\begin{subequations}\label{estimate 255} 
\begin{align}
&\langle \xi_{1}(t) - \xi_{1}(s), \psi_{i}^{1} \rangle + \int_{s}^{t} \langle \text{div} (\xi_{1} \otimes \xi_{1} - \xi_{2} \otimes \xi_{2})(r) + (-\Delta)^{m_{1}} \xi_{1}(r), \psi_{i}^{1} \rangle dr \nonumber\\
& \hspace{50mm} = \int_{s}^{t} \langle \psi_{i}^{1}, F_{1}(\xi_{1}(r)) d \theta_{1}(r) \rangle, \\
& \langle \xi_{2}(t) - \xi_{2}(s), \psi_{i}^{2} \rangle + \int_{s}^{t} \langle \text{div} (\xi_{1}  \otimes \xi_{2} - \xi_{2} \otimes \xi_{1})(r) + (-\Delta)^{m_{2}} \xi_{2}(r), \psi_{i}^{2} \rangle dr  \nonumber \\
& \hspace{50mm} = \int_{s}^{t} \langle \psi_{i}^{2}, F_{2} (\xi_{2}(r)) d\theta_{2} (r) \rangle, 
\end{align}
\end{subequations}
\item [] (M3) for any $q \in \mathbb{N}$ there exists a function $t \mapsto C_{t,q} \in \mathbb{R}_{+}$ for all $t\geq s$ such that  
\begin{align}
& \mathbb{E}^{P} [ \sup_{r\in [0,t]} \lVert \xi_{1}(r) \rVert_{L_{x}^{2}}^{2q} + \int_{s}^{t} \lVert \xi_{1}(r) \rVert_{\dot{H}_{x}^{\gamma}}^{2} dr \nonumber\\
&+ \sup_{r \in [0,t]} \lVert \xi_{2}(r) \rVert_{L_{x}^{2}}^{2q} + \int_{s}^{t}  \lVert \xi_{2}(r) \rVert_{\dot{H}_{x}^{\gamma}}^{2} dr]\leq C_{t,q} (1+ \lVert \xi_{1}^{\text{in}} \rVert_{L_{x}^{2}}^{2q} + \lVert \xi_{2}^{\text{in}} \rVert_{L_{x}^{2}}^{2q} ).  \label{estimate 254}
\end{align} 
\end{enumerate}
The set of all such probabilistically weak solutions with the same constant $C_{t,q}$ in \eqref{estimate 254} for every $q \in \mathbb{N}$ and $t\geq s$ is denoted by $\mathcal{W} (s, \xi^{\text{in}}, \theta^{\text{in}}, \{C_{t,q} \}_{q \in \mathbb{N}, t \geq s } )$. 
\end{define}
\begin{define}\label{Definition 5.2}
Fix any $\gamma \in (0,1)$. Let $s \geq 0, \xi^{\text{in}} = (\xi_{1}^{\text{in}}, \xi_{2}^{\text{in}}) \in L_{\sigma}^{2} \times L_{\sigma}^{2},\theta^{\text{in}} = (\theta_{1}^{\text{in}}, \theta_{2}^{\text{in}}) \in \tilde{U}_{1}\times \tilde{U}_{2}$, and $\tau \geq s$ be a stopping time of $(\bar{\mathcal{B}}_{t})_{t\geq s}$. We define 
\begin{equation}
\bar{\Omega}_{\tau} \triangleq \{\omega( \cdot \wedge \tau(\omega)): \omega \in \bar{\Omega} \}
\end{equation} 
and $(\bar{\mathcal{B}}_{\tau})$ to be the $\sigma$-field associated to $\tau$. Then $P \in \mathcal{P} (\bar{\Omega}_{\tau})$ is a probabilistically weak solution to \eqref{stochastic GMHD} on $[s, \tau]$ with initial condition $(\xi^{\text{in}}, \theta^{\text{in}})$ at initial time $s$ if 
\begin{enumerate}
\item [] (M1)  $P ( \{ \xi(t) = \xi^{\text{in}}, \theta(t) = \theta^{\text{in}} \hspace{1mm} \forall \hspace{1mm} t \in [0,s] \}) = 1$ and for all $l \in \mathbb{N}$ 
\begin{equation}
P ( \{ (\xi, \theta) \in \bar{\Omega}: \int_{0}^{l \wedge \tau} \sum_{k=1}^{2} \lVert F_{k} (\xi_{k} (r)) \rVert_{L_{2} (U_{k}, L_{\sigma}^{2})}^{2} dr < \infty \}) = 1, 
\end{equation} 
\item [] (M2) under $P$, $\langle \theta(\cdot \wedge \tau), l_{i} \rangle_{U_{1} \times U_{2}}$ where $\{l_{i}\}_{i\in\mathbb{N}} = \{ (l_{i}^{1}, l_{i}^{2}) \}_{i \in \mathbb{N}}$ is an orthonormal basis of $U_{1} \times U_{2}$, is a continuous, square-integrable $(\bar{\mathcal{B}}_{t})_{t\geq s}$-martingale with initial condition $\langle \theta^{\text{in}}, l_{i} \rangle$ at initial time $s$ with its quadratic variation given by $\sum_{k=1}^{2} (t \wedge \tau - s) \lVert l_{i}^{k} \rVert_{U_{k}}^{2}$ and for every $\psi_{i} = (\psi_{i}^{1}, \psi_{i}^{2}) \in (C^{\infty} (\mathbb{T}^{3}) \cap L_{\sigma}^{2})^{2}$ and $t\geq s$, 
\begin{subequations}\label{estimate 256}
\begin{align}
& \langle \xi_{1} (t\wedge \tau) - \xi_{1}(s), \psi_{i}^{1} \rangle + \int_{s}^{t \wedge \tau} \langle \text{div} ( \xi_{1} \otimes \xi_{1} - \xi_{2} \otimes \xi_{2})(r) + (-\Delta)^{m_{1}} \xi_{1} (r), \psi_{i}^{1} \rangle dr \nonumber\\
& \hspace{50mm} = \int_{s}^{t \wedge \tau} \langle \psi_{i}^{1}, F_{1} (\xi_{1}(r)) d\theta_{1} (r) \rangle, \\
& \langle \xi_{2} (t \wedge \tau) - \xi_{2} (s), \psi_{i}^{2} \rangle + \int_{s}^{t \wedge \tau}\langle \text{div} (\xi_{1}\otimes \xi_{2} - \xi_{2} \otimes \xi_{1})(r) + (-\Delta)^{m_{2}} \xi_{2} (r), \psi_{i}^{2} \rangle dr \nonumber\\
& \hspace{50mm} = \int_{s}^{t\wedge \tau} \langle \psi_{i}^{2}, F_{2} (\xi_{2} (r)) d\theta_{2} (r) \rangle, 
\end{align}
\end{subequations}
\item [](M3) for any $q \in \mathbb{N}$ there exists a function $t \mapsto C_{t,q} \in \mathbb{R}_{+}$ for all $t\geq s$ such that  
\begin{align}
& \mathbb{E}^{P} [ \sup_{r \in [0, t \wedge \tau]} \lVert \xi_{1}(r) \rVert_{L_{x}^{2}}^{2q} + \int_{s}^{t \wedge \tau} \lVert \xi_{1}(r) \rVert_{\dot{H}_{x}^{\gamma}}^{2} dr  \nonumber\\
&+ \sup_{r \in [0, t\wedge \tau]} \lVert \xi_{2} (r) \rVert_{L_{x}^{2}}^{2q} + \int_{s}^{t \wedge \tau} \lVert \xi_{2} (r) \rVert_{\dot{H}_{x}^{\gamma}}^{2} dr] \leq C_{t,q} (1+ \lVert \xi_{1}^{\text{in}} \rVert_{L_{x}^{2}}^{2q} + \lVert \xi_{2}^{\text{in}} \rVert_{L_{x}^{2}}^{2q}). 
\end{align}
\end{enumerate} 
\end{define} 
Our first result concerns existence and stability of solution to \eqref{stochastic GMHD} and follows from Proposition \ref{Proposition 4.1} similarly to previous works (e.g., \cite[The. 5.1]{HZZ19} and \cite[Pro. 5.1]{Y21a}). 
\begin{proposition}\label{Proposition 5.1}
For every $(s, \xi^{\text{in}}, \theta^{\text{in}}) \in [0,\infty) \times (L_{\sigma}^{2})^{2} \times (\tilde{U}_{1} \times \tilde{U}_{2})$, there exists a probabilistically weak solution $P \in \mathcal{P} (\bar{\Omega})$ to \eqref{stochastic GMHD} with initial condition $(\xi^{\text{in}},\theta^{\text{in}})$ at initial time $s$ according to Definition \ref{Definition 5.1}. Moreover, if there exists a family $\{(s_{l}, \xi_{l}, \theta_{l} ) \}_{l \in \mathbb{N}} \subset [0,\infty) \times (L_{\sigma}^{2})^{2} \times (\tilde{U}_{1} \times \tilde{U}_{2})$ such that $\lim_{l\to\infty} \lVert (s_{l}, \xi_{l},\theta_{l}) - (s, \xi^{\text{in}},\theta^{\text{in}}) \rVert_{\mathbb{R} \times (L_{\sigma}^{2})^{2} \times (\tilde{U}_{1} \times \tilde{U}_{2})} = 0$ and $P_{l} \in \mathcal{W} (s_{l}, \xi_{l}, \theta_{l}, \{C_{t,q} \}_{q\in \mathbb{N}, t \geq s_{l}} )$, then there exists a subsequence $\{P_{l_{k}} \}_{k \in \mathbb{N}}$ that converges weakly to some $P \in \mathcal{W} (s, \xi^{\text{in}}, \theta^{\text{in}}, \{C_{t,q} \}_{q \in \mathbb{N}, t \geq s } )$. 
\end{proposition}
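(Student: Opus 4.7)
The plan is to deduce Proposition \ref{Proposition 5.1} from Proposition \ref{Proposition 4.1} in two steps, following the strategy of the proofs of \cite[The. 5.1]{HZZ19} and \cite[Pro. 5.1]{Y21a}: first, upgrade a martingale solution on $\Omega_{0}$ to a probabilistically weak solution on $\bar{\Omega}$ via a martingale representation; second, derive the stability assertion by a tightness/weak-limit argument on $\mathcal{P}(\bar{\Omega})$.

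For the existence part, given $(s,\xi^{\text{in}},\theta^{\text{in}})$, I start with a martingale solution $Q \in \mathcal{P}(\Omega_{0})$ of \eqref{stochastic GMHD} at initial time $s$ with initial condition $\xi^{\text{in}}$, supplied by Proposition \ref{Proposition 4.1}. Under $Q$, the processes $M^{i}_{k,\cdot,s}$ from \eqref{estimate 61}--\eqref{estimate 62} are continuous square-integrable $(\mathcal{B}_{t})_{t\geq s}$-martingales with the covariance structure prescribed by $G_{k}$. An application of the classical martingale representation theorem on a suitable enlargement of $\Omega_{0}$ yields a cylindrical $(\bar{\mathcal{B}}_{t})_{t\geq s}$-Wiener process $\theta=(\theta_{1},\theta_{2})$ on $U_{1}\times U_{2}$ with initial datum $\theta^{\text{in}}$, with respect to which \eqref{estimate 255} holds. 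The push-forward of the joint law of $(\xi,\theta)$ onto $\bar{\Omega}$ yields the desired $P \in \mathcal{W}(s,\xi^{\text{in}},\theta^{\text{in}},\{C_{t,q}\}_{q\in\mathbb{N},t\geq s})$, and (M1)--(M3) of Definition \ref{Definition 5.1} follow directly from the corresponding properties of $Q$.

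For the stability part, the first task is to verify tightness of $\{P_{l}\}_{l\in\mathbb{N}}$ in $\mathcal{P}(\bar{\Omega})$. The $\theta$-marginal is tight because each $\theta$ is a cylindrical Wiener process on $U_{1}\times U_{2}$ viewed in $\tilde{U}_{1}\times\tilde{U}_{2}$ along the Hilbert--Schmidt embedding. The $\xi$-marginal is tight thanks to the uniform bound analogous to \eqref{estimate 254}, which controls the $C_{t}L_{x}^{2}\cap L_{t}^{2}\dot{H}_{x}^{\gamma}$ norm uniformly in $l$, combined with a uniform fractional H\"older estimate in $H^{-3}$ coming from the drift and the stochastic integral in \eqref{estimate 256} via Burkholder--Davis--Gundy together with the linear-growth assumption on $G_{k}$ imposed in Section \ref{Subsection 3.1}. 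Prokhorov's theorem then gives a weakly convergent subsequence $P_{l_{k}}\to P$.

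The main obstacle is to verify that the limit $P$ still satisfies the martingale formulation \eqref{estimate 255} with the limiting $\theta$ as the driving cylindrical Wiener process. Linear-in-$\xi$ terms pass to the limit by weak continuity; for the nonlinear convective terms $\text{div}(\xi_{1}\otimes\xi_{1}-\xi_{2}\otimes\xi_{2})$ and $\text{div}(\xi_{1}\otimes\xi_{2}-\xi_{2}\otimes\xi_{1})$, the uniform $C_{t}L_{x}^{2}\cap L_{t}^{2}\dot{H}_{x}^{\gamma}$ bounds together with Aubin--Lions and a Jakubowski--Skorokhod-type representation produce strong convergence in $L_{t,x}^{2}$ along a common probability space, which is enough to identify the limits. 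The stochastic integral terms are then handled by the standard characterization procedure: compute the quadratic variation of the residual $(\bar{\mathcal{B}}_{t})$-martingale under $P$, invoke the continuity and linear-growth properties of $G_{k}$ to close it, and recognize the limit as $\int_{s}^{\cdot}\langle\psi_{i}^{k},F_{k}(\xi_{k}(r))d\theta_{k}(r)\rangle$, exactly as in the proofs of \cite[The. 5.1]{HZZ19} and \cite[Pro. 5.1]{Y21a}. The bound (M3) is inherited by $P$ by lower semicontinuity of the relevant norms and Fatou's lemma, concluding that $P\in\mathcal{W}(s,\xi^{\text{in}},\theta^{\text{in}},\{C_{t,q}\}_{q\in\mathbb{N},t\geq s})$.
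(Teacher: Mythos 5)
Your proposal is correct and takes essentially the same route as the paper: the paper offers no detailed argument here, simply noting that Proposition \ref{Proposition 5.1} follows from Proposition \ref{Proposition 4.1} as in \cite[The. 5.1]{HZZ19} and \cite[Pro. 5.1]{Y21a}, which is precisely the standard scheme you reconstruct (martingale representation on an enlarged space to upgrade a martingale solution to a probabilistically weak one, then tightness, Skorokhod representation, and martingale identification for the stability statement). Your write-up just fills in the details the paper leaves to the cited references.
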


Proposition \ref{Proposition 5.1} leads to the following results due to \cite[Pro. 5.2 and 5.3]{HZZ19}: 
\begin{lemma}\label{Lemma 5.2}
\rm{(\cite[Pro. 5.2]{HZZ19})} Let $\tau$ be a bounded $(\bar{\mathcal{B}}_{t})_{t \geq 0}$-stopping time. Then, for every $\omega \in \bar{\Omega}$, there exists $Q_{\omega} \in \mathcal{P}(\bar{\Omega})$ such that 
\begin{subequations}
\begin{align}
& Q_{\omega} ( \{ \omega' \in \bar{\Omega}: \hspace{0.5mm}  ( \xi, \theta) (t, \omega') = (\xi, \theta) (t,\omega) \hspace{1mm} \forall \hspace{1mm} t \in [0, \tau(\omega)] \}) = 1, \\
& Q_{\omega} (A) = R_{\tau(\omega), \xi(\tau(\omega), \omega), \theta(\tau(\omega), \omega)} (A) \hspace{1mm} \forall \hspace{1mm} A \in \mathcal{B}^{\tau(\omega)}, 
\end{align} 
\end{subequations}
where $R_{\tau(\omega), \xi(\tau(\omega), \omega), \theta(\tau(\omega), \omega)} \in \mathcal{P} (\bar{\Omega})$ is a probabilistically weak solution to \eqref{stochastic GMHD} with initial condition $(\xi(\tau(\omega), \omega), \theta(\tau(\omega), \omega))$ at initial time $\tau(\omega)$. Moreover, for every $A \in \bar{\mathcal{B}}$, the mapping $\omega \mapsto Q_{\omega}(A)$ is $\bar{\mathcal{B}}_{\tau}$-measurable, where $\bar{\mathcal{B}}$ is the Borel $\sigma$-algebra of $\bar{\Omega}$ from Section \ref{Subsection 3.1}.  
\end{lemma}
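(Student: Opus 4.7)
The plan is to follow the classical gluing/concatenation approach as in \cite[Pro. 5.2]{HZZ19}, adapted to our system of two coupled equations. At a high level, for each $\omega \in \bar{\Omega}$ we will stop the trajectory $\omega$ at time $\tau(\omega)$, start a fresh probabilistically weak solution $R_{\tau(\omega),\xi(\tau(\omega),\omega),\theta(\tau(\omega),\omega)}$ from the value $(\xi,\theta)(\tau(\omega),\omega)$ at time $\tau(\omega)$, and glue the two together to define $Q_{\omega}$. The existence of $R_{s,\xi^{\text{in}},\theta^{\text{in}}}$ for every admissible triple $(s,\xi^{\text{in}},\theta^{\text{in}})$ is guaranteed by Proposition \ref{Proposition 5.1}, so the content of the lemma lies almost entirely in the measurability of the map $\omega \mapsto Q_{\omega}$ with respect to $\bar{\mathcal{B}}_{\tau}$.

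First, I would fix a family of probabilistically weak solutions $\{R_{s,\xi^{\text{in}},\theta^{\text{in}}}\}$ parametrized by $(s,\xi^{\text{in}},\theta^{\text{in}}) \in [0,\infty) \times (L_{\sigma}^{2})^{2} \times (\tilde{U}_{1} \times \tilde{U}_{2})$. The stability part of Proposition \ref{Proposition 5.1} tells us that the set-valued map sending $(s,\xi^{\text{in}},\theta^{\text{in}})$ to the (nonempty, closed) set $\mathcal{W}(s,\xi^{\text{in}},\theta^{\text{in}},\{C_{t,q}\})$ of probabilistically weak solutions, viewed as a subset of $\mathcal{P}(\bar{\Omega})$ endowed with the topology of weak convergence, has closed graph under the convergence of initial data. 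A standard measurable selection theorem (Kuratowski--Ryll-Nardzewski) then yields a Borel-measurable selection $(s,\xi^{\text{in}},\theta^{\text{in}}) \mapsto R_{s,\xi^{\text{in}},\theta^{\text{in}}} \in \mathcal{P}(\bar{\Omega})$.

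Second, I would define the concatenation
\begin{equation*}
Q_{\omega} \triangleq \delta_{\omega} \otimes_{\tau(\omega)} R_{\tau(\omega),\xi(\tau(\omega),\omega),\theta(\tau(\omega),\omega)},
\end{equation*}
where the symbol $\otimes_{\tau(\omega)}$ denotes the probability measure on $\bar{\Omega}$ obtained by freezing the trajectory as $\omega$ on $[0,\tau(\omega)]$ and sampling from $R_{\tau(\omega),\xi(\tau(\omega),\omega),\theta(\tau(\omega),\omega)}$ on $[\tau(\omega),\infty)$ (made precise via the canonical identification of $\bar{\Omega}$ with paths on $[0,\infty)$). By construction the two displayed properties hold: the first is immediate from $\delta_{\omega}$ fixing the trajectory on $[0,\tau(\omega)]$, and the second follows because $R_{\tau(\omega),\xi(\tau(\omega),\omega),\theta(\tau(\omega),\omega)}$ is a probabilistically weak solution starting at time $\tau(\omega)$ from the required initial datum (M1 in Definition \ref{Definition 5.1} for this solution forces the agreement on $[0,\tau(\omega)]$ as well, but the relevant $\sigma$-field $\mathcal{B}^{\tau(\omega)}$ only sees times $\geq \tau(\omega)$).

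The main obstacle, and the technical heart of the argument, is verifying that $\omega \mapsto Q_{\omega}(B)$ is $\bar{\mathcal{B}}_{\tau}$-measurable for every $B \in \bar{\mathcal{B}}$. This reduces to showing that (i) $\omega \mapsto (\tau(\omega),\xi(\tau(\omega),\omega),\theta(\tau(\omega),\omega))$ is $\bar{\mathcal{B}}_{\tau}$-measurable, which is standard for stopped processes, and (ii) the selection $(s,\xi^{\text{in}},\theta^{\text{in}}) \mapsto R_{s,\xi^{\text{in}},\theta^{\text{in}}}$ is Borel measurable, which comes from Proposition \ref{Proposition 5.1} via the measurable selection theorem, and (iii) the concatenation operation $(\omega, R) \mapsto \delta_{\omega} \otimes_{\tau(\omega)} R$ is jointly measurable. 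Composing (i)--(iii) gives the desired $\bar{\mathcal{B}}_{\tau}$-measurability. Because the structure is identical to the single-equation case treated in \cite[Pro. 5.2]{HZZ19}, with the only modification being that the state and driving noise are now pairs $(\xi_{1},\xi_{2})$ and $(\theta_{1},\theta_{2})$, the argument transfers essentially verbatim; the necessary stability input for the system is already encoded in Proposition \ref{Proposition 5.1}.
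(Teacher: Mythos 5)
Your proposal is correct and follows essentially the same route as the paper, which does not reprove this lemma but cites \cite[Pro. 5.2]{HZZ19}: there the statement is obtained exactly by a measurable selection of probabilistically weak solutions (furnished by the existence/stability result, here Proposition \ref{Proposition 5.1}) followed by concatenation $Q_{\omega} = \delta_{\omega} \otimes_{\tau(\omega)} R_{\tau(\omega), \xi(\tau(\omega),\omega), \theta(\tau(\omega),\omega)}$ and a check of $\bar{\mathcal{B}}_{\tau}$-measurability of $\omega \mapsto Q_{\omega}(B)$. No gaps worth flagging.
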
 

\begin{lemma}\label{Lemma 5.3}
\rm{(\cite[Pro. 5.3]{HZZ19})} Let $\tau$ be a bounded $(\bar{\mathcal{B}}_{t})_{t\geq 0}$-stopping time, $\xi^{\text{in}} = (\xi_{1}^{\text{in}}, \xi_{2}^{\text{in}}) \in L_{\sigma}^{2} \times L_{\sigma}^{2}$, and $P$ be a probabilistically weak solution to \eqref{stochastic GMHD} on $[0,\tau]$ with initial condition $(\xi^{\text{in}}, 0)$ at initial time $0$ according to Definition \ref{Definition 5.2}. Suppose that there exists a Borel set $\mathcal{N} \subset \bar{\Omega}_{\tau}$ such that $P(\mathcal{N}) = 0$ and $Q_{\omega}$ from Lemma \ref{Lemma 5.2} satisfies for every $\omega \in \bar{\Omega}_{\tau} \setminus \mathcal{N}$ 
\begin{equation}
Q_{\omega} (\{ \omega' \in \bar{\Omega}: \hspace{0.5mm}  \tau(\omega') = \tau(\omega) \}) = 1. 
\end{equation} 
Then the probability measure $P\otimes_{\tau}R \in \mathcal{P} (\bar{\Omega})$ defined by 
\begin{equation}\label{estimate 259}
P \otimes_{\tau} R (\cdot) \triangleq \int_{\bar{\Omega}} Q_{\omega} (\cdot) P(d \omega)
\end{equation} 
satisfies $P\otimes_{\tau}R \rvert_{\bar{\Omega}_{\tau}} = P \rvert_{\bar{\Omega}_{\tau}}$ and it is a probabilistically weak solution to \eqref{stochastic GMHD} on $[0,\infty)$ with initial condition $(\xi^{\text{in}}, 0)$ at initial time $0$. 
\end{lemma}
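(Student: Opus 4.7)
The strategy mirrors the gluing construction of \cite[Pro. 5.3]{HZZ19} and \cite[Pro. 5.4]{Y21a}. The proof reduces to verifying conditions (M1)--(M3) of Definition \ref{Definition 5.1} for $P \otimes_{\tau} R$ with initial condition $(\xi^{\text{in}}, 0)$ at initial time $0$, together with the restriction identity $P \otimes_{\tau} R\rvert_{\bar{\Omega}_{\tau}} = P\rvert_{\bar{\Omega}_{\tau}}$. The latter identity follows directly from Lemma \ref{Lemma 5.2}: for every $\omega \in \bar{\Omega}$, $Q_{\omega}$ coincides with $\delta_{\omega}$ on $\bar{\mathcal{B}}_{\tau}$, so integrating against $P$ in \eqref{estimate 259} recovers $P$ on events measurable up to time $\tau$.

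The first step is to verify (M1) and (M3). The initial condition $P \otimes_{\tau} R(\{\xi(0) = \xi^{\text{in}}, \theta(0) = 0\}) = 1$ follows because on $\{t = 0\} \subset \bar{\Omega}_{\tau}$ the restriction identity applies and $P$ itself satisfies (M1). Integrability of $\int_{0}^{l} \sum_{k} \lVert F_{k}(\xi_{k}) \rVert^{2}_{L_{2}(U_{k}, L_{\sigma}^{2})} dr$ under $P \otimes_{\tau} R$ is obtained by splitting the integral at $\tau$: the contribution on $[0, \tau]$ is controlled by the hypothesis on $P$ (Definition \ref{Definition 5.2}(M1)), while the contribution on $[\tau, l]$ is controlled $P$-a.s.\ by (M1) for the martingale solution $R_{\tau(\omega), \xi(\tau(\omega), \omega), \theta(\tau(\omega), \omega)}$. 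The estimate in (M3) is obtained analogously, by concatenating the energy estimate for $P$ on $[0, t\wedge \tau]$ with the estimate for $R_{\tau(\omega), \xi(\tau(\omega), \omega), \theta(\tau(\omega), \omega)}$ on $[\tau(\omega), t]$, using that $\lVert \xi(\tau(\omega), \omega) \rVert_{L_{x}^{2}}$ is controlled by (M3) for $P$ on the stopped space, together with Fubini to integrate out $\omega$.

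The principal obstacle, and the core of the argument, is (M2): showing that for every $\psi_{i} \in (C^{\infty}(\mathbb{T}^{3})\cap L_{\sigma}^{2})^{2}$ the processes defined in \eqref{estimate 255} are continuous, square-integrable $(\bar{\mathcal{B}}_{t})_{t\geq 0}$-martingales under $P \otimes_{\tau} R$ with the correct quadratic variation, and that $\theta$ is a cylindrical Wiener process under $P \otimes_{\tau} R$. The plan is to fix $0 \leq s \leq t$ and a bounded $\bar{\mathcal{B}}_{s}$-measurable test function $Z$, and compute $\mathbb{E}^{P\otimes_{\tau} R}[Z(N_{t} - N_{s})]$ for $N$ denoting either the martingale from \eqref{estimate 255} or the relevant functional of $\theta$. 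By \eqref{estimate 259} and tower with $\bar{\mathcal{B}}_{\tau}$, this expectation splits as
\begin{equation*}
\mathbb{E}^{P\otimes_{\tau} R}[Z(N_{t} - N_{s}) \mathbf{1}_{\{s \geq \tau\}}] + \mathbb{E}^{P\otimes_{\tau} R}[Z(N_{t} - N_{s}) \mathbf{1}_{\{s < \tau\}}],
\end{equation*}
and the hypothesis $Q_{\omega}(\{\tau(\cdot) = \tau(\omega)\}) = 1$ outside the null set $\mathcal{N}$ allows one to identify the random time $\tau$ consistently under both $P$ and $Q_{\omega}$. On the first piece, one uses that $Q_{\omega}$ is a probabilistically weak solution starting at time $\tau(\omega)$ to conclude that $N$ increments between times $s \geq \tau(\omega)$ and $t$ are $Q_{\omega}$-martingale increments. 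On the second piece, the restriction identity $P\otimes_{\tau}R\rvert_{\bar{\Omega}_{\tau}} = P\rvert_{\bar{\Omega}_{\tau}}$ together with the martingale property of $P$ up to $\tau$ (Definition \ref{Definition 5.2}(M2)) handles the contribution. Combining the two yields the martingale property on $[0,\infty)$.

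The quadratic variation identification and the cylindrical Wiener property of $\theta$ under $P \otimes_{\tau} R$ follow by the same splitting argument applied to $N_{t}^{2} - \langle\langle N \rangle\rangle_{t}$ and to $\langle \theta(t), l_{i} \rangle_{U_{1}\times U_{2}}^{2} - t \sum_{k} \lVert l_{i}^{k} \rVert_{U_{k}}^{2}$ respectively. The main technical subtlety, which is where the hypothesis $Q_{\omega}(\{\tau = \tau(\omega)\}) = 1$ becomes indispensable, is to ensure that stochastic integrals against $\theta$ computed before and after $\tau$ assemble correctly; this requires the pasting time to be deterministic from the $Q_{\omega}$-perspective, precisely what the hypothesis provides. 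Given this, the argument parallels \cite[Pro. 5.3]{HZZ19} and \cite[Pro. 5.4]{Y21a} verbatim, and we omit the routine verification.
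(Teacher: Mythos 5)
Your proposal follows essentially the same route as the paper, which offers no independent proof of this lemma but simply cites \cite[Pro.~5.3]{HZZ19}: the standard gluing argument verifying (M1)--(M3) for $P\otimes_{\tau}R$ via \eqref{estimate 259}, with the hypothesis $Q_{\omega}(\{\tau=\tau(\omega)\})=1$ used to paste the martingale property of $P$ before $\tau$ with that of $Q_{\omega}$ after $\tau$. The only imprecision is minor: on the event $\{s<\tau\}$ the increment $N_{t}-N_{s}$ crosses $\tau$ and should be further split at $t\wedge\tau$ before invoking the two martingale properties separately, exactly as in \cite{HZZ19}.
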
 
Now we fix the $\mathbb{R}$-valued Wiener processes $B_{1}$ and $B_{2}$ on $(\Omega, \mathcal{F}, \textbf{P})$ with $(\mathcal{F}_{t})_{t\geq 0}$ as its normal filtration. For $l \in \mathbb{N}, L > 1$, and $\delta \in (0, \frac{1}{12})$ we define 
\begin{align}
&\tau_{L}^{l} (\omega) \triangleq \inf\{t \geq 0: \max_{k=1,2} \lvert \theta_{k} (t,\omega)  \rvert > (L - \frac{1}{l})^{\frac{1}{4}} \}  \nonumber\\
& \hspace{5mm} \wedge \inf\{ t \geq 0: \max_{k=1,2} \lVert \theta_{k} (\omega) \rVert_{C_{t}^{\frac{1}{2} - 2 \delta}} > (L - \frac{1}{l})^{\frac{1}{2}} \wedge L \hspace{1mm} \text{ and } \hspace{1mm} \tau_{L}(\omega) \triangleq \lim_{l\to\infty} \tau_{L}^{l} (\omega). \label{estimate 257}
\end{align}
Comparing \eqref{stochastic GMHD} with \eqref{estimate 255} and \eqref{estimate 256} we see that $F_{k} (\xi_{k}(r)) = \xi_{k}(r)$, $\theta_{k} = B_{k}$ for $k \in \{1,2\}$. As a Brownian path is locally H$\ddot{\mathrm{o}}$lder-continuous with exponent $\alpha \in (0, \frac{1}{2})$, it follows from \cite[Lem. 3.5]{HZZ19} that $\tau_{L}$ is a stopping time of $(\bar{\mathcal{B}}_{t})_{t\geq 0}$. We assume Theorem \ref{Theorem 2.3} and denote by $(u,b)$ the solution constructed therein over time interval $[0, \mathfrak{t}]$ where $\mathfrak{t} = T_{L}$ for $L$ sufficiently large and 
\begin{equation}\label{estimate 258}
T_{L} \triangleq \inf\{t> 0: \max_{k=1,2} \lvert B_{k} (t) \rvert \geq L^{\frac{1}{4}} \} \wedge \inf\{t > 0: \max_{k=1,2} \lVert B_{k} \rVert_{C_{t}^{\frac{1}{2} - 2 \delta}} \geq L^{\frac{1}{2}} \} \wedge L. 
\end{equation} 
With $P$ representing the law of $(u, b, B_{1}, B_{2})$, the following two results also follow immediately from previous works (e.g., \cite[Pro. 5.4 and 5.5]{HZZ19}) making use of the fact that 
\begin{equation}\label{estimate 260}
\theta (t, (u,b, B_{1}, B_{2})) = (B_{1}, B_{2} )(t) \hspace{3mm} \forall \hspace{1mm} t \in [0, T_{L}] \hspace{1mm} \textbf{P}\text{-a.s.}
\end{equation} 

\begin{proposition}\label{Proposition 5.4}
Let $\tau_{L}$ be defined by \eqref{estimate 257}. Then $P = \mathcal{L} (u, b, B_{1}, B_{2})$ is a probabilistically weak solution to \eqref{stochastic GMHD} on $[0, \tau_{L}]$ that satisfies Definition \ref{Definition 5.2}. 
\end{proposition}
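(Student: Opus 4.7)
The plan is to follow the template of \cite[Pro. 5.4]{HZZ19} (or its system counterpart \cite[Pro. 5.5]{HZZ19}), adapting the argument used for Proposition \ref{Proposition 4.5} in the additive case to the current linear multiplicative setting. The crucial structural fact is \eqref{estimate 260}, which expresses that the canonical noise coordinate $\theta$ on $\bar{\Omega}$ coincides with the pair $(B_{1},B_{2})$ on $[0,T_{L}]$ under $P=\mathcal{L}(u,b,B_{1},B_{2})$. This identity is what couples the abstract stopping time $\tau_{L}$ on $(\bar{\Omega},\bar{\mathcal{B}}_{t})$, defined in \eqref{estimate 257}, to the concrete stopping time $T_{L}$ defined in \eqref{estimate 258} on the underlying filtered probability space.

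First, I would establish the pull-back identity
\begin{equation*}
\tau_{L}(u,b,B_{1},B_{2}) = T_{L} \quad \textbf{P}\text{-a.s.},
\end{equation*}
which is the linear-multiplicative analog of \eqref{estimate 86}. Because $\tau_{L}$ and $T_{L}$ are built from exactly the same thresholds applied to the same scalar quantities $|\theta_{k}(t)|$ and $\|\theta_{k}\|_{C_{t}^{1/2-2\delta}}$ on the one side and $|B_{k}(t)|$ and $\|B_{k}\|_{C_{t}^{1/2-2\delta}}$ on the other, \eqref{estimate 260} gives the coincidence on the set $\{t\le T_{L}\}$, and then one argues as in \cite[Equ.~(5.6)]{HZZ19} to push the equality up to $\tau_{L}$ itself using the monotone approximation $\tau_{L}^{l}\nearrow \tau_{L}$.

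Next I would verify the three items of Definition \ref{Definition 5.2}. Item (M1) is immediate: the deterministic initial datum $(u^{\text{in}},b^{\text{in}})$ was produced in the proof of Theorem \ref{Theorem 2.3}, and with $F_{k}(x)=x$ the integrability condition on $F_{k}(\xi_{k})$ reduces to the a.s.\ local integrability in time of $\|\xi_{k}\|_{L_{x}^{2}}^{2}$, which follows from \eqref{estimate 20}. For (M2), the process $\theta(\cdot\wedge\tau_{L})$ under $P$ equals $(B_{1},B_{2})(\cdot\wedge T_{L})$ under $\textbf{P}$, which is a continuous square-integrable $(\mathcal{F}_{t})$-martingale with the stated quadratic variation; the integral identities \eqref{estimate 256} then follow from the fact that $(u,b,B_{1},B_{2})$ actually solves \eqref{stochastic GMHD} on $[0,T_{L}]$ by Theorem \ref{Theorem 2.3}, after testing against $\psi_{i}$ and stopping at $T_{L}$. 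Finally, (M3) is a consequence of the uniform bound \eqref{estimate 20} together with the elementary estimate $\|\cdot\|_{L_{x}^{2}}\le C\|\cdot\|_{\dot{H}_{x}^{\gamma}}$ for mean-zero functions, which yields the moment bounds with a constant $C_{t,q}$ depending only on $t,q,L$ and the data.

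The main obstacle, as always in this framework, lies in correctly aligning the canonical stopping time $\tau_{L}$ on $\bar{\Omega}$ with the stopping time $T_{L}$ on the underlying $(\Omega,\mathcal{F},\textbf{P})$: since $\tau_{L}$ is built from the canonical process $\theta$ while $(u,b)$ is $(\mathcal{F}_{t})$-adapted with respect to the original noise, one must be careful that measurability, the monotone limit in $l$ in the definition of $\tau_{L}$, and the continuity-in-time of $\theta$ under the push-forward $P$ all conspire to give the a.s.\ equality above. Once this is secured, the remainder of the verification is a direct transcription of the additive-noise argument for Proposition \ref{Proposition 4.5}, so I would follow \cite[Pro. 5.4]{HZZ19} essentially verbatim, and omit the details.
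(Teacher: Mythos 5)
Your proposal is correct and follows essentially the same route as the paper: the paper likewise reduces Proposition \ref{Proposition 5.4} to the argument of \cite[Pro. 5.4]{HZZ19}, with the key input being the identification \eqref{estimate 260} of the canonical noise coordinate $\theta$ with $(B_{1},B_{2})$ on $[0,T_{L}]$, which yields the coincidence of $\tau_{L}(u,b,B_{1},B_{2})$ with $T_{L}$ and then the verification of (M1)--(M3) of Definition \ref{Definition 5.2} exactly as you describe.
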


\begin{proposition}\label{Proposition 5.5}
Let $\tau_{L}$ be defined by \eqref{estimate 257}. Then $P \otimes_{\tau_{L}} R$ defined by \eqref{estimate 259} is a probabilistically weak solution to \eqref{stochastic GMHD} on $[0,\infty)$ that satisfies Definition \ref{Definition 5.1}. 
\end{proposition}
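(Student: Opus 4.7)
The plan is to apply Lemma \ref{Lemma 5.3} directly, with $\tau = \tau_L$ and $P = \mathcal{L}(u,b,B_1,B_2)$. By Proposition \ref{Proposition 5.4}, this $P$ is already a probabilistically weak solution on $[0,\tau_L]$ satisfying Definition \ref{Definition 5.2}, so Lemma \ref{Lemma 5.3} will conclude that $P\otimes_{\tau_L} R \in \mathcal{P}(\bar{\Omega})$ is a probabilistically weak solution on $[0,\infty)$ satisfying Definition \ref{Definition 5.1}, provided we exhibit a Borel set $\mathcal{N} \subset \bar{\Omega}_{\tau_L}$ with $P(\mathcal{N}) = 0$ such that
\begin{equation*}
Q_\omega(\{\omega' \in \bar{\Omega} : \tau_L(\omega') = \tau_L(\omega)\}) = 1 \quad \forall \, \omega \in \bar{\Omega}_{\tau_L} \setminus \mathcal{N},
\end{equation*}
where $Q_\omega$ is furnished by Lemma \ref{Lemma 5.2}. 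This is the exact analog of \eqref{estimate 87}, which was the sole nontrivial verification in the proof of Proposition \ref{Proposition 4.6}.

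The key observation is that the stopping time $\tau_L$ in \eqref{estimate 257} depends on $\omega$ only through the path of $\theta=(\theta_1,\theta_2)$, and only through its restriction to $[0,\tau_L(\omega)]$: both defining functionals $t \mapsto \max_{k=1,2}|\theta_k(t)|$ and $t \mapsto \max_{k=1,2} \|\theta_k\|_{C_t^{1/2-2\delta}}$ are nondecreasing in $t$ and compared against the $l$-indexed level sets $(L-\frac{1}{l})^{1/4}$ and $(L-\frac{1}{l})^{1/2}$. By the first property of $Q_\omega$ in Lemma \ref{Lemma 5.2}, we have $Q_\omega$-a.s. that $(\xi,\theta)(t,\omega') = (\xi,\theta)(t,\omega)$ for all $t \in [0,\tau_L(\omega)]$, which forces $\tau_L^l(\omega') = \tau_L^l(\omega)$ for every $l \in \mathbb{N}$, and hence $\tau_L(\omega') = \tau_L(\omega)$ upon passing to the limit $l \to \infty$.

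The remaining work is measurability and null-set bookkeeping. Using the $\bar{\mathcal{B}}_{\tau_L}$-measurability of $\omega \mapsto Q_\omega(A)$ stated in Lemma \ref{Lemma 5.2}, together with the identification \eqref{estimate 260} under $P$ of $\theta$ with the fixed Brownian pair $(B_1,B_2)$ on $[0,\tau_L]$, one isolates the $P$-null set $\mathcal{N}$ on which the almost-sure identity above could fail, exactly as in the proofs of \cite[Pro. 3.8]{HZZ19} and \cite[Pro. 4.6]{Y21a}. The main obstacle I anticipate is verifying the martingale condition (M2) of Definition \ref{Definition 5.1} after concatenation: one must show that $\theta$ remains a cylindrical $(\bar{\mathcal{B}}_t)$-Wiener process under $P\otimes_{\tau_L} R$ on $[0,\infty)$ and that the stochastic integral identities \eqref{estimate 255} persist past $\tau_L$. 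This is handled as in \cite[Pro. 5.5]{HZZ19} by exploiting that $R_{\tau_L(\omega),\xi(\tau_L(\omega),\omega),\theta(\tau_L(\omega),\omega)}$ is itself a probabilistically weak solution starting from time $\tau_L(\omega)$ with the correct initial noise value $\theta(\tau_L(\omega),\omega)$, so the quadratic-variation and martingale properties patch continuously at $\tau_L$ and the desired global martingale problem is solved.
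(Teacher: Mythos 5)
Your proposal is correct and follows essentially the same route as the paper, which proves Proposition \ref{Proposition 5.5} precisely by invoking Lemma \ref{Lemma 5.3} (i.e., the transferred \cite[Pro. 5.3/5.5]{HZZ19}) once the analogue of \eqref{estimate 87} — the $Q_{\omega}$-a.s. preservation of $\tau_{L}$, which holds because $\tau_{L}$ in \eqref{estimate 257} is determined by $\theta$ on $[0,\tau_{L}(\omega)]$ together with the identification \eqref{estimate 260} — is verified. One harmless slip: $t \mapsto \max_{k=1,2}\lvert \theta_{k}(t)\rvert$ is not nondecreasing (unlike the H\"older seminorm), but that monotonicity is never needed since path agreement on $[0,\tau_{L}(\omega)]$ alone fixes the crossing times $\tau_{L}^{l}$ and hence $\tau_{L}$.
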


We are ready to prove Theorem \ref{Theorem 2.4} assuming Theorem \ref{Theorem 2.3}. In fact, this proof is similar to the proof of Theorem \ref{Theorem 2.2} in Section \ref{Section 4}; thus, we leave this in the Appendix. Now we know that $(v, \Xi)$ defined via $\Upsilon_{1}$ and $\Upsilon_{2}$ in \eqref{estimate 40} satisfy \eqref{estimate 22}-\eqref{estimate 23}. This motivates us to pursue $(v_{q}, \Xi_{q}, \mathring{R}_{q}^{v}, \mathring{R}_{q}^{\Xi})$ for $q \in \mathbb{N}_{0}$ that solves 
\begin{subequations}\label{estimate 277}
\begin{align}
&\partial_{t} v_{q} + \frac{1}{2} v_{q} + (-\Delta)^{m_{1}} v_{q} + \text{div} ( \Upsilon_{1} (v_{q} \otimes v_{q}) - \Upsilon_{1}^{-1} \Upsilon_{2}^{2} (\Xi_{q} \otimes \Xi_{q} ))  + \nabla p_{q} = \text{div} \mathring{R}_{q}^{v},\\
&\partial_{t} \Xi_{q} + \frac{1}{2}\Xi_{q} + (-\Delta)^{m_{2}} \Xi_{q} + \Upsilon_{1} \text{div} (v_{q} \otimes \Xi_{q} - \Xi_{q} \otimes v_{q}) = \text{div} \mathring{R}_{q}^{\Xi},  \\
& \nabla\cdot v_{q}  = 0, \hspace{1mm} \nabla\cdot \Xi_{q} = 0,
\end{align}
\end{subequations} 
where $\mathring{R}_{q}^{v}$ is a symmetric trace-free matrix and $\mathring{R}_{q}^{\Xi}$ is a skew-symmetric matrix. We define $\lambda_{q}$ and $\delta_{q}$ identically to \eqref{estimate 93} but introduce a different definition of $M_{0}(t)$ from \eqref{estimate 94} and a new quantity $m_{L}$: 
\begin{equation}\label{estimate 262}
M_{0}(t) \triangleq e^{4L t + 2L} \hspace{2mm} \text{ and } \hspace{2mm} m_{L} \triangleq \sqrt{3} L^{\frac{5}{4}} e^{\frac{5}{2} L^{\frac{1}{4}}}. 
\end{equation} 
Because for $L > 1$ and $t \in [0, T_{L}], \lvert B_{k} (t) \rvert \leq L^{\frac{1}{4}}$ and $\lVert B_{k} \rVert_{C_{t}^{\frac{1}{2} - 2\delta}} \leq L^{\frac{1}{2}}$ due to \eqref{estimate 258}, we have 
\begin{equation}\label{estimate 273}
\lVert \Upsilon_{k} \rVert_{C_{t}^{\frac{1}{2} - 2 \delta}}, \lVert \Upsilon_{k}^{-1} \rVert_{C_{t}^{\frac{1}{2} - 2 \delta}} \leq e^{L^{\frac{1}{4}}} L^{\frac{1}{2}} \leq m_{L}^{\frac{2}{5}} \hspace{1mm} \text{ and } \hspace{1mm} \lVert \Upsilon_{k} \rVert_{C_{t}}, \lVert\Upsilon_{k}^{-1} \rVert_{C_{t}} \leq e^{L^{\frac{1}{4}}} \leq m_{L}^{\frac{2}{5}}. 
\end{equation} 
We assume again that $b \geq 2$ and $a^{\beta b} > (2\pi)^{3}  +1$ as in \eqref{estimate 96} so that $\sum_{1 \leq \iota \leq q} \delta_{\iota}^{\frac{1}{2}} < \frac{1}{2}$ for all $q \in \mathbb{N}$. For the inductive estimates we assume for all $t \in [0, T_{L}]$, 
\begin{subequations}\label{estimate 272}
\begin{align}
& \lVert v_{q} \rVert_{C_{t}L_{x}^{2}} \leq m_{L} M_{0}(t)^{\frac{1}{2}} (1+ \sum_{1 \leq \iota \leq q} \delta_{\iota}^{\frac{1}{2}}) \leq 2 m_{L}M_{0}(t)^{\frac{1}{2}}, \label{estimate 268}\\
& \lVert \Xi_{q} \rVert_{C_{t}L_{x}^{2}} \leq m_{L} M_{0}(t)^{\frac{1}{2}} (1+ \sum_{1 \leq \iota \leq q} \delta_{\iota}^{\frac{1}{2}}) \leq 2 m_{L} M_{0}(t)^{\frac{1}{2}}, \label{estimate 269}\\
& \lVert v_{q} \rVert_{C_{t,x}^{1}} \leq m_{L} M_{0}(t)^{\frac{1}{2}} \lambda_{q}^{4}, \hspace{4mm} \lVert \Xi_{q} \rVert_{C_{t,x}^{1}} \leq m_{L}M_{0}(t)^{\frac{1}{2}} \lambda_{q}^{4}, \label{estimate 270}\\
& \lVert \mathring{R}_{q}^{v} \rVert_{C_{t}L_{x}^{1}} \leq c_{v} M_{0}(t) \delta_{q+1}, \hspace{3mm} \lVert \mathring{R}_{q}^{\Xi} \rVert_{C_{t}L_{x}^{1}} \leq c_{\Xi} M_{0}(t) \delta_{q+1}, \label{estimate 271}
\end{align}
\end{subequations} 
where the second inequalities in \eqref{estimate 268}-\eqref{estimate 269} hold again due to \eqref{estimate 96}.
\begin{remark}\label{Remark 5.1}
The definition of $m_{L}$ in \eqref{estimate 262} is different from those of previous works, e.g., $m_{L} = \sqrt{3} L^{\frac{1}{4}} e^{\frac{1}{2} L^{\frac{1}{4}}}$ in \cite[Equ. (6.5)]{HZZ19}. The reason for this important change is as follows. As we previewed in Remark \ref{Remark 2.1}, although Hofmanov$\acute{\mathrm{a}}$ et al. in \cite{HZZ19} were able to strategically define $\bar{a}_{\xi}$ and thereby $w_{q+1}^{p}$ to reduce the oscillation term in the proof of Theorem \ref{Theorem 2.3} to that of Theorem \ref{Theorem 2.1}, this is impossible for the case of the MHD system. From \eqref{estimate 24}-\eqref{estimate 25} we can see that the oscillation term from the magnetic field equation is more balanced; i.e., both $w_{q+1}^{p} \otimes d_{q+1}^{p}$ and $d_{q+1}^{p} \otimes w_{q+1}^{p}$ are multiplied by $\Upsilon_{1,l}$. Thus, we focus on \eqref{estimate 24}. Let us assume for some $\bar{a}_{\xi}$ to be determined that 
\begin{equation}\label{estimate 428}
w_{q+1}^{p} \triangleq \sum_{\xi \in \Lambda} \bar{a}_{\xi} \phi_{\xi} \varphi_{\xi} \xi \hspace{2mm} \text{ and } \hspace{2mm} d_{q+1}^{p} \triangleq \sum_{\xi \in \Lambda_{\Xi}} \bar{a}_{\xi} \phi_{\xi} \varphi_{\xi} \xi_{2}
\end{equation}
and follow the analogous computations of \eqref{estimate 263}-\eqref{estimate 264}. We see that 
\begin{align}
\text{div} R_{\text{osc}}^{v} +& \nabla p_{\text{osc}} \overset{\eqref{estimate 24}\eqref{estimate 428}}{=} \text{div} ( \Upsilon_{1,l} [ \sum_{\xi \in \Lambda} \bar{a}_{\xi}^{2} \phi_{\xi}^{2} \varphi_{\xi}^{2} \xi \otimes \xi + \sum_{\xi, \xi' \in \Lambda: \xi \neq \xi'} \bar{a}_{\xi} \bar{a}_{\xi'} \phi_{\xi} \phi_{\xi'} \varphi_{\xi} \varphi_{\xi'} \xi \otimes \xi' ] \nonumber\\
& - \Upsilon_{1,l}^{-1} \Upsilon_{2,l}^{2} [ \sum_{\xi \in \Lambda_{\Xi}} \bar{a}_{\xi}^{2} \phi_{\xi}^{2} \varphi_{\xi}^{2} \xi_{2} \otimes \xi_{2} + \sum_{\xi, \xi' \in \Lambda_{\Xi}: \xi \neq \xi'} \bar{a}_{\xi} \bar{a}_{\xi'} \phi_{\xi} \phi_{\xi'} \varphi_{\xi} \varphi_{\xi'} \xi_{2} \otimes \xi_{2}'] + \mathring{R}_{l}^{v}) + \partial_{t} w_{q+1}^{t}  \nonumber\\
&\overset{\eqref{estimate 46}\eqref{estimate 57}}{=} \text{div} (\Upsilon_{1,l} \sum_{\xi \in \Lambda_{v}} \bar{a}_{\xi}^{2} \phi_{\xi}^{2} \varphi_{\xi}^{2} (\xi \otimes \xi) + \sum_{\xi \in \Lambda_{\Xi}} \bar{a}_{\xi}^{2} (\Upsilon_{1,l} \xi \otimes \xi - \Upsilon_{1,l}^{-1} \Upsilon_{2,l}^{2} \xi_{2} \otimes \xi_{2}) \nonumber \\
&+ \sum_{\xi \in \Lambda_{\Xi}} \bar{a}_{\xi}^{2} \mathbb{P}_{\neq 0} (\phi_{\xi}^{2} \varphi_{\xi}^{2}) (\Upsilon_{1,l} \xi \otimes \xi - \Upsilon_{1,l}^{-1} \Upsilon_{2,l}^{2} \xi_{2} \otimes \xi_{2}) \nonumber  \\
&+ \Upsilon_{1,l} \sum_{\xi, \xi' \in \Lambda: \xi \neq \xi'} \bar{a}_{\xi} \bar{a}_{\xi'} \phi_{\xi} \phi_{\xi'} \varphi_{\xi} \varphi_{\xi'} (\xi \otimes \xi') \nonumber\\
&- \Upsilon_{1,l}^{-1} \Upsilon_{2,l}^{2} \sum_{\xi, \xi' \in \Lambda_{\Xi}: \xi \neq \xi'} \bar{a}_{\xi} \bar{a}_{\xi'} \phi_{\xi} \phi_{\xi'} \varphi_{\xi} \varphi_{\xi'} (\xi_{2} \otimes \xi_{2}') + \mathring{R}_{l}^{v}) + \partial_{t} w_{q+1}^{t}. \label{estimate 265} 
\end{align}
In comparison with \eqref{estimate 263}, this motivates us to define differently from \eqref{estimate 145}
\begin{equation}\label{estimate 274}
\mathring{G}^{\Xi} \triangleq \sum_{\xi \in \Lambda_{\Xi}} \bar{a}_{\xi}^{2} (\Upsilon_{1,l} \xi \otimes \xi - \Upsilon_{1,l}^{-1} \Upsilon_{2,l}^{2} \xi_{2} \otimes \xi_{2}) 
\end{equation} 
with $\bar{a}_{\xi}$ for $\xi \in \Lambda_{\Xi}$ still to be determined so that we obtain from \eqref{estimate 265}
\begin{align}
&\text{div} R_{\text{osc}}^{v} + \nabla p_{\text{osc}} \label{estimate 266} \\
=& \text{div} (\Upsilon_{1,l} \sum_{\xi \in \Lambda_{v}} \bar{a}_{\xi}^{2} \phi_{\xi}^{2} \varphi_{\xi}^{2} (\xi \otimes \xi) + \mathring{R}_{l}^{v} + \mathring{G}^{\Xi}) + \text{div} (\sum_{\xi \in \Lambda_{\Xi}} \bar{a}_{\xi}^{2} \mathbb{P}_{\neq 0} (\phi_{\xi}^{2} \varphi_{\xi}^{2}) (\Upsilon_{1,l} \xi \otimes \xi - \Upsilon_{1,l}^{-1} \Upsilon_{2,l}^{2} \xi_{2} \otimes \xi_{2})) \nonumber \\
&+ \text{div} ( \Upsilon_{1,l} \sum_{\xi,\xi' \in \Lambda: \xi \neq \xi'} \bar{a}_{\xi} \bar{a}_{\xi'} \phi_{\xi} \phi_{\xi'} \varphi_{\xi} \varphi_{\xi'} (\xi \otimes \xi')  \nonumber\\
& \hspace{20mm}  - \Upsilon_{1,l}^{-1} \Upsilon_{2,l}^{2} \sum_{\xi, \xi' \in \Lambda_{\Xi}: \xi \neq \xi'} \bar{a}_{\xi} \bar{a}_{\xi'} \phi_{\xi} \phi_{\xi'} \varphi_{\xi} \varphi_{\xi'} (\xi_{2} \otimes \xi_{2}')) + \partial_{t} w_{q+1}^{t}. \nonumber 
\end{align}
In comparison with \eqref{estimate 264} we now realize that if we define the modified velocity amplitude function as 
\begin{equation}\label{estimate 306}
\bar{a}_{\xi} (t,x) \triangleq \Upsilon_{1,l}^{-\frac{1}{2}}(t) a_{\xi}(t,x) \overset{\eqref{estimate 153} }{=} \Upsilon_{1,l}^{-\frac{1}{2}}(t)\rho_{v}^{\frac{1}{2}} (t,x) \gamma_{\xi} ( \text{Id} - \frac{ \mathring{R}_{l}^{v}(t,x) + \mathring{G}^{\Xi} (t,x)}{\rho_{v} (t,x)})  \hspace{2mm} \forall \hspace{1mm} \xi \in \Lambda_{v}
\end{equation}
for $\mathring{G}^{\Xi}$ in \eqref{estimate 274} and $\rho_{v}$ defined identically to \eqref{estimate 152} so that $\Upsilon_{1,l}\bar{a}_{\xi}^{2}= a_{\xi}^{2}$, then we can apply \eqref{estimate 216} to deduce from \eqref{estimate 266}
\begin{align}
&\text{div} R_{\text{osc}}^{v} + \nabla p_{\text{osc}} \label{estimate 267} \\
=& \nabla \rho_{v} + \text{div} ( \sum_{\xi \in \Lambda_{v}} a_{\xi}^{2} \mathbb{P}_{\neq 0} (\phi_{\xi}^{2} \varphi_{\xi}^{2}) (\xi \otimes \xi) ) + \text{div} (\sum_{\xi \in \Lambda_{\Xi}} \bar{a}_{\xi}^{2} \mathbb{P}_{\neq 0} (\phi_{\xi}^{2}\varphi_{\xi}^{2}) (\Upsilon_{1,l} \xi \otimes \xi - \Upsilon_{1,l}^{-1}\Upsilon_{2,l}^{2} \xi_{2} \otimes \xi_{2})) \nonumber\\
&+ \text{div} ( \Upsilon_{1,l} \sum_{\xi,\xi' \in \Lambda: \xi \neq \xi'} \bar{a}_{\xi} \bar{a}_{\xi'} \phi_{\xi} \phi_{\xi'} \varphi_{\xi} \varphi_{\xi'} (\xi \otimes \xi')  \nonumber\\
& \hspace{20mm}  - \Upsilon_{1,l}^{-1} \Upsilon_{2,l}^{2} \sum_{\xi, \xi' \in \Lambda_{\Xi}: \xi \neq \xi'} \bar{a}_{\xi} \bar{a}_{\xi'} \phi_{\xi} \phi_{\xi'} \varphi_{\xi} \varphi_{\xi'} (\xi_{2} \otimes \xi_{2}')) + \partial_{t} w_{q+1}^{t}. \nonumber 
\end{align}
In another comparison with \eqref{estimate 264} we now realize that it is beneficial to define the modified magnetic amplitude function as  
\begin{equation}\label{estimate 362}
\bar{a}_{\xi}(t,x) \triangleq \Upsilon_{1,l}^{-\frac{1}{2}}(t) a_{\xi} (t,x) \overset{\eqref{estimate 138}}{=} \Upsilon_{1,l}^{-\frac{1}{2}}(t) \rho_{\Xi}^{\frac{1}{2}}(t,x) \gamma_{\xi} (- \frac{\mathring{R}_{l}^{\Xi}(t,x) }{\rho_{\Xi}(t,x)} ) \hspace{2mm} \forall \hspace{1mm} \xi \in \Lambda_{\Xi}
\end{equation} 
for $\rho_{\Xi}$ defined identically to \eqref{estimate 133} so that $\bar{a}_{\xi}^{2} \Upsilon_{1,l} = a_{\xi}^{2}$ which will allow us to add together $\text{div}(\sum_{\xi \in \Lambda_{v}} a_{\xi}^{2} \mathbb{P}_{\neq 0} (\phi_{\xi}^{2} \varphi_{\xi}^{2}) (\xi \otimes \xi))$ with $\text{div} \sum_{\xi \in \Lambda_{\Xi}} \bar{a}_{\xi}^{2} \mathbb{P}_{\neq 0} (\phi_{\xi}^{2} \varphi_{\xi}^{2}) (\Upsilon_{1,l} \xi \otimes \xi)$ in \eqref{estimate 267} to deduce 
\begin{align}
&\text{div} R_{\text{osc}}^{v} + \nabla p_{\text{osc}} \nonumber\\
=& \nabla \rho_{v} + \text{div} (\sum_{\xi \in \Lambda} a_{\xi}^{2} \mathbb{P}_{\neq 0} (\phi_{\xi}^{2} \varphi_{\xi}^{2}) (\xi\otimes \xi)) - \text{div} (\sum_{\xi \in \Lambda_{\Xi}} a_{\xi}^{2} \mathbb{P}_{\neq 0} (\phi_{\xi}^{2} \varphi_{\xi}^{2}) \Upsilon_{1,l}^{-2} \Upsilon_{2,l}^{2} (\xi_{2}\otimes \xi_{2} )) \nonumber \\
&+ \text{div} (\sum_{\xi, \xi' \in \Lambda: \xi \neq \xi'} a_{\xi} a_{\xi'} \phi_{\xi} \phi_{\xi'} \varphi_{\xi} \varphi_{\xi'} (\xi\otimes \xi') \nonumber\\
& \hspace{10mm} - \Upsilon_{1,l}^{-2} \Upsilon_{2,l}^{2} \sum_{\xi, \xi' \in \Lambda_{\Xi}: \xi \neq \xi'} a_{\xi} a_{\xi'} \phi_{\xi} \phi_{\xi'} \varphi_{\xi} \varphi_{\xi'} (\xi_{2} \otimes \xi_{2} ')) + \partial_{t} w_{q+1}^{t}. \label{estimate 364}
\end{align}
Therefore, at this point we have found the necessary choices of $\mathring{G}^{\Xi}, \bar{a}_{\xi}$ for both $\xi \in \Lambda_{v}$ and $\Lambda_{\Xi}$, along with those of $\rho_{v}$ and $\rho_{\Xi}$, although they still do not succeed in reducing to the oscillation terms from the proof of Theorem \ref{Theorem 2.1} in contrast to previous works. Unfortunately, 
this choice will still cause a problem if we did not modify the definition of $m_{L}$ in \eqref{estimate 262}. Indeed, if we kept the same definition of $m_{L} = \sqrt{3} L^{\frac{1}{4}} e^{\frac{1}{2} L^{\frac{1}{4}}}$ from previous works, then we would have $\lVert \Upsilon_{k} \rVert_{C_{t}}, \lVert \Upsilon_{k}^{-1} \rVert_{C_{t}} \leq m_{L}^{2}$ instead of $m_{L}^{\frac{2}{5}}$ in \eqref{estimate 273}. In turn, this implies that ``$\Upsilon_{1,l}^{-1} \Upsilon_{2,l}^{2}$'' in \eqref{estimate 274} can be only bounded by $m_{L}^{6}$. Observing \eqref{estimate 169}, this implies that the estimate of $\lVert \bar{a}_{\xi} \rVert_{C_{t}L_{x}^{2}}$ for $\xi \in \Lambda_{v}$ will have an extra factor of at least $m_{L}^{3}$. Observing \eqref{estimate 173} this time, we see that this implies that the estimate of $\lVert \bar{a}_{\xi} \phi_{\xi} \varphi_{\xi} \rVert_{C_{t}L_{x}^{2}}$ for $\xi \in \Lambda_{v}$ will also have an extra factor of at least $m_{L}^{3}$; consequently, observing \eqref{estimate 178} we see that the estimate of $\lVert w_{q+1}^{p} \rVert_{C_{t}L_{x}^{2}}$ will have an extra factor of at least $m_{L}^{3}$. Therefore, through estimate that are analogous to \eqref{estimate 202} and \eqref{estimate 275}, we will not be able to verify the inductive bound \eqref{estimate 268} which only has a factor of $m_{L}$. This is just one example of many issues that will arise unless we revise the definition of $m_{L}$ from previous works to that in \eqref{estimate 262}.
\end{remark} 

\begin{proposition}\label{Proposition 5.6}
Let 
\begin{equation}\label{estimate 429}
v_{0}(t,x) \triangleq \frac{m_{L} e^{2L t + L}}{(2\pi)^{\frac{3}{2}}} 
\begin{pmatrix}
\sin(x^{3}) \\
0\\
0
\end{pmatrix}
\hspace{2mm} \text{ and } \hspace{2mm} 
\Xi_{0} (t,x) \triangleq 
\frac{ m_{L} e^{2L t + L}}{(2\pi)^{3}} 
\begin{pmatrix}
\sin(x^{3}) \\
\cos(x^{3}) \\
0
\end{pmatrix}.
\end{equation}
Then, together with 
\begin{equation}\label{estimate 430}
\mathring{R}_{0}^{v} (t,x) \triangleq \frac{ m_{L }( 2L + \frac{1}{2}) e^{2L t+ L}}{(2\pi)^{\frac{3}{2}}} 
\begin{pmatrix}
0 & 0 & - \cos(x^{3}) \\
0 & 0 & 0 \\
-\cos(x^{3}) & 0 & 0
\end{pmatrix}
+ \mathcal{R} (-\Delta)^{m_{1}} v_{0} (t,x) 
\end{equation}  
and 
\begin{equation}\label{estimate 433}
\mathring{R}_{0}^{\Xi} (t,x) \triangleq \frac{ m_{L} (2L + \frac{1}{2}) e^{2L t + L}}{(2\pi)^{3}} 
\begin{pmatrix}
0 & 0 & -\cos(x^{3}) \\
0 & 0 & \sin(x^{3}) \\
\cos(x^{3}) & -\sin(x^{3}) & 0 
\end{pmatrix}
+ \mathcal{R}^{\Xi} (-\Delta)^{m_{2}} \Xi_{0} (t,x), 
\end{equation} 
$(v_{0}, \Xi_{0})$ satisfy \eqref{estimate 277} and \eqref{estimate 272} at level $q= 0$ provided 
\begin{equation}\label{estimate 279}
\sqrt{3} (( 2\pi)^{3} + 1)^{2} < \sqrt{3} a^{2\beta b} \leq \frac{ \min \{c_{v}, c_{\Xi} \} e^{L - \frac{5}{2} L^{\frac{1}{4}}}}{L^{\frac{5}{4}} [ 8 (2L + \frac{1}{2} ) (2\pi)^{\frac{1}{2}} + 36 \pi^{\frac{3}{2}}]}, \hspace{3mm} L \leq \frac{ (2\pi)^{\frac{3}{2}} a^{4} -2}{2}, 
\end{equation} 
where the first inequality is assumed to justify \eqref{estimate 96}. Finally, $v_{0}(0,x), \Xi_{0} (0,x), \mathring{R}_{0}^{v} (0,x)$, and $\mathring{R}_{0}^{\Xi} (0,x)$ are all deterministic. 
\end{proposition}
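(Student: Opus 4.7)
The construction mirrors Proposition \ref{Proposition 4.7}, with three structural adaptations appropriate to the linear multiplicative setting: the prefactor $m_{L}$ replaces the $L^{2}$-scaling from the additive case, the operator $\partial_{t}+\tfrac{1}{2}$ (rather than $\partial_{t}$) now produces the coefficient $(2L+\tfrac{1}{2})$ in the Reynolds stresses, and no $z_{k}$-correction is required since \eqref{estimate 277} is already a random PDE. The crucial observation is that $v_{0}$ and $\Xi_{0}$ depend on $x$ only through $x^{3}$ and have trivial third component; as a consequence each of the three nonlinear tensors $v_{0}\otimes v_{0}$, $\Xi_{0}\otimes\Xi_{0}$, and $v_{0}\otimes\Xi_{0}-\Xi_{0}\otimes v_{0}$ is identically divergence-free. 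The $t$-only factors $\Upsilon_{1}$ and $\Upsilon_{1}^{-1}\Upsilon_{2}^{2}$ therefore drop out of \eqref{estimate 277} altogether, reducing the system to $\partial_{t}v_{0}+\tfrac{1}{2}v_{0}+(-\Delta)^{m_{1}}v_{0}=\mathrm{div}\,\mathring{R}_{0}^{v}$ and its analogue for $\Xi_{0}$, which I expect to solve with $p_{0}\equiv 0$.

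I would first verify the structural properties. Both $v_{0}$ and $\Xi_{0}$ are manifestly mean-zero and divergence-free, so $\mathcal{R}(-\Delta)^{m_{1}}v_{0}$ and $\mathcal{R}^{\Xi}(-\Delta)^{m_{2}}\Xi_{0}$ are well-defined, and Lemma \ref{divergence inverse operator} delivers the trace-free symmetry of $\mathring{R}_{0}^{v}$ and the skew-symmetry of $\mathring{R}_{0}^{\Xi}$ once one checks by inspection that the explicit constant-coefficient matrices share those properties. A direct computation shows that the divergence of the first matrix in $\mathring{R}_{0}^{v}$ equals $\frac{m_{L}(2L+1/2)e^{2Lt+L}}{(2\pi)^{3/2}}(\sin x^{3},0,0)=(\partial_{t}+\tfrac{1}{2})v_{0}$, and similarly the divergence of the first matrix in $\mathring{R}_{0}^{\Xi}$ matches $(\partial_{t}+\tfrac{1}{2})\Xi_{0}$; combined with the identity $\mathrm{div}\,\mathcal{R}(-\Delta)^{m_{k}}=(-\Delta)^{m_{k}}$ on mean-zero divergence-free fields from Lemma \ref{divergence inverse operator}, the reduced equations are satisfied.

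The inductive bounds \eqref{estimate 272} at $q=0$ follow from routine integrations: $\|v_{0}(t)\|_{L_{x}^{2}}=m_{L}M_{0}(t)^{1/2}/\sqrt{2}$ and $\|\Xi_{0}(t)\|_{L_{x}^{2}}=m_{L}M_{0}(t)^{1/2}/(2\pi)^{3/2}$ give \eqref{estimate 268}-\eqref{estimate 269}; one time or space derivative contributes a factor at most $\max\{2L,\sqrt{2}\}$, and the bound $\|v_{0}\|_{C_{t,x}^{1}}\leq m_{L}M_{0}(t)^{1/2}\lambda_{0}^{4}$ reduces to $L\leq((2\pi)^{3/2}a^{4}-2)/2$, which is precisely the second inequality of \eqref{estimate 279}. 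The only quantitative step is \eqref{estimate 271}: using $\|\cos(x^{3})\|_{L_{x}^{1}}=16\pi^{2}$, the explicit matrix in $\mathring{R}_{0}^{v}$ has $C_{t}L_{x}^{1}$-norm at most $8(2\pi)^{1/2}(2L+\tfrac{1}{2})m_{L}e^{2Lt+L}$, and the identity $-\Delta v_{0}=v_{0}$ combined with the bound $\|\mathcal{R}(-\Delta)^{m_{1}}v_{0}\|_{L_{x}^{1}}\lesssim\|(-\Delta)^{m_{1}-1/2}v_{0}\|_{L_{x}^{2}}$ from Lemma \ref{divergence inverse operator} yields $\|\mathcal{R}(-\Delta)^{m_{1}}v_{0}\|_{L_{x}^{1}}\leq 36\pi^{3/2}m_{L}e^{2Lt+L}$. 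Demanding the sum to be at most $c_{v}M_{0}(t)\delta_{1}=c_{v}e^{4Lt+2L}a^{-2\beta b}$ at $t=0$ and substituting $m_{L}=\sqrt{3}L^{5/4}e^{5L^{1/4}/2}$ reproduces exactly the first inequality of \eqref{estimate 279}, with an analogous calculation for $\mathring{R}_{0}^{\Xi}$ producing the $\min\{c_{v},c_{\Xi}\}$. Finally, since $B_{k}(0)=0$ gives $\Upsilon_{k}(0)=1$ and the formulas defining $v_{0},\Xi_{0},\mathring{R}_{0}^{v},\mathring{R}_{0}^{\Xi}$ contain no noise at all (the $\Upsilon_{k}$ having dropped out), all four quantities are deterministic at $t=0$. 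There is no genuine obstacle here — only the delicate constant-tracking in the $L_{x}^{1}$-estimate of $\mathring{R}_{0}^{v}$ and $\mathring{R}_{0}^{\Xi}$, which is precisely what the smallness hypothesis \eqref{estimate 279} has been calibrated to absorb.
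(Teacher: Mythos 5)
Your proposal is correct and follows essentially the same route as the paper's proof: verify the symmetry/skew-symmetry via Lemma \ref{divergence inverse operator}, check \eqref{estimate 277} with $p_{0}\equiv 0$ (the paper simply states this is "readily verified," while you helpfully spell out that the quadratic tensors are divergence-free so the $t$-only factors $\Upsilon_{1},\Upsilon_{1}^{-1}\Upsilon_{2}^{2}$ drop out), then compute the $L_{x}^{2}$, $C_{t,x}^{1}$, and $L_{x}^{1}$ bounds exactly as in \eqref{estimate 278}--\eqref{estimate 432}, with the two inequalities of \eqref{estimate 279} absorbing the constants. The constants you track ($8(2\pi)^{1/2}(2L+\tfrac12)$, $36\pi^{3/2}$, the reduction of the $C_{t,x}^{1}$ bound to $L\le((2\pi)^{3/2}a^{4}-2)/2$) match the paper's.
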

 
\begin{proof}[Proof of Proposition \ref{Proposition 5.6} ]
First, $v_{0}$ and $\Xi_{0}$ are both mean-zero and divergence-free; thus, $\mathcal{R} (-\Delta)^{m_{1}} v_{0}$ and $\mathcal{R}^{\Xi} (-\Delta)^{m_{2}} \Xi_{0}$ are well-defined. Moreover, as $\mathcal{R} (-\Delta)^{m_{1}} v_{0}$ is trace-free and symmetric due to Lemma \ref{divergence inverse operator}, we see that $\mathring{R}_{0}^{v}$ is trace-free and symmetric. Similarly, as $\mathcal{R}^{\Xi} (-\Delta)^{m_{2}} \Xi_{0}$ is skew-symmetric, we see that $\mathring{R}_{0}^{\Xi}$ is skew-symmetric. Moreover, it can be readily verified that \eqref{estimate 277} holds with $p_{0} \equiv 0$. Next, let us compute from \eqref{estimate 429}
\begin{equation}\label{estimate 278}
\lVert v_{0} (t) \rVert_{L_{x}^{2}} = \frac{ m_{L} M_{0}(t)^{\frac{1}{2}}}{\sqrt{2}} \leq m_{L} M_{0}(t)^{\frac{1}{2}} \hspace{2mm} \text{ and } \hspace{2mm} \lVert \Xi_{0} (t) \rVert_{L_{x}^{2}}  = \frac{m_{L} M_{0}(t)^{\frac{1}{2}}}{(2\pi)^{\frac{3}{2}}} \leq m_{L} M_{0}(t)^{\frac{1}{2}},
\end{equation} 
which verifies \eqref{estimate 268}-\eqref{estimate 269} at level $q= 0$. We also compute directly from \eqref{estimate 429}  
\begin{subequations}
\begin{align}
&\lVert v_{0} \rVert_{C_{t,x}^{1}}  = \frac{ m_{L} M_{0}(t)^{\frac{1}{2}} (2L + 2)}{(2\pi)^{\frac{3}{2}}} \overset{\eqref{estimate 279}}{\leq} m_{L} M_{0}(t)^{\frac{1}{2}} \lambda_{0}^{4}, \\
&\lVert \Xi_{0} \rVert_{C_{t,x}^{1}} = \frac{ (1+ 2 L + \sqrt{2}) m_{L} M_{0}(t)^{\frac{1}{2}}}{(2\pi)^{3}} \overset{\eqref{estimate 279}}{\leq} m_{L} M_{0}(t)^{\frac{1}{2}} \lambda_{0}^{4}, 
\end{align}
\end{subequations} 
which verifies \eqref{estimate 270} at level $q =0$. Next, we can compute directly from \eqref{estimate 430}  
\begin{equation}\label{estimate 431}
\lVert \mathring{R}_{0}^{v}(t) \rVert_{L_{x}^{1}} \leq 8 m_{L} (2L + \frac{1}{2}) e^{2L t + L} (2\pi)^{\frac{1}{2}} + \lVert \mathcal{R} (-\Delta)^{m_{1}} v_{0}(t) \rVert_{L_{x}^{1}}.
\end{equation} 
We can use the fact that $\Delta v_{0} = - v_{0}$ due to \eqref{estimate 429} leads to $\lVert \mathcal{R} (-\Delta)^{m_{1}} v_{0}(t) \rVert_{L_{x}^{1}} \leq 36 \sqrt{2} \pi^{\frac{3}{2}} \lVert v_{0}(t) \rVert_{L_{x}^{2}}$ from \eqref{estimate 110} to compute 
\begin{equation}
\lVert \mathring{R}_{0}^{v} (t) \rVert_{L_{x}^{1}} 
\overset{\eqref{estimate 431}}{\leq} 8 m_{L} (2L + \frac{1}{2}) e^{2L t + L} (2\pi)^{\frac{1}{2}} + 36\sqrt{2} \pi^{\frac{3}{2}} \lVert v_{0} (t) \rVert_{L_{x}^{2}} 
\overset{\eqref{estimate 278} \eqref{estimate 279}}{\leq} c_{v} M_{0}(t) \delta_{1}. 
\end{equation} 
Similarly, we can estimate 
\begin{equation}\label{estimate 432}
\lVert \mathring{R}_{0}^{\Xi} (t) \rVert_{L_{x}^{1}} \leq \frac{ 8 m_{L} (2L + \frac{1}{2}) M_{0}(t)^{\frac{1}{2}}}{\pi} + \lVert \mathcal{R}^{\Xi} (-\Delta)^{m_{2}} \Xi_{0}(t) \rVert_{L_{x}^{1}}. 
\end{equation} 
We use the fact that $\Delta \Xi_{0} = -\Xi_{0}$ due to \eqref{estimate 429} leads to $\lVert \mathcal{R}^{\Xi} (-\Delta)^{m_{2}} \Xi_{0}(t) \rVert_{L_{x}^{1}} \leq 6 (2\pi)^{\frac{3}{2}} \lVert \Xi_{0}(t) \rVert_{L_{x}^{2}}$ by \eqref{estimate 114} so that
\begin{equation}
\lVert \mathcal{R}_{0}^{\Xi} (t) \rVert_{L_{x}^{1}} 
\overset{\eqref{estimate 432} }{\leq} \frac{ 8 m_{L} (2L + \frac{1}{2}) M_{0}(t)^{\frac{1}{2}}}{\pi} + 6 (2\pi)^{\frac{3}{2}} \lVert \Xi_{0}(t) \rVert_{L_{x}^{2}} \overset{\eqref{estimate 278} \eqref{estimate 279}}{\leq}  c_{\Xi} M_{0}(t) \delta_{1}. 
\end{equation}  
Finally, it is clear that $v_{0}(0,x)$ and $\Xi_{0}(0,x)$ are both deterministic; it follows that $\mathring{R}_{0}^{v}(0,x)$ and $\mathring{R}_{0}^{\Xi} (0,x)$ are also both deterministic. 
\end{proof} 
 
\begin{proposition}\label{Proposition 5.7}
Let $L$ satisfy 
\begin{equation}\label{estimate 280}
\sqrt{3} ((2\pi)^{3} + 1)^{2} < \frac{ \min \{c_{v}, c_{\Xi} \}  e^{L  - \frac{5}{2} L^{\frac{1}{4}}}}{ L^{\frac{5}{4}} [8 (2L + \frac{1}{2}) (2\pi)^{\frac{1}{2}} + 36 \pi^{\frac{3}{2}}]} 
\end{equation} 
and suppose that $(v_{q}, \Xi_{q}, \mathring{R}_{q}^{v}, \mathring{R}_{q}^{\Xi})$ are $(\mathcal{F}_{t})_{t\geq 0}$-adapted processes that solve \eqref{estimate 277} and satisfy \eqref{estimate 272}. Then there exist a choice of parameters $a, b,$ and $\beta$ such that \eqref{estimate 279} is fulfilled and $(\mathcal{F}_{t})_{t\geq 0}$-adapted processes $(v_{q+1}, \Xi_{q+1}, \mathring{R}_{q+1}^{v}, \mathring{R}_{q+1}^{\Xi})$ that solve \eqref{estimate 277} and satisfy \eqref{estimate 272} at level $q+1$, and for all $t \in [0, T_{L}]$, 
\begin{equation}\label{estimate 285}
\lVert v_{q+1}(t) -v_{q}(t) \rVert_{L_{x}^{2}} \leq m_{L} M_{0}(t)^{\frac{1}{2}} \delta_{q+1}^{\frac{1}{2}} \hspace{1mm} \text{ and } \hspace{1mm}  \lVert \Xi_{q+1} (t) - \Xi_{q}(t) \rVert_{L_{x}^{2}} \leq m_{L} M_{0}(t)^{\frac{1}{2}} \delta_{q+1}^{\frac{1}{2}}. 
\end{equation} 
Finally, if $(v_{q}, \Xi_{q}, \mathring{R}_{q}^{v}, \mathring{R}_{q}^{\Xi}) (0,x)$ are deterministic, then so are $(v_{q+1}, \Xi_{q+1}, \mathring{R}_{q+1}^{v},  \mathring{R}_{q+1}^{\Xi}) (0,x)$. 
\end{proposition}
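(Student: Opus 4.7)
The plan is to follow the same four-step architecture as in the proof of Proposition \ref{Proposition 4.8}: choose parameters $\eta, \alpha, \sigma, r, \mu, b, l$ exactly as in \eqref{estimate 128}--\eqref{l}, mollify $(v_q, \Xi_q, \mathring{R}_q^v, \mathring{R}_q^\Xi)$ in space and time to obtain $(v_l, \Xi_l, \mathring{R}_l^v, \mathring{R}_l^\Xi)$ and a commutator error $(R_{\text{com1}}^v, R_{\text{com1}}^\Xi)$ analogous to \eqref{estimate 235}, define a perturbation $(w_{q+1}, d_{q+1})$, set $v_{q+1} \triangleq v_l + w_{q+1}$ and $\Xi_{q+1} \triangleq \Xi_l + d_{q+1}$, and finally assemble the new Reynolds stresses $\mathring{R}_{q+1}^v, \mathring{R}_{q+1}^\Xi$ by collecting all error terms and estimating each piece (linear, corrector, oscillation, two kinds of commutator) in $C_t L_x^1$. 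The single new ingredient, dictated by Remark \ref{Remark 5.1}, is that the amplitude functions must be modified to absorb the factor $\Upsilon_{1,l}$. Specifically, I keep $\rho_\Xi, \rho_v$ as in \eqref{estimate 133}, \eqref{estimate 152} but redefine
\begin{equation*}
\bar a_\xi \triangleq \Upsilon_{1,l}^{-1/2} \rho_\Xi^{1/2} \gamma_\xi(- \mathring{R}_l^\Xi / \rho_\Xi), \quad \xi \in \Lambda_\Xi, \qquad \bar a_\xi \triangleq \Upsilon_{1,l}^{-1/2} \rho_v^{1/2} \gamma_\xi(\text{Id} - (\mathring{R}_l^v + \mathring{G}^\Xi)/\rho_v),\quad \xi \in \Lambda_v,
\end{equation*}
with the new $\mathring{G}^\Xi \triangleq \sum_{\xi \in \Lambda_\Xi} \bar a_\xi^2 (\Upsilon_{1,l} \xi \otimes \xi - \Upsilon_{1,l}^{-1}\Upsilon_{2,l}^2 \xi_2 \otimes \xi_2)$ from \eqref{estimate 274}, and form $w_{q+1}^p, d_{q+1}^p, w_{q+1}^c, d_{q+1}^c, w_{q+1}^t, d_{q+1}^t$ precisely as in \eqref{estimate 166}, \eqref{estimate 330}, \eqref{estimate 334} but with $\bar a_\xi$ in place of $a_\xi$.

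The first step is to propagate the pointwise bounds \eqref{estimate 273} on $\Upsilon_{k,l}$ and $\Upsilon_{k,l}^{-1}$ through every amplitude estimate: in all of \eqref{estimate 146}, \eqref{estimate 169}, \eqref{estimate 150}--\eqref{estimate 413} one incurs at most a multiplicative factor of some fixed small power of $m_L$ from the bounds on $\Upsilon_{k,l}^{\pm 1}$ (this is precisely the reason the exponent $\frac{5}{4}$ is built into $m_L$ in \eqref{estimate 262}: by \eqref{estimate 273} one has $\lVert \Upsilon_{k,l}^{\pm 1}\rVert \leq m_L^{2/5}$, so products of a bounded number of such factors are absorbed into $m_L$). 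With this bookkeeping, the estimates \eqref{estimate 177}--\eqref{estimate 198} go through with every occurrence of $\delta_{q+1}^{1/2} M_0(t)^{1/2}$ replaced by $m_L \delta_{q+1}^{1/2} M_0(t)^{1/2}$, which is exactly the right scaling to match the inductive hypothesis \eqref{estimate 272} and to prove \eqref{estimate 285} as in the computation of \eqref{estimate 425}.

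Next I decompose $\mathring{R}_{q+1}^v - \nabla \pi_{q+1}$ and $\mathring{R}_{q+1}^\Xi$ following \eqref{estimate 208}--\eqref{estimate 209} but accounting for the extra weights $\Upsilon_{1,l}, \Upsilon_{1,l}^{-1}\Upsilon_{2,l}^2$ coming from \eqref{estimate 22}--\eqref{estimate 23}. The linear, corrector, and two commutator pieces are treated exactly as in \eqref{estimate 249}, \eqref{estimate 250}, \eqref{estimate 252}, \eqref{estimate 253} with only the $m_L^{O(1)}$ multiplicative bookkeeping noted above; since the estimates in Section \ref{Section 4} give factors $\ll M_0(t)\delta_{q+2}$ with room to spare ($\lambda_{q+1}^{-c\alpha}$ or similar), these absorb the $m_L$ factors by taking $a$ large enough (note $L$, hence $m_L$, is already fixed before $a$). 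The main obstacle, as foreseen in Remark \ref{Remark 5.1}, is the oscillation term: since $\bar a_\xi^2 \Upsilon_{1,l} = a_\xi^2$ for both $\xi \in \Lambda_v$ and $\xi \in \Lambda_\Xi$, the computations \eqref{estimate 265}--\eqref{estimate 364} show that the magnetic oscillation term reduces exactly to the additive-case expression \eqref{extra}, while the velocity oscillation term reduces to the additive-case expression \eqref{estimate 264} plus a genuinely new term $-\text{div}(\sum_{\xi \in \Lambda_\Xi} a_\xi^2 \mathbb{P}_{\neq 0}(\phi_\xi^2 \varphi_\xi^2)\Upsilon_{1,l}^{-2}\Upsilon_{2,l}^2 (\xi_2 \otimes \xi_2))$ and an analogous cross term in the off-diagonal sum. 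These new terms are handled by the same inverse-divergence-plus-Lemma~\ref{Lemma 6.3} argument as in \eqref{estimate 244}: the gain of a factor $(\lambda_{q+1}\sigma)^{-1}$ from the high-frequency projection dominates the $C_t^1$ cost of $\Upsilon_{1,l}^{-2}\Upsilon_{2,l}^2 \nabla a_\xi^2$, which is controlled by $m_L^{O(1)} l^{-O(1)}$ thanks to the mollification in time; thus one recovers $\lVert R_{\text{osc}}^v\rVert_{C_t L_x^1} \ll M_0(t)\delta_{q+2}$ as in \eqref{estimate 246}.

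Finally, verification of $(\mathcal{F}_t)_{t \geq 0}$-adaptedness and of preservation of deterministic initial data is identical to the last paragraph of the proof of Proposition \ref{Proposition 4.8}: $\Upsilon_{k,l}$ is $(\mathcal{F}_t)$-adapted because $\vartheta_l$ has support in $\mathbb{R}_+$, and composing with deterministic smooth functions $\chi, \gamma_\xi$ and deterministic oscillatory profiles $\phi_\xi, \varphi_\xi, \Psi_\xi$ preserves adaptedness; at $t = 0$, $B_k(0) = 0$ so $\Upsilon_k(0) = 1$, and all quantities are deterministic by the inductive hypothesis on $(v_q, \Xi_q, \mathring{R}_q^v, \mathring{R}_q^\Xi)(0,x)$.
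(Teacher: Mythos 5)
Your overall architecture coincides with the paper's (same modified amplitudes $\bar a_\xi=\Upsilon_{1,l}^{-1/2}a_\xi$ as in \eqref{estimate 362} and \eqref{estimate 306}, same new $\mathring{G}^{\Xi}$ from \eqref{estimate 274}, same $m_L$-bookkeeping and the same reduction of the oscillation terms), but there is one concrete definitional error that would make the argument fail as written: you build the temporal correctors $w_{q+1}^{t}, d_{q+1}^{t}$ "with $\bar a_\xi$ in place of $a_\xi$." In the paper they are kept exactly as in \eqref{estimate 334}, i.e.\ with $a_\xi^{2}=\Upsilon_{1,l}\bar a_\xi^{2}$ (only the principal parts and the curl-correctors use $\bar a_\xi$). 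The reason is the cancellation that the temporal corrector exists for: after combining the diagonal part of $\Upsilon_{1,l}\,w_{q+1}^{p}\otimes d_{q+1}^{p}$ with $\mathring{R}_l^{\Xi}$, the divergence produces the large term $-\xi_{2}\,a_{\xi}^{2}\,\mathbb{P}_{\geq \lambda_{q+1}\sigma/2}(\mu^{-1}\partial_t\phi_{\xi}\,2\phi_{\xi}\varphi_{\xi}^{2})$ (via $\xi\cdot\nabla\phi_\xi=\mu^{-1}\partial_t\phi_\xi$ from \eqref{estimate 57}), whose coefficient is $a_\xi^{2}$, not $\bar a_\xi^{2}$. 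Only a temporal corrector carrying $a_\xi^{2}$ cancels it, leaving the harmless $\mu^{-1}\partial_t a_\xi^{2}$ term as in \eqref{extra}. With your choice the mismatch $\mu^{-1}\bar a_\xi^{2}(\Upsilon_{1,l}-1)\,\partial_t(\phi_\xi^{2}\varphi_\xi^{2})$ survives; even after the inverse divergence and the $(\lambda_{q+1}\sigma)^{-1}$ gain of Lemma \ref{Lemma 6.3} it is of order $r^{\frac{1}{p^{\ast}}-2}\sigma^{\frac{1}{p^{\ast}}-1}\approx\lambda_{q+1}^{1-5\eta-60\alpha}$ times $\delta_{q+1}M_0(t)l^{-O(1)}$, a positive power of $\lambda_{q+1}$, so $\lVert R^{\Xi}_{\text{osc}}\rVert_{C_tL_x^1}\ll M_0(t)\delta_{q+2}$ is unreachable. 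The same issue arises for $w_{q+1}^{t}$ in the velocity oscillation term. Note also that your subsequent appeal to "the reduction to \eqref{extra} and \eqref{estimate 364}" silently presupposes the correct ($a_\xi^{2}$) choice, so the proposal is internally inconsistent precisely at the point Remark \ref{Remark 5.1} is about.

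A secondary caution: in the Reynolds-stress estimates extra factors $m_L^{O(1)}$ can indeed be absorbed by enlarging $a$ (e.g.\ $m_L^{8/5}\lesssim l^{-1}$), but in the perturbation estimates \eqref{estimate 268}--\eqref{estimate 269} and \eqref{estimate 285} the margin is exactly one factor of $m_L$ and cannot be bought back with $a$; one must track the powers so that the total does not exceed $1$. The paper does this through the precise chain $\lVert\Upsilon_{k,l}^{\pm1}\rVert_{C_t}\leq m_L^{2/5}$, $\lVert\mathring G^{\Xi}\rVert_{C_tL_x^1}\leq 6m_L^{8/5}c_v\delta_{q+1}M_0(t)$, and then $\lVert\bar a_\xi\rVert_{C_tL_x^2}\leq m_L\delta_{q+1}^{1/2}M_0(t)^{1/2}/(3C_{\ast}(8\pi^3)^{1/2}\lvert\Lambda_v\rvert)$, which is exactly borderline; your phrase "a fixed small power of $m_L$ absorbed into $m_L$" needs to be replaced by this explicit accounting.
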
  

We are now ready to prove Theorem \ref{Theorem 2.3} assuming Proposition \ref{Proposition 5.7}. 
\begin{proof}[Proof of Theorem \ref{Theorem 2.3}]
Let us fix $T > 0, K > 1$, and $\kappa \in (0,1)$, and then take $L = L(T, K, c_{v}, c_{\Xi}) > 0$ that satisfies \eqref{estimate 280} and enlarge it if necessary to satisfy 
\begin{equation}\label{estimate 281}
\left( \frac{1}{(2\pi)^{\frac{3}{2}}} - \frac{1}{(2\pi)^{3}} \right) e^{2L T} > e^{2L^{\frac{1}{2}}} \left( \frac{2}{(2\pi)^{3}} + \frac{1}{\sqrt{2}} + \frac{1}{(2\pi)^{\frac{3}{2}}} \right) \hspace{1mm} \text{ and } \hspace{1mm} L > [\ln (Ke^{\frac{T}{2}} )]^{2}.
\end{equation} 
Now we can start from $(v_{0}, \Xi_{0}, \mathring{R}_{0}^{v}, \mathring{R}_{0}^{\Xi})$ in Proposition \ref{Proposition 5.6}, and via Proposition \ref{Proposition 5.7} inductively obtain a family $(v_{q}, \Xi_{q}, \mathring{R}_{q}^{v}, \mathring{R}_{q}^{\Xi})$ for all $q \in \mathbb{N}$ that satisfy \eqref{estimate 277},  \eqref{estimate 272}, and \eqref{estimate 285}. Similarly to \eqref{estimate 118}, we can assume that $b\geq 2$ and show for any $\gamma \in (0, \frac{\beta}{4+ \beta})$ and any $t \in [0, T_{L}]$ that via Gagliardo-Nirenberg's inequality, \eqref{estimate 270}, and \eqref{estimate 285} 
\begin{equation}
 \sum_{q\geq 0} \lVert v_{q+1}(t) - v_{q}(t) \rVert_{\dot{H}_{x}^{\gamma}} + \lVert \Xi_{q+1}(t) - \Xi_{q}(t) \rVert_{\dot{H}_{x}^{\gamma}}  \lesssim m_{L} M_{0}(t)^{\frac{1}{2}} \sum_{q\geq 0} \lambda_{q+1}^{-\beta (1-\gamma) + 4 \gamma} \lesssim m_{L} M_{0}(L)^{\frac{1}{2}}.
\end{equation} 
Therefore, $\{v_{q}\}_{q=0}$ and $\{\Xi_{q}\}_{q=0}$ are both Cauchy in $C([0,T_{L}]; \dot{H}^{\gamma} (\mathbb{T}^{3}))$ and thus we deduce the limiting processes $\lim_{q\to\infty} v_{q} \triangleq v$ and $\lim_{q\to\infty} \Xi_{q} \triangleq \Xi$ both in $C([0, T_{L}]; \dot{H}^{\gamma} (\mathbb{T}^{3}))$ for which there exists a deterministic constant $C_{L} > 0$ such that 
\begin{equation}\label{estimate 286}
\lVert v \rVert_{C([0, T_{L} ]; \dot{H}_{x}^{\gamma})} + \lVert \Xi \rVert_{C([0, T_{L} ]; \dot{H}_{x}^{\gamma} )} \leq C_{L}. 
\end{equation} 
Because $(v_{q}, \Xi_{q})$ are $(\mathcal{F}_{t})_{t\geq 0}$-adapted due to Propositions \ref{Proposition 5.6} and \ref{Proposition 5.7}, we see that $(v, \Xi)$ are $(\mathcal{F}_{t})_{t\geq 0}$-adapted. Moreover, because for all $t \in [0, T_{L}]$, due to \eqref{estimate 271} $\lVert \mathring{R}_{q}^{v} \rVert_{C_{t}L_{x}^{1}} + \lVert \mathring{R}_{q}^{\Xi} \rVert_{C_{t}L_{x}^{1}} \to 0$ as $q\to\infty$, $(v, \Xi)$ is a weak solution to  \eqref{estimate 22}-\eqref{estimate 23} on $[0, T_{L}]$ so that $(u,b) = (\Upsilon_{1} v, \Upsilon_{2} \Xi)$ solves \eqref{stochastic GMHD}. Because $\lim_{L\to\infty} T_{L} = + \infty$ $\textbf{P}$-a.s., for the fixed $T> 0$ and $\kappa > 0$, increasing $L$ larger if necessary allows us to obtain \eqref{estimate 19}. Next, as $v$ and $\Xi$ are $(\mathcal{F}_{t})_{t\geq 0}$-adapted, we see that $(u,b)$ are $(\mathcal{F}_{t})_{t\geq 0}$-adapted. Moreover, due to \eqref{estimate 286} and $\lVert \Upsilon_{k} \rVert_{C_{t}} \leq e^{L^{\frac{1}{4}}}$ from \eqref{estimate 273} we obtain \eqref{estimate 20}. Furthermore, we compute for all $t \in [0, T_{L}]$, 
\begin{subequations}\label{estimate 287}
\begin{align}
\lVert v(t) - v_{0}(t) \rVert_{L_{x}^{2}}\overset{\eqref{estimate 285}}{\leq} m_{L} M_{0}(t)^{\frac{1}{2}} \sum_{q\geq 0} \delta_{q+1}^{\frac{1}{2}} \overset{\eqref{estimate 279}}{<} m_{L} M_{0}(t)^{\frac{1}{2}} \frac{1}{(2\pi)^{3}},  \\
\lVert \Xi(t) - \Xi_{0}(t) \rVert_{L_{x}^{2}}\overset{\eqref{estimate 285}}{\leq} m_{L} M_{0}(t)^{\frac{1}{2}} \sum_{q\geq 0} \delta_{q+1}^{\frac{1}{2}} \overset{\eqref{estimate 279}}{<} m_{L} M_{0}(t)^{\frac{1}{2}} \frac{1}{(2\pi)^{3}}. 
\end{align} 
\end{subequations} 
Then it follows that 
\begin{subequations}
\begin{align}
&\lVert v(0) \rVert_{L_{x}^{2}} \leq \lVert v(0) - v_{0} (0) \rVert_{L_{x}^{2}} + \lVert v_{0} (0) \rVert_{L_{x}^{2}} 
\overset{\eqref{estimate 278} \eqref{estimate 287}}{\leq} m_{L} M_{0}(0)^{\frac{1}{2}} (\frac{1}{(2\pi)^{3}} + \frac{1}{\sqrt{2}}), \label{estimate 288} \\
&\lVert \Xi(0) \rVert_{L_{x}^{2}} \leq \lVert \Xi(0) - \Xi_{0} (0) \rVert_{L_{x}^{2}} + \lVert \Xi_{0} (0) \rVert_{L_{x}^{2}} 
\overset{\eqref{estimate 278} \eqref{estimate 287}}{\leq} m_{L} M_{0}(0)^{\frac{1}{2}} (\frac{1}{(2\pi)^{3}} + \frac{1}{(2\pi)^{\frac{3}{2}}}). \label{estimate 289}
\end{align}
\end{subequations}
On the other hand, on a set $\{T_{L} \geq T \}$, 
\begin{align}
\lVert \Xi(T) \rVert_{L_{x}^{2}}   \geq& \lVert \Xi_{0}(T) \rVert_{L_{x}^{2}} - \lVert \Xi(T) - \Xi_{0}(T) \rVert_{L_{x}^{2}} \overset{\eqref{estimate 278}\eqref{estimate 287}}{\geq} \frac{ m_{L} M_{0}(T)^{\frac{1}{2}}}{(2\pi)^{\frac{3}{2}}} - \frac{ m_{L} M_{0}(T)^{\frac{1}{2}}}{(2\pi)^{3}}\nonumber\\
& \hspace{30mm} \overset{\eqref{estimate 281}\eqref{estimate 288} \eqref{estimate 289}}{>} e^{2L^{\frac{1}{2}}} (\lVert v(0) \rVert_{L_{x}^{2}} +  \lVert \Xi(0) \rVert_{L_{x}^{2}}). \label{estimate 290}
\end{align} 
Therefore, on $\{T_{L} \geq T \}$, we obtain \eqref{estimate 26} as follows: 
\begin{align}
\lVert b(T) \rVert_{L_{x}^{2}} \overset{\eqref{estimate 258}}{\geq}&  e^{-L^{\frac{1}{4}}} \lVert \Xi(T) \rVert_{L_{x}^{2}} \nonumber \\
\overset{\eqref{estimate 290}}{\geq}&  e^{L^{\frac{1}{2}}} (\lVert v(0) \rVert_{L_{x}^{2}} +  \lVert \Xi(0) \rVert_{L_{x}^{2}}) 
\overset{\eqref{estimate 281}}{>}  K e^{\frac{T}{2}} ( \lVert u^{\text{in}} \rVert_{L_{x}^{2}} + \lVert b^{\text{in}} \rVert_{L_{x}^{2}}). 
\end{align}
\end{proof}

\subsection{Proof of Proposition \ref{Proposition 5.7}}
\subsubsection{Choice of parameters}  
We fix $L$ sufficiently large so that \eqref{estimate 280} holds. We take the same choices of $m_{1}^{\ast}, m_{2}^{\ast}, \eta, \alpha, \sigma, r, \mu$, and $b$ in \eqref{estimate 128}-\eqref{b} so that all the conditions of \eqref{estimate 50}, \eqref{estimate 53}, and \eqref{estimate 52} are satisfied; in fact, we choose $b$ and $a$ so that $\lambda_{q+1} \sigma \in 2 \mathbb{N}$ again. We choose the same $l$ in \eqref{l} so that estimates in \eqref{estimate 130} hold. By taking $a \in 2\mathbb{N}$ sufficiently large, the second inequality in \eqref{estimate 279} is satisfied. On the other hand, by taking $\beta >0$ sufficiently small, the first inequality of \eqref{estimate 279} is also satisfied because we chose $L$ to satisfy \eqref{estimate 280}. Thus, we consider these parameters fixed, preserving our freedom to take $a \in 2 \mathbb{N}$ as large and $\beta > 0$ as small as we wish, at minimum guaranteeing again \eqref{estimate 129}.

\subsubsection{Mollification}  
We mollify $v_{q}, \Xi_{q}, \mathring{R}_{q}^{v}$, and $\mathring{R}_{q}^{\Xi}$ identically as we did in \eqref{estimate 291} and $\Upsilon_{k}$ as we did in \eqref{estimate 292}. We note that similarly to \eqref{estimate 273}, we have estimates of 
\begin{equation}\label{estimate 296}
\lVert \Upsilon_{k,l} \rVert_{C_{t}}, \lVert \Upsilon_{k,l}^{-1} \rVert_{C_{t}} \overset{\eqref{estimate 258}}{\leq} m_{L}^{\frac{2}{5}}. 
\end{equation} 
Then, the mollified system of  \eqref{estimate 277} is of the form 
\begin{subequations}\label{estimate 358}
\begin{align}
&\partial_{t} v_{l} + \frac{1}{2} v_{l} + (-\Delta)^{m_{1}} v_{l} \nonumber\\
& \hspace{20mm} + \text{div} (\Upsilon_{1,l} (v_{l} \otimes v_{l}) - \Upsilon_{1,l}^{-1} \Upsilon_{2,l}^{2} (\Xi_{l} \otimes \Xi_{l}) ) + \nabla p_{l} = \text{div} (\mathring{R}_{l}^{v}  + R_{\text{com1}}^{v}), \\
& \partial_{t} \Xi_{l} + \frac{1}{2} \Xi_{l} + (-\Delta)^{m_{2}} \Xi_{l} + \Upsilon_{1,l} \text{div} (v_{l} \otimes \Xi_{l} - \Xi_{l} \otimes v_{l}) = \text{div}( \mathring{R}_{l}^{\Xi}  + R_{\text{com1}}^{\Xi}), 
\end{align}
\end{subequations}
if we define 
\begin{subequations}\label{estimate 384}
\begin{align}
p_{l} \triangleq&  (\Upsilon_{1} \frac{ \lvert v_{q} \rvert^{2}}{3})\ast_{x} \varrho_{l} \ast_{t} \vartheta_{l} - (\Upsilon_{1}^{-1} \Upsilon_{2}^{2} \frac{ \lvert \Xi_{q} \rvert^{2}}{3} )\ast_{x} \varrho_{l} \ast_{t} \vartheta_{l} \nonumber\\
& \hspace{20mm} + p_{q} \ast_{x} \varrho_{l} \ast_{t} \vartheta_{l} - \Upsilon_{1,l} \frac{ \lvert v_{l} \rvert^{2}}{3} + \Upsilon_{1,l}^{-1} \Upsilon_{2,l}^{2} \frac{ \lvert \Xi_{l} \rvert^{2}}{3}, \\
R_{\text{com1}}^{v}\triangleq& - (\Upsilon_{1} (v_{q} \mathring{\otimes} v_{q}) ) \ast_{x} \varrho_{l} \ast_{t} \vartheta_{l} + (\Upsilon_{1}^{-1} \Upsilon_{2}^{2} (\Xi_{q} \mathring{\otimes} \Xi_{q} ))\ast_{x} \varrho_{l} \ast_{t} \vartheta_{l} \nonumber\\
& \hspace{20mm} + \Upsilon_{1,l} (v_{l} \mathring{\otimes} v_{l}) - \Upsilon_{1,l}^{-1} \Upsilon_{2,l}^{2} (\Xi_{l} \mathring{\otimes} \Xi_{l} ), \label{estimate 434}\\
R_{\text{com1}}^{\Xi} \triangleq& -(\Upsilon_{1}(v_{q} \otimes \Xi_{q})) \ast_{x} \varrho_{l} \ast_{t} \vartheta_{l} + (\Upsilon_{1} (\Xi_{q} \otimes v_{q}) )\ast_{x} \varrho_{l} \ast_{t} \vartheta_{l} \nonumber \\
& \hspace{20mm} + \Upsilon_{1,l} (v_{l} \otimes \Xi_{l}) - \Upsilon_{1,l} (\Xi_{l} \otimes v_{l}). \label{estimate 435}
\end{align}
\end{subequations} 
We estimate the commutator terms similarly to \eqref{estimate 293}-\eqref{estimate 295}. First, we split from \eqref{estimate 434}
\begin{equation}\label{estimate 297}
\lVert R_{\text{com1}}^{v} \rVert_{C_{t}L_{x}^{1}} \leq A_{1} + A_{2}
\end{equation}
where 
\begin{align*}
A_{1} \triangleq&  \lVert \Upsilon_{1,l} (v_{l} \mathring{\otimes} v_{l} ) - (\Upsilon_{1} (v_{q} \mathring{\otimes} v_{q}) ) \ast_{x} \varrho_{l} \ast_{t} \vartheta_{l} \rVert_{C_{t}L_{x}^{1}},\\
A_{2} \triangleq& \lVert \Upsilon_{1,l}^{-1} \Upsilon_{2,l}^{2} (\Xi_{l} \mathring{\otimes} \Xi_{l}) - (\Upsilon_{1}^{-1} \Upsilon_{2}^{2} (\Xi_{q} \mathring{\otimes} \Xi_{q})) \ast_{x} \varrho_{l} \ast_{t} \vartheta_{l} \rVert_{C_{t}L_{x}^{1}}, 
\end{align*}
for which we can estimate by \eqref{estimate 273} and \eqref{estimate 296}  
\begin{subequations}\label{estimate 298}
\begin{align}
&A_{1} \lesssim lm_{L}^{\frac{2}{5}} \lVert v_{q} \rVert_{C_{t,x}^{1}} \lVert v_{q} \rVert_{C_{t}L_{x}^{2}} + l^{\frac{1}{2} - 2 \delta} m_{L}^{\frac{2}{5}} \lVert v_{q} \rVert_{C_{t}L_{x}^{\infty}} \lVert v_{q} \rVert_{C_{t}L_{x}^{2}}, \label{estimate 299} \\
&A_{2} \lesssim m_{L}^{2} l^{\frac{1}{2} - 2 \delta} \lVert \Xi_{q} \rVert_{C_{t}L_{x}^{\infty}} \lVert \Xi_{q} \rVert_{C_{t}L_{x}^{2}} + m_{L}^{\frac{6}{5}} l \lVert \Xi_{q} \rVert_{C_{t,x}^{1}} \lVert \Xi_{q} \rVert_{C_{t}L_{x}^{2}}; \label{estimate 300}
\end{align} 
\end{subequations}
e.g., \eqref{estimate 299} follows from splitting $A_{1}$ to $A_{1} \leq \sum_{k=1}^{6}A_{1k}$ where 
\begin{align*}
A_{11} \triangleq& \lVert (\Upsilon_{1,l} - \Upsilon_{1}) (v_{q} \ast_{x} \varrho_{l} \ast_{t} \vartheta_{l}) \mathring{\otimes} (v_{q} \ast_{x} \varrho_{l} \ast_{t} \vartheta_{l}) \rVert_{C_{t}L^{1}}, \\
A_{12} \triangleq & \lVert \Upsilon_{1} ( v_{q} \ast_{x} \varrho_{l} \ast_{t} \vartheta_{l} - v_{q} \ast_{x} \varrho_{l}) \mathring{\otimes} (v_{q} \ast_{x} \varrho_{l} \ast_{t} \vartheta_{l}) \rVert_{C_{t}L_{x}^{1}}, \\
A_{13} \triangleq& \lVert \Upsilon_{1} (v_{q} \ast_{x} \varrho_{l} - v_{q}) \mathring{\otimes} (v_{q} \ast_{x} \varrho_{l} \ast_{t} \vartheta_{l}) \rVert_{C_{t}L_{x}^{1}}, \\
A_{14} \triangleq& \lVert \Upsilon_{1} (v_{q} \mathring{\otimes} (v_{q} \ast_{x} \varrho_{l} \ast_{t} \vartheta_{l} - v_{q} \ast_{x} \varrho_{l} )) \rVert_{C_{t}L_{x}^{1}}, \\
A_{15} \triangleq& \lVert \Upsilon_{1} (v_{q} \mathring{\otimes} (v_{q} \ast_{x} \varrho_{l} - v_{q} ) ) \rVert_{C_{t}L_{x}^{1}}, \\
A_{16} \triangleq& \lVert (\Upsilon_{1} v_{q} \mathring{\otimes} v_{q}) - (\Upsilon_{1} v_{q} \mathring{\otimes} v_{q}) \ast_{x} \varrho_{l} \ast_{t} \vartheta_{l} \rVert_{C_{t}L_{x}^{1}},  
\end{align*} 
and then applying standard mollifier estimates. Applying \eqref{estimate 298} to \eqref{estimate 297} leads us to 
\begin{align}
\lVert R_{\text{com1}}^{v} \rVert_{C_{t}L_{x}^{1}} \lesssim& [ lm_{L}^{\frac{6}{5}} (\lVert v_{q} \rVert_{C_{t,x}^{1}} + \lVert \Xi_{q} \rVert_{C_{t,x}^{1}}) \nonumber\\
& + l^{\frac{1}{2} - 2 \delta} m_{L}^{2} (\lVert v_{q} \rVert_{C_{t}L_{x}^{\infty}} + \lVert \Xi_{q} \rVert_{C_{t}L_{x}^{\infty}})] (\lVert v_{q} \rVert_{C_{t}L_{x}^{2}} + \lVert \Xi_{q} \rVert_{C_{t}L_{x}^{2}}). \label{estimate 385}
\end{align} 
Similarly, we can split from \eqref{estimate 435} 
\begin{align*}
\lVert R_{\text{com1}}^{\Xi} \rVert_{C_{t}L_{x}^{1}} \leq& \lVert \Upsilon_{1,l} (v_{l} \otimes \Xi_{l}) - [\Upsilon_{1} (v_{q} \otimes \Xi_{q})] \ast_{x} \varrho_{l} \ast_{t} \vartheta_{l} \rVert_{C_{t}L_{x}^{1}} \\
&+ \lVert \Upsilon_{1,l} (\Xi_{l} \otimes v_{l} ) - [\Upsilon_{1} (\Xi_{q} \otimes v_{q})] \ast_{x} \varrho_{l} \ast_{t} \vartheta_{l} \rVert_{C_{t}L_{x}^{1}}
\end{align*}
and then estimate 
\begin{align}
\lVert R_{\text{com1}}^{\Xi} \rVert_{C_{t}L_{x}^{1}} \lesssim& [ lm_{L}^{\frac{2}{5}} (\lVert v_{q} \rVert_{C_{t,x}^{1}} + \lVert \Xi_{q} \rVert_{C_{t,x}^{1}}) \nonumber\\
& + l^{\frac{1}{2} - 2 \delta} m_{L}^{\frac{2}{5}} (\lVert v_{q} \rVert_{C_{t}L_{x}^{\infty}} + \lVert \Xi_{q} \rVert_{C_{t}L_{x}^{\infty}})] (\lVert v_{q} \rVert_{C_{t}L_{x}^{2}} + \lVert \Xi_{q} \rVert_{C_{t}L_{x}^{2}}). \label{estimate 386}
\end{align} 
Moreover, similarly to \eqref{estimate 301} we can estimate for $a \in 2 \mathbb{N}$ sufficiently large by Young's inequality for convolution and the fact that the mollifiers have unit mass 
\begin{subequations}\label{estimate 305}
\begin{align}
& \lVert v_{q} - v_{l} \rVert_{C_{t}L_{x}^{2}} + \lVert \Xi_{q} - \Xi_{l} \rVert_{C_{t}L_{x}^{2}}\overset{\eqref{estimate 270}}{\lesssim} m_{L} l M_{0}(t)^{\frac{1}{2}} \lambda_{q}^{4}  \overset{\eqref{estimate 130} \eqref{estimate 129}}{\ll} m_{L}M_{0}(t)^{\frac{1}{2}} \delta_{q+1}^{\frac{1}{2}},  \label{estimate 302} \\
& \lVert v_{l} \rVert_{C_{t}L_{x}^{2}} \leq \lVert v_{q} \rVert_{C_{t}L_{x}^{2}} \overset{\eqref{estimate 268}}{\leq} m_{L} M_{0}(t)^{\frac{1}{2}} (1+ \sum_{1 \leq \iota \leq q} \delta_{\iota}^{\frac{1}{2}}), \label{estimate 303}\\
& \lVert \Xi_{l} \rVert_{C_{t}L_{x}^{2}}\leq\lVert \Xi_{q}\rVert_{C_{t}L_{x}^{2}} \overset{\eqref{estimate 269}}{\leq} m_{L} M_{0}(t)^{\frac{1}{2}}(1+\sum_{1\leq \iota\leq q} \delta_{\iota}^{\frac{1}{2}}). \label{estimate 304}
\end{align}
\end{subequations}

\subsubsection{Perturbation}  
We use the same definitions of $\chi$ in \eqref{estimate 131} and $\rho_{\Xi}$ in \eqref{estimate 133} as we decided in Remark \ref{Remark 5.1}; we retain the estimates \eqref{estimate 132} and \eqref{estimate 134} because their verifications depended only on the definition of $\chi$. Furthermore, because the definition of $\rho_{\Xi}$ in \eqref{estimate 133} only involves $\mathring{R}_{l}^{\Xi}$ for which inductive estimates remains same (cf. \eqref{estimate 100} and \eqref{estimate 271}), we still retain \eqref{estimate 135}-\eqref{estimate 136}. Next, although we changed the definitions of $M_{0}(t)$ from \eqref{estimate 94} to \eqref{estimate 262} and the proof of \eqref{estimate 137}-\eqref{estimate 408} relied on the fact that $\partial_{t} M_{0}(t) = 4L M_{0}(t)$, fortunately the $M_{0}(t)$ in \eqref{estimate 262} also satisfies $\partial_{t} M_{0}(t) = 4L M_{0}(t)$; thus, we retain \eqref{estimate 137}-\eqref{estimate 408}. Furthermore, we retain \eqref{estimate 144} and consequently \eqref{estimate 141}-\eqref{estimate 143} because they are immediate consequences of  \eqref{estimate 135}-\eqref{estimate 137}. As we decided in Remark \ref{Remark 5.1}, we strategically define a modified amplitude function $\bar{a}_{\xi}$ for $\xi \in \Lambda_{\Xi}$ as in \eqref{estimate 362}, in which $\gamma_{\xi} (- \frac{\mathring{R}_{l}^{\Xi}}{\rho_{\Xi}})$ is well-defined because we retained \eqref{estimate 132}. This definition of $\bar{a}_{\xi}$ for $\xi \in \Lambda_{\Xi}$ allows us to directly deduce 
\begin{equation*}
 \sum_{\xi \in \Lambda_{\Xi}} \Upsilon_{1,l} \bar{a}_{\xi}^{2}  \mathbb{P}_{=0} (\phi_{\xi}^{2} \varphi_{\xi}^{2}) (\xi \otimes \xi_{2} - \xi_{2} \otimes \xi) 
\overset{\eqref{estimate 362}}{=} \sum_{\xi \in \Lambda_{\Xi}} a_{\xi}^{2} \mathbb{P}_{=0} (\phi_{\xi}^{2} \varphi_{\xi}^{2}) (\xi \otimes \xi_{2} -\xi_{2} \otimes \xi) 
\overset{\eqref{estimate 139}}{=} - \mathring{R}_{l}^{\Xi},  
\end{equation*}
which leads to the following analogous identity to \eqref{estimate 207}:
\begin{equation}\label{estimate 361}
\sum_{\xi \in\Lambda_{\Xi}} \Upsilon_{1,l} \bar{a}_{\xi}^{2} \phi_{\xi}^{2} \varphi_{\xi}^{2} (\xi \otimes \xi_{2} - \xi_{2} \otimes \xi) + \mathring{R}_{l}^{\Xi} = \sum_{\xi \in \Lambda_{\Xi}} \Upsilon_{1,l} \bar{a}_{\xi}^{2} \mathbb{P}_{\neq 0} (\phi_{\xi}^{2} \varphi_{\xi}^{2}) (\xi \otimes \xi_{2} - \xi_{2} \otimes \xi). 
\end{equation} 
By taking $c_{\Xi}$ sufficiently small to satisfy \eqref{estimate 157} again, because we retained the estimates \eqref{estimate 132}, \eqref{estimate 134}, and the inductive bound \eqref{estimate 100} is same as \eqref{estimate 271}, we retain \eqref{estimate 146}. Then  we can estimate for $c_{\Xi}$ sufficiently small that satisfies \eqref{estimate 157}, 
\begin{align}\label{estimate 307}
\lVert \bar{a}_{\xi} \rVert_{C_{t}L_{x}^{2}} 
\overset{\eqref{estimate 362}}{\leq}& \lVert \Upsilon_{1,l}^{-1} \rVert_{C_{t}}^{\frac{1}{2}} \lVert a_{\xi} \rVert_{C_{t}L_{x}^{2}} \nonumber \\
\overset{\eqref{estimate 296} \eqref{estimate 146}}{\leq}&  \min \{ ( \frac{c_{v} }{\lvert \Lambda_{\Xi} \rvert} )^{\frac{1}{2}}, \frac{1}{3 C_{\ast} (8\pi^{3})^{\frac{1}{2}} \lvert \Lambda_{\Xi} \rvert  } \} m_{L}^{\frac{1}{5}} \delta_{q+1}^{\frac{1}{2}} M_{0}(t)^{\frac{1}{2}}  \hspace{2mm} \forall \hspace{1mm} \xi \in \Lambda_{\Xi}. 
\end{align}
Similarly, because we retained \eqref{estimate 421}, \eqref{estimate 132}, \eqref{estimate 144}, and \eqref{estimate 394} we also retain \eqref{estimate 150}  so that 
\begin{subequations}\label{estimate 308}
\begin{align}
&\lVert a_{\xi} \rVert_{C_{t}C_{x}^{j}} \overset{\eqref{estimate 150}}{\lesssim}  l^{-5j - 2} M_{0}(t)^{\frac{1}{2}} \delta_{q+1}^{\frac{1}{2}} \hspace{44mm} \forall \hspace{1mm} j \geq 0,  \hspace{1mm} \xi \in \Lambda_{\Xi}, \label{estimate 436}\\ 
&\lVert \bar{a}_{\xi} \rVert_{C_{t}C_{x}^{j}} \overset{\eqref{estimate 362}}{\leq} \lVert \Upsilon_{1,l} ^{-1} \rVert_{C_{t}}^{\frac{1}{2}} \lVert a_{\xi} \rVert_{C_{t}C_{x}^{j}} \overset{\eqref{estimate 296}\eqref{estimate 150}}{\lesssim} m_{L}^{\frac{1}{5}} l^{-5j - 2} M_{0}(t)^{\frac{1}{2}} \delta_{q+1}^{\frac{1}{2}} \hspace{1mm} \forall \hspace{1mm} j \geq 0, \hspace{1mm}  \xi \in \Lambda_{\Xi}. \label{estimate 437} 
\end{align}
\end{subequations} 
Next, because we retained \eqref{estimate 132}, \eqref{estimate 394}, \eqref{estimate 141}-\eqref{estimate 143}, and \eqref{estimate 144} and \eqref{estimate 100} is same as \eqref{estimate 271}, we can also retain \eqref{estimate 309}-\eqref{estimate 406}. This allows us to directly deduce for $a \in 2 \mathbb{N}$ sufficiently large 
\begin{subequations}\label{estimate 310}
\begin{align}
\lVert a_{\xi} \rVert_{C_{t}^{1}C_{x}^{j}} &\overset{\eqref{estimate 309} }{\lesssim}  l^{-7 - 5j} \delta_{q+1}^{\frac{1}{2}} M_{0}(t)^{\frac{1}{2}} \hspace{36mm} \forall \hspace{1mm} j \in \{0,1,2\}, \hspace{1mm}  \xi \in \Lambda_{\Xi}, \label{estimate 415}\\
\lVert \bar{a}_{\xi} \rVert_{C_{t}^{1}C_{x}^{j}} &\lesssim \lVert \Upsilon_{1,l}^{-1} \rVert_{C_{t}}^{\frac{3}{2}} \lVert \partial_{t} \Upsilon_{1,l} \rVert_{C_{t}} \lVert a_{\xi} \rVert_{C_{t}C_{x}^{j}} + \lVert \Upsilon_{1,l}^{-1} \rVert_{C_{t}}^{\frac{1}{2}} \lVert \partial_{t} a_{\xi} \rVert_{C_{t}C_{x}^{j}} \nonumber\\
& \hspace{5mm} \overset{\eqref{estimate 436} \eqref{estimate 415} \eqref{estimate 296}}{\lesssim}  m_{L}^{\frac{1}{5}} l^{-7 - 5j} \delta_{q+1}^{\frac{1}{2}} M_{0}(t)^{\frac{1}{2}} \hspace{8mm} \forall \hspace{1mm} j \in \{0,1,2\}, \hspace{1mm} \xi \in \Lambda_{\Xi}, \label{estimate 442} \\
\lVert a_{\xi} \rVert_{C_{t}^{2}C_{x}} &\overset{\eqref{estimate 406}}{\lesssim} l^{-12} \delta_{q+1}^{\frac{1}{2}} M_{0}(t)^{\frac{1}{2}} \hspace{57mm} \forall \hspace{1mm} \xi \in \Lambda_{\Xi}, \label{estimate 407}\\
\lVert \bar{a}_{\xi} \rVert_{C_{t}^{2}C_{x}} &\overset{\eqref{estimate 273}  \eqref{estimate 296} \eqref{estimate 415}\eqref{estimate 407} }{\lesssim} m_{L}^{\frac{1}{5}} l^{-12} M_{0}(t)^{\frac{1}{2}} \delta_{q+1}^{\frac{1}{2}} \hspace{27mm} \forall \hspace{1mm} \xi \in \Lambda_{\Xi}. \label{estimate 414}
\end{align}
\end{subequations} 
Now we define $\mathring{G}^{\Xi}$ as planned in \eqref{estimate 274}. It follows that they satisfy the following estimates:
\begin{subequations}\label{estimate 417}
\begin{align}
& \lVert \mathring{G}^{\Xi} \rVert_{C_{t}L_{x}^{1}} \leq 3\sum_{\xi \in \Lambda_{\Xi}} \lVert \bar{a}_{\xi} \rVert_{C_{t}L_{x}^{2}}^{2} (\lVert \Upsilon_{1,l} \rVert_{C_{t}} + \lVert \Upsilon_{1,l}^{-1} \rVert_{C_{t}} \lVert \Upsilon_{2,l} \rVert_{C_{t}}^{2}) 
\overset{ \eqref{estimate 307}}{\leq} 6 m_{L}^{\frac{8}{5}} c_{v} \delta_{q+1} M_{0}(t) , \label{estimate 311}\\
& \lVert \mathring{G}^{\Xi} \rVert_{C_{t}C_{x}^{j}} \lesssim \sum_{\xi \in \Lambda_{\Xi}} \lVert \bar{a}_{\xi}^{2} \rVert_{C_{t}C_{x}^{j}} ( \lVert \Upsilon_{1,l} \rVert_{C_{t}} + \lVert \Upsilon_{1,l}^{-1} \rVert_{C_{t}} \lVert \Upsilon_{2,l} \rVert_{C_{t}}^{2}) \nonumber\\
& \hspace{45mm} \overset{\eqref{estimate 296} \eqref{estimate 308}}{\lesssim} m_{L}^{\frac{8}{5}}  \delta_{q+1} l^{-5j-4} M_{0}(t) \hspace{2mm} \forall \hspace{1mm} j \geq 0, \label{estimate 312}  \\
&\lVert \mathring{G}^{\Xi} \rVert_{C_{t}^{1}C_{x}^{j}} \overset{\eqref{estimate 296}}{\lesssim}  \sum_{\xi \in \Lambda_{\Xi}} \lVert \bar{a}_{\xi}\rVert_{C_{t,x}} \lVert \bar{a}_{\xi} \rVert_{C_{t}^{1}C_{x}^{j}} m_{L}^{\frac{6}{5}}  + \lVert \bar{a}_{\xi} \rVert_{C_{t,x}} \lVert \bar{a}_{\xi} \rVert_{C_{t}C_{x}^{j}} m_{L}^{2} l^{-1} \nonumber\\
& \hspace{45mm} \overset{\eqref{estimate 308} \eqref{estimate 310}}{\lesssim} m_{L}^{\frac{8}{5}}  \delta_{q+1}l^{-9-5j} M_{0}(t) \hspace{2mm} \forall  \hspace{1mm}j \in \{0,1,2\}, \label{estimate 313} \\
& \lVert \mathring{G}^{\Xi} \rVert_{C_{t}^{2}C_{x}} \overset{\eqref{estimate 308} \eqref{estimate 296} \eqref{estimate 414}}{\lesssim} m_{L}^{\frac{8}{5}} l^{-14} M_{0}(t) \delta_{q+1}. \label{estimate 416}
\end{align}
\end{subequations} 
Next, as we decided in Remark \ref{Remark 5.1}, we define $\rho_{v}$ identically to \eqref{estimate 152} but define a modified velocity amplitude $\bar{a}_{\xi}$ for $\xi \in \Lambda_{v}$ in \eqref{estimate 306}. Then we retain the estimates \eqref{estimate 155}-\eqref{estimate 154}, and \eqref{estimate 323} because their verifications only depended on the definition of $\chi$ in \eqref{estimate 131} which we preserved. These lead us to, for $\xi \in \Lambda_{v}$, by taking $c_{v} > 0$ sufficiently small to satisfy \eqref{estimate 156} 
\begin{align}
& \lVert \bar{a}_{\xi}\rVert_{C_{t}L_{x}^{2}} \overset{\eqref{estimate 296} \eqref{estimate 155}}{\leq} m_{L}^{\frac{1}{5}} \lVert \rho_{v} \rVert_{C_{t}L_{x}^{1}}^{\frac{1}{2}} \lVert \gamma_{\xi} \rVert_{C(B_{\epsilon_{v}}(\text{Id} ))} \nonumber \\
& \hspace{20mm} \overset{\eqref{estimate 154}\eqref{estimate 48}}{\leq} m_{L}^{\frac{1}{5}} [8 \epsilon_{v}^{-1} (c_{v} \delta_{q+1} M_{0}(t) 8\pi^{3} + \lVert \mathring{R}_{l}^{v} \rVert_{C_{t}L_{x}^{1}} + \lVert \mathring{G}^{\Xi} \rVert_{C_{t}L_{x}^{1}})]^{\frac{1}{2}} M_{\ast}  \nonumber \\
&\hspace{30mm} \overset{\eqref{estimate 271} \eqref{estimate 311}\eqref{estimate 156}}{\leq} \frac{ m_{L} \delta_{q+1}^{\frac{1}{2}} M_{0}(t)^{\frac{1}{2}}}{3  C_{\ast} (8\pi^{3})^{\frac{1}{2}} \lvert \Lambda_{v} \rvert }. \label{estimate 315}
\end{align}
Moreover, we can estimate by taking $a \in 2 \mathbb{N}$ sufficiently large so that $m_{L}^{\frac{8}{5}} \lesssim l^{-1}$, 
\begin{subequations}\label{estimate 319}
\begin{align}
& \lVert \rho_{v} \rVert_{C_{t,x}}  \overset{\eqref{estimate 154}}{\lesssim} \delta_{q+1} M_{0}(t) + \lVert \mathring{R}_{l}^{v} \rVert_{C_{t}W_{x}^{4,1}} + \lVert \mathring{G}^{\Xi} \rVert_{C_{t}L_{x}^{\infty}} 
\overset{\eqref{estimate 271} \eqref{estimate 312}}{\lesssim} m_{L}^{\frac{8}{5}}  l^{-4} M_{0}(t) \delta_{q+1}, \label{estimate 316} \\
& \lVert \rho_{v} \rVert_{C_{t}C_{x}^{j}}  \overset{\eqref{estimate 312} \eqref{estimate 271}}{\lesssim}  \delta_{q+1} M_{0}(t) l^{-10j} \hspace{37mm} \forall \hspace{1mm} j \geq 1, \label{estimate 317} \\
&  \lVert \rho_{v} \rVert_{C_{t}^{1}C_{x}^{j}} \overset{ \eqref{estimate 316} \eqref{estimate 317}  \eqref{estimate 312}\eqref{estimate 313} \eqref{estimate 271}}{\lesssim} m_{L}^{\frac{8}{5}} l^{-10j - 9}  M_{0}(t) \delta_{q+1} \hspace{2mm} \forall \hspace{1mm} j \in \{0,1,2\}, \label{estimate 318}\\
& \lVert \rho_{v} \rVert_{C_{t}^{2}C_{x}} \overset{\eqref{estimate 318} \eqref{estimate 271} \eqref{estimate 417}}{\lesssim} m_{L}^{\frac{16}{5}} \delta_{q+1}  l^{-18} M_{0}(t), \label{estimate 418}
\end{align}
\end{subequations}
where e.g. to prove \eqref{estimate 318}, one can directly compute $\partial_{t}\rho_{v}$ and then estimate 
\begin{align*}
\lVert \partial_{t} \rho_{v} \rVert_{C_{t,x}} 
\lesssim& L \lVert \rho_{v} \rVert_{C_{t,x}} + ( \lVert \partial_{t} \mathring{R}_{l}^{v} \rVert_{C_{t,x}} + \lVert \partial_{t} \mathring{G}^{\Xi} \rVert_{C_{t,x}} + [ \lVert \mathring{R}_{l}^{v} \rVert_{C_{t,x}} + \lVert \mathring{G}^{\Xi} \rVert_{C_{t,x}}]L ) \\
& \hspace{10mm} \overset{\eqref{estimate 316} \eqref{estimate 312}\eqref{estimate 313}\eqref{estimate 271}}{\lesssim}m_{L}^{\frac{8}{5}} \delta_{q+1} l^{-9}  M_{0}(t)
\end{align*}
while the case $j = 1, 2$ can be estimated via \cite[Equ. (130)]{BDIS15}. Immediate corollaries of \eqref{estimate 319}, using \cite[Equ. (130)]{BDIS15}, consist of 
\begin{subequations}\label{estimate 324}
\begin{align}
& \lVert \rho_{v}^{\frac{1}{2}} \rVert_{C_{t,x}} \overset{\eqref{estimate 316}}{\lesssim} m_{L}^{\frac{4}{5}} l^{-2}  M_{0}(t)^{\frac{1}{2}} \delta_{q+1}^{\frac{1}{2}}, \label{estimate 320} \\
& \lVert \rho_{v}^{\frac{1}{2}} \rVert_{C_{t}C_{x}^{j}} \overset{\eqref{estimate 323} \eqref{estimate 317}}{\lesssim} \delta_{q+1}^{\frac{1}{2}} M_{0}(t)^{\frac{1}{2}} l^{-10j} \hspace{22mm} \forall \hspace{1mm} j \geq 1, \label{estimate 321}\\
& \lVert \rho_{v}^{\frac{1}{2}} \rVert_{C_{t}^{1}C_{x}^{j}} \overset{\eqref{estimate 323}\eqref{estimate 318} \eqref{estimate 317}}{\lesssim} m_{L}^{\frac{8}{5}}\delta_{q+1}^{\frac{1}{2}} M_{0}(t)^{\frac{1}{2}} l^{-10j - 9}  \hspace{5mm} \forall \hspace{1mm} j \in \{0,1,2\}. 
\end{align}
\end{subequations}
Next, we obtain the following estimates; let us emphasize that it will be convenient for us to obtain the estimates on not only $\bar{a}_{\xi}$ but also $a_{\xi}$ for $\xi \in \Lambda_{v}$ here and they differ from those of \eqref{estimate 187}-\eqref{estimate 413} because we changed the definitions of $\mathring{G}^{\Xi}$ within $\rho_{v}$ and hence $a_{\xi}$ for $\xi \in \Lambda_{v}$. We have via \cite[Equ. (130)]{BDIS15}  
\begin{subequations}\label{estimate 420}
\begin{align}
& \lVert a_{\xi} \rVert_{C_{t}C_{x}^{j}} \overset{\eqref{estimate 320}\eqref{estimate 321} \eqref{estimate 317}}{\lesssim} m_{L}^{\frac{4}{5}} M_{0}(t)^{\frac{1}{2}} l^{-10j - 2} \delta_{q+1}^{\frac{1}{2}} \hspace{9mm} \forall \hspace{1mm} j \geq 0, \hspace{9mm}\xi \in \Lambda_{v}, \label{estimate 326}\\
&  \lVert \bar{a}_{\xi} \rVert_{C_{t}C_{x}^{j}} \overset{\eqref{estimate 296} \eqref{estimate 320}\eqref{estimate 321} \eqref{estimate 317}}{\lesssim} m_{L} M_{0}(t)^{\frac{1}{2}} l^{-10j - 2} \delta_{q+1}^{\frac{1}{2}} \hspace{2mm} \forall \hspace{1mm} j \geq 0, \hspace{9mm}\xi \in \Lambda_{v}, \label{estimate 325}\\
& \lVert a_{\xi} \rVert_{C_{t}^{1}C_{x}^{j}}\overset{\eqref{estimate 296} \eqref{estimate 324} \eqref{estimate 417} }{\lesssim} m_{L}^{\frac{12}{5}}  M_{0}(t)^{\frac{1}{2}} l^{-10j - 11} \delta_{q+1}^{\frac{1}{2}} \hspace{12mm} \forall \hspace{1mm} j \in \{0,1,2\},\hspace{1mm} \xi \in \Lambda_{v},\label{estimate 328}\\
& \lVert \bar{a}_{\xi} \rVert_{C_{t}^{1}C_{x}^{j}} \overset{\eqref{estimate 296} \eqref{estimate 324} \eqref{estimate 417}}{\lesssim} m_{L}^{\frac{13}{5}} M_{0}(t)^{\frac{1}{2}} l^{-10j - 11} \delta_{q+1}^{\frac{1}{2}}  \hspace{12mm} \forall \hspace{1mm} j \in \{0,1,2\}, \hspace{1mm}\xi \in \Lambda_{v},\label{estimate 327} \\
& \lVert a_{\xi} \rVert_{C_{t}^{2}C_{x}} \overset{\eqref{estimate 155} \eqref{estimate 323} \eqref{estimate 319} \eqref{estimate 417}}{\lesssim}m_{L}^{\frac{16}{5}} \delta_{q+1}^{\frac{1}{2}} M_{0}(t)^{\frac{1}{2}} l^{-18}  \hspace{27mm} \forall \hspace{1mm} \xi \in \Lambda_{v}, \label{estimate 419}
\end{align}
\end{subequations} 
by taking $ a \in 2 \mathbb{N}$ sufficiently large so that $m_{L}^{\frac{8}{5}} \lesssim l^{-1}$. Next, we have the following identity, analogous to, and a consequence of, \eqref{estimate 216}:
\begin{equation}\label{estimate 329}
\Upsilon_{1,l} \sum_{\xi \in \Lambda_{v}} \bar{a}_{\xi}^{2} \phi_{\xi}^{2} \varphi_{\xi}^{2} (\xi \otimes \xi) + \mathring{R}_{l}^{v} + \mathring{G}^{\Xi}  \overset{\eqref{estimate 306} \eqref{estimate 216}}{=} \rho_{v}\text{Id} + \sum_{\xi \in \Lambda_{v}} a_{\xi}^{2} \mathbb{P}_{\neq 0} (\phi_{\xi}^{2} \varphi_{\xi}^{2}) (\xi \otimes \xi).
\end{equation} 
Now we define $w_{q+1}^{p}, d_{q+1}^{p}, w_{q+1}^{c}$, and $d_{q+1}^{c}$ as follows (cf. \eqref{estimate 166}-\eqref{estimate 330}):
\begin{equation}\label{estimate 331}
w_{q+1}^{p} \triangleq \sum_{\xi \in \Lambda} \bar{a}_{\xi} \phi_{\xi} \varphi_{\xi} \xi, \hspace{3mm} d_{q+1}^{p} \triangleq \sum_{\xi \in \Lambda_{\Xi}} \bar{a}_{\xi} \phi_{\xi} \varphi_{\xi} \xi_{2}, 
\end{equation} 
and  
\begin{subequations}\label{estimate 332}
\begin{align}
w_{q+1}^{c} \triangleq \frac{1}{N_{\Lambda}^{2} \lambda_{q+1}^{2}} \sum_{\xi \in \Lambda}& \text{curl} ( \nabla \bar{a}_{\xi} \times (\phi_{\xi} \Psi_{\xi} \xi)) \nonumber \\
&+ \nabla \bar{a}_{\xi} \times \text{curl} ( \phi_{\xi} \Psi_{\xi} \xi) + \bar{a}_{\xi} \nabla \phi_{\xi} \times \text{curl} (\Psi_{\xi} \xi), \\
d_{q+1}^{c} \triangleq \frac{1}{N_{\Lambda}^{2} \lambda_{q+1}^{2}} \sum_{\xi \in \Lambda_{\Xi}}& \text{curl}( \nabla \bar{a}_{\xi} \times (\phi_{\xi} \Psi_{\xi} \xi_{2})) +\nabla \bar{a}_{\xi} \times\text{curl}(\phi_{\xi}\Psi_{\xi}\xi_{2}) - \bar{a}_{\xi}\Delta \phi_{\xi} \Psi_{\xi}\xi_{2}. 
\end{align}
\end{subequations} 
Due to \eqref{estimate 438} which remains valid because $\nabla\cdot (\Psi_{\xi} \xi ) = \nabla\cdot (\Psi_{\xi} \xi_{2}) = 0$, we have  
\begin{subequations}\label{estimate 380}
\begin{align}
&\frac{1}{N_{\Lambda}^{2} \lambda_{q+1}^{2}} \text{curl curl} \sum_{\xi \in \Lambda} \bar{a}_{\xi} \phi_{\xi} \Psi_{\xi} \xi = w_{q+1}^{p} + w_{q+1}^{c}, \label{estimate 501}\\
& \frac{1}{N_{\Lambda}^{2} \lambda_{q+1}^{2}} \text{curl curl}\sum_{\xi\in\Lambda_{\Xi}}\bar{a}_{\xi}\phi_{\xi}\Psi_{\xi}\xi_{2} = d_{q+1}^{p} + d_{q+1}^{c},  \label{estimate 502} 
\end{align}
\end{subequations} 
similarly to \eqref{estimate 193}. Thus, we see that $w_{q+1}^{p} + w_{q+1}^{c}$ and $d_{q+1}^{p} + d_{q+1}^{c}$ are both divergence-free and mean-zero. We also define the temporal correctors $w_{q+1}^{t}$ and $d_{q+1}^{t}$ identically to \eqref{estimate 334} so that they are mean-zero and divergence-free and the identities in \eqref{estimate 217} remain valid; however, we emphasize the difference due to the change in $\mathring{G}^{\Xi}$ and therefore in $\rho_{v}$ and ultimately in $a_{\xi}$ for $\xi \in \Lambda_{v}$ within the definitions of $w_{q+1}^{t}$ and $d_{q+1}^{t}$ in \eqref{estimate 334}. At last, we define $w_{q+1}, d_{q+1}, v_{q+1}$, and $\Xi_{q+1}$ identically to \eqref{estimate 176}-\eqref{estimate 203} so that $v_{q+1}$ and $\Xi_{q+1}$ are mean-zero and divergence-free. Now we have for all $j \in \mathbb{N}_{0}$, by taking $a \in 2 \mathbb{N}$ sufficiently large, 
\begin{subequations}
\begin{align}
\lVert D^{j} \bar{a}_{\xi} \rVert_{C_{t} L_{x}^{2}} \overset{\eqref{estimate 307}\eqref{estimate 308} }{\leq} \frac{m_{L} \delta_{q+1}^{\frac{1}{2}} M_{0}(t)^{\frac{1}{2}}}{3 C_{\ast} (8\pi^{3})^{\frac{1}{2}} \lvert \Lambda_{\Xi} \rvert} l^{-8j} \hspace{5mm} & \forall \hspace{1mm}  \xi \in \Lambda_{\Xi},  \\
\lVert D^{j} \bar{a}_{\xi} \rVert_{C_{t}L_{x}^{2}} \overset{\eqref{estimate 315}\eqref{estimate 325} }{\leq} \frac{m_{L} \delta_{q+1}^{\frac{1}{2}} M_{0}(t)^{\frac{1}{2}}}{3 C_{\ast} (8\pi^{3})^{\frac{1}{2}}\lvert \Lambda_{v} \rvert } l^{-13j} \hspace{5mm} &  \forall \hspace{1mm} \xi \in \Lambda_{v}. 
\end{align}
\end{subequations}
Thus, similarly to \eqref{estimate 172}-\eqref{estimate 173}, in order to apply Lemma \ref{Lemma 6.2} we set first in case $\xi \in \Lambda_{\Xi}$, ``$f$'' = $\bar{a}_{\xi}$, ``$g$'' = $\phi_{\xi} \varphi_{\xi}$, ``$\kappa$'' = $\lambda_{q+1} \sigma \in\mathbb{N}$ due to \eqref{estimate 52}, ``$N$'' = 1, ``$p$'' = 2, ``$\zeta$'' = $l^{-8}$, ``$C_{f}$'' = $\frac{m_{L}\delta_{q+1}^{\frac{1}{2}} M_{0}(t)^{\frac{1}{2}}  }{3 C_{\ast} (8\pi^{3})^{\frac{1}{2}} \lvert \Lambda_{\Xi} \rvert}$ while in case $\xi \in \Lambda_{v}$, we set identically with the only exceptions of ``$\zeta$'' = $l^{-13}$ and ``$C_{f}$'' = $\frac{m_{L} \delta_{q+1}^{\frac{1}{2}} M_{0}(t)^{\frac{1}{2}}}{3C_{\ast} (8\pi^{3})^{\frac{1}{2}} \lvert \Lambda_{v} \rvert}$ for which both conditions in \eqref{estimate 140} may be verified using \eqref{eta}-\eqref{sigma, r, mu}. Therefore, by \eqref{estimate 171} and \eqref{estimate 57}  
\begin{subequations}\label{estimate 439}
\begin{align}
& \lVert \bar{a}_{\xi} \phi_{\xi} \varphi_{\xi} \rVert_{C_{t}L_{x}^{2}} \leq \left( \frac{ m_{L} \delta_{q+1}^{\frac{1}{2}} M_{0}(t)^{\frac{1}{2}}}{3 C_{\ast} (8 \pi^{3})^{\frac{1}{2}} \lvert \Lambda_{\Xi} \rvert} \right) C_{\ast} \lVert \phi_{\xi} \varphi_{\xi} \rVert_{C_{t}L_{x}^{2}}   = \frac{m_{L}  \delta_{q+1}^{\frac{1}{2}} M_{0}(t)^{\frac{1}{2}}}{3 \lvert \Lambda_{\Xi} \rvert} \hspace{3mm} \forall \hspace{1mm} \xi \in \Lambda_{\Xi}, \\
& \lVert \bar{a}_{\xi} \phi_{\xi} \varphi_{\xi} \rVert_{C_{t}L_{x}^{2}} \leq \left( \frac{m_{L}  \delta_{q+1}^{\frac{1}{2}} M_{0}(t)^{\frac{1}{2}}}{3 C_{\ast} (8 \pi^{3})^{\frac{1}{2}} \lvert \Lambda_{v} \rvert} \right) C_{\ast} \lVert \phi_{\xi} \varphi_{\xi} \rVert_{C_{t}L_{x}^{2}} 
=\frac{ m_{L} \delta_{q+1}^{\frac{1}{2}} M_{0}(t)^{\frac{1}{2}}}{3 \lvert \Lambda_{v} \rvert} \hspace{3mm} \forall \hspace{1mm} \xi \in \Lambda_{v}.
\end{align}
\end{subequations}
It follows that 
\begin{subequations}\label{estimate 343}
\begin{align}
&\lVert d_{q+1}^{p} \rVert_{C_{t}L_{x}^{2}}  \overset{\eqref{estimate 331}}{\leq} \sum_{\xi \in \Lambda_{\Xi}} \lVert \bar{a}_{\xi} \phi_{\xi} \varphi_{\xi} \rVert_{C_{t}L_{x}^{2}} \overset{\eqref{estimate 439}}{\leq} \frac{m_{L}  \delta_{q+1}^{\frac{1}{2}} M_{0}(t)^{\frac{1}{2}}}{3}, \label{estimate 335} \\
&\lVert w_{q+1}^{p} \rVert_{C_{t}L_{x}^{2}} \overset{\eqref{estimate 331}}{\leq} \sum_{\xi \in \Lambda_{v}} \lVert \bar{a}_{\xi} \phi_{\xi} \varphi_{\xi} \rVert_{C_{t}L_{x}^{2}} +  \sum_{\xi \in \Lambda_{\Xi}} \lVert \bar{a}_{\xi} \phi_{\xi} \varphi_{\xi} \rVert_{C_{t}L_{x}^{2}} \overset{\eqref{estimate 439}}{\leq}  \frac{2m_{L} \delta_{q+1}^{\frac{1}{2}} M_{0}(t)^{\frac{1}{2}}}{3}. \label{estimate 336}
\end{align}
\end{subequations} 
Next, analogously to \eqref{estimate 179}, using the fact that $\phi_{\xi}$ and $\Psi_{\xi}$ have oscillations in orthogonal directions, we can compute from \eqref{estimate 332} for all $p \in [1,\infty]$ 
\begin{align}
& \lVert d_{q+1}^{c} \rVert_{C_{t}L_{x}^{p}} \label{estimate 337}\\
\lesssim& \lambda_{q+1}^{-2} \sum_{\xi \in \Lambda_{\Xi}} \lVert \bar{a}_{\xi} \rVert_{C_{t}C_{x}^{2}} \lVert \phi_{\xi} \rVert_{C_{t}L_{x}^{p}} \lVert \Psi_{\xi} \rVert_{L_{x}^{p}} + \lVert \bar{a}_{\xi} \rVert_{C_{t}C_{x}^{1}} (\lVert \phi_{\xi} \rVert_{C_{t}W_{x}^{1,p}} \lVert\Psi_{\xi} \rVert_{L_{x}^{p}} + \lVert \phi_{\xi} \rVert_{C_{t}L_{x}^{p}} \lVert \Psi_{\xi} \rVert_{W_{x}^{1,p}}) \nonumber\\
& \hspace{20mm} + \lVert \bar{a}_{\xi} \rVert_{C_{t,x}} \lVert \phi_{\xi}\rVert_{C_{t}W_{x}^{2,p}} \lVert\Psi_{\xi} \rVert_{L_{x}^{p}}  \overset{\eqref{estimate 175} \eqref{estimate 130} \eqref{estimate 308}}{\lesssim} m_{L}^{\frac{1}{5}} \delta_{q+1}^{\frac{1}{2}} M_{0}(t)^{\frac{1}{2}} l^{-2} r^{\frac{1}{p} - \frac{3}{2}} \sigma^{\frac{1}{p} + \frac{1}{2}}.  \nonumber 
\end{align}  
Similarly, we compute from \eqref{estimate 332}
\begin{align}
\lVert w_{q+1}^{c} \rVert_{C_{t}L_{x}^{p}} 
\lesssim& \lambda_{q+1}^{-2} (\sum_{\xi \in \Lambda_{v}} + \sum_{\xi \in \Lambda_{\Xi}}) \lVert \bar{a}_{\xi} \rVert_{C_{t}C_{x}^{2}} \lVert \phi_{\xi} \rVert_{C_{t}L_{x}^{p}} \lVert \Psi_{\xi} \rVert_{L_{x}^{p}} \label{estimate 338}\\
&+ \lVert \bar{a}_{\xi} \rVert_{C_{t}C_{x}^{1}} ( \lVert \phi_{\xi} \rVert_{C_{t}W_{x}^{1,p}} + \lVert \Psi_{\xi} \rVert_{L_{x}^{p}} + \lVert \phi_{\xi} \rVert_{C_{t}L_{x}^{p}} \lVert \Psi_{\xi} \rVert_{W_{x}^{1,p}}) + \lVert \bar{a}_{\xi} \rVert_{C_{t,x}} \lVert \phi_{\xi} \rVert_{C_{t}W_{x}^{1,p}} \lVert \Psi_{\xi} \rVert_{W_{x}^{1,p}} \nonumber \\
& \hspace{30mm} \overset{\eqref{estimate 308}\eqref{estimate 325}}{\lesssim} m_{L} M_{0}(t)^{\frac{1}{2}} l^{-2} r^{\frac{1}{p} - \frac{3}{2}} \sigma^{\frac{1}{p} + \frac{1}{2}} \delta_{q+1}^{\frac{1}{2}}. \nonumber 
\end{align}
Next, for $p \in (1,\infty)$, we compute $\lVert d_{q+1}^{t} \rVert_{C_{t}L_{x}^{p}}$ and $\lVert w_{q+1}^{t} \rVert_{C_{t}L_{x}^{p}}$. We note that in previous works such as \cite{HZZ19}, it was possible to just use the same estimate in \eqref{estimate 236}; however, we changed the definitions of $\mathring{G}^{\Xi}$ so that $\rho_{v}$ and hence $a_{\xi}$ for $\xi \in \Lambda_{v}$ changed. We compute for $p \in (1,\infty)$ using \eqref{estimate 308} and \eqref{estimate 326}, 
\begin{subequations}\label{estimate 339}
\begin{align}
& \lVert d_{q+1}^{t} \rVert_{C_{t}L_{x}^{p}} \lesssim \mu^{-1} \sum_{\xi \in \Lambda_{\Xi}} \lVert a_{\xi} \rVert_{C_{t,x}}^{2} \lVert \phi_{\xi} \varphi_{\xi} \rVert_{C_{t}L_{x}^{2p}}^{2} 
\overset{\eqref{estimate 175}}{\lesssim} \mu^{-1} \delta_{q+1} l^{-4} M_{0}(t) r^{\frac{1}{p} - 1} \sigma^{\frac{1}{p} -1}, \\
& \lVert w_{q+1}^{t} \rVert_{C_{t}L_{x}^{p}} \lesssim \mu^{-1} \sum_{\xi \in \Lambda} \lVert a_{\xi} \rVert_{C_{t,x}}^{2} \lVert \phi_{\xi} \varphi_{\xi} \rVert_{C_{t}L_{x}^{2p}}^{2} \overset{\eqref{estimate 175}}{\lesssim} m_{L}^{\frac{8}{5}} \mu^{-1} \delta_{q+1}  l^{-4}  M_{0}(t) r^{\frac{1}{p} - 1} \sigma^{\frac{1}{p} - 1}. 
\end{align}
\end{subequations} 
We are now ready to estimate for $a \in 2 \mathbb{N}$ sufficiently large 
\begin{subequations}\label{last}
\begin{align}
& \lVert d_{q+1} \rVert_{C_{t}L_{x}^{2}} \overset{\eqref{estimate 176}\eqref{estimate 343}\eqref{estimate 337}\eqref{estimate 339}}{\leq} m_{L} [ \frac{ \delta_{q+1}^{\frac{1}{2}} M_{0}(t)^{\frac{1}{2}}}{3}    \label{estimate 341} \\
& \hspace{20mm} + C [ M_{0} (t)^{\frac{1}{2}} l^{-2} r^{-1} \sigma \delta_{q+1}^{\frac{1}{2}} + \delta_{q+1} \mu^{-1} l^{-4} M_{0} (t) r^{-\frac{1}{2}} \sigma^{-\frac{1}{2}} ]] \overset{\eqref{estimate 340}}{\leq} \frac{m_{L} M_{0}(t)^{\frac{1}{2}} \delta_{q+1}^{\frac{1}{2}}   }{2}, \nonumber \\
&\lVert w_{q+1} \rVert_{C_{t}L_{x}^{2}} \overset{\eqref{estimate 176}}{\leq} \lVert w_{q+1}^{p} \rVert_{C_{t}L_{x}^{2}} + \lVert w_{q+1}^{c} \rVert_{C_{t}L_{x}^{2}} + \lVert w_{q+1}^{t} \rVert_{C_{t}L_{x}^{2}}  \label{estimate 342}\\
& \hspace{50mm} \overset{\eqref{estimate 343}\eqref{estimate 338} \eqref{estimate 339}}{\leq} \frac{ 3m_{L} M_{0}(t)^{\frac{1}{2}} \delta_{q+1}^{\frac{1}{2}}  }{4}, \nonumber 
\end{align}
\end{subequations}
where \eqref{estimate 341} followed directly from \eqref{estimate 340} whereas \eqref{estimate 342} can be verified using \eqref{estimate 130}. We also estimate for any $p\in [1,\infty]$, from \eqref{estimate 331} 
\begin{subequations}\label{estimate 344}
\begin{align}
& \lVert d_{q+1}^{p} \rVert_{C_{t}L_{x}^{p}} 
\leq \sum_{\xi \in \Lambda_{\Xi}} \lVert \bar{a}_{\xi} \rVert_{C_{t,x}} \lVert \phi_{\xi} \varphi_{\xi} \rVert_{C_{t}L_{x}^{p}}  \overset{\eqref{estimate 308} \eqref{estimate 175}}{\lesssim} m_{L}^{\frac{1}{5}}\delta_{q+1}^{\frac{1}{2}}  M_{0}(t)^{\frac{1}{2}}  l^{-2} r^{\frac{1}{p} - \frac{1}{2}} \sigma^{\frac{1}{p} - \frac{1}{2}},  \\
&\lVert w_{q+1}^{p} \rVert_{C_{t}L_{x}^{p}} 
\leq \sum_{\xi \in \Lambda} \lVert \bar{a}_{\xi} \rVert_{C_{t,x}} \lVert \phi_{\xi} \varphi_{\xi} \rVert_{C_{t}L_{x}^{p}} \overset{\eqref{estimate 308} \eqref{estimate 325} \eqref{estimate 175}}{\lesssim} m_{L}  \delta_{q+1}^{\frac{1}{2}}M_{0}(t)^{\frac{1}{2}} l^{-2} r^{\frac{1}{p} - \frac{1}{2}} \sigma^{\frac{1}{p} - \frac{1}{2}}.
\end{align}
\end{subequations}
We are now ready to estimate for $p \in (1,\infty)$, 
\begin{align}
& \lVert w_{q+1} \rVert_{C_{t}L_{x}^{p}} + \lVert d_{q+1} \rVert_{C_{t}L_{x}^{p}} \label{estimate 345}\\
\overset{\eqref{estimate 344}\eqref{estimate 337}-\eqref{estimate 339}}{\lesssim}& (m_{L} + m_{L}^{\frac{1}{5}}) [ M_{0}(t)^{\frac{1}{2}} l^{-2} \delta_{q+1}^{\frac{1}{2}} r^{\frac{1}{p} - \frac{1}{2}} \sigma^{\frac{1}{p} - \frac{1}{2}} + \delta_{q+1}^{\frac{1}{2}} M_{0}(t)^{\frac{1}{2}} l^{-2} r^{\frac{1}{p} - \frac{3}{2}} \sigma^{\frac{1}{p} + \frac{1}{2}}] \nonumber\\
&+ (m_{L}^{\frac{8}{5}} + 1) \mu^{-1} l^{-4} M_{0}(t) r^{\frac{1}{p} -1} \sigma^{\frac{1}{p} -1} \delta_{q+1} 
\overset{\eqref{estimate 130}}{\lesssim} m_{L} M_{0}(t)^{\frac{1}{2}} l^{-2} r^{\frac{1}{p} - \frac{1}{2}} \sigma^{\frac{1}{p} - \frac{1}{2}} \delta_{q+1}^{\frac{1}{2}}. \nonumber
\end{align} 
Next, for all $p \in[1,\infty]$, we estimate from \eqref{estimate 331} 
\begin{subequations}\label{estimate 440} 
\begin{align}
 \lVert d_{q+1}^{p} \rVert_{C_{t}W_{x}^{1,p}}  \lesssim&\sum_{\xi \in \Lambda_{\Xi}} \lVert \bar{a}_{\xi} \rVert_{C_{t}C_{x}^{1}} \lVert \phi_{\xi} \varphi_{\xi} \rVert_{C_{t}L_{x}^{p}} + \lVert \bar{a}_{\xi} \rVert_{C_{t,x}} \lVert \phi_{\xi} \varphi_{\xi} \rVert_{C_{t}W_{x}^{1,p}} \nonumber\\
 & \hspace{10mm} \overset{\eqref{estimate 175} \eqref{estimate 308}\eqref{estimate 130}}{\lesssim} m_{L}^{\frac{1}{5}} M_{0}(t)^{\frac{1}{2}} l^{-2} \delta_{q+1}^{\frac{1}{2}} r^{\frac{1}{p} - \frac{1}{2}} \sigma^{\frac{1}{p} - \frac{1}{2}} \lambda_{q+1},   \label{estimate 346}\\
\lVert w_{q+1}^{p} \rVert_{C_{t}W_{x}^{1,p}} \lesssim& \sum_{\xi \in \Lambda} \lVert \bar{a}_{\xi} \rVert_{C_{t}C_{x}^{1}} \lVert \phi_{\xi} \varphi_{\xi} \rVert_{C_{t}L_{x}^{p}} + \lVert \bar{a}_{\xi} \rVert_{C_{t,x}} \lVert \phi_{\xi} \varphi_{\xi} \rVert_{C_{t}W_{x}^{1,p}} \nonumber \\
& \hspace{10mm} \overset{\eqref{estimate 175} \eqref{estimate 325}\eqref{estimate 130}}{\lesssim}  m_{L} M_{0}(t)^{\frac{1}{2}} l^{-2} \delta_{q+1}^{\frac{1}{2}} r^{\frac{1}{p} - \frac{1}{2}} \sigma^{\frac{1}{p} - \frac{1}{2}} \lambda_{q+1}. \label{estimate 347}
\end{align}
\end{subequations}
For all $p \in [1, \infty]$, we also deduce the following estimates from \eqref{estimate 189}-\eqref{estimate 190}: 
\begin{subequations}
\begin{align}
\lVert d_{q+1}^{c} \rVert_{C_{t}W_{x}^{1,p}} &\overset{\eqref{estimate 332} \eqref{estimate 308}\eqref{estimate 175}}{\lesssim} m_{L}^{\frac{1}{5}} \lambda_{q+1}^{-2} l^{-2} M_{0}(t)^{\frac{1}{2}}\delta_{q+1}^{\frac{1}{2}} [ l^{-15} r^{\frac{1}{p} - \frac{1}{2}} \sigma^{\frac{1}{p} - \frac{1}{2}} + l^{-5} \lambda_{q+1}^{2} r^{\frac{1}{p} - \frac{1}{2}} \sigma^{\frac{1}{p} - \frac{1}{2}}  \nonumber \\
& + l^{-10} \lambda_{q+1} r^{\frac{1}{p} - \frac{1}{2}} \sigma^{\frac{1}{p} - \frac{1}{2}} + l^{-5} \lambda_{q+1}^{2}  r^{\frac{1}{p} - \frac{5}{2}} \sigma^{\frac{1}{p} + \frac{3}{2}}  + \lambda_{q+1}^{3} r^{\frac{1}{p} - \frac{5}{2}} \sigma^{\frac{1}{p} + \frac{3}{2}} + \lambda_{q+1}^{3} r^{\frac{1}{p} - \frac{3}{2}} \sigma^{\frac{1}{p} + \frac{1}{2}}]\nonumber \\
& \hspace{25mm} \overset{\eqref{estimate 189}}{\lesssim} m_{L}^{\frac{1}{5}} \delta_{q+1}^{\frac{1}{2}} \lambda_{q+1} l^{-2} M_{0}(t)^{\frac{1}{2}} r^{\frac{1}{p} - \frac{3}{2}} \sigma^{\frac{1}{p} + \frac{1}{2}}, \label{estimate 348}\\
 \lVert w_{q+1}^{c} \rVert_{C_{t}W_{x}^{1,p}} &\overset{\eqref{estimate 332} \eqref{estimate 308}\eqref{estimate 325}}{\lesssim} m_{L} \delta_{q+1}^{\frac{1}{2}} \lambda_{q+1}^{-2} M_{0}(t)^{\frac{1}{2}} l^{-2}  [ l^{-30} r^{\frac{1}{p} - \frac{1}{2}} \sigma^{\frac{1}{p} - \frac{1}{2}} + l^{-10} \lambda_{q+1}^{2} r^{\frac{1}{p} - \frac{1}{2}} \sigma^{\frac{1}{p} - \frac{1}{2}} \nonumber\\
&   + l^{-20} \lambda_{q+1} r^{\frac{1}{p} - \frac{1}{2}} \sigma^{\frac{1}{p} - \frac{1}{2}} + l^{-10} \lambda_{q+1}^{2} r^{\frac{1}{p} - \frac{3}{2}} \sigma^{\frac{1}{p} + \frac{1}{2}} + \lambda_{q+1}^{3} r^{\frac{1}{p} - \frac{5}{2}} \sigma^{\frac{1}{p} + \frac{3}{2}} + \lambda_{q+1}^{3} r^{\frac{1}{p} - \frac{3}{2}} \sigma^{\frac{1}{p} + \frac{1}{2}}] \nonumber\\
& \hspace{25mm} \overset{\eqref{estimate 190} \eqref{estimate 130} \eqref{sigma, r, mu} }{\lesssim} m_{L} \delta_{q+1}^{\frac{1}{2}} \lambda_{q+1} M_{0}(t)^{\frac{1}{2}} l^{-2} r^{\frac{1}{p} - \frac{3}{2}} \sigma^{\frac{1}{p} + \frac{1}{2}}. \label{estimate 349}
\end{align}
\end{subequations} 
Finally, for all $p \in (1,\infty)$, we can estimate
\begin{subequations}\label{estimate 441}
\begin{align}
& \lVert d_{q+1}^{t} \rVert_{C_{t}W_{x}^{1,p}} \overset{\eqref{estimate 334} \eqref{estimate 191}}{\lesssim} \delta_{q+1} \mu^{-1} l^{-4} M_{0}(t) \lambda_{q+1} r^{\frac{1}{p} - 1} \sigma^{\frac{1}{p} - 1}, \label{estimate 350}\\
& \lVert w_{q+1}^{t} \rVert_{C_{t}W_{x}^{1,p}} \overset{\eqref{estimate 334}\eqref{estimate 326} \eqref{estimate 310} \eqref{estimate 175}}{\lesssim} m_{L}^{\frac{8}{5}}\delta_{q+1} \mu^{-1}  l^{-4} M_{0}(t) \lambda_{q+1} r^{\frac{1}{p} - 1} \sigma^{\frac{1}{p} -1},  \label{estimate 351}
\end{align}
\end{subequations}
where \eqref{estimate 350} follows from the computations in \eqref{estimate 191} because for all $\xi \in \Lambda_{\Xi}$, our estimates on $\lVert a_{\xi} \rVert_{C_{t}C_{x}^{j}}$ in \eqref{estimate 150} and \eqref{estimate 436} are identical. These lead us to conclude for all $p \in (1,\infty)$, 
\begin{equation}\label{estimate 377}
 \lVert w_{q+1} \rVert_{C_{t} W_{x}^{1,p}} + \lVert d_{q+1} \rVert_{C_{t}W_{x}^{1,p}} \overset{\eqref{estimate 440}-\eqref{estimate 441} \eqref{estimate 130}}{\lesssim} m_{L} M_{0}(t)^{\frac{1}{2}} l^{-2} \delta_{q+1}^{\frac{1}{2}} r^{\frac{1}{p} - \frac{1}{2}} \sigma^{\frac{1}{p} - \frac{1}{2}} \lambda_{q+1}. 
\end{equation}
Next, we estimate using \eqref{estimate 380}  
\begin{subequations}\label{estimate 356}
\begin{align}
&\lVert d_{q+1}^{p} + d_{q+1}^{c}\rVert_{C_{t,x}^{1}} \overset{\eqref{estimate 194}}{\lesssim}m_{L}^{\frac{1}{5}}  \delta_{q+1}^{\frac{1}{2}} l^{-2} M_{0}(t)^{\frac{1}{2}} \lambda_{q+1}\sigma^{\frac{1}{2}} r^{-\frac{3}{2}} \mu, \label{estimate 352} \\
& \lVert w_{q+1}^{p} + w_{q+1}^{c} \rVert_{C_{t,x}^{1}} \lesssim \lambda_{q+1}^{-2} \sum_{\xi \in \Lambda}\lVert \bar{a}_{\xi} \rVert_{C_{t}^{1}C_{x}^{2}} \lVert \phi_{\xi} \Psi_{\xi} \rVert_{C_{t}C_{x}} + \lVert \bar{a}_{\xi} \rVert_{C_{t,x}} (\lVert \phi_{\xi} \rVert_{C_{t}^{1}C_{x}^{2}} \lVert \Psi_{\xi} \rVert_{C_{x}} + \lVert \phi_{\xi} \rVert_{C_{t}^{1}C_{x}} \lVert \Psi_{\xi} \rVert_{C_{x}^{2}}) \nonumber\\
& \hspace{23mm} + \lVert \bar{a}_{\xi} \rVert_{C_{t}C_{x}^{3}} \lVert \phi_{\xi} \Psi_{\xi} \rVert_{C_{t}C_{x}} + \lVert \bar{a}_{\xi} \rVert_{C_{t,x}} \lVert \phi_{\xi} \Psi_{\xi} \rVert_{C_{t}C_{x}^{3}} \nonumber\\
& \hspace{26mm} \overset{\eqref{estimate 420} \eqref{estimate 308} \eqref{estimate 310} \eqref{estimate 175}}{\lesssim}  m_{L} \delta_{q+1}^{\frac{1}{2}} \lambda_{q+1} M_{0}(t)^{\frac{1}{2}} l^{-2} \sigma^{\frac{1}{2}} r^{-\frac{3}{2}} \mu, \label{estimate 353} 
\end{align}
\end{subequations}
where \eqref{estimate 352} follows from \eqref{estimate 194} because the only difference in the estimates of $\lVert a_{\xi} \rVert_{C_{t}C_{x}^{j}}$ and $\lVert a_{\xi} \rVert_{C_{t}^{1}C_{x}^{j}}$ for $\xi \in \Lambda_{\Xi}$ in \eqref{estimate 437} and \eqref{estimate 442} compared to \eqref{estimate 150} and \eqref{estimate 309} is a factor of $m_{L}^{\frac{1}{5}}$. Next, because for $\xi \in \Lambda_{\Xi}$ the bounds of $\lVert a_{\xi} \rVert_{C_{t}C_{x}^{j}}$, $\lVert a_{\xi} \rVert_{C_{t}^{1}C_{x}^{j}}$, and $\lVert a_{\xi} \rVert_{C_{t}^{2}C_{x}}$ for $\xi \in \Lambda_{\Xi}$ in \eqref{estimate 436}, \eqref{estimate 415}, and \eqref{estimate 407} are same as \eqref{estimate 150}, \eqref{estimate 309}, and \eqref{estimate 406}, we can bound $\mathbb{P}\mathbb{P}_{\neq 0}$ in the expense of $\lambda_{q+1}^{\alpha}$ and obtain identically to \eqref{estimate 410} 
\begin{equation}\label{estimate 354}
\lVert d_{q+1}^{t} \rVert_{C_{t,x}^{1}}\lesssim \delta_{q+1} M_{0}(t) l^{-4} \lambda_{q+1}^{1+ \alpha}  r^{-2}.
\end{equation} 
Similarly, we can deduce 
\begin{equation}\label{estimate 355}
\lVert w_{q+1}^{t} \rVert_{C_{t,x}^{1}} \overset{\eqref{estimate 308} \eqref{estimate 310} \eqref{estimate 420} \eqref{estimate 175}}{\lesssim} m_{L}^{\frac{8}{5}} \delta_{q+1} M_{0}(t) l^{-4} \lambda_{q+1}^{1+ \alpha} r^{-2}. 
\end{equation} 
We are now ready to conclude via \eqref{estimate 176} 
\begin{align}
 \lVert w_{q+1} \rVert_{C_{t,x}^{1}} + \lVert d_{q+1} \rVert_{C_{t,x}^{1}} \overset{\eqref{estimate 356}-\eqref{estimate 355}}{\lesssim}& m_{L} l^{-2} \delta_{q+1}^{\frac{1}{2}} M_{0}(t)^{\frac{1}{2}} \lambda_{q+1} \sigma^{\frac{1}{2}} r^{-\frac{3}{2}} \mu + m_{L}^{\frac{8}{5}} M_{0}(t) l^{-4} \lambda_{q+1}^{1+ \alpha} r^{-2} \delta_{q+1}  \nonumber\\
\overset{\eqref{sigma, r, mu} \eqref{estimate 130}}{\lesssim}& m_{L} l^{-2} \delta_{q+1}^{\frac{1}{2}} M_{0}(t)^{\frac{1}{2}} \lambda_{q+1} \sigma^{\frac{1}{2}} r^{-\frac{3}{2}} \mu.\label{estimate 357}
\end{align}
We can verify \eqref{estimate 285} via \eqref{estimate 203} as follows:
\begin{subequations}
\begin{align}
&\lVert v_{q+1} (t) - v_{q}(t) \rVert_{L_{x}^{2}} \leq \lVert w_{q+1}(t) \rVert_{L_{x}^{2}} + \lVert v_{l}(t) - v_{q}(t) \rVert_{L_{x}^{2}} \overset{\eqref{estimate 342} \eqref{estimate 302}}{\leq} m_{L} M_{0}(t)^{\frac{1}{2}}\delta_{q+1}^{\frac{1}{2}},    \\
&\lVert \Xi_{q+1} (t) - \Xi_{q}(t) \rVert_{L_{x}^{2}} \leq \lVert d_{q+1}(t) \rVert_{L_{x}^{2}} + \lVert \Xi_{l}(t) - \Xi_{q}(t) \rVert_{L_{x}^{2}}  \overset{\eqref{estimate 341} \eqref{estimate 302}}{\leq} m_{L}M_{0}(t)^{\frac{1}{2}} \delta_{q+1}^{\frac{1}{2}} .   
\end{align}
\end{subequations}
Additionally, we can verify \eqref{estimate 268}-\eqref{estimate 269} at level $q+1$ via \eqref{estimate 203} as 
\begin{subequations}\label{estimate 448}
\begin{align}
&\lVert v_{q+1} \rVert_{C_{t}L_{x}^{2}} \leq\Vert w_{q+1} \rVert_{C_{t}L_{x}^{2}} + m_{L} M_{0}(t)^{\frac{1}{2}} (1+ \sum_{1\leq \iota \leq q} \delta_{\iota}^{\frac{1}{2}}) \leq m_{L}M_{0}(t)^{\frac{1}{2}} (1+ \sum_{1 \leq \iota \leq q+1} \delta_{\iota}^{\frac{1}{2}}), \\
& \lVert \Xi_{q+1} \rVert_{C_{t}L_{x}^{2}} \leq \Vert d_{q+1} \rVert_{C_{t}L_{x}^{2}} + m_{L}M_{0}(t)^{\frac{1}{2}} (1+ \sum_{1\leq \iota \leq q} \delta_{\iota}^{\frac{1}{2}}) \leq m_{L}M_{0}(t)^{\frac{1}{2}} (1+ \sum_{1 \leq \iota \leq q+1} \delta_{\iota}^{\frac{1}{2}})
\end{align} 
\end{subequations}
due to \eqref{estimate 303}-\eqref{estimate 304}, and \eqref{estimate 341}-\eqref{estimate 342}. At last, we verify \eqref{estimate 270} at level $q+1$ as follows: by taking $a \in 2 \mathbb{N}$ sufficiently large, 
\begin{align*}
\lVert v_{q+1} \rVert_{C_{t,x}^{1}}  + \lVert \Xi_{q+1} \rVert_{C_{t,x}^{1}} &
\overset{\eqref{estimate 203} \eqref{estimate 357}}{\lesssim} \lVert v_{q} \rVert_{C_{t,x}^{1}} + \lVert \Xi_{q} \rVert_{C_{t,x}^{1}} + m_{L} \delta_{q+1}^{\frac{1}{2}} l^{-2} M_{0}(t)^{\frac{1}{2}} \lambda_{q+1} \sigma^{\frac{1}{2}} r^{-\frac{3}{2}} \mu \\
& \overset{\eqref{estimate 270}}{\lesssim} m_{L}M_{0}(t)^{\frac{1}{2}} \lambda_{q}^{4} + m_{L}l^{-2} M_{0}(t)^{\frac{1}{2}} \lambda_{q+1} \sigma^{\frac{1}{2}} r^{-\frac{3}{2}} \mu \overset{\eqref{estimate 130}}{\leq} m_{L}M_{0}(t)^{\frac{1}{2}} \lambda_{q+1}^{4}. 
\end{align*} 

\subsubsection{Reynolds stress}  
We first compute using \eqref{estimate 277}, \eqref{estimate 358}, \eqref{estimate 176}, and \eqref{estimate 203}, 
\begin{align}
\text{div} \mathring{R}_{q+1}^{\Xi} &= \partial_{t} d_{q+1}^{p} + \partial_{t} d_{q+1}^{c} + \partial_{t} d_{q+1}^{t} + \frac{1}{2} d_{q+1} + (-\Delta)^{m_{2}} d_{q+1} \label{estimate 443}  \\
& - \Upsilon_{1,l} \text{div} (v_{l} \otimes \Xi_{l} - \Xi_{l} \otimes v_{l}) + \text{div} \mathring{R}_{l}^{\Xi} + \text{div} R_{\text{com1}}^{\Xi} + \Upsilon_{1} \text{div} (v_{q+1} \otimes \Xi_{q+1} - \Xi_{q+1} \otimes v_{q+1}) \nonumber 
\end{align}
where we may further rewrite via \eqref{estimate 176}-\eqref{estimate 203} 
\begin{align}
& - \Upsilon_{1,l} \text{div} (v_{l} \otimes \Xi_{l} - \Xi_{l} \otimes v_{l}) + \Upsilon_{1} \text{div} (v_{q+1} \otimes \Xi_{q+1} - \Xi_{q+1} \otimes v_{q+1}) \nonumber \\
=&  \Upsilon_{1,l} \text{div} (v_{l} \otimes d_{q+1} + (w_{q+1}^{c} + w_{q+1}^{t}) \otimes d_{q+1} + w_{q+1}^{p} \otimes d_{q+1}^{p} \nonumber \\
& \hspace{5mm} + w_{q+1}^{p} \otimes (d_{q+1}^{c} + d_{q+1}^{t}) + w_{q+1} \otimes \Xi_{l} \nonumber \\
& \hspace{5mm} - \Xi_{l} \otimes w_{q+1} - (d_{q+1}^{c} + d_{q+1}^{t}) \otimes w_{q+1} - d_{q+1}^{p} \otimes w_{q+1}^{p} - d_{q+1}^{p} \otimes (w_{q+1}^{c} + w_{q+1}^{t}) - d_{q+1} \otimes v_{l}) \nonumber \\
&+ (\Upsilon_{1} - \Upsilon_{1,l}) \text{div} (v_{q+1} \otimes \Xi_{q+1} - \Xi_{q+1} \otimes v_{q+1}). \label{estimate 444} 
\end{align}
Applying \eqref{estimate 444}  to \eqref{estimate 443} gives us  
\begin{align}
 \text{div} \mathring{R}_{q+1}^{\Xi} =& \underbrace{\frac{1}{2} d_{q+1} + (-\Delta)^{m_{2}} d_{q+1} + \partial_{t} d_{q+1}^{p} + \partial_{t} d_{q+1}^{c}}_{\text{Part of } \text{div} R_{\text{lin}}^{\Xi}} \nonumber \\
& \underbrace{ + \text{div} \Upsilon_{1,l} ( w_{q+1} \otimes \Xi_{l} + v_{l} \otimes d_{q+1} - d_{q+1} \otimes v_{l} - \Xi_{l} \otimes w_{q+1})}_{\text{Another part of } \text{div} R_{\text{lin}}^{\Xi}} \nonumber \\
&\underbrace{+ \text{div} \Upsilon_{1,l} ( ( w_{q+1}^{c} + w_{q+1}^{t} ) \otimes d_{q+1} + w_{q+1}^{p} \otimes (d_{q+1}^{c} + d_{q+1}^{t})}_{\text{Part of div} R_{\text{corr}}^{\Xi}} \nonumber  \\
& \hspace{5mm}  \underbrace{ - (d_{q+1}^{c} + d_{q+1}^{t}) \otimes w_{q+1} - d_{q+1}^{p} \otimes (w_{q+1}^{c} + w_{q+1}^{t})}_{\text{Another part of div} R_{\text{corr}}^{\Xi}} \nonumber \\
&\underbrace{+ \text{div} (\Upsilon_{1,l} w_{q+1}^{p} \otimes d_{q+1}^{p} - \Upsilon_{1,l} d_{q+1}^{p} \otimes w_{q+1}^{p} + \mathring{R}_{l}^{\Xi} ) + \partial_{t} d_{q+1}^{t}}_{\text{div} R_{\text{osc}}^{\Xi}} \nonumber\\
&\underbrace{+ (\Upsilon_{1} - \Upsilon_{1,l} ) \text{div} (v_{q+1} \otimes \Xi_{q+1} - \Xi_{q+1} \otimes v_{q+1})}_{\text{div} R_{\text{com2}}^{\Xi}} + \text{div} R_{\text{com1}}^{\Xi}.  \label{estimate 396}
\end{align}
Similarly, we can write using \eqref{estimate 277}, \eqref{estimate 358}, \eqref{estimate 176}, and \eqref{estimate 203}, 
\begin{align}
\text{div} \mathring{R}_{q+1}^{v} - \nabla p_{q+1} 
&=\partial_{t} w_{q+1}^{p} + \partial_{t} w_{q+1}^{c} + \partial_{t} w_{q+1}^{t} + \frac{1}{2} w_{q+1} + (-\Delta)^{m_{1}} w_{q+1}  \nonumber \\
&-\text{div} (\Upsilon_{1,l} (v_{l} \otimes v_{l} ) - \Upsilon_{1,l}^{-1} \Upsilon_{2,l}^{2} (\Xi_{l} \otimes \Xi_{l} )) -\nabla p_{l} + \text{div} \mathring{R}_{l}^{v} + \text{div} R_{\text{com1}}^{v} \nonumber \\
&+ \text{div} ( \Upsilon_{1} (v_{q+1} \otimes v_{q+1} ) - \Upsilon_{1}^{-1} \Upsilon_{2}^{2} (\Xi_{q+1} \otimes \Xi_{q+1} )), \label{estimate 359} 
\end{align}
where we rewrite the nonlinear terms within \eqref{estimate 359} as follows, although it is complicated due to $\Upsilon_{1,l}^{-1} \Upsilon_{2,l}^{2}$; first, we write 
\begin{align}
& - \Upsilon_{1}^{-1} \Upsilon_{2}^{2} (\Xi_{q+1} \otimes \Xi_{q+1}) +\Upsilon_{1,l}^{-1} \Upsilon_{2,l}^{2} (\Xi_{l} \otimes \Xi_{l}) = - (\Upsilon_{1}^{-1} - \Upsilon_{1,l}^{-1})\Upsilon_{2}^{2} (\Xi_{q+1} \otimes \Xi_{q+1}) \nonumber \\
& \hspace{7mm} - \Upsilon_{1,l}^{-1} (\Upsilon_{2} - \Upsilon_{2,l}) (\Upsilon_{2} + \Upsilon_{2,l}) (\Xi_{q+1} \otimes \Xi_{q+1})  - \Upsilon_{1,l}^{-1} \Upsilon_{2,l}^{2} (\Xi_{q+1} \otimes \Xi_{q+1} - \Xi_{l} \otimes \Xi_{l})  \label{estimate 445}
\end{align}
so that via \eqref{estimate 176}-\eqref{estimate 203} 
\begin{align}
& - \text{div} (\Upsilon_{1,l} (v_{l} \otimes v_{l}) - \Upsilon_{1,l}^{-1} \Upsilon_{2,l}^{2} (\Xi_{l} \otimes \Xi_{l} )) + \text{div} (\Upsilon_{1} (v_{q+1}\otimes v_{q+1} ) - \Upsilon_{1}^{-1}\Upsilon_{2}^{2}(\Xi_{q+1} \otimes \Xi_{q+1} ))  \nonumber\\
=& \Upsilon_{1,l} \text{div} (v_{l} \otimes w_{q+1} + (w_{q+1}^{c} + w_{q+1}^{t}) \otimes w_{q+1} + w_{q+1}^{p} \otimes w_{q+1}^{p}  \nonumber\\
& \hspace{40mm}  + w_{q+1}^{p} \otimes (w_{q+1}^{c}+  w_{q+1}^{t}) + w_{q+1} \otimes v_{l})  \nonumber\\
& - \Upsilon_{1,l}^{-1} \Upsilon_{2,l}^{2} \text{div} (\Xi_{l} \otimes d_{q+1} + (d_{q+1}^{c} + d_{q+1}^{t}) \otimes d_{q+1} + d_{q+1}^{p} \otimes d_{q+1}^{p}   \nonumber\\
& \hspace{40mm} + d_{q+1}^{p} \otimes (d_{q+1}^{c} + d_{q+1}^{t}) + d_{q+1} \otimes \Xi_{l})  \nonumber\\
&+ (\Upsilon_{1} - \Upsilon_{1,l}) \text{div} (v_{q+1} \otimes v_{q+1})   - (\Upsilon_{1}^{-1} - \Upsilon_{1,l}^{-1}) \Upsilon_{2}^{2} \text{div} (\Xi_{q+1} \otimes \Xi_{q+1}) \nonumber \\
& -\Upsilon_{1,l}^{-1} (\Upsilon_{2} - \Upsilon_{2,l}) (\Upsilon_{2} + \Upsilon_{2,l}) \text{div} (\Xi_{q+1} \otimes \Xi_{q+1}).  \label{estimate 360}
\end{align}
Therefore, applying \eqref{estimate 360} to \eqref{estimate 359} gives us 
\begin{align}
& \text{div} \mathring{R}_{q+1}^{v} - \nabla p_{q+1} \label{estimate 397}\\
=& \underbrace{\frac{1}{2} w_{q+1} + (-\Delta)^{m_{1}} w_{q+1} + \partial_{t} w_{q+1}^{p} + \partial_{t} w_{q+1}^{c}}_{\text{ Part of (div}R_{\text{lin}}^{v} +  \nabla p_{\text{lin}}^{v})} \nonumber\\
& + \underbrace{ \text{div} (\Upsilon_{1,l} w_{q+1} \otimes v_{l} + \Upsilon_{1,l} v_{l} \otimes w_{q+1} - \Upsilon_{1,l}^{-1} \Upsilon_{2,l}^{2} d_{q+1} \otimes \Xi_{l} - \Upsilon_{1,l}^{-1} \Upsilon_{2,l}^{2} \Xi_{l} \otimes d_{q+1})}_{\text{Another part of (div} R_{\text{lin}}^{v} + \nabla p_{\text{lin}}^{v})} \nonumber\\ 
& + \underbrace{ \text{div}(\Upsilon_{1,l} (w_{q+1}^{c} + w_{q+1}^{t}) \otimes w_{q+1} + \Upsilon_{1,l} w_{q+1}^{p} \otimes (w_{q+1}^{c} + w_{q+1}^{t})}_{\text{Part of (div}R_{\text{corr}}^{v} + \nabla p_{\text{corr}}^{v})}  \nonumber \\
& \hspace{5mm} \underbrace{-\Upsilon_{1,l}^{-1} \Upsilon_{2,l}^{2} (d_{q+1}^{c} + d_{q+1}^{t}) \otimes d_{q+1} - \Upsilon_{1,l}^{-1} \Upsilon_{2,l}^{2} d_{q+1}^{p} \otimes (d_{q+1}^{c} + d_{q+1}^{t} ) )}_{\text{Another part of (div}R_{\text{corr}}^{v} + \nabla p_{\text{corr}}^{v})} \nonumber\\
&+  \underbrace{ \text{div} (\Upsilon_{1,l} w_{q+1}^{p} \otimes w_{q+1}^{p} - \Upsilon_{1,l}^{-1} \Upsilon_{2,l}^{2} d_{q+1}^{p} \otimes d_{q+1}^{p} + \mathring{R}_{l}^{v}) + \partial_{t} w_{q+1}^{t}}_{\text{div} R_{\text{osc}}^{v} + \nabla p_{\text{osc}}^{v}} \nonumber\\
&+ \underbrace{ (\Upsilon_{1} - \Upsilon_{1,l}) \text{div} (v_{q+1} \otimes v_{q+1} ) - (\Upsilon_{1}^{-1} - \Upsilon_{1,l}^{-1} )\Upsilon_{2}^{2} \text{ div } (\Xi_{q+1} \otimes \Xi_{q+1})}_{\text{Part of (div} R_{\text{com2}}^{v} + \nabla p_{\text{com2}})} \nonumber \\
&  \underbrace{ - \Upsilon_{1,l}^{-1} (\Upsilon_{2} - \Upsilon_{2,l}) (\Upsilon_{2} + \Upsilon_{2,l}) \text{div} (\Xi_{q+1} \otimes \Xi_{q+1} )}_{\text{Another parat of (div} R_{\text{com2}}^{v} + \nabla p_{\text{com2}})} + \text{div} R_{\text{com1}}^{v} - \nabla p_{l}. \nonumber 
\end{align}
Let us now compute from \eqref{estimate 396}  
\begin{align} 
\text{div} R_{\text{osc}}^{\Xi} 
=& \text{div} ( \Upsilon_{1,l} \sum_{\xi \in \Lambda_{\Xi}} \bar{a}_{\xi}^{2} \phi_{\xi}^{2} \varphi_{\xi}^{2} (\xi \otimes \xi_{2} - \xi_{2} \otimes \xi)  \\
& \hspace{5mm} + \Upsilon_{1,l} \sum_{\xi \in \Lambda, \xi' \in \Lambda_{\Xi}: \xi \neq \xi'} \bar{a}_{\xi} \bar{a}_{\xi'} \phi_{\xi} \phi_{\xi'} \varphi_{\xi} \varphi_{\xi'} (\xi \otimes \xi_{2}' - \xi_{2}' \otimes \xi) + \mathring{R}_{l}^{\Xi}) + \partial_{t} d_{q+1}^{t}  \nonumber \\
\overset{\eqref{estimate 361}}{=}& \text{div} (\sum_{\xi \in \Lambda_{\Xi}} \Upsilon_{1,l} \bar{a}_{\xi}^{2} \mathbb{P}_{\neq 0} (\phi_{\xi}^{2} \varphi_{\xi}^{2}) (\xi \otimes \xi_{2} - \xi_{2} \otimes \xi)  \nonumber \\
& \hspace{5mm}+ \Upsilon_{1,l} \sum_{\xi \in \Lambda, \xi'\in \Lambda_{\Xi}: \xi \neq \xi'} \bar{a}_{\xi} \bar{a}_{\xi'} \phi_{\xi} \phi_{\xi'} \varphi_{\xi} \varphi_{\xi'} (\xi \otimes \xi_{2}' - \xi_{2}' \otimes \xi) ) + \partial_{t} d_{q+1}^{t}  \nonumber \\
\overset{\eqref{estimate 306} \eqref{estimate 362}}{=}& \text{div} (\sum_{\xi \in \Lambda_{\Xi}}   a_{\xi}^{2} \mathbb{P}_{\neq \frac{ \lambda_{q+1} \sigma}{2}} (\phi_{\xi}^{2} \varphi_{\xi}^{2}) (\xi \otimes \xi_{2} - \xi_{2} \otimes \xi)  \nonumber \\
& \hspace{5mm}+  \sum_{\xi \in \Lambda, \xi'\in \Lambda_{\Xi}: \xi \neq \xi'} a_{\xi} a_{\xi'} \phi_{\xi} \phi_{\xi'} \varphi_{\xi} \varphi_{\xi'} (\xi \otimes \xi_{2}' - \xi_{2}' \otimes \xi) ) + \partial_{t} d_{q+1}^{t}. \nonumber 
\end{align} 
This is precisely $\text{div} (E_{1}^{\Xi} + E_{2}^{\Xi}) + \partial_{t} d_{q+1}^{t}$ in \eqref{estimate 363}-\eqref{estimate 446}.  Therefore, the definition of $R_{\text{osc}}^{\Xi}$ in our current case is same as that of \eqref{estimate 423}  in the proof of Proposition \ref{Proposition 4.8}. Moreover, because the estimates of $\lVert a_{\xi} \rVert_{C_{t}C_{x}^{j}}$ and $\lVert a_{\xi} \rVert_{C_{t}^{1}C_{x}^{j}}$ for $\xi \in \Lambda_{\Xi}$ in \eqref{estimate 150} and \eqref{estimate 309} remained same in \eqref{estimate 308}-\eqref{estimate 310}, the estimate $\lVert \mathcal{R}^{\Xi} (\text{div} E_{1}^{\Xi} + \partial_{t} d_{q+1}^{t} ) \rVert_{C_{t}L_{x}^{p^{\ast}}}  \ll M_{0}(t) \delta_{q+2}$ in \eqref{estimate 244} remains valid in our current case.  On the other hand, for $\text{div}R_{\text{osc}}^{v} + \nabla p_{\text{osc}}$, we continue from \eqref{estimate 364} as 
\begin{align}
&\text{div} R_{\text{osc}}^{v} + \nabla p_{\text{osc}}  \\
\overset{\eqref{estimate 364}}{=}& \nabla \rho_{v} + \text{div} (\sum_{\xi \in \Lambda} a_{\xi}^{2} \mathbb{P}_{\neq 0} (\phi_{\xi}^{2} \varphi_{\xi}^{2}) (\xi\otimes \xi)) - \text{div} (\sum_{\xi \in \Lambda_{\Xi}} a_{\xi}^{2} \mathbb{P}_{\neq 0} (\phi_{\xi}^{2} \varphi_{\xi}^{2}) \Upsilon_{1,l}^{-2} \Upsilon_{2,l}^{2} (\xi_{2}\otimes \xi_{2} )) \nonumber \\
&+ \text{div} (\sum_{\xi, \xi' \in \Lambda: \xi \neq \xi'} a_{\xi} a_{\xi'} \phi_{\xi} \phi_{\xi'} \varphi_{\xi} \varphi_{\xi'} (\xi\otimes \xi') \nonumber\\
& \hspace{10mm} - \Upsilon_{1,l}^{-2} \Upsilon_{2,l}^{2} \sum_{\xi, \xi' \in \Lambda_{\Xi}: \xi \neq \xi'} a_{\xi} a_{\xi'} \phi_{\xi} \phi_{\xi'} \varphi_{\xi} \varphi_{\xi'} (\xi_{2} \otimes \xi_{2} ')) + \partial_{t} w_{q+1}^{t}  \nonumber\\
&\overset{\eqref{estimate 57}\eqref{estimate 212} \eqref{estimate 214} }{=} \nabla \rho_{v} + \sum_{\xi \in\Lambda} \mathbb{P}_{\neq 0} (( \xi \otimes \xi) \nabla a_{\xi}^{2} \mathbb{P}_{\geq \frac{\lambda_{q+1} \sigma}{2}} (\phi_{\xi}^{2}\varphi_{\xi}^{2} )) \nonumber\\
& - \mu^{-1} \sum_{\xi \in \Lambda} \mathbb{P}_{\neq 0} (\partial_{t} a_{\xi}^{2} \mathbb{P}_{\neq 0} (\phi_{\xi}^{2} \varphi_{\xi}^{2} )) \xi + \mu^{-1} \sum_{\xi \in \Lambda} \nabla \Delta^{-1} \text{div} \partial_{t} (a_{\xi}^{2} \mathbb{P}_{\neq 0} (\phi_{\xi}^{2} \varphi_{\xi}^{2} )) \xi  \nonumber\\
& - \Upsilon_{1,l}^{-2} \Upsilon_{2,l}^{2} \sum_{\xi \in \Lambda_{\Xi}} \mathbb{P}_{\neq 0} (( \xi_{2} \otimes \xi_{2}) \nabla a_{\xi}^{2} \mathbb{P}_{\geq \frac{\lambda_{q+1} \sigma}{2}} (\phi_{\xi}^{2} \varphi_{\xi}^{2}))  \nonumber\\
& + \text{div} (\sum_{\xi,\xi' \in\Lambda: \xi \neq \xi'} a_{\xi}a_{\xi'} \phi_{\xi}\phi_{\xi'} \varphi_{\xi}\varphi_{\xi'} \xi \otimes \xi' - \Upsilon_{1,l}^{-2} \Upsilon_{2,l}^{2}\sum_{\xi, \xi' \in\Lambda_{\Xi}: \xi \neq \xi'}  a_{\xi} a_{\xi'} \phi_{\xi}\phi_{\xi'} \varphi_{\xi}\varphi_{\xi'} \xi_{2}\otimes \xi_{2}').  \nonumber
\end{align}
Therefore, at last we define again $\mathring{R}_{q+1}^{v}$ and $\mathring{R}_{q+1}^{\Xi}$ identically to \eqref{estimate 218}-\eqref{estimate 219} while 
\begin{equation}
p_{q+1} \triangleq - p_{\text{lin}} - p_{\text{corr}} - p_{\text{osc}} - p_{\text{com2}} + p_{l}
\end{equation} 
and besides $R_{\text{osc}}^{\Xi}$ in \eqref{estimate 423}, and $R_{\text{com1}}^{v}, R_{\text{com1}}^{\Xi}$, and $p_{l}$ in \eqref{estimate 384} we define 
\begin{subequations}\label{estimate 449}
\begin{align}
R_{\text{lin}}^{\Xi} \triangleq& \mathcal{R}^{\Xi} ( \frac{1}{2} d_{q+1} + (-\Delta)^{m_{2}} d_{q+1}) + \mathcal{R}^{\Xi} ( \partial_{t} d_{q+1}^{p} + \partial_{t} d_{q+1}^{c})  \nonumber \\
&+ \Upsilon_{1,l} (w_{q+1} \otimes \Xi_{l} + v_{l} \otimes d_{q+1} - d_{q+1} \otimes v_{l} - \Xi_{l} \otimes w_{q+1}), \label{estimate 365}\\
R_{\text{corr}}^{\Xi} \triangleq& \Upsilon_{1,l} ( ( w_{q+1}^{c} + w_{q+1}^{t} ) \otimes d_{q+1} + w_{q+1}^{p} \otimes (d_{q+1}^{c} + d_{q+1}^{t} ) \nonumber\\
& \hspace{10mm} - (d_{q+1}^{c} + d_{q+1}^{t}) \otimes w_{q+1}  - d_{q+1}^{p} \otimes (w_{q+1}^{c} + w_{q+1}^{t} )), \label{estimate 366}\\ 
R_{\text{com2}}^{\Xi} \triangleq& (\Upsilon_{1} - \Upsilon_{1,l}) (v_{q+1} \otimes \Xi_{q+1} - \Xi_{q+1} \otimes v_{q+1}), \label{estimate 368}\\
R_{\text{lin}}^{v} \triangleq& \mathcal{R} ( \frac{1}{2} w_{q+1} + (-\Delta)^{m_{1}} w_{q+1}) + \mathcal{R} (\partial_{t} w_{q+1}^{p} + \partial_{t} w_{q+1}^{c})) \nonumber \\
& + \Upsilon_{1,l} (w_{q+1} \mathring{\otimes} v_{l} + v_{l} \mathring{\otimes} w_{q+1}) - \Upsilon_{1,l}^{-1} \Upsilon_{2,l}^{2} (d_{q+1} \mathring{\otimes} \Xi_{l} + \Xi_{l} \mathring{\otimes} d_{q+1}), \label{estimate 369} \\
p_{\text{lin}} \triangleq& \frac{2}{3} [ \Upsilon_{1,l} (w_{q+1} \cdot v_{l}) - \Upsilon_{1,l}^{-1} \Upsilon_{2,l}^{2} (d_{q+1} \cdot \Xi_{l}) ], \label{estimate 370}\\
R_{\text{corr}}^{v} \triangleq& \Upsilon_{1,l} [ (w_{q+1}^{c} + w_{q+1}^{t}) \mathring{\otimes} w_{q+1} + w_{q+1}^{p} \mathring{\otimes} (w_{q+1}^{c}+ w_{q+1}^{t}) ] \nonumber \\
& - \Upsilon_{1,l}^{-1} \Upsilon_{2,l}^{2} [(d_{q+1}^{c} + d_{q+1}^{t}) \mathring{\otimes} d_{q+1} + d_{q+1}^{p} \mathring{\otimes} (d_{q+1}^{c} + d_{q+1}^{t} )], \label{estimate 371}\\
p_{\text{corr}} \triangleq& \frac{1}{3} [ \Upsilon_{1,l}  ( w_{q+1}^{c} + w_{q+1}^{t}) \cdot (w_{q+1} + w_{q+1}^{p}) \nonumber\\
& \hspace{10mm} - \Upsilon_{1,l}^{-1} \Upsilon_{2,l}^{2} (d_{q+1}^{c} + d_{q+1}^{t}) \cdot (d_{q+1} + d_{q+1}^{p})],  \label{estimate 372} \\
R_{\text{osc}}^{v} \triangleq& \mathcal{R} [ \sum_{\xi \in \Lambda} \mathbb{P}_{\neq 0} ( ( \xi \otimes \xi) \nabla a_{\xi}^{2} \mathbb{P}_{\geq \frac{\lambda_{q+1} \sigma}{2}} (\phi_{\xi}^{2} \varphi_{\xi}^{2} )) - \mu^{-1} \sum_{\xi \in \Lambda} \mathbb{P}_{\neq 0} (\partial_{t} a_{\xi}^{2} \mathbb{P}_{\neq 0} (\phi_{\xi}^{2} \varphi_{\xi}^{2} )) \xi \label{estimate 373} \\
& \hspace{20mm} - \Upsilon_{1,l}^{-2} \Upsilon_{2,l}^{2} \sum_{\xi \in \Lambda_{\Xi}} \mathbb{P}_{\neq 0} (( \xi_{2} \otimes \xi_{2}) \nabla a_{\xi}^{2} \mathbb{P}_{\geq \frac{\lambda_{q+1} \sigma}{2}} (\phi_{\xi}^{2} \varphi_{\xi}^{2} ))] \nonumber  \\
&+ \sum_{\xi, \xi' \in \Lambda: \xi \neq \xi'} a_{\xi} a_{\xi'} \phi_{\xi} \phi_{\xi'} \varphi_{\xi} \varphi_{\xi'} (\xi \mathring{\otimes} \xi') - \Upsilon_{1,l}^{-2} \Upsilon_{2,l}^{2} \sum_{\xi, \xi' \in \Lambda_{\Xi}: \xi \neq \xi'} a_{\xi} a_{\xi'} \phi_{\xi} \phi_{\xi'} \varphi_{\xi} \varphi_{\xi'} (\xi_{2} \mathring{\otimes} \xi_{2}'),  \nonumber \\
p_{\text{osc}} \triangleq& \rho_{v} + \mu^{-1} \sum_{\xi \in \Lambda} \Delta^{-1} \text{div} \partial_{t} (a_{\xi}^{2} \mathbb{P}_{\neq 0} (\phi_{\xi}^{2} \varphi_{\xi}^{2} )) \xi \label{estimate 374}\\
&+ \frac{1}{3}[ \sum_{\xi, \xi' \in \Lambda: \xi \neq \xi'} a_{\xi}a_{\xi'} \phi_{\xi}\phi_{\xi'} \varphi_{\xi}\varphi_{\xi'} \xi \cdot \xi' - \Upsilon_{1,l}^{-2} \Upsilon_{2,l}^{2} \sum_{\xi, \xi' \in \Lambda_{\Xi}: \xi \neq \xi'} a_{\xi}a_{\xi'} \phi_{\xi}\phi_{\xi'} \varphi_{\xi}\varphi_{\xi'} \xi_{2} \cdot \xi_{2}'], \nonumber \\
R_{\text{com2}}^{v} \triangleq& (\Upsilon_{1} - \Upsilon_{1,l}) (v_{q+1} \mathring{\otimes} v_{q+1}) - (\Upsilon_{1}^{-1} - \Upsilon_{1,l}^{-1}) \Upsilon_{2}^{2} (\Xi_{q+1} \mathring{\otimes} \Xi_{q+1}) \nonumber \\
& \hspace{20mm}  - \Upsilon_{1,l}^{-1} (\Upsilon_{2} -\Upsilon_{2,l}) (\Upsilon_{2} + \Upsilon_{2,l}) (\Xi_{q+1} \mathring{\otimes} \Xi_{q+1}), \label{estimate 375}\\
p_{\text{com2}} \triangleq& \frac{1}{3} [ (\Upsilon_{1} - \Upsilon_{1,l}) \lvert v_{q+1} \rvert^{2} - (\Upsilon_{1}^{-1} - \Upsilon_{1,l}^{-1}) \Upsilon_{2}^{2} \lvert \Xi_{q+1} \rvert^{2}  \nonumber\\
& \hspace{20mm} - \Upsilon_{1,l}^{-1} (\Upsilon_{2} - \Upsilon_{2,l}) (\Upsilon_{2} + \Upsilon_{2,l}) \lvert \Xi_{q+1}\rvert^{2}]. \label{estimate 376} 
\end{align}
\end{subequations} 
We choose the same $p^{\ast}$ from \eqref{p ast} and first estimate $R_{\text{lin}}^{\Xi}$ from \eqref{estimate 365} and $R_{\text{lin}}^{v}$ from \eqref{estimate 369}. We immediately have  
\begin{equation}\label{estimate 378}
\lVert \mathcal{R}^{\Xi} ( \frac{1}{2} d_{q+1} ) \rVert_{C_{t}L_{x}^{p^{\ast}}} + \lVert \mathcal{R} (\frac{1}{2} w_{q+1}) \rVert_{C_{t}L_{x}^{p^{\ast}}} 
\overset{\eqref{estimate 345}}{\lesssim} m_{L} M_{0}(t)^{\frac{1}{2}} l^{-2} r^{\frac{1}{p^{\ast}} - \frac{1}{2}} \sigma^{\frac{1}{p^{\ast}} - \frac{1}{2}} \delta_{q+1}^{\frac{1}{2}}. 
\end{equation} 
Concerning the diffusive terms within \eqref{estimate 365} and \eqref{estimate 369} we estimate  by \eqref{estimate 345} and \eqref{estimate 377} 
\begin{align}
& \lVert \mathcal{R}^{\Xi} ((-\Delta)^{m_{2}} d_{q+1}) \rVert_{C_{t}L_{x}^{p^{\ast}}} + \lVert \mathcal{R} ((-\Delta)^{m_{1}} w_{q+1}) \rVert_{C_{t}L_{x}^{p^{\ast}}}  \nonumber \\
& \hspace{30mm} \lesssim  m_{L} M_{0}(t)^{\frac{1}{2}} l^{-2} r^{\frac{1}{p^{\ast}} - \frac{1}{2}} \sigma^{\frac{1}{p^{\ast}} - \frac{1}{2}} \delta_{q+1}^{\frac{1}{2}} \lambda_{q+1}^{\max\{m_{1}^{\ast},m_{2}^{\ast} \}}.\label{estimate 379}
\end{align} 
Concerning the temporal derivatives in \eqref{estimate 365} and \eqref{estimate 369} we estimate 
\begin{align}
& \lVert \mathcal{R}^{\Xi} (\partial_{t} d_{q+1}^{p} + \partial_{t} d_{q+1}^{c}) \rVert_{C_{t}L_{x}^{p^{\ast}}} + \lVert \mathcal{R} (\partial_{t} w_{q+1}^{p} + \partial_{t} w_{q+1}^{c} ) \rVert_{C_{t}L_{x}^{p^{\ast}}} \nonumber \\
\overset{\eqref{estimate 380}}{\lesssim}& \lambda_{q+1}^{-2} [\sum_{\xi \in\Lambda_{\Xi}} \lVert \partial_{t} \text{curl} (\bar{a}_{\xi} \phi_{\xi} \Psi_{\xi} \xi_{2}) \rVert_{C_{t}L_{x}^{p^{\ast}}} + \sum_{\xi \in \Lambda} \lVert \partial_{t} \text{curl} (\bar{a}_{\xi} \phi_{\xi} \Psi_{\xi} \xi) \rVert_{C_{t}L_{x}^{p^{\ast}}}] \nonumber \\
&\overset{ \eqref{estimate 308} \eqref{estimate 310}\eqref{estimate 420}  \eqref{estimate 175}}{\lesssim} m_{L} M_{0}(t)^{\frac{1}{2}} l^{-2} \delta_{q+1}^{\frac{1}{2}} r^{\frac{1}{p^{\ast}} - \frac{3}{2}} \sigma^{\frac{1}{p^{\ast}} + \frac{1}{2}} \mu. \label{estimate 381}
\end{align} 
Via  \eqref{estimate 296}, \eqref{estimate 345}, and \eqref{estimate 270}, we estimate the rest of the terms in \eqref{estimate 365} and \eqref{estimate 369} by 
\begin{align}
& \lVert \Upsilon_{1,l} (w_{q+1} \otimes \Xi_{l} + v_{l} \otimes d_{q+1} - d_{q+1} \otimes v_{l} -\Xi_{l}\otimes w_{q+1}) \rVert_{C_{t}L_{x}^{p^{\ast}}} \label{estimate 382} \\
&+ \lVert \Upsilon_{1,l} (w_{q+1}\mathring{\otimes} v_{l} + v_{l} \mathring{\otimes }w_{q+1}) - \Upsilon_{1,l}^{-1} \Upsilon_{2,l}^{2} (d_{q+1} \mathring{\otimes} \Xi_{l} + \Xi_{l} \mathring{\otimes} d_{q+1}) \rVert_{C_{t}L_{x}^{p^{\ast}}} \nonumber \\
\lesssim& m_{L}^{\frac{6}{5}}l (\lVert v_{q} \rVert_{C_{t,x}^{1}} + \lVert \Xi_{q} \rVert_{C_{t,x}^{1}}) m_{L} M_{0}(t)^{\frac{1}{2}} l^{-2}\delta_{q+1}^{\frac{1}{2}} r^{\frac{1}{p^{\ast}} - \frac{1}{2}} \sigma^{\frac{1}{p^{\ast}} - \frac{1}{2}} 
\lesssim m_{L}^{\frac{16}{5}} l^{-1} \delta_{q+1}^{\frac{1}{2}} M_{0}(t) \lambda_{q}^{4} r^{\frac{1}{p^{\ast}} -\frac{1}{2}} \sigma^{\frac{1}{p^{\ast}} - \frac{1}{2}}.\nonumber 
\end{align}
Therefore, starting from \eqref{estimate 365} and \eqref{estimate 369}, we can apply \eqref{estimate 378}-\eqref{estimate 382} and actually the desired estimate follows from the computations within \eqref{estimate 249} from the proof of Proposition \ref{Proposition 4.8} as follows: for $a \in 2 \mathbb{N}$ sufficiently large  
\begin{align}
& \lVert R_{\text{lin}}^{\Xi} \rVert_{C_{t}L_{x}^{1}} + \lVert R_{\text{lin}}^{v} \rVert_{C_{t}L_{x}^{1}} \label{estimate 383}\\
\overset{\eqref{estimate 378}-\eqref{estimate 382}}{\lesssim}& m_{L}^{\frac{16}{5}}[ M_{0}(t)^{\frac{1}{2}} l^{-2} r^{\frac{1}{p^{\ast}} - \frac{1}{2}} \sigma^{\frac{1}{p^{\ast}} - \frac{1}{2}} \delta_{q+1}^{\frac{1}{2}} \lambda_{q+1}^{\max\{m_{1}^{\ast},m_{2}^{\ast} \}} \nonumber\\
&+ M_{0}(t)^{\frac{1}{2}} l^{-2} \delta_{q+1}^{\frac{1}{2}} r^{\frac{1}{p^{\ast}} - \frac{3}{2}} \sigma^{\frac{1}{p^{\ast}} + \frac{1}{2}} \mu +  l^{-1} \delta_{q+1}^{\frac{1}{2}} M_{0}(t) \lambda_{q}^{4} r^{\frac{1}{p^{\ast}} -\frac{1}{2}} \sigma^{\frac{1}{p^{\ast}} - \frac{1}{2}} ] \overset{\eqref{estimate 249}}{\ll} M_{0}(t) \delta_{q+2}.\nonumber 
\end{align}
Second, the desired estimate on $R_{\text{corr}}^{\Xi}$ from \eqref{estimate 366} and $R_{\text{corr}}^{v}$ from \eqref{estimate 371} also follows from the computations within \eqref{estimate 250} within the proof of Proposition \ref{Proposition 4.8} as follows: for $a \in 2 \mathbb{N}$ sufficiently large 
\begin{align}
& \lVert R_{\text{corr}}^{\Xi} \rVert_{C_{t}L_{x}^{p^{\ast}}} + \lVert R_{\text{corr}}^{v} \rVert_{C_{t}L_{x}^{p^{\ast}}} \label{estimate 387}\\
 \overset{\eqref{estimate 296}}{\lesssim}& m_{L}^{\frac{6}{5}}  (\lVert w_{q+1}^{c} \rVert_{C_{t}L_{x}^{2p^{\ast}}} + \lVert w_{q+1}^{t} \rVert_{C_{t}L_{x}^{2p^{\ast}}} + \lVert d_{q+1}^{c} \rVert_{C_{t}L_{x}^{2p^{\ast}}} + \lVert d_{q+1}^{t} \rVert_{C_{t}L_{x}^{2p^{\ast}}}) \nonumber\\
\times& (\lVert w_{q+1}^{p} \rVert_{C_{t}L_{x}^{2p^{\ast}}} + \lVert w_{q+1}^{c} \rVert_{C_{t}L_{x}^{2p^{\ast}}} + \lVert w_{q+1}^{t} \rVert_{C_{t}L_{x}^{2p^{\ast}}} + \lVert d_{q+1}^{p} \rVert_{C_{t}L_{x}^{2p^{\ast}}} + \lVert d_{q+1}^{c} \rVert_{C_{t}L_{x}^{2p^{\ast}}} + \lVert d_{q+1}^{t} \rVert_{C_{t}L_{x}^{2p^{\ast}}}) \nonumber \\
&\overset{\eqref{estimate 337}-\eqref{estimate 339} \eqref{estimate 344}}{\lesssim} m_{L}^{\frac{22}{5}} (M_{0}(t)^{\frac{1}{2}} \delta_{q+1}^{\frac{1}{2}} l^{-2} r^{\frac{1}{2p^{\ast}} - \frac{3}{2}} \sigma^{\frac{1}{2p^{\ast}} + \frac{1}{2}} +  \mu^{-1} \delta_{q+1} l^{-4} M_{0}(t) r^{\frac{1}{2p^{\ast}} -1} \sigma^{\frac{1}{2p^{\ast}} -1}) \nonumber\\
&\times (\delta_{q+1}^{\frac{1}{2}} M_{0}(t)^{\frac{1}{2}} l^{-2} r^{\frac{1}{2p^{\ast}} - \frac{1}{2}} \sigma^{\frac{1}{2p^{\ast}} - \frac{1}{2}} +  M_{0}(t)^{\frac{1}{2}} \delta_{q+1}^{\frac{1}{2}} l^{-2} r^{\frac{1}{2p^{\ast}} - \frac{3}{2}} \sigma^{\frac{1}{2p^{\ast}} + \frac{1}{2}} \nonumber\\
& \hspace{40mm} + \mu^{-1} \delta_{q+1} l^{-4} M_{0}(t) r^{\frac{1}{2p^{\ast}} -1} \sigma^{\frac{1}{2p^{\ast}} -1})  \overset{\eqref{estimate 250}}{\ll} M_{0}(t) \delta_{q+2}. \nonumber
\end{align}
Third, concerning $R_{\text{osc}}^{\Xi}$ from \eqref{estimate 423}, as we discussed already, the estimate $\lVert \mathcal{R}^{\Xi} (\text{div} E_{1}^{\Xi} + \partial_{t} d_{q+1}^{t} ) \rVert_{C_{t}L_{x}^{p^{\ast}}}  \ll M_{0}(t) \delta_{q+2}$ in \eqref{estimate 244} remains valid and thus we only need to estimate $\lVert E_{2}^{\Xi} \rVert_{C_{t}L_{x}^{1}}$ which also follows from the computations within \eqref{estimate 243} in the proof of Proposition \ref{Proposition 4.8} as follows: for $a \in 2 \mathbb{N}$ sufficiently large 
\begin{align}
\lVert E_{2}^{\Xi} \rVert_{C_{t}L_{x}^{1}} 
\overset{\eqref{estimate 211}}{\lesssim}& \sum_{\xi \in\Lambda, \xi'\in\Lambda_{\Xi}: \xi \neq \xi'} \lVert a_{\xi} \rVert_{C_{t,x}} \lVert a_{\xi'} \rVert_{C_{t,x}} \lVert \phi_{\xi} \phi_{\xi'} \varphi_{\xi} \varphi_{\xi'} \rVert_{C_{t}L_{x}^{1}} \nonumber\\
&\overset{\eqref{estimate 326} \eqref{estimate 308}\eqref{estimate 175}}{\lesssim} m_{L}^{\frac{4}{5}} M_{0}(t) \delta_{q+1} l^{-4} \sigma r^{-1} 
\overset{\eqref{estimate 243}}{\ll} M_{0}(t) \delta_{q+2}. \label{estimate 389}
\end{align}
Therefore, we conclude
\begin{equation}\label{estimate 388}
\lVert R_{\text{osc}}^{\Xi} \rVert_{C_{t}L_{x}^{1}} \overset{\eqref{estimate 244}\eqref{estimate 389}}{\ll} M_{0}(t) \delta_{q+2}. 
\end{equation} 
Moreover, using 
\begin{equation}\label{estimate 447} 
-2\eta + \frac{221\alpha}{4} + (8\eta -2) (\frac{1}{p^{\ast}} - 1) \overset{\eqref{p ast}}{=} - \frac{19\alpha}{4} - \eta, 
\end{equation} 
we can compute from \eqref{estimate 373}
\begin{align}
& \lVert R_{\text{osc}}^{v} \rVert_{C_{t}L_{x}^{1}} \nonumber \\
\overset{\eqref{estimate 296}}{\lesssim}& \sum_{\xi \in \Lambda} \lVert (-\Delta)^{-\frac{1}{2}} \mathbb{P}_{\neq 0} ( \nabla a_{\xi}^{2} \mathbb{P}_{\geq \frac{\lambda_{q+1} \sigma}{2}} (\phi_{\xi}^{2} \varphi_{\xi}^{2} )) \rVert_{C_{t}L_{x}^{p^{\ast}}} + \mu^{-1} \lVert (-\Delta)^{-\frac{1}{2}} \mathbb{P}_{\neq 0} ( \partial_{t} a_{\xi}^{2} \mathbb{P}_{\geq \frac{\lambda_{q+1} \sigma}{2}} (\phi_{\xi}^{2} \varphi_{\xi}^{2})) \rVert_{C_{t}L_{x}^{p^{\ast}}} \nonumber \\
&+ m_{L}^{\frac{8}{5}} \sum_{\xi \in \Lambda_{\Xi}} \lVert (-\Delta)^{-\frac{1}{2}} \mathbb{P}_{\neq 0} (\nabla a_{\xi}^{2} \mathbb{P}_{\geq \frac{\lambda_{q+1} \sigma}{2}} (\phi_{\xi}^{2} \varphi_{\xi}^{2})) \rVert_{C_{t}L_{x}^{p^{\ast}}} + \sum_{\xi,\xi'\in \Lambda: \xi \neq \xi'} \lVert a_{\xi} \rVert_{C_{t,x}} \lVert a_{\xi'} \rVert_{C_{t,x}} \lVert \phi_{\xi}\phi_{\xi'} \varphi_{\xi}\varphi_{\xi'} \rVert_{C_{t}L_{x}^{1}}\nonumber \\
&+ m_{L}^{\frac{8}{5}} \sum_{\xi, \xi' \in\Lambda_{\Xi}: \xi \neq \xi'} \lVert a_{\xi} \rVert_{C_{t,x}} \lVert a_{\xi'} \rVert_{C_{t,x}} \lVert \phi_{\xi} \phi_{\xi'} \varphi_{\xi} \varphi_{\xi'} \rVert_{C_{t}L_{x}^{1}} \nonumber \\
&\overset{\eqref{estimate 241} \eqref{estimate 308}\eqref{estimate 326}\eqref{estimate 175}}{\lesssim} \lambda_{q+1}^{-2\eta} [m_{L}^{\frac{4}{5}} l^{-2} \delta_{q+1}^{\frac{1}{2}} M_{0}(t)^{\frac{1}{2}} m_{L}^{\frac{4}{5}} l^{-32} \delta_{q+1}^{\frac{1}{2}} M_{0}(t)^{\frac{1}{2}} \nonumber \\
& \hspace{20mm}+ \lambda_{q+1}^{\eta -1} m_{L}^{\frac{4}{5}} \delta_{q+1}^{\frac{1}{2}} l^{-2} M_{0}(t)^{\frac{1}{2}} m_{L}^{\frac{12}{5}} M_{0}(t)^{\frac{1}{2}} \delta_{q+1}^{\frac{1}{2}} l^{-31} ] (r^{\frac{1}{2p^{\ast}} - \frac{1}{2}} \sigma^{\frac{1}{2p^{\ast}} - \frac{1}{2}})^{2} \nonumber \\
&+ m_{L}^{\frac{8}{5}} \sum_{\xi \in \Lambda_{\Xi}} \lambda_{q+1}^{-2\eta} l^{-2} \delta_{q+1}^{\frac{1}{2}} M_{0}(t)^{\frac{1}{2}} \delta_{q+1}^{\frac{1}{2}} l^{-17} M_{0}(t)^{\frac{1}{2}} (r^{\frac{1}{2p^{\ast}} - \frac{1}{2}} \sigma^{\frac{1}{2p^{\ast}} - \frac{1}{2}})^{2} + m_{L}^{\frac{8}{5}} l^{-4} \delta_{q+1} M_{0}(t) \lambda_{q+1}^{-4\eta} \nonumber \\
\overset{\eqref{estimate 130}}{\lesssim}& m_{L}^{\frac{8}{5}} \delta_{q+2} M_{0}(t) [ \lambda_{q+1}^{-2 \eta + \frac{221 \alpha}{4} + 2 \beta b} (\lambda_{q+1}^{8\eta -2})^{\frac{1}{p^{\ast}} - 1} + \lambda_{q+1}^{\frac{13\alpha}{2} - 4 \eta + 2 \beta b}] \nonumber \\ 
& \hspace{10mm} \overset{ \eqref{estimate 447}\eqref{estimate 129}}{\lesssim} m_{L}^{\frac{8}{5}} \delta_{q+2} M_{0}(t) [ \lambda_{q+1}^{- \frac{37 \alpha}{8} - \eta} + \lambda_{q+1}^{\frac{53 \alpha}{8} - 4 \eta}] \ll M_{0}(t) \delta_{q+2}. \label{estimate 390}
\end{align}
Fourth, we estimate $R_{\text{com1}}^{v}$ and $R_{\text{com1}}^{\Xi}$ from \eqref{estimate 384} by continuing from \eqref{estimate 385}-\eqref{estimate 386} and making use of $\delta \in (0, \frac{1}{12})$, for $a \in 2 \mathbb{N}$ sufficiently large 
\begin{align}
 \lVert R_{\text{com1}}^{v} \rVert_{C_{t}L_{x}^{1}}+ \lVert R_{\text{com1}}^{\Xi} \rVert_{C_{t}L_{x}^{1}} &
\overset{\eqref{estimate 385}\eqref{estimate 386}}{\lesssim} [l m_{L}^{\frac{6}{5}} m_{L} M_{0}(t)^{\frac{1}{2}} \lambda_{q}^{4} + l^{\frac{1}{2} - 2 \delta} m_{L}^{2} (m_{L} M_{0}(t)^{\frac{1}{2}} \lambda_{q}^{4} )] m_{L} M_{0}(t)^{\frac{1}{2}} \nonumber\\
\lesssim& l^{\frac{1}{3}} m_{L}^{4} M_{0}(t) \lambda_{q}^{4} 
\overset{\eqref{b}\eqref{estimate 129}}{\lesssim}  M_{0}(t) \delta_{q+2} [\lambda_{q+1}^{-\frac{\alpha}{8}} m_{L}^{4}] \ll M_{0}(t) \delta_{q+2}. \label{estimate 391}
\end{align}
Finally, we estimate $R_{\text{com2}}^{\Xi}$ from \eqref{estimate 368} and $R_{\text{com2}}^{v}$ from \eqref{estimate 375} by relying on \eqref{estimate 268}-\eqref{estimate 269} at level $q+1$ that we verified in \eqref{estimate 448} and the fact that $\delta \in (0, \frac{1}{12})$, as well as taking $a \in 2 \mathbb{N}$ sufficiently large 
\begin{align}
&\lVert R_{\text{com2}}^{v} \rVert_{C_{t}L_{x}^{1}} + \lVert R_{\text{com2}}^{\Xi} \rVert_{C_{t}L_{x}^{1}} \nonumber\\
\overset{\eqref{estimate 273} \eqref{estimate 296} }{\lesssim}& l^{\frac{1}{2} - 2 \delta} \lVert \Upsilon_{1} \rVert_{C_{t}^{\frac{1}{2} - 2 \delta}} \lVert v_{q+1} \rVert_{C_{t}L_{x}^{2}}^{2} + \lVert \Upsilon_{1}^{-1} \Upsilon_{1,l}^{-1} (\Upsilon_{1,l} - \Upsilon_{1}) \rVert_{C_{t}} m_{L}^{\frac{4}{5}} \lVert \Xi_{q+1} \rVert_{C_{t}L_{x}^{2}}^{2} \nonumber\\
&+ l^{\frac{1}{2} - 2 \delta} \lVert \Upsilon_{2} \rVert_{C_{t}^{\frac{1}{2} - 2 \delta}} m_{L}^{\frac{4}{5}} \lVert \Xi_{q+1} \rVert_{C_{t}L_{x}^{2}}^{2} + l^{\frac{1}{2} - 2 \delta} \lVert \Upsilon_{1} \rVert_{C_{t}^{\frac{1}{2} - 2 \delta} } \lVert v_{q+1} \rVert_{C_{t}L_{x}^{2}} \lVert \Xi_{q+1} \rVert_{C_{t}L_{x}^{2}} \nonumber\\
\overset{\eqref{estimate 273} \eqref{estimate 448}}{\lesssim}& M_{0}(t) \delta_{q+2}[\lambda_{q+1}^{\frac{\alpha}{8} - \frac{\alpha}{2}} m_{L}^{4}]  \overset{\eqref{estimate 129}}{\ll} M_{0}(t) \delta_{q+2}. \label{estimate 392}
\end{align}
In sum of \eqref{estimate 383}, \eqref{estimate 387}, \eqref{estimate 388}, \eqref{estimate 390}-\eqref{estimate 392}, we conclude 
\begin{equation}
\lVert \mathring{R}_{q+1}^{v} \rVert_{C_{t}L_{x}^{1}}  + \lVert \mathring{R}_{q+1}^{\Xi} \rVert_{C_{t}L_{x}^{1}}  \ll M_{0}(t) \delta_{q+2}, 
\end{equation} 
which implies \eqref{estimate 271} at level $q+1$. 

Finally, the verification that $(v_{q+1}, \Xi_{q+1}, \mathring{R}_{q+1}^{v}$, $\mathring{R}_{q+1}^{\Xi})$ are $(\mathcal{F}_{t})_{t\geq 0}$-adapted and that $(v_{q+1},$ $\Xi_{q+1}$, $\mathring{R}_{q+1}^{v}$, $\mathring{R}_{q+1}^{\Xi})(0,x)$ are deterministic if $(v_{q}, \Xi_{q}, \mathring{R}_{q}^{v}, \mathring{R}_{q}^{\Xi})(0,x)$ are deterministic is very similar to the proof of Proposition \ref{Proposition 4.8} and thus omitted. 

\section{Appendix}\label{Appendix}
\subsection{Further preliminaries}
\begin{lemma}\label{divergence inverse operator}
\rm{(\cite[Equ. (5.34)]{BV19b} and \cite[Sec. 6.1-6.2]{BBV21})} For any $f \in C^{\infty}(\mathbb{T}^{3})$ that is mean-zero, define 
\begin{equation}\label{estimate 107}
(\mathcal{R}f)^{kl} \triangleq ( \partial^{k}\Delta^{-1} f^{l} + \partial^{l} \Delta^{-1} f^{k}) - \frac{1}{2} (\delta^{kl} + \partial^{k} \partial^{l} \Delta^{-1}) \text{div} \Delta^{-1} f
\end{equation} 
for $k, l \in \{1,2,3\}$. Then $\mathcal{R} f(x)$ is a symmetric trace-free matrix for each $x \in \mathbb{T}^{3}$, and  satisfies $\text{div} (\mathcal{R} f) = f$. Moreover, $\mathcal{R}$ satisfies the classical Calder$\acute{\mathrm{o}}$n-Zygmund and Schauder estimates: $\lVert (-\Delta)^{\frac{1}{2}} \mathcal{R} \rVert_{L_{x}^{p} \mapsto L_{x}^{p}} + \lVert \mathcal{R} \rVert_{L_{x}^{p} \mapsto L_{x}^{p}}  + \lVert \mathcal{R} \rVert_{C_{x} \mapsto C_{x}} \lesssim 1$ for all $p \in (1, \infty)$. Additionally, we define for $f: \mathbb{R}^{3}\mapsto \mathbb{R}^{3}$ such that $\nabla\cdot f = 0$,  
\begin{equation}\label{estimate 112}
( \mathcal{R}^{\Xi} f)^{ij} \triangleq \epsilon_{ijk} (-\Delta)^{-1} (\nabla \times f)^{k}
\end{equation} 
where $\epsilon_{ijk}$ is the Levi-Civita tensor. Then $\mathcal{R}^{\Xi} (f) = f, \mathcal{R}^{\Xi} (f) = - (\mathcal{R}^{\Xi} (f))^{T}$, and $(-\Delta)^{\frac{1}{2}}\mathcal{R}^{\Xi}$ is a Calderon-Zygmund operator again.  
\end{lemma}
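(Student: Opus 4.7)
The proof is almost entirely direct verification from the explicit Fourier-multiplier formulas (\ref{estimate 107}) and (\ref{estimate 112}), so my plan is to check each claimed property algebraically and then invoke standard Calder\'on--Zygmund theory for the analytic bounds. Throughout, I will use the mean-zero hypothesis on $f$ (for $\mathcal{R}$) and the divergence-free hypothesis (for $\mathcal{R}^{\Xi}$) to guarantee that $\Delta^{-1}$ acts on mean-zero distributions and is therefore well-defined on $\mathbb{T}^{3}$.

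For the operator $\mathcal{R}$, the symmetry $(\mathcal{R}f)^{kl}=(\mathcal{R}f)^{lk}$ is immediate because every one of the three summands in (\ref{estimate 107}) is symmetric under the exchange $k\leftrightarrow l$. To verify that $\mathcal{R}f$ is trace-free, I would contract $k=l$ and compute
\begin{equation*}
\sum_{k}(\mathcal{R}f)^{kk}=2\,\mathrm{div}\,\Delta^{-1}f-\tfrac{1}{2}\bigl(3+\Delta\Delta^{-1}\bigr)\mathrm{div}\,\Delta^{-1}f=2\,\mathrm{div}\,\Delta^{-1}f-2\,\mathrm{div}\,\Delta^{-1}f=0.
\end{equation*}
For the divergence identity $\mathrm{div}(\mathcal{R}f)=f$, I would differentiate in $l$ and collect terms:
\begin{equation*}
\sum_{l}\partial^{l}(\mathcal{R}f)^{kl}=\partial^{k}\mathrm{div}\,\Delta^{-1}f+f^{k}-\tfrac{1}{2}\bigl(\partial^{k}+\partial^{k}\Delta\Delta^{-1}\bigr)\mathrm{div}\,\Delta^{-1}f=f^{k},
\end{equation*}
with the three $f^{k}$-independent terms cancelling exactly. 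The analytic bounds then follow from the fact that each constituent operator $\partial^{k}\Delta^{-1}$ has Fourier symbol of order $-1$, so $(-\Delta)^{1/2}\mathcal{R}$ is a zero-order Fourier multiplier with a smooth symbol on $\mathbb{Z}^{3}\setminus\{0\}$; classical Mikhlin/Calder\'on--Zygmund theory yields the $L^{p}\to L^{p}$ bounds for $p\in(1,\infty)$, while the $C_{x}\to C_{x}$ bound is the standard Schauder estimate for the same operator class.

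For $\mathcal{R}^{\Xi}$, the skew-symmetry $\mathcal{R}^{\Xi}f=-(\mathcal{R}^{\Xi}f)^{T}$ is immediate since $\epsilon_{ijk}=-\epsilon_{jik}$. The identity asserted in the lemma's last sentence must be read as $\mathrm{div}(\mathcal{R}^{\Xi}f)=f$ (the statement as printed appears to contain a typo), and I would establish it by writing $(\nabla\times f)^{k}=\epsilon_{klm}\partial^{l}f^{m}$ and using the standard contraction $\epsilon_{ijk}\epsilon_{klm}=\delta_{il}\delta_{jm}-\delta_{im}\delta_{jl}$:
\begin{equation*}
\sum_{j}\partial^{j}(\mathcal{R}^{\Xi}f)^{ij}=\epsilon_{ijk}\epsilon_{klm}(-\Delta)^{-1}\partial^{j}\partial^{l}f^{m}=(-\Delta)^{-1}\partial^{i}\mathrm{div}\,f-(-\Delta)^{-1}\Delta f^{i}=f^{i},
\end{equation*}
where the first term vanishes by $\nabla\cdot f=0$. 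The Calder\'on--Zygmund property of $(-\Delta)^{1/2}\mathcal{R}^{\Xi}$ again follows from writing its symbol: $(-\Delta)^{1/2}(-\Delta)^{-1}(\nabla\times)$ is a zero-order multiplier of the required smoothness class.

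There is no real obstacle here: the lemma consists of algebraic identities plus an invocation of the standard multiplier theorems, and the only subtlety is bookkeeping the mean-zero and divergence-free conditions so that $\Delta^{-1}$ is legitimately applied. The result is classical and may alternatively be quoted directly from the references \cite{BV19b,BBV21} cited in the statement.
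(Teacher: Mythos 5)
Your verification is correct: the paper itself offers no proof of this lemma (it is quoted from \cite[Equ. (5.34)]{BV19b} and \cite[Sec. 6.1-6.2]{BBV21}), and your direct Fourier/algebraic checks — the trace cancellation, the computation $\sum_{l}\partial^{l}(\mathcal{R}f)^{kl}=f^{k}$, the Levi-Civita contraction giving $\mathrm{div}(\mathcal{R}^{\Xi}f)=f$ for divergence-free $f$, and the Mikhlin/Calder\'on--Zygmund argument for the operator bounds — are exactly the standard argument behind the cited statement. You are also right that ``$\mathcal{R}^{\Xi}(f)=f$'' in the statement should be read as $\mathrm{div}(\mathcal{R}^{\Xi}f)=f$, which is how the operator is used throughout the paper (e.g. in Proposition \ref{Proposition 4.7}).
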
 
\begin{lemma}\label{Lemma 6.2}
\rm{(\cite[Lem. 3.7]{BV19a}, \cite[Lem. 5.4]{BBV21})} Fix integers $N, \kappa \geq 1$, and let $\zeta > 1$ satisfy 
\begin{equation}\label{estimate 140}
\frac{2 \pi \sqrt{3} \zeta}{\kappa} \leq \frac{1}{3} \hspace{2mm} \text{ and } \hspace{2mm} \zeta^{4} \frac{ (2\pi\sqrt{3} \zeta)^{N}}{\kappa^{N}} \leq 1. 
\end{equation} 
Let $p \in \{1,2\}$ and $f$ be a $\mathbb{T}^{3}$-periodic function such that there exists a constant $C_{f}  > 0$ such that 
\begin{equation}
\lVert D^{j} f \rVert_{L_{x}^{p}} \leq C_{f} \zeta^{j}  \hspace{3mm} \forall \hspace{1mm} 0 \leq j \leq N + 4. 
\end{equation} 
Additionally, let $g$ be a $(\mathbb{T}/\kappa)^{3}$-periodic function. Then for a universal constant $C_{\ast}$, 
\begin{equation}\label{estimate 171}
\lVert fg \rVert_{L_{x}^{p}} \leq C_{f} C_{\ast} \lVert g \rVert_{L_{x}^{p}}.
\end{equation} 
\end{lemma}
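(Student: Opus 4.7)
The plan is to follow the classical improved Hölder (or decoupling) argument of Buckmaster--Vicol \cite{BV19a}, which exploits the separation of scales between the slowly varying $f$ (controlled on scale $\zeta^{-1}$) and the fast oscillation of $g$ (periodic on scale $\kappa^{-1}$). First I would partition $\mathbb{T}^{3}=\bigcup_{i=1}^{\kappa^{3}}Q_{i}$ into closed cubes of side length $2\pi/\kappa$, each with center $x_{i}$. Since $g$ is $(\mathbb{T}/\kappa)^{3}$-periodic, $g\rvert_{Q_{i}}$ is the same function up to translation for every $i$, yielding the exact identity
$$\lVert g\rVert_{L^{p}(Q_{i})}^{p}=\kappa^{-3}\lVert g\rVert_{L^{p}(\mathbb{T}^{3})}^{p}.$$

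Next, on each $Q_{i}$ I would Taylor-expand $f$ around $x_{i}$ to order $N$:
$$f(x)=\sum_{|\alpha|\leq N}\frac{(x-x_{i})^{\alpha}}{\alpha!}D^{\alpha}f(x_{i})+R_{i}(x),\quad |R_{i}(x)|\lesssim\Big(\tfrac{2\pi\sqrt{3}}{\kappa}\Big)^{N+1}\sup_{Q_{i}}|D^{N+1}f|.$$
Bounding $|x-x_{i}|\leq\pi\sqrt{3}/\kappa$ on $Q_{i}$ and using the triangle inequality together with the identity above, one arrives at
$$\lVert fg\rVert_{L^{p}(Q_{i})}\lesssim\kappa^{-3/p}\lVert g\rVert_{L^{p}(\mathbb{T}^{3})}\Big(\sum_{|\alpha|\leq N}\tfrac{1}{\alpha!}\big(\tfrac{2\pi\sqrt{3}}{\kappa}\big)^{|\alpha|}|D^{\alpha}f(x_{i})|+\lVert R_{i}\rVert_{L^{\infty}(Q_{i})}\Big).$$
Raising to the $p$-th power and summing over $i$, I would recognize $\kappa^{-3}\sum_{i}|D^{\alpha}f(x_{i})|^{p}$ as a Riemann sum for $\lVert D^{\alpha}f\rVert_{L^{p}}^{p}\leq C_{f}^{p}\zeta^{p|\alpha|}$, with error controlled by a further one-step Taylor argument (the oscillation of $|D^{\alpha}f|^{p}$ on each cube is at most $O(\zeta/\kappa)$ times its supremum). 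The Taylor-polynomial piece then becomes a geometric series $\sum_{|\alpha|\leq N}(2\pi\sqrt{3}\zeta/\kappa)^{|\alpha|}C_{f}\lVert g\rVert_{L^{p}}$, dominated via the first condition in \eqref{estimate 140} by $\sum_{k\geq 0}(k+1)^{2}3^{-k}\cdot C_{f}\lVert g\rVert_{L^{p}}$, i.e.\ a universal constant times $C_{f}\lVert g\rVert_{L^{p}}$, crucially \emph{independent} of $N$.

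The main obstacle is controlling the Taylor remainder, which requires $\lVert D^{N+1}f\rVert_{L^{\infty}}$. Here I would invoke Sobolev embedding on $\mathbb{T}^{3}$: since $W^{k,p}(\mathbb{T}^{3})\hookrightarrow L^{\infty}(\mathbb{T}^{3})$ demands $k>3/p$, four extra derivatives suffice uniformly for $p\in\{1,2\}$, which explains precisely why the hypothesis is imposed up to $j=N+4$. This yields $\lVert D^{N+1}f\rVert_{L^{\infty}}\lesssim C_{f}\zeta^{N+4}$, and the remainder contribution is of order $\zeta^{4}(2\pi\sqrt{3}\zeta/\kappa)^{N}C_{f}\lVert g\rVert_{L^{p}}$, bounded by $C_{f}\lVert g\rVert_{L^{p}}$ exactly by the second condition in \eqref{estimate 140}. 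Collecting the Taylor-polynomial and remainder bounds into a single universal $C_{\ast}$ yields \eqref{estimate 171}.
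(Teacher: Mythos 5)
You should first note that the paper does not actually prove this lemma: it is quoted verbatim from \cite[Lem. 3.7]{BV19a} (see also \cite[Lem. 5.4]{BBV21}), so the only comparison possible is with the standard proof of that cited result, whose architecture (cubes of side $2\pi/\kappa$, exact periodicity of $g$ on each cube, Taylor expansion of $f$ at the cube centers, Sobolev embedding for the remainder) you have reproduced correctly. The genuine gap is in your treatment of the polynomial part, i.e. the step converting $\kappa^{-3}\sum_{i}|D^{\alpha}f(x_{i})|^{p}$ into $\lVert D^{\alpha}f\rVert_{L^{p}}^{p}$. Your justification -- ``the oscillation of $|D^{\alpha}f|^{p}$ on each cube is at most $O(\zeta/\kappa)$ times its supremum'' -- does not follow from the hypotheses: only the integrated bounds $\lVert D^{j}f\rVert_{L^{p}}\leq C_{f}\zeta^{j}$ are assumed, and an oscillation bound of that form would require a pointwise ratio $|D^{|\alpha|+1}f|\lesssim\zeta\,|D^{\alpha}f|$, which is not available. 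If you instead control the oscillation by what is available, namely $\kappa^{-1}\sup|D^{\alpha}f|^{p-1}\sup|D^{|\alpha|+1}f|$ with global sup norms obtained from Sobolev embedding, the error carries a factor of order $\zeta^{4}/\kappa$ for $p=1$ (and $\zeta^{5/2}\kappa^{-1/2}$ for $p=2$), and this is \emph{not} bounded under \eqref{estimate 140}: for instance $\zeta\sim\kappa^{1/2}$ with $N$ large satisfies both conditions while $\zeta^{4}/\kappa\to\infty$. So the Riemann-sum step, as you argue it, would fail.

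The missing idea is to localize the Sobolev embedding to the cubes at scale $\kappa^{-1}$: on a cube $Q_{i}$ of side $2\pi/\kappa$ one has $\lVert D^{\alpha}f\rVert_{L^{\infty}(Q_{i})}\lesssim\sum_{j=0}^{4}\kappa^{3/p-j}\lVert D^{\alpha+j}f\rVert_{L^{p}(Q_{i})}$ (rescaled embedding), whence $\kappa^{-3}\sum_{i}|D^{\alpha}f(x_{i})|^{p}\lesssim\sum_{j\leq 4}\kappa^{-pj}\lVert D^{\alpha+j}f\rVert_{L^{p}(\mathbb{T}^{3})}^{p}\lesssim C_{f}^{p}\zeta^{p|\alpha|}\sum_{j\leq4}(\zeta/\kappa)^{pj}\lesssim C_{f}^{p}\zeta^{p|\alpha|}$, using $|\alpha|+j\leq N+4$ and the first condition in \eqref{estimate 140}; the remaining sum over $\alpha$ then closes exactly as you describe (do state the Minkowski inequality in $\ell^{p}$ over the cube index before raising to the $p$-th power, otherwise you pick up an $N$-dependent constant when $p=2$). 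One further small repair: with your convention that four extra derivatives are needed for the embedding, the remainder at order $N+1$ would require $D^{N+5}f$, which lies outside the hypothesis $j\leq N+4$; either Taylor-expand only to degree $N-1$, so the remainder involves $D^{N}f$ and $\lVert D^{N}f\rVert_{L^{\infty}}\lesssim C_{f}\zeta^{N+4}$ uses exactly $j\leq N+4$ -- this is also what makes the second condition in \eqref{estimate 140} appear verbatim as $\zeta^{4}(2\pi\sqrt{3}\zeta/\kappa)^{N}\leq1$ -- or, for $p=1$, invoke the borderline embedding $W^{3,1}(\mathbb{T}^{3})\hookrightarrow L^{\infty}$.
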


\begin{lemma}\label{Lemma 6.3}
\rm{(\cite[Lem. 4.1]{CL21}, cf. \cite[Lem. 7.4]{LQ20})} Let $g \in C^{2} (\mathbb{T}^{3})$ and $k \in \mathbb{N}$. Then for any $p \in (1,\infty)$ and any $f \in L^{p} (\mathbb{T}^{3})$, 
\begin{equation}\label{estimate 241}
\lVert (-\Delta)^{-\frac{1}{2}} \mathbb{P}_{\neq 0} (g \mathbb{P}_{\geq k} f ) \rVert_{L_{x}^{p}} \lesssim k^{-1} \lVert g \rVert_{C_{x}^{2}} \lVert f \rVert_{L_{x}^{p}}. 
\end{equation} 
\end{lemma}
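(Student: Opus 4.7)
The plan is to reduce the estimate to a commutator decomposition that isolates the $k^{-1}$ gain coming from the high-frequency support of $\mathbb{P}_{\geq k}f$. First I would use the identity
\[
(-\Delta)^{-\frac{1}{2}}\mathbb{P}_{\neq 0}(g\,\mathbb{P}_{\geq k}f) = g\,(-\Delta)^{-\frac{1}{2}}\mathbb{P}_{\geq k}f + \bigl[(-\Delta)^{-\frac{1}{2}}\mathbb{P}_{\neq 0},\,g\bigr]\,\mathbb{P}_{\geq k}f,
\]
which is legitimate for $k\geq 1$ because $\mathbb{P}_{\neq 0}\mathbb{P}_{\geq k}=\mathbb{P}_{\geq k}$. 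The first (non-commutator) term is the easy piece: the Fourier multiplier with symbol $\chi_{|\xi|\geq k}\,|\xi|^{-1}$ satisfies Mikhlin–H\"ormander bounds with constant $O(k^{-1})$, hence is bounded on $L^p(\mathbb{T}^3)$ with the same norm for every $p\in(1,\infty)$. Combined with the uniform $L^p$-boundedness of $\mathbb{P}_{\geq k}$ and H\"older's inequality, this gives a contribution $\lesssim k^{-1}\|g\|_{L^\infty}\|f\|_{L^p}\leq k^{-1}\|g\|_{C^2}\|f\|_{L^p}$.

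For the commutator term I would pass to Fourier series on $\mathbb{T}^3$: writing $h=\mathbb{P}_{\geq k}f$ one has
\[
\mathcal{F}\bigl\{[(-\Delta)^{-\frac{1}{2}}\mathbb{P}_{\neq 0},\,g]\,h\bigr\}(\xi) = \sum_{|\eta|\geq k,\,\xi\neq 0}\hat{g}(\xi-\eta)\left(\tfrac{1}{|\xi|}-\tfrac{1}{|\eta|}\right)\hat{h}(\eta),
\]
and exploit the mean value bound $\bigl|\tfrac{1}{|\xi|}-\tfrac{1}{|\eta|}\bigr|\leq \tfrac{|\xi-\eta|}{|\xi||\eta|}$. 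I would then split the sum into a paraproduct-type decomposition: the low--high region $|\xi-\eta|\leq\tfrac{1}{2}|\eta|$ and the high--high region $|\xi-\eta|>\tfrac{1}{2}|\eta|$. In the first regime $|\xi|\sim|\eta|\geq k$, so the commutator is morally $(-\Delta)^{-\frac{1}{2}}\mathbb{P}_{\gtrsim k}$ applied to $(\nabla g)\cdot h$, which is bounded on $L^p$ by $k^{-1}\|\nabla g\|_{L^\infty}\|h\|_{L^p}\lesssim k^{-1}\|g\|_{C^2}\|f\|_{L^p}$ by Mikhlin and the Calder\'on--Zygmund boundedness of the Riesz transforms. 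In the second regime the constraint $|\xi-\eta|\gtrsim k$ forces the spectrum of $g$ to be high; two derivatives on $g$, controlled by $\|\Delta g\|_{L^\infty}\lesssim\|g\|_{C^2}$, convert via Bernstein into the decay factor $k^{-2}$, which absorbs any loss from reassembling the convolution and leaves the bound $\lesssim k^{-1}\|g\|_{C^2}\|f\|_{L^p}$.

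The principal obstacle is making the commutator estimate rigorous under the sharp $C^2$ hypothesis on $g$. A direct application of Calder\'on-type commutator theory for a pseudodifferential operator of order $-1$ with a multiplier costs one derivative and naively produces an operator of order $-2$; closing the resulting $L^p$ estimate by brute force would therefore seem to need $g\in C^{1,1}$ or even $C^3$ to control Mikhlin constants. The delicate point of the argument is to arrange the paraproduct decomposition above so that in each regime only the $L^\infty$-norms of $\nabla g$ and $\Delta g$ enter (both controlled by $\|g\|_{C^2}$), and to spend only one of the two available powers of $k$ (one from the multiplier $|\xi|^{-1}\chi_{|\xi|\geq k}$, one from Bernstein on the high-frequency part of $g$) to obtain the single factor $k^{-1}$ in the final bound while keeping the constant independent of $k$.
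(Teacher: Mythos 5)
The paper itself gives no proof of this lemma: it is quoted from \cite{CL21} and \cite{LQ20}, where the argument is a short low/high frequency decomposition of $g$, not a commutator argument. Measured against that, your proposal has two genuine gaps. The first concerns the non-commutator term: both multiplier facts you invoke are false for the sharp cutoffs this paper uses ($\mathbb{P}_{\leq r}$ has symbol $1_{\lvert \xi \rvert \leq r}$). A symbol with a jump across a sphere satisfies no Mikhlin--H\"ormander condition, and by Fefferman's ball-multiplier theorem (transferred to $\mathbb{T}^{3}$) neither $\mathbb{P}_{\geq k}$ nor $k(-\Delta)^{-\frac{1}{2}}\mathbb{P}_{\geq k}$ is bounded on $L^{p}(\mathbb{T}^{3})$ uniformly in $k$ for $p \neq 2$. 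The estimate survives either by using smooth frequency cutoffs or by observing that in every application in this paper (e.g.\ \eqref{estimate 244}, \eqref{estimate 246}, \eqref{estimate 390}) the function $f = \phi_{\xi}^{2}\varphi_{\xi}^{2}$ has no nonzero modes below $\lambda_{q+1}\sigma$, so a smooth cutoff can be inserted for free; your write-up does neither, so as stated the ``easy piece'' is not justified.

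The second and more serious gap is that the commutator bound, which is the entire content of your route, is never actually proved: in the low--high regime you say the commutator is ``morally'' $(-\Delta)^{-\frac{1}{2}}\mathbb{P}_{\gtrsim k}\left((\nabla g)\cdot h\right)$, in the high--high regime you assert that Bernstein ``absorbs any loss from reassembling the convolution,'' and you then yourself identify closing these under the $C^{2}$ hypothesis as the principal obstacle. A crude Fourier-side summation indeed does not close, since $\lvert \hat{g}(\zeta)\rvert \lesssim \lvert \zeta \rvert^{-2} \lVert g \rVert_{C^{2}}$ only gives $\sum_{\zeta \neq 0} \lvert \zeta \rvert \lvert \hat{g}(\zeta) \rvert = \infty$ in dimension three, so making your paraproduct sketch rigorous would require Coifman--Meyer-type machinery. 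That machinery is avoidable, and this is exactly what the cited proof does: split $g = \mathbb{P}_{\leq k/2} g + \mathbb{P}_{> k/2} g$ (with smooth cutoffs). The product $\mathbb{P}_{\leq k/2} g \cdot \mathbb{P}_{\geq k} f$ has all frequencies $\geq k/2$, so a smooth version of $(-\Delta)^{-\frac{1}{2}}\mathbb{P}_{\geq k/4}$ applies and costs only $k^{-1}\lVert g \rVert_{L^{\infty}}$, while $\lVert \mathbb{P}_{>k/2} g \rVert_{L^{\infty}} \lesssim k^{-2} \lVert g \rVert_{C^{2}}$ together with the $L^{p}$-boundedness of $(-\Delta)^{-\frac{1}{2}}\mathbb{P}_{\neq 0}$ handles the remaining piece with the better factor $k^{-2}$. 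Note that after adding $g\,(-\Delta)^{-\frac{1}{2}}\mathbb{P}_{\geq k} f$ back in, your low--high and high--high pieces reduce to precisely these two terms; the commutator scaffolding only manufactures an intermediate claim that is harder than the lemma itself.
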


\subsection{Sketch of proof on extending convex integration scheme in \cite{BBV21} to the case $\nu_{1}, \nu_{2} > 0$ and $m_{1}, m_{2} \in (0, \frac{3}{4})$}
We explain how one can extend the convex integration scheme in \cite{BBV21} from the ideal case of $\nu_{1} = \nu_{2} = 0$ to the case $\nu_{1}, \nu_{2} > 0$, and $m_{1}, m_{2} \in (0, \frac{3}{4})$. The actual proof is lengthy and has many parts similar to the proofs of Theorems \ref{Theorem 2.1}-\ref{Theorem 2.4}; thus, we only sketch the main ideas here. We also only consider the case of an additive noise so that $F_{k} \equiv 1$ and both $B_{k}$ for $k \in \{1,2\}$ are $G_{k}G_{k}^{\ast}$-Wiener processes as in the setup of Theorems \ref{Theorem 2.1}-\ref{Theorem 2.2}. In short, Beekie et al. in \cite[p. 11]{BBV21} defines a small parameter $r$ and subsequently sets it to be $\lambda_{q+1}^{-\frac{3}{4}}$ on \cite[p. 14]{BBV21}. Essentially, one can keep the freedom to choose this ``$r$,'' choose $l \triangleq \lambda_{q+1}^{-\frac{3\alpha}{2}}$ in contrast to \eqref{l} for simplicity  and proceed similarly to the proof of \cite[The. 1.4]{BBV21}. Then, at the very end of the convex integration when one must verify the final inductive estimate \eqref{estimate 100} at level $q+1$, 
\begin{equation}\label{estimate 14}
\lVert \mathring{R}_{q+1}^{v} \rVert_{C_{t}L_{x}^{1}} \leq c_{v} M_{0}(t) \delta_{q+2} \hspace{2mm} \text{ and } \hspace{2mm} \lVert \mathring{R}_{q+1}^{\Xi} \rVert_{C_{t}L_{x}^{1}} \leq c_{\Xi} M_{0}(t) \delta_{q+2}, 
\end{equation}
by choosing various parameters such as $\alpha$ very carefully, one arrives at 
\begin{subequations}
\begin{align}
\lVert  \mathring{R}_{q+1}^{\Xi} \rVert_{C_{t}L_{x}^{1}} \lesssim&   l^{- \frac{3}{2}} M_{0}(t)^{\frac{1}{2}} r^{\frac{1}{p^{\ast}} - \frac{1}{2}} \lambda_{q+1}^{2m_{2} -1} + l^{- \frac{7}{2}} M_{0}(t) r^{\frac{1}{2}}  + \delta_{q+2} M_{0}(t) a^{b^{q+1} [ \frac{139\alpha}{8} - 1]}  \label{estimate 13}\\
&+ l^{-9} M_{0}(t) \lambda_{q+1}^{-1} r^{-2 + \frac{1}{p^{\ast}}} + l^{-3} M_{0}(t) r  + \delta_{q+2} M_{0}(t) a^{b^{q} (-4)} + \delta_{q+2} M_{0}(t) a^{b^{q+1} (-\frac{3\alpha}{8})},  \nonumber   \\
\lVert \mathring{R}_{q+1}^{v} \rVert_{C_{t}L_{x}^{1}} \lesssim& l^{- \frac{3}{2}} M_{0}(t)^{\frac{1}{2}} r^{\frac{1}{p^{\ast}} - \frac{1}{2}} \lambda_{q+1}^{2m_{1} -1} + l^{- \frac{7}{2}} M_{0}(t) r^{\frac{1}{2}}  + \delta_{q+2} M_{0}(t) a^{b^{q+1} [ \frac{139\alpha}{8} - 1]}  \label{estimate 12} \\
&+ l^{-13} M_{0}(t) \lambda_{q+1}^{-1} r^{-2 + \frac{1}{p^{\ast}}} + l^{-3} M_{0}(t) r  + \delta_{q+2} M_{0}(t) a^{b^{q} (-4)} + \delta_{q+2} M_{0}(t) a^{b^{q+1} (-\frac{3\alpha}{8})},  \nonumber
\end{align} 
\end{subequations} 
where $l^{- \frac{3}{2}} M_{0}(t)^{\frac{1}{2}} r^{\frac{1}{p^{\ast}} - \frac{1}{2}} \lambda_{q+1}^{2m_{1} -1}$ in \eqref{estimate 12} and $l^{- \frac{3}{2}} M_{0}(t)^{\frac{1}{2}} r^{\frac{1}{p^{\ast}} - \frac{1}{2}} \lambda_{q+1}^{2m_{2} -1}$ in \eqref{estimate 13} come respectively from viscous and magnetic diffusive terms. For brevity we omit details. Next, we consider different cases and choose $r$ appropriately. 

First, let us consider the case $m_{1} \in (0, \frac{1}{2}]$ and $m_{2} \in (0, \frac{1}{2}]$. Then we simply have $\lambda_{q+1}^{2m_{1} - 1} \lesssim 1$ and $\lambda_{q+1}^{2m_{2} - 1} \lesssim 1$ so that we can define $r \triangleq \lambda_{q+1}^{-\frac{3}{4}}$ identically to \cite{BBV21}. Then by taking $\alpha > 0$ and $p^{\ast} > 1$ sufficiently small so that $\frac{9\alpha}{4} + (-\frac{3}{4}) (\frac{1}{p^{\ast}} - \frac{1}{2}) < 0$ and $\beta > 0$ sufficiently small, we can deduce 
\begin{equation*}
 l^{- \frac{3}{2}} M_{0}(t)^{\frac{1}{2}} r^{\frac{1}{p^{\ast}} - \frac{1}{2}} \lambda_{q+1}^{2m_{2} -1} \lesssim \lambda_{q+1}^{\frac{9\alpha}{4}} M_{0}(t)^{\frac{1}{2}} (\lambda_{q+1}^{-\frac{3}{4}} )^{\frac{1}{p^{\ast}} - \frac{1}{2}} \ll c_{\Xi} M_{0}(t) \delta_{q+2}
\end{equation*} 
for $a \in 2\mathbb{N}$ sufficiently large. Other terms may be estimated similarly by taking $\alpha > 0$ and $p^{\ast} > 1$ sufficiently small, $\beta > 0$ sufficiently small and $a \in 2\mathbb{N}$ sufficiently large to attain \eqref{estimate 14} as desired. 

Second, let us consider the case $m_{1} \in (0, \frac{1}{2}]$ and $m_{2} \in (\frac{1}{2}, \frac{3}{4})$. Two key terms in \eqref{estimate 13} to handle are 
\begin{equation}\label{estimate 17}
l^{- \frac{3}{2}} M_{0}(t)^{\frac{1}{2}} r^{\frac{1}{p^{\ast}} - \frac{1}{2}} \lambda_{q+1}^{2m_{2} -1} \text{ from magnetic diffusion and }  l^{-9} M_{0}(t) \lambda_{q+1}^{-1} r^{-2 + \frac{1}{p^{\ast}}}.
\end{equation}  
Planning ahead to take $\alpha > 0$ and $p^{\ast} > 1$ arbitrarily small, these two conditions simplify to, if we denote $r = \lambda_{q+1}^{\overline{m}}$ temporarily, 
\begin{equation}\label{estimate 15}
\overline{m} (\frac{1}{1}- \frac{1}{2}) + 2m_{2} - 1 < 0 \hspace{1mm}   \text{ and } \hspace{1mm}   -1 + \overline{m}(-2 + \frac{1}{1}) < 0 \hspace{1mm}  \text{ or } \hspace{1mm}   -1 < \overline{m} < 2 (1-2m_{2})
\end{equation}
and we realize that $(-1, 2 (1-2m_{2} )) \neq \emptyset$ precisely because $m_{2} < \frac{3}{4}$. Therefore, fixing such    
\begin{equation}\label{estimate 16}
r = \lambda_{q+1}^{\overline{m}} \text{ for any } \overline{m} \in (-1, 2 (1-2m_{2})), 
\end{equation} 
we can select $\alpha = \alpha (\overline{m}) > 0$ and then $p^{\ast} = p^{\ast} (m_{2}, \overline{m}, \alpha) \in (1,2)$ as needed to deduce the necessary estimates. E.g., concerning the first key term in \eqref{estimate 17}, we can compute  
\begin{equation*}
l^{-\frac{3}{2}} M_{0}(t)^{\frac{1}{2}}r^{\frac{1}{p^{\ast}} - \frac{1}{2}}  \lambda_{q+1}^{2m_{2} -1} = \lambda_{q+1}^{\frac{9\alpha}{4}} M_{0}(t)^{\frac{1}{2}} \lambda_{q+1}^{\overline{m} (\frac{1}{p^{\ast}} - \frac{1}{2})} \lambda_{q+1}^{2m_{2} -1} \ll c_{\Xi} M_{0}(t) \delta_{q+2} 
\end{equation*} 
by requiring $\alpha < \frac{4}{9} (1- 2m_{2} - \frac{\overline{m}}{2})$ and then $p^{\ast} < \frac{ \overline{m}}{ - \frac{9\alpha}{4} + 1 - 2 m_{2} + \frac{\overline{m}}{2}}$, as well as $\beta > 0$ sufficiently small and $a \in 2\mathbb{N}$ sufficiently large where one can use \eqref{estimate 16} to show that $\frac{4}{9} (1- 2m_{2} - \frac{\overline{m}}{2}) > 0$ which leads to $\frac{\overline{m}}{- \frac{9\alpha}{4} + 1- 2m_{2} + \frac{\overline{m}}{2}} > 1$. Concerning the second key term in \eqref{estimate 17}, we can compute  
\begin{equation}
l^{-9}M_{0}(t) \lambda_{q+1}^{-1} r^{-2 + \frac{1}{p^{\ast}}} = \lambda_{q+1}^{\frac{27\alpha}{2}}M_{0}(t) \lambda_{q+1}^{-1} \lambda_{q+1}^{\overline{m} (-2 + \frac{1}{p^{\ast}})} \ll c_{\Xi} M_{0}(t) \delta_{q+2} 
\end{equation} 
by additionally requiring 
\begin{equation*}
\begin{cases}
\alpha < \frac{2}{27} (1+ \overline{m}),  \hspace{2mm} p^{\ast} < \frac{\overline{m}}{- \frac{27\alpha}{2} + 1 + 2 \overline{m}} &\text{ if } \overline{m} \in (-1, -\frac{1}{2}],\\
\alpha < \frac{2}{27} (1+ 2\overline{m}), p^{\ast} > \frac{\overline{m}}{- \frac{27\alpha}{2} + 1 + 2\overline{m}}  &\text{ if } \overline{m} \in (- \frac{1}{2}, 2 (1-2m_{2})), 
\end{cases}
\end{equation*} 
as well as $\beta > 0$ sufficiently small and $a \in 2 \mathbb{N}$ sufficiently large. The other terms may be estimated more easily by adding requirements on $\alpha > 0$ and $p^{\ast} > 1$ as needed. The third case $m_{1} \in (0, \frac{1}{2}]$ and $m_{2} \in (\frac{1}{2}, \frac{3}{4})$ can clearly be achieved similarly to the second case. 

Finally, we consider the case $m_{1} \in (\frac{1}{2}, \frac{3}{4})$ and $m_{2} \in (\frac{1}{2}, \frac{3}{4})$. Similarly to \eqref{estimate 17}-\eqref{estimate 15} in the second case, four key terms in \eqref{estimate 13}-\eqref{estimate 12} to handle are 
\begin{align*}
& l^{- \frac{3}{2}} M_{0}(t)^{\frac{1}{2}} r^{\frac{1}{p^{\ast}} - \frac{1}{2}} \lambda_{q+1}^{2m_{2} -1}, \hspace{2mm} l^{-9} M_{0}(t) \lambda_{q+1}^{-1} r^{-2 + \frac{1}{p^{\ast}}}, \\
& l^{- \frac{3}{2}} M_{0}(t)^{\frac{1}{2}} r^{\frac{1}{p^{\ast}} - \frac{1}{2}} \lambda_{q+1}^{2m_{1} -1}, \hspace{2mm} l^{-13} M_{0}(t) \lambda_{q+1}^{-1} r^{-2 + \frac{1}{p^{\ast}}}, 
\end{align*} 
so that planning ahead to take $\alpha > 0$ and $p^{\ast} > 1$ sufficiently small, replacing $r$ by $\lambda_{q+1}^{\overline{m}}$, we see that we certainly need 
\begin{align*}
\frac{\overline{m}}{2} + 2m_{2} - 1< 0, -1 - \overline{m} < 0, \text{ and } \frac{\overline{m}}{2} + 2m_{1} - 1< 0, \text{ or } -1 < \overline{m} < \min_{k=1,2} 2(1-2m_{k})
\end{align*} 
and $(-1, \min_{k=1,2} 2(1-2m_{k})) \neq \emptyset$ precisely because $m_{1}, m_{2} < \frac{3}{4}$. Therefore, we can fix such 
\begin{equation}\label{estimate 393} 
r = \lambda_{q+1}^{\overline{m}} \text{ for any } \overline{m} \in (-1, \min_{k=1,2} 2 (1-2m_{k})), 
\end{equation} 
and obtain necessary bounds by taking $\alpha = \alpha(\overline{m}) > 0$ sufficiently small and then $p^{\ast} = p^{\ast} (\alpha, m_{1}, m_{2}, \overline{m}) \in (1,2)$ as needed similarly to the second case. We omit further details. 

\subsection{Proof of Theorem \ref{Theorem 2.4} assuming Theorem \ref{Theorem 2.3}}
We fix $T > 0$ arbitrary, $K > 1$, and $\kappa \in (0,1)$ such that $\kappa K^{2} \geq 1$. The probability measure $P \otimes_{\tau_{L}} R$ from Proposition \ref{Proposition 5.5} satisfies
\begin{equation}\label{estimate 261}
P \otimes_{\tau_{L}} R ( \{ \tau_{L} \geq T \}) \overset{\eqref{estimate 259}}{=} P ( \{ \tau_{L} \geq T \} ) \overset{\eqref{estimate 257}-\eqref{estimate 260}}{=}  \textbf{P} ( \{ T_{L} \geq T \}) \overset{\eqref{estimate 19}}{>} \kappa.
\end{equation} 
This leads to 
\begin{equation}
\mathbb{E}^{P \otimes_{\tau_{L} } R} [ \lVert \xi_{2}(T) \rVert_{L_{x}^{2}}^{2}]  \geq \mathbb{E}^{P \otimes_{\tau_{L}} R} [ 1_{\{ \tau_{L} \geq T \}} \lVert \xi_{2}(T) \rVert_{L_{x}^{2}}^{2}] 
\overset{\eqref{estimate 26} \eqref{estimate 261}}{>} \kappa K^{2} e^{T} [\lVert u^{\text{in}} \rVert_{L_{x}^{2}}^{2} + \lVert b^{\text{in}} \rVert_{L_{x}^{2}}^{2}].
\end{equation} 
On the other hand, classical Galerkin approximation gives us another solution $\mathcal{Q}$ that admits $\mathbb{E}^{\mathcal{Q}} [\lVert \xi(T) \rVert_{L_{x}^{2}}^{2}] \leq e^{T} [\lVert \xi_{1}^{\text{in}} \rVert_{L_{x}^{2}}^{2} + \lVert \xi_{2}^{\text{in}} \rVert_{L_{x}^{2}}^{2}].$ This implies a lack of joint uniqueness in law and consequently non-uniqueness in aw for \eqref{stochastic GMHD} due to Cherny's theorem (\cite[Lem. C.1]{HZZ19}). 
 
\section*{Acknowledgements}
The author expresses deep gratitude to Prof. Mimi Dai for some stimulating discussions during the 5-day workshop at the American Institute of Mathematics in April 2021.

\end{document}